\newcommand{\cl}{\mathcal}
\newtheorem{theorem}{Theorem}[section]
\newtheorem*{theorem*}{Theorem}
\newtheorem{corollary}[theorem]{Corollary}
\newtheorem{lemma}[theorem]{Lemma}
\newtheorem{proposition}[theorem]{Proposition}
\newtheorem*{proposition*}{Proposition}
\newtheorem{conjecture}[theorem]{Conjecture}
\newtheorem*{conjecture*}{Conjecture}
\theoremstyle{definition}
\newtheorem{remark}[theorem]{Remark}
\newcommand{\rar}{\rightarrow}
\numberwithin{equation}{section}
\def\bR {\mathbb{R}}
\def\bS {\mathbb{S}}
\def\cH {\mathcal{H}}
\def\la {\langle}
\def\ra {\rangle}
\newcommand{\tx}[1]{\mathrm{#1}}
\newcommand{\conj}[1]{\overline{#1}}
\newcommand{\lam}{\lambda}
\newcommand{\ud}{\mathrm{\,d}}
\newcommand{\vd}{\mathrm{d}}
\newcommand{\udr}{\,r\vd r}
\newcommand{\vD}{\mathrm{D}}
\newcommand{\uln}[1]{{\underline{ #1 }}}
\definecolor{deepgreen}{cmyk}{1,0,1,0.5}
\newcommand{\A}{\mathcal{A}}
\newcommand{\E}{\mathcal{E}}
\newcommand{\HH}{\mathcal{H}}
\newcommand{\LL}{\mathcal{L}}
\newcommand{\R}{\mathbb{R}}
\newcommand{\Sp}{\mathbb{S}}
\newcommand{\Z}{\mathbb{Z}}
\newcommand{\bs}[1]{\boldsymbol{#1}}
\newcommand{\al}{\alpha}
\newcommand{\de}{\delta}
\newcommand{\te}{\theta}
\newcommand{\s}{\sigma}
\newcommand{\z}{\zeta}
\newcommand{\De}{\Delta}
\newcommand{\p}{\partial}
\newcommand{\na}{\nabla}
\newcommand{\supp}{\operatorname{supp}}
\newcommand{\ext}{\operatorname{ext}}
\newcommand{\Rmnum}[1]{\expandafter\@slowromancap\romannumeral #1@}
\newcommand{\ti}{\widetilde}
\newcommand{\U}{\underline}
\newcommand{\ang}[1]{\left\langle{#1}\right\rangle}
\newcommand{\abs}[1]{\left\lvert{#1}\right\rvert}
\newcommand{\EQ}[1]{\begin{equation}\begin{split} #1 \end{split}\end{equation}}
\newcommand{\pmat}[1]{\begin{pmatrix} #1 \end{pmatrix}}
\newcommand{\Del}[1]{}
\numberwithin{equation}{section}
\newcommand{\mif}{{\ \ \text{if} \ \ }}
\newcommand{\mas}{{\ \ \text{as} \ \ }}
\definecolor{green}{rgb}{0,0.8,0} % Redefines the color green.
\newcommand{\bbR}{\mathbb R}
\newcommand{\calZ}{\mathcal Z}
\newcommand{\Lam}{\Lambda}
\begin{document}

\title[Bubbling wave maps with prescribed radiation]{Dynamics of bubbling wave maps with prescribed radiation}
\author{Jacek Jendrej}
\author{Andrew Lawrie}
\author{Casey Rodriguez}

\begin{abstract}
We study energy critical one-equivariant wave maps taking values in the two-sphere. It is known that any finite energy wave map that develops a singularity does so by concentrating the energy of (possibly) several copies of the ground state harmonic map at the origin. If only a single bubble of energy is concentrated, the solution decomposes into a dynamically rescaled harmonic map plus a term that accounts for the energy that radiates away from the singularity. In this paper, we construct blow up solutions by prescribing the radiative component of the map. In addition, we give a sharp classification of the dynamical blow up rate for every solution with this prescribed radiation. 

%so by concentrating the energy of the ground state harmonic map at the origin, and that 

%\begin{itemize} 
%\item Give some heuristics. E.g., On the one hand it is not possible to ``detect" blow up by simply looking at the radiation and detecting some singularity there. Indeed in the case that $\nu \in \N$ the radiation is smooth. On the other hand 
%\end{itemize} 

\end{abstract}

\thanks{J. Jendrej was supported by  ANR-18-CE40-0028 project ESSED.  A. Lawrie was supported by NSF grant DMS-1700127 and a Sloan Research Fellowship. C. Rodriguez was supported by an NSF Postdoctoral Research Fellowship DMS-1703180. }

\maketitle

%-------------------------INTRODUCTION------------------------------------------%
\section{Introduction}
\label{sec:intro}

We consider wave maps  from (1+2)-dimensional Minkowski space $\R^{1+2}_{t, x}$  to the $2$-sphere, $\bS^2$, with 1-equivariant symmetry. In this setting, the objects under consideration  are formal critical points of the Lagrangian action, 
\EQ{
\cl A(U) = \frac{1}{2} \int_{\R^{1+2}_{t, x}} \Big( {-} \abs{\p_t U(t, x)}^2 + \abs{\na U(t, x)}^2 \Big) \, \ud x  \ud t
}
where  $U: \R^{1+2}_{t, x} \to \Sp^2 \subset \R^3$ are maps that take the restricted form, 
\EQ{ \label{eq:fullmap} 
U(t, r, \te) = ( \sin u(t, r) \cos \te, \sin u(t, r) \sin \te, \cos u(t, r)) \in \Sp^2\subset \R^3
}
Here $(r, \te)$ are polar coordinates on $\R^2$, and $u(t, r)$ is a radially symmetric function. 
The Cauchy problem for $1$-equivariant wave maps reduces to a scalar semilinear wave equation for the polar angle $u$:
\EQ{\label{eq:wmk}
      &\partial_t^2 u - \p_r^2 u - \frac{1}{r} \p_r u + 
      \frac{\sin 2 u}{2 r^2} =0, \quad (t,r) \in \bbR \times (0,\infty),\\
      &(u(t_0), \partial_t u(t_0)) = (u_0, \dot u_0), \qquad  t_0 \in \bR. 
}

Wave maps are referred to as nonlinear $\s$-models in the high energy physics literature, see e.g.,~\cite{MS, GeGr17}. From the mathematical point of view, they are a canonical example of a geometric wave equation as they simultaneously generalize the free scalar wave equation to manifold valued maps and the classical harmonic maps equation to Lorentzian domains. The case considered here is of particular interest, as the static solutions given by finite energy harmonic maps are amongst the simplest examples of topological solitons;  other examples include kinks in scalar field equations,  vortices in Ginzburg-Landau equations, Dirac monopoles, Skyrmions, and Yang-Mills instantons; see~\cite{MS}. 
The symmetry reduced equation~\eqref{eq:wmk} is a much studied problem, since it admits intriguing features from the point of view of dynamics, e.g., bubbling harmonic maps, multi-soliton solutions, etc.,  in the relatively simple setting of a geometrically natural semilinear wave equation. 
%balances  features 
%
%gives a natural setting in which to study dynamic such a bubbling 
%it admits several interesting features, such as nontrivial stationary harmonic maps which lead to interesting dynamics such as bubbling, which is the focus of this paper. 
For a more thorough presentation of the physical or geometric content of this equation, see e.g.,~\cite{MS, ShSt00, GeGr17}.
%Here we will simply regard \eqref{eq:wmk} as a scalar semilinear problem.

Equation \eqref{eq:wmk} can be expressed as a Hamiltonian system, 
\begin{align} \label{eq:wmk-ham}
\partial_t \bs u(t) = J\circ\vD \cl E(\bs u(t)),
\end{align}
where $\bs u(t)$ is our notation for the vector $\bs u(t) := (u(t), \p_t u(t))$. The energy functional $\cl E$ is defined for $\bs u_0 = (u_0, \dot u_0)$ by the formula
\begin{align}
  \label{eq:energy-wmap}
  \cl E(\bs u_0) := \pi \int_0^{\infty}\Bigl(|\dot u_0|^2 + |\partial_r u_0|^2 + \frac{\sin^2 u_0}{r^2}\Bigr)\udr.
\end{align}
Note that initial data $\bs u_0 = (u_0, \dot u_0)$ of finite energy forces $u_0(r)  \to m \pi$ as $r \to 0$ and $u_0(r) \to n \pi$ as $r \to \infty$ for $m, n \in \Z$. We will fix $m=0$, and $n=1$ in our analysis, but we could just as well consider states of finite energy with arbitrary endpoint in $  \pi \Z$.  Thus the finite energy maps we study connect the north and south poles of $\Sp^2$ and have topological degree one, i.e., they are members of the space
\EQ{
\HH_1 := \{ \bs u_0 \mid \E( \bs u_0) < \infty, \lim_{r \to 0} u_0(r) = 0, \quad \lim_{r \to \infty} u_0(r)  = \pi\} 
}
%but we could just as well consider states of finite energy such that $\lim_{r\to +\infty}u_0(r) \in \pi\bZ$,
%see \cite{CKM08, CKLS15} for details.
The family of stationary solutions 
\begin{align}
Q_\lambda(r) := 2\arctan\big(\frac{r}{\lambda}\big), \quad \lam >0
\end{align} 
plays a fundamental role in the study of \eqref{eq:wmk}.
We will write $Q(r) := Q_1(r)$. We note $\bs Q:= (Q, 0)\in \HH_1$ since $Q(r) \rar \pi$ as $r \rar \infty$. In fact, $Q$, which is the polar angle of a degree one \emph{harmonic map},  has minimal energy in $\HH_1$; see~\eqref{eq:bog} below. 

We will often work with vectors in the energy space $\mathcal H := H \times L^2$, where the space $H$ is the completion of $C_0^\infty((0, \infty))$ for the norm
\begin{equation}
  \label{eq:H-norm}
  \|v\|_H^2 := \pi \int_0^{\infty}\Big(|\partial_r v(r)|^2 + \frac{|v(r)|^2}{r^2} \Big)\udr.
\end{equation}
Note that $v \in H$ forces $\lim_{r\to +\infty}v (r) = 0$ and that $\bs u_0 \in \HH_1$ means that $u_0 - Q_{\lam} \in H$ for any fixed $\lam>0$.  It is classical that~\eqref{eq:wmk} is locally well-posed for initial data in $\HH$; see~\cite{ST94}. 

\subsection{Main results}
The breakthrough works of Krieger, Schlag, Tataru~\cite{KST}, Rodnianski, Sterbenz~ \cite{RS}, and Rapha{\"e}l, Rodnianski~\cite{RR} proved that wave maps can develop singularities in finite time by concentrating energy at a point in space. 
%It is well known that solutions of~\eqref{eq:wmk} can develop singularities in finite time~\cite{KST, RS, RR}. 
The main goal of this paper is to directly tie the precise blow-up dynamics of concentrating wave maps to the part of the solution that radiates away from the singularity. 

We start by clarifying what is meant by the \emph{radiation field} of a singular wave map. By equivariance and energy-criticality a solution to~\eqref{eq:wmk} can only become singular by concentrating energy at $r=0$. In~\cite{Struwe}, Struwe proved that such energy concentration can only occur via the bubbling of at least one harmonic map, at least along a sequence of times.  It was later shown in~\cite[Theorem 1.3]{CKLS1} that any $1$-equivariant wave map $u(t) \in \HH_1$ with energy $\E(\bs u) < 3 \E(\bs Q)$ that blows up, say as time $t \rar 0^+$,  admits a decomposition of the form 
\EQ{ \label{eq:ckls1} 
&\bs u (t) = \bs Q_{\lam(t)} + \bs u^*_0 + \bs g(t), \\
&\lam(t) \to 0 \mas t \to 0^+, \\
&\| \bs g(t) \|_{\HH} \to 0 \mas t \to 0^+,
}
where $\bs u^*_0 \in \HH$ is uniquely determined by $\bs u(t)$, and $\lam(t)$ is a continuous function. In fact, 
letting $\bs u^*(t)$ denote the wave map evolution of the initial data $\bs u^*(0) = \bs u^*_0$, we have  $\E( \bs u^*) = \E( \bs u) - \E(\bs Q)$ and, 
\EQ{
\bs u(t, r)  &=  (\pi,0) + \bs u^*(t, r),  \quad \forall  r \ge t, 
}
i.e., $\bs u^*(t)$ accounts for the part of $\bs u(t)$ that radiates out of the light cone as we approach the singular time. We will refer to $\bs u^*_0\in \HH$ or the associated flow $\bs u^*$ as the \emph{radiation field} of the singular wave map $\bs u$.

In the case of a wave map that blows up by bubbling off a \emph{single} harmonic map,  the cap on the energy $\E( \bs u )< 3 \E(\bs Q)$  (which ensures  that there can only be one blow up bubble) was removed by C{\^o}te~\cite{Cote}, and Jia, Kenig~\cite{JK} generalized the result to $k$-equivariant maps.  These works also provided a further generalization of~\eqref{eq:ckls1} that allows for the possibility of more concentrating bubbles along a sequence of times. Later, Duyckaerts, Jia, Kenig, and Merle~\cite{DJKM2} established a one-bubble decomposition as in~\eqref{eq:ckls1} for general wave maps with energy slightly above the ground state harmonic map.

Given the qualitative decomposition~\eqref{eq:ckls1}, it is natural to ask what information does the radiation field $\bs u^*_0$ carry about the blow up dynamics. In this paper, our approach to answering  this question is to fix a mapping $\bs u_0^* \in \HH$ as a candidate for a radiation field, and ask: 
% \emph{prescribe the radiation $\vec u^*$} and ask, 
\begin{itemize} 
\item Do solutions that become singular as in~\eqref{eq:ckls1} with radiation $\bs u^*_0$ exist? 
\item If yes, can we characterize \emph{all singular solutions} with radiation $\bs u^*_0$? 
%\item Does $\bs u^*_0$ \emph{uniquely} determine the blow-up rate $\lam(t)$ \emph{and} the error term $\bs g(t)$ in~\eqref{eq:ckls1}?
\end{itemize} 
This perspective is natural. Indeed, the only solution as in~\eqref{eq:ckls1} with $\bs u^*_0= (0, 0)$ is the stationary solution $Q_{\lam_0}$ for fixed $\lam_0>0$. Thus, the dynamical parameter $\lam(t)$ in~\eqref{eq:ckls1} can only tend to zero in the presence of nontrivial $\bs u^*_0$ and naively one can expect that nonlinear interactions between the wave map evolution $\bs u^*(t)$ and $\bs Q_{\lam(t)}$ to drive the concentration. In this paper, we show that for sufficiently regular $\bs u^*_0 \in \HH$ this naive intuition is correct, and that the answer to both questions above is yes. In fact,  the dynamical parameter $\lam(t)$ is precisely determined by the rate of decay of $\bs u^*_0(r)$ as $r \to 0$. 

We demonstrate this for a natural class of radiation fields,  i.e. those with polynomial vanishing as $r \to 0$. Let $\nu > \frac{9}{2}$ and $q \in \R \backslash \{0\}$, and consider a radiation field $\bs u_0^* = (u_0^*, u_1^*) \in \HH$ such that either 
\EQ{ \label{eq:u*} 
u^*_0(r) &=  q r^{\nu} + o(r^{\nu}) \mas r \to 0, \\
 \quad u_1^*(r) &= 0, \\
}
or 
\EQ{ \label{eq:u*dot} 
u^*_0(r) &= 0,  \\
 \quad u_1^*(r) &= q r^{\nu-1} + o(r^{\nu-1}) \mas r \to 0.
}
%and we further assume for simplicity that $\E( \bs u^*) < 2 \E(\bs Q)$, which allows us to assume~\eqref{eq:ckls1} holds for any singular solution with such radiation. 
%\footnote{The restriction on the size of the energy of $\bs u^*_0$ above is not necessary since all we will  use is the behavior of $u_0^*(r)$ as $r \to 0$, but is included in the hypothesis so we can use the qualitative classification~\eqref{eq:ckls1} from~\cite{CKLS1} to simplify the statement of the main theorem; see Remark~\ref{r:variations}. }.
%where $q>0$ and $\nu>0$. 

The main result is the following theorem.

\begin{theorem}[Main Theorem]\label{t:main1}
Let $\bs u^*_0 \in \HH$ be as in~\eqref{eq:u*} or~\eqref{eq:u*dot} with $\nu > \frac{9}{2}$. 
%and let $\chi \in C^\infty_0(\bbR^2)$ be a smooth radial cutoff such that $\chi(r) = 1$ for $r \leq \frac{1}{2}$ and $\chi(r) = 0$ for $r \geq 1$. Define 
%\begin{align}
%\bs u^*_0 = \chi(r) (q r^{\nu}, 0).
%\end{align}
Then, 
\begin{itemize} 
\item[\textbf{(a)}] {\textbf{Construction:}} If $q<0$ in~\eqref{eq:u*}, there exists $T_+>0$ and a solution $\bs u_c \in C((0,T_+)); \cl H_1)$ to \eqref{eq:wmk}  blowing up backwards-in-time at $T_- =0$ such that 
\begin{align}
\bs u_c(t) = \bs Q_{\lam_c(t)} + \bs u^*_0 + \bs o_{\cl H}(1), \quad \mbox{as } t \rar 0^+, 
\end{align} 
with
\begin{align} \label{eq:rate1} 
\lam_c(t) = \frac{p|q|}{\nu^2(\nu+1)} \frac{t^{\nu+1}}{|\log t|}, 
\end{align}
where 
\EQ{ \label{eq:pdef} 
p = p(\nu) := \frac{\nu (\nu + 2)\sqrt{\pi}\Gamma \Bigl ( \frac{3+\nu}{2}\Bigr )}{4 \Gamma \Bigl ( \frac{4+\nu}{2} \Bigr )}.
}
If $q <0$ in~\eqref{eq:u*dot} then the exact same result holds with the explicit constant 
\begin{align}\label{eq:tipdef} 
p(\nu) = \frac{(\nu + 1)\sqrt{\pi}\Gamma \Bigl ( \frac{2+\nu}{2}\Bigr )}{4 \Gamma \Bigl ( \frac{3+\nu}{2} \Bigr )}.
\end{align}
\item[\textbf{(b)}] \textbf{Classification:}   \label{t:main2} Let $\bs u(t) \in \HH_1$ be \emph{any}  finite energy solution to~\eqref{eq:wmk} that blows up by concentrating \emph{one} bubble backwards-in-time at $t = 0$ while radiating $\bs u^*_0$ as in~\eqref{eq:u*}, i.e., $\bs u(t)$ admits a 
decomposition
\begin{align}\label{eq:decomp1} 
\bs u(t) = \bs Q_{\lam(t)} + \bs u^*_0 + \bs o_{\cl H}(1), \quad \mbox{as } t \rar 0^+,
\end{align} 
with $\lam(t) = o(t)$ as $t \to 0^+$. Then, 
\EQ{ \label{eq:q} 
q <0, 
}
and the rate $\lam(t)$ satisfies, 
%If $q<0$ in~\eqref{eq:u*}, \emph{any} finite energy solution  $\bs u(t) \in \HH_1$ to~\eqref{eq:wmk} that blows up  backwards-in-time at $t = 0$ and radiates $\bs u^*_0$ as in~\eqref{eq:u*} admits a decomposition
%\begin{align}
%\bs u(t) = \bs Q_{\lam(t)} + \bs u^*_0 + o_{\cl H}(1), \quad \mbox{as } t \rar 0, 
%\end{align} 
%where the rate $\lam(t)$ satisfies 
\EQ{
 \lam(t) = \left (\frac{p \abs{q}}{\nu^2(\nu+1)}  +o(1)\right )  \frac{t^{\nu+1}}{|\log t|} \mas t \to 0^+, \label{eq:gen_blowup_rate}
}
where $p(\nu)$ is as in~\eqref{eq:pdef}.  

If instead the radiation takes the form~\eqref{eq:u*dot} then the same result holds with $p(\nu)$ as in~\eqref{eq:tipdef}. 
%\item[\textbf{(c)}] \textbf{Sign of $q$:} If $\bs u(t) \in \HH_1$ is  \emph{any} finite energy solution that blows up  backwards-in-time at $t = 0$ and radiates $\bs u^*$ as in~\eqref{eq:u*} then 
%\EQ{
%q <0.
%}
\end{itemize} 
\end{theorem}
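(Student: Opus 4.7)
The plan is to analyze both parts simultaneously via modulation theory around the family $\bs Q_\lambda$. Starting from the decomposition $\bs u(t) = \bs Q_{\lambda(t)} + \bs u^*(t) + \bs g(t)$, where $\bs u^*(t)$ denotes the global wave-map flow with data $\bs u^*_0$, I would impose an orthogonality condition of the schematic form $\langle g(t), \chi \Lambda Q_{\lambda(t)}\rangle = 0$, where $\Lambda Q_\lambda := r\partial_r Q_\lambda$ spans the kernel of the linearized operator $L_\lambda = -\partial_r^2 - r^{-1}\partial_r + \cos(2Q_\lambda)/r^2$ (with $\chi$ a localization since $\Lambda Q_\lambda \notin L^2$). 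Differentiating the orthogonality along the flow produces a modulation equation for $\lambda$ of the form
\begin{align}
a_0 \, \dot\lambda(t) = \langle \mathcal N(u^*(t), Q_{\lambda(t)}), \Lambda Q_{\lambda(t)}\rangle + (\text{errors in }\bs g),
\end{align}
where $\mathcal N$ encodes the forcing on $g$ generated by the interaction of $u^*$ with the harmonic map. Taylor-expanding $u^*(t,r) = qr^\nu + \text{remainder}$ as $r\to 0$ and integrating on the transition scale extracts a leading contribution proportional to $q\,\lambda(t)^{\nu-1}$ weighted by a logarithm.

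For the construction in part \textbf{(a)}, I would first build a high-order approximate solution of the form $\bs Q_{\lambda(t)} + \bs u^*(t) + \bs\Phi[\lambda](t,r)$ by iteratively solving away the error generated by inserting the ansatz into \eqref{eq:wmk}, inverting the spatial operator using the explicit Green's function of $L_\lambda$. The correction $\Phi$ naturally splits into an inner piece adapted to $r\sim \lambda$ and an outer piece adapted to $r\sim t$, glued on the intermediate scale by matched asymptotics; the explicit constants $p(\nu)$ in \eqref{eq:pdef} and \eqref{eq:tipdef} should emerge as exact $\Gamma$-function integrals of $\Lambda Q(r)\cdot r^{\nu+1}$ against appropriate weights, via a beta integral identity. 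Prescribing $\lambda(t)$ by \eqref{eq:rate1}, I would then produce an exact solution by a backwards-in-time compactness argument: solve \eqref{eq:wmk} with Cauchy data equal to the approximate solution at a sequence of times $t_n \searrow 0$, derive uniform energy estimates for the difference on $[t_n, T_+]$ using coercivity of the linearized energy on the orthogonal complement of $\Lambda Q_{\lambda}$, and extract a limit. The hypothesis $\nu > 9/2$ should enter precisely so that the residual error generated by $\bs\Phi$ is integrable in time against the available modulated energy norm.

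For the classification in part \textbf{(b)}, starting from a decomposition as in \eqref{eq:decomp1} with only the qualitative input $\lambda(t) = o(t)$ and $\|\bs g(t)\|_{\mathcal H}\to 0$, I would first upgrade the regularity of $\lambda$ by proving a Lipschitz-type bound $|\dot\lambda(t)| \lesssim \lambda(t)/t + (\text{small})$ that rules out self-similar blow-up, in the spirit of the ejection/no-return arguments of Rapha\"el--Rodnianski and C\^ote. Injecting this control into the modulation equation, the leading right-hand side $\sim q\,\lambda(t)^{\nu-1}/|\log t|$ forces the sign constraint $q<0$ (otherwise $\lambda$ could not remain positive while decreasing to zero) and pins down the sharp asymptotic \eqref{eq:gen_blowup_rate} after integration in $t$. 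The case \eqref{eq:u*dot} reduces to the same scheme once one observes that a velocity profile $u_1^*(r) \sim q r^{\nu-1}$ propagates under the linear wave equation to produce the same near-origin behavior in $u^*(t)$, only with a different explicit prefactor, accounting for the shift from \eqref{eq:pdef} to \eqref{eq:tipdef}.

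The hardest step is extracting the modulation law with the correct constant \emph{and} the $|\log t|$ factor. This requires the profile $\bs\Phi$ to be constructed accurately on both scales $r\sim \lambda$ and $r\sim t$, so that the coupling integral $\langle \mathcal N(u^*, Q_\lambda), \Lambda Q_\lambda\rangle$ is dominated by an exactly computable piece; the logarithm itself is born on the intermediate dyadic range $\lambda \ll r \ll t$, where $\Lambda Q_\lambda(r) \sim \lambda/r$ pairs with the polynomial tail of $u^*$ to yield a $\log(t/\lambda) \sim |\log t|$ factor. A secondary difficulty is choosing a norm for $\bs g$ strong enough to close the ODE for $\lambda$ yet weak enough to permit the backwards compactness and to be propagated by the wave-map flow; the parameter range $\nu > 9/2$ should be exactly what makes these two demands compatible.
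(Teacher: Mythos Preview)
Your proposal contains a genuine scaling error that would derail both parts. You write that the interaction term extracts a leading contribution proportional to $q\,\lambda(t)^{\nu-1}$, and later that the modulation right-hand side is $\sim q\lambda(t)^{\nu-1}/|\log t|$. This is incorrect: the forcing is of order $q\,t^{\nu-1}$, not $q\,\lambda^{\nu-1}$. The point is that inside the light cone $\{r\le t\}$ the radiation $u^*(t,r)$ is \emph{not} approximately $qr^\nu$; the linear wave evolution smears it out to $u^*(t,r)\approx qp\,t^{\nu-1}r$ (obtained from Kirchhoff's formula for the $4$d wave equation applied to $v=u^*_L/r$). This is precisely where the constant $p(\nu)$ comes from --- it is the value of the $4$d linear flow of $r^{-1}\bs u^*_0$ at the spacetime origin, not a beta integral of $\Lambda Q(r)\cdot r^{\nu+1}$ as you suggest. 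With the correct forcing $\sim t^{\nu-1}$ the formal system becomes $\lambda'=-b$, $4\log(t/\lambda)\,b' \approx 4pq\,t^{\nu-1}$, which integrates to the stated rate; your ODE $\dot\lambda\sim q\lambda^{\nu-1}/|\log t|$ is separable and yields something entirely different.

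There are two further structural gaps. First, the paper explicitly warns that the naive first-order modulation equation you write down (differentiating the orthogonality once) does not close: one needs to study a second-order quantity, and doing so directly produces terms of critical size and indefinite sign. The resolution is \emph{not} to refine the ansatz by an inner/outer expansion $\bs\Phi[\lambda]$ as you propose (that is the Krieger--Schlag--Tataru or Rapha\"el--Rodnianski route, and would be at odds with proving the universal classification in part (b)), but rather to refine the modulation parameters: one introduces $\zeta(t)\sim 4\lambda\log(t/\lambda)$ and a virial-corrected $b(t)=\zeta'(t)+(\text{correction})$ whose derivative has a good sign, the correction being designed to cancel the bad terms. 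Second, the energy estimate for $\bs g$ requires a new ingredient: unlike in the threshold problems where $\langle D\mathcal E(\bs u^*),\bs g\rangle$ is negligible, here it carries leading-order interaction (because the underlying ODE system is non-autonomous), and this must be extracted via a time-differentiation of the local energy pairing. Your sketch does not anticipate either of these mechanisms.
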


\begin{remark} 
In part $(b)$ of Theorem~\ref{t:main1} we assume that the blow up solution $\bs u(t)$ admits a decomposition~\eqref{eq:decomp1} with one bubble, but this is a mild hypothesis given the existing literature.  By \cite[Theorem 1.3]{CKLS1} the decomposition~\eqref{eq:decomp1} holds for any solution $\bs u(t) \in \HH_1$ such that $\E( \bs u) < 3 \E(\bs Q)$. For larger energies, it was proved by C{\^o}te~\cite{Cote} that such a decomposition holds as long as we assume that there is only one concentrating bubble. The qualitative classification results for energies larger than $3 \E(\bs Q)$ in ~\cite{Cote, JK} allow for solutions that simultaneously concentrate the energy of multiple bubbles in finite time (along a sequence of times), which necessitates our inclusion of the hypothesis that only one bubble concentrates. However,  it is quite possible that finite time multiple bubble trees do not exist.  
\end{remark} 

\begin{remark} 
In Appendix~\ref{a:formal}  we show that the blow up rate~\eqref{eq:rate1} can be formally derived from the leading order behavior of the linear evolution of the radiation. This is borne out by our analysis -- the explicit constants $p(\nu)$ in~\eqref{eq:pdef},~\eqref{eq:tipdef} arise as the leading order coefficients of the evolution of the data $\bs u_0^*(r)/r$ via the free $4d$ wave equation at the point $(t, 0)$; see Section~\ref{s:radiation} and Appendix~\ref{a:u*dot}. 
\end{remark} 

%\begin{theorem}
%Let $\nu > 1$, $q < 0$, and let $\bs u_0^*$ be as in the statement of Theorem \ref{t:main1}.  Let $\bs u(t)\in \HH$ be \emph{any solution} to~\eqref{eq:wmk} 
%such that 
%\EQ{
%	\bs u(t) =  \bs Q_{\lambda(t)}  + \bs u^*_0 + o_{\HH}(1) \mas t \to 0^+ 
%}
%with $\lim_{t \rar 0^+} \frac{\lam(t)}{t} = 0$. Then there exists a uniform constant $C > 0$ such that 
%\EQ{
% \lambda(t) \le C  \frac{t^{\nu+1}}{|\log t|} \quad \forall\,  t.
%}

%If $\nu \geq 2$, then 
%\EQ{
% \left (
% \frac{p|q|}{\nu^2(\nu+1)} - o(1)
% \right ) \frac{t^{\nu+1}}{|\log t|} \leq	\lambda(t) \quad \mbox{as } t \rar 0.
%} 
%\end{theorem}

\begin{remark}[Forward-in-time blow up and the sign of the bubble]  \label{r:q} 
Suppose the radiation takes the form $\bs u^*(0)  = (u_0^*, 0)$ as  in~\eqref{eq:u*} with $q < 0$. Then due to the time reversal symmetry of \eqref{eq:wmk},
a wave map $\bs u^+$ satisfies \eqref{eq:decomp1} if and only if $u^-(t,r) :=  u^+(-t,r)$ satisfies  
\begin{align}
\bs u^-(t) = \bs Q_{\lam(|t|)} + \bs u_0^* + \bs o_{\cl H}(1), \quad \mbox{as } t \rar 0^-.  
\end{align}
Thus, the conclusions of Theorem \ref{t:main1} are equally valid forward-in-time.
However, if we assume the radiation takes the form $\bs u^*(0)  = (0, u_1^*)$ as in~\eqref{eq:u*dot} with $q < 0$, then
a wave map $\bs u^+$ satisfies \eqref{eq:decomp1} if and only if $u^-(t,r) :=  -u^+(-t,r)$ satisfies  
\begin{align}
\bs u^-(t) = -\bs Q_{\lam(|t|)} + \bs u_0^* + \bs o_{\cl H}(1), \quad \mbox{as } t \rar 0^-.  
\end{align} 
In particular, the conclusions of Theorem \ref{t:main1} hold forward-in-time with the \emph{sign of the bubble reversed}. 

In all of the analysis we could have just as easily considered 
$\bs u_0^* = (u_0^*, u_1^*)$ with  $u_0^*,  u^*_1$ both nontrivial and simply determined which factor contributed the leading  order behavior of the linear flow; see Section~\ref{s:radiation} and Appendix~\ref{a:u*dot}. For example, if the radiation takes the form 
\begin{align}
\bs u_0^* = (q_1 r^\nu + o(r^\nu), q_2 r^{\nu-1} + o(r^{\nu-1})) 
\end{align}
with 
\begin{gather}
q_1 \frac{\nu (\nu + 2)\sqrt{\pi}\Gamma \Bigl ( \frac{3+\nu}{2}\Bigr )}{4 \Gamma \Bigl ( \frac{4+\nu}{2} \Bigr )} + 
q_2 \frac{(\nu + 1)\sqrt{\pi}\Gamma \Bigl ( \frac{2+\nu}{2}\Bigr )}{4 \Gamma \Bigl ( \frac{3+\nu}{2} \Bigr )} < 0, \label{eq:sign_1}\\
q_1 \frac{\nu (\nu + 2)\sqrt{\pi}\Gamma \Bigl ( \frac{3+\nu}{2}\Bigr )}{4 \Gamma \Bigl ( \frac{4+\nu}{2} \Bigr )} - 
q_2 \frac{(\nu + 1)\sqrt{\pi}\Gamma \Bigl ( \frac{2+\nu}{2}\Bigr )}{4 \Gamma \Bigl ( \frac{3+\nu}{2} \Bigr )} < 0, \label{eq:sign_2}
\end{gather}
then there exist bubbling solutions $\bs u_c^\pm$ and scaling parameters $\lam^{\pm}$ defined for $\pm t > 0$ near 0 satisfying $\lam^{\pm}(t) \simeq |t|^{\nu+1} |\log |t||^{-1}$ such that 
\begin{align}
\bs u_c^{\pm}(t) = \bs Q_{\lam^{\pm}(t)} + \bs u_0^* + \bs o_{\cl H}(1), 
\quad \mbox{as } t \rar 0^{\pm}. 
\end{align}
Moreover, if the sign of \eqref{eq:sign_2} is reversed then there exist bubbling solutions $\bs u_c^\pm$ and scaling parameters $\lam^{\pm}$ defined for $\pm t > 0$ near 0 satisfying $\lam^{\pm}(t) \simeq |t|^{\nu+1} |\log |t||^{-1}$ such that 
\begin{align}
\bs u_c^{\pm}(t) = \pm \bs Q_{\lam^{\pm}(t)} + \bs u_0^* + \bs o_{\cl H}(1), 
\quad \mbox{as } t \rar 0^{\pm}. 
\end{align}
In particular, the sign of the bubble is reversed for a solution bubbling forward in time in this situation. In both cases, classification results hold with different explicit constants appearing in the concentration rates $\lam^{\pm}(t)$. 
However, 
we will stick to the forms~\eqref{eq:u*} and~\eqref{eq:u*dot} for simplicity. 
\end{remark} 

\begin{remark}[Straightforward generalizations] \label{r:variations} 
%%Theorem \ref{t:main1} is proved for radiation fields as in~\eqref{eq:u*}, but the techniques in this paper can be applied to yield several generalizations. 
% The requirement that the radiation takes the form $\bs u_0 = (u_0^*, 0)$ is not at all necessary. The main fact that we use about the radiation is that its nonlinear flow $ \bs u^*(t)$ can be well approximated near $r =0$ by the behavior of the linear flow, $ \bs u_L^*(t,r)$, of $\bs u^*_0$ near $(t, r) = (t, 0)$. So we could just as easily have considered $\bs u_0 = (u_0^*, \dot u_0^*)$ with nontrivial $\dot u^*_0$, such the leading order behavior of the linear flow is the same as in Lemma~\ref{l:linear_app}. 
The restriction $u^*_0(r) \to 0$ as $ r \to \infty$ in~\eqref{eq:u*} does not any significant play a role in the analysis and we could just as easily consider $u_0^*(r)$ such that $u_0^*(r) \to n \pi$ as $r \to \infty$ for any integer $n \in \Z$.   
%discuss various valid variations on the assumptions about the radiation, e.g., 
%%\EQ{
%%u^*_0(r) = -q r^{\nu} + o(r^{\nu}) \mas r \to 0 
%%}
%and another where 
%\EQ{
%u^*_0(r)  \to - \pi \mas r \to \infty 
%}
%so that the blow up solution can be topologically trivial.  
\end{remark}

\begin{remark}[Different blow up dynamics]  \label{r:other} 
The statement of Theorem~\ref{t:main1} makes clear that the order of vanishing of $\bs u^*_0$ in~\eqref{eq:u*} or ~\eqref{eq:u*dot}  as $r \to 0$ determines the blow up rate~\eqref{eq:rate1}. We considered the polynomial decay, e.g.,  $r^ \nu$ as in~\eqref{eq:u*}, first and foremost because of its simplicity, but also because it yielded the previously unknown blow up dynamics $\lam(t) \simeq t^{\nu+1}/ \abs{\log t}$.  

The restriction $\nu>9/2$ in Theorem~\ref{t:main1} is technical -- it allow us to use the simple pointwise estimates in Lemma~\ref{l:linear_app} to pick out the leading order of the evolution $\bs u^*(t)$ near $r =0$. The odd integer values of $1<\nu< 9/2$, i.e., $\nu= 3$, could easily be included in our analysis, since function in~\eqref{eq:phidef} is manifestly smooth in this case.  The case $\nu=1$ in~\eqref{eq:u*} is a special case that could also be treated by our methods and there we would see the same blow up dynamics from~\cite{R19}.  The number $9/2$ could possibly be lowered with a more careful analysis, but we do not pursue this here.
 
The techniques in this paper should also allow Theorem~\ref{t:main1} to be extended to radiation fields with non-pure polynomial decay. In Appendix~\ref{a:formal} we outline how to formally guess the rate of $\lam(t)$ for a few different choices for $\bs u^*(r)$. For example, the methods here should readily cover radiation fields of the form, 
\EQ{
u_0^*(r)  = - r^{\nu} \abs{\log r}^\mu + o( r^{\nu} \abs{\log r}^\mu)  \mas r \to 0, \quad \nu>\frac{9}{2}, \mu \in \R
} 
which would yield the blow-up rates, 
\EQ{
\lam(t) \simeq  t^{\nu+1} \abs{\log t}^{\mu-1} \mas t \to 0.
}
Of particular interest is the case $\mu =1$, which yields the pure power rates $\lam(t) \simeq t^{\nu+1}$ discovered by Krieger, Schlag, Tataru~\cite{KST} using a different method. 

The stable blow up regime discovered by Rapha{\"e}l, Rodnianski~\cite{RR} is an open set of initial data (including smooth data) in $H^2$ that leads to blows up with the rate $\lam(t) \simeq t e^{-\sqrt{ \abs{\log t}}}$, i.e., just barely faster than the self-similar rate $\lam(t) = t$, which is forbidden for finite energy solutions to~\eqref{eq:wmk}.  The radiation associated to these blow up solutions is no better than energy class, and thus it is not clear if specific radiation profiles for these solutions can be characterized \emph{a priori} as is the case for the solutions in Theorem~\ref{t:main1}. 
\end{remark} 

\begin{remark} 
We note that for integer values of $\nu>9/2$, the radiation $\bs u^*(t)$ can be a $C^\infty$ function. We do not pursue here questions concerning additional regularity of the blow up solution $\bs u_c(t)$ in part $(a)$. 
\end{remark} 

\subsection{Discussion of Theorem~\ref{t:main1}} 

%We take a different perspective on the blow-up problem for wave maps than, say, the constructions of blow-up solutions in ~\cite{KST, RS, RR}. 
A basic tenet of the approach in this paper is that certain bubbling phenomena can be readily accessed by viewing the solution backwards from the final time, rather than from the point of view of initial data. This ``backwards''  point of view opens the door to classification statements as in part $(b)$ of Theorem~\ref{t:main1} and to the uniqueness conjecture posed below.  Our method can be motivated via an analogy with the scattering problem for nonlinear waves. 

\subsubsection{Analogy with the scattering problem} There are  two ways to think about the scattering problem. On the one hand, start with initial data for a nonlinear wave $u_{{NL}}(t)$ and \emph{show that it scatters} by finding a linear wave $u_{{L}}(t)$ such that 
\EQ{ \label{eq:scattering} 
\| u_{{NL}}(t) - u_{{L}}(t) \|_\HH  \to 0 \mas t \to \infty
}
This first type of scattering question is typically called \emph{completeness of wave operators}. On the other hand, one may start with a free evolution $u_{{L}}(t)$ and attempt to find a nonlinear wave $u_{{NL}}(t)$ such that~\eqref{eq:scattering} holds. This latter type of scattering question is called \emph{existence of wave operators} and is typically easier to address than the completeness question. As one may imagine, the completeness question requires precise estimates for the nonlinear flow, which are harder to obtain than the corresponding information about the linear flow that serves as input for the existence question. 

In the setting of singularity formation, one can view the approach of ``finding sets of initial data that lead to blow-up'' taken for example in~\cite{KST, RS, RR} as somewhat analogous to the \emph{completeness} question in scattering theory.  The approach in this paper, in which we prescribe the radiation field $\bs u^*_0$ and look for blow-up solutions radiating $\bs u^*_0$ can be  viewed as analogous to the \emph{existence} question in scattering theory. One of the main advantages of the ``backwards'' perspective taken here is that it gives evidence for a natural \emph{unique} continuation of singular solutions past  the blow up time.

\subsubsection{The Radiative Uniqueness Conjecture and unique continuation} 
Part $(b)$ of Theorem~\ref{t:main1} determines the rate of concentration for any blow up solution radiating $\bs u^*_0$ up to the precise constant in the leading order term in $\lam(t)$. The following corollary is immediate. 
 \begin{corollary} 
Let $\bs u_c(t)$ denote the constructed blow-up solution from Part (a) of Theorem~\ref{t:main1} and let $\bs u(t) \in \HH_1$ be \emph{any} other solution that blows up backwards-in-time at $t =0$ with the same radiation field $\bs u^*_0$. Then,
\EQ{
\| \bs u_c(t) - \bs u(t) \|_{\HH} \to 0 \mas t \to 0^+. 
}
\end{corollary}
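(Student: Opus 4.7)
The plan is to extract the result directly from the asymptotic information provided by Parts (a) and (b) of Theorem~\ref{t:main1}, combined with a soft continuity property of the map $\lam \mapsto \bs Q_\lam$ in the energy space.

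First, I would invoke Part (a) to obtain the modulation parameter $\lam_c(t)$ for the constructed solution $\bs u_c(t)$, which satisfies
\begin{equation}
\lam_c(t) = \Bigl(\frac{p|q|}{\nu^2(\nu+1)} + o(1)\Bigr) \frac{t^{\nu+1}}{|\log t|}, \qquad t \to 0^+.
\end{equation}
Next, since $\bs u_c(t)$ itself bubbles off exactly one copy of $\bs Q$ while radiating $\bs u^*_0$, Part (b) applies to $\bs u_c(t)$ and forces, in particular, that the sign condition $q<0$ is consistent. For the other solution $\bs u(t)$, the assumption that it blows up backwards-in-time at $t=0$ radiating the same $\bs u_0^*$ with one bubble means that the one-bubble decomposition~\eqref{eq:decomp1} holds, so Part (b) yields the same leading-order formula for its scaling parameter $\lam(t)$. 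In particular,
\begin{equation}
\frac{\lam(t)}{\lam_c(t)} \longrightarrow 1 \qquad \text{as } t \to 0^+.
\end{equation}

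The second step is the following elementary fact about the harmonic map family: the quantity $\|\bs Q_\lam - \bs Q_\mu\|_{\HH}$ depends only on the ratio $\lam/\mu$ (by the scale invariance of the $\HH$-norm and the fact that $Q_\lam(r) = Q(r/\lam)$), vanishes when $\lam = \mu$, and is continuous in $\lam/\mu$. Consequently $\lam(t)/\lam_c(t) \to 1$ implies
\begin{equation}
\|\bs Q_{\lam_c(t)} - \bs Q_{\lam(t)}\|_{\HH} \longrightarrow 0 \qquad \text{as } t \to 0^+.
\end{equation}

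To conclude, I would subtract the two one-bubble decompositions,
\begin{equation}
\bs u_c(t) - \bs u(t) = \bigl(\bs Q_{\lam_c(t)} - \bs Q_{\lam(t)}\bigr) + \bs o_{\HH}(1),
\end{equation}
where the $\bs o_{\HH}(1)$ term combines the two error terms from~\eqref{eq:decomp1} for $\bs u_c$ and $\bs u$, and the radiation contributions $\bs u^*_0$ cancel. Taking the $\HH$-norm and using the previous step gives $\|\bs u_c(t) - \bs u(t)\|_{\HH} \to 0$, as required. There is essentially no obstacle here: the entire nonlinear content is absorbed into Theorem~\ref{t:main1}, and the only subtlety is the harmless observation that agreement of the blow-up rate at leading order is already enough to make the difference of bubbles converge to zero in $\HH$, since $\HH$ does not see the common scaling of the soliton family.
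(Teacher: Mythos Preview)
Your proposal is correct and matches the paper's approach: the paper deems this corollary ``immediate'' from Part~(b) of Theorem~\ref{t:main1} and does not spell out the details, but the argument you give---deducing $\lam(t)/\lam_c(t)\to 1$ from the sharp classification of the rate, using the scale-invariance of $\|\cdot\|_H$ to conclude $\|\bs Q_{\lam_c(t)}-\bs Q_{\lam(t)}\|_{\HH}\to 0$ (cf.\ Lemma~\ref{l:difference_est}), and subtracting the two decompositions so that $\bs u_0^*$ cancels---is exactly what the authors have in mind.
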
 
In fact, we expect the radiation to~\emph{uniquely} determine the blow-up solution. 
%The perspective taken in this paper allows a formulation of, and evidence for the following conjecture. 
\begin{conjecture}[Radiative Uniqueness Conjecture] \label{c:uc} 
Let $\bs u_c(t)$ denote the constructed blow-up solution from Part (a) of Theorem~\ref{t:main1} and let $\bs u(t) \in \HH_1$ be \emph{any} other solution that blows up backwards-in-time at $t =0$ with the same radiation field $\bs u^*_0$. Then,
\EQ{
\bs u(t) = \bs u_c(t) 
}
\end{conjecture}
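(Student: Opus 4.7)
\textbf{Proof plan for Conjecture~\ref{c:uc}.}

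The plan is to view the conjecture as a backwards uniqueness statement for the wave map flow: setting $\bs h(t) := \bs u(t) - \bs u_c(t)$, the corollary above already gives $\|\bs h(t)\|_\HH \to 0$ as $t \to 0^+$, and the task is to upgrade this vanishing to $\bs h \equiv 0$ on $(0,T_+)$. I would attack this through a modulation-and-energy-estimate scheme in the spirit of the stability arguments used in the analysis of type~II blow-up for energy critical waves, adapted to run ``backwards from the final time.''

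\emph{Step 1: Refined modulation.} First, I would refine the decomposition~\eqref{eq:decomp1} for both $\bs u$ and $\bs u_c$. A standard implicit function theorem argument produces modulation parameters $\lam(t)$ and $\lam_c(t)$ for which the errors $\bs g(t) := \bs u(t) - \bs Q_{\lam(t)} - \bs u^*(t)$ and $\bs g_c(t) := \bs u_c(t) - \bs Q_{\lam_c(t)} - \bs u^*(t)$ are symplectically orthogonal to the zero mode $\Lambda \bs Q_{\lam(t)}$ of the linearized operator around the bubble. One then derives an ODE for $\dot\lam$ in terms of the radiation and of $\bs g$, and the real content of this step is to upgrade the asymptotic statement of Theorem~\ref{t:main1}(b) to a quantitative bound such as $|\lam(t)-\lam_c(t)| \lec \lam_c(t)^{1+\delta}$ and $\|\bs g - \bs g_c\|_\HH \lec t^{\delta}$ for some $\delta>0$, which is strictly more than what the classification statement provides.

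\emph{Step 2: Equation and splitting for $\bs h$.} The difference $\bs h$ satisfies
\EQ{
\p_t \bs h = J \circ \bigl(\vD^2 \cE(\bs u_c)\,\bs h + N(\bs h;\bs u_c)\bigr),
}
with $N$ quadratic in $\bs h$. Decompose $\bs h = \mu(t)\,\Lambda \bs Q_{\lam_c(t)} + \bs k(t)$ with $\bs k$ symplectically orthogonal to $\Lambda \bs Q_{\lam_c(t)}$. The coefficient $\mu(t)$ is essentially $\lam(t)-\lam_c(t)$ up to lower order, while $\bs k$ carries the genuinely transverse error.

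\emph{Step 3: Coercive energy estimate.} By the Hardy-type coercivity of the linearized Hamiltonian $-\p_r^2 - r^{-1}\p_r + r^{-2}\cos 2Q_{\lam_c}$ on the orthogonal complement of $\Lambda \bs Q_{\lam_c}$, there is a functional $\calI(\bs k)$ equivalent to $\|\bs k\|_\HH^2$. Differentiating in $t$, substituting the modulation equation for $\mu$ from Step~1, and exploiting that the nonlinear source is quadratic in the already small quantity $\bs h$, one hopes to obtain a differential inequality of Gronwall type, schematically
\EQ{
\frac{\ud}{\ud t}\bigl(\calI(\bs k) + |\mu|^2\bigr) \gec -\frac{C}{t\,|\log t|}\bigl(\calI(\bs k) + |\mu|^2\bigr),
}
which, combined with $\calI(\bs k(t))+|\mu(t)|^2 \to 0$ as $t\to 0^+$, forces $\bs k\equiv 0$ and $\mu\equiv 0$, hence $\bs h\equiv 0$ near $t=0$; global uniqueness then follows from forward well-posedness in $\HH_1$.

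\emph{Main obstacle.} The principal difficulty is that the blow-up rate $\lam(t) \simeq t^{\nu+1}/|\log t|$ is only logarithmically faster than self-similar, so the natural Lyapunov functional loses coercivity exactly at the borderline weight $t\,|\log t|$. This is the same logarithmic degeneracy that complicates the analysis of the Rapha\"el--Rodnianski regime and of related constructions. Closing the estimate will likely require: (i) a sharp expansion of the modulation equation well beyond the leading order in Theorem~\ref{t:main1}(b), pinning down the subleading coefficient of $\lam(t)$; and (ii) a weighted energy estimate whose weight is finely tuned to the logarithmic correction and which uses additional regularity inherited from $\bs u^*_0$ (via the smoothness hypothesis $\nu>9/2$). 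Whether these ingredients suffice, or whether a new idea is needed to eliminate the $|\log t|$-borderline, is precisely what keeps the statement at the level of a conjecture.
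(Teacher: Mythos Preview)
The paper does not prove this statement; Conjecture~\ref{c:uc} is explicitly posed as an open problem, and the surrounding discussion makes clear that the authors view it as beyond the reach of the methods developed here (they write that they are ``unaware of any uniqueness results as in Conjecture~\ref{c:uc} for concentrating nonlinear waves''). So there is no ``paper's own proof'' to compare against, and your submission is, appropriately, a proof \emph{plan} rather than a proof --- you yourself close by saying that the logarithmic borderline ``is precisely what keeps the statement at the level of a conjecture.''

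On the plan itself, two remarks. First, your description of the rate as ``only logarithmically faster than self-similar'' is incorrect: here $\lam(t) \simeq t^{\nu+1}/|\log t|$ with $\nu > 9/2$, which is polynomially faster than the self-similar rate $\lam(t)\simeq t$. You may be thinking of the Rapha\"el--Rodnianski regime $\lam(t)\simeq t\,e^{-\sqrt{|\log t|}}$, which is a different construction with rougher radiation. The logarithmic issue you identify in the Gronwall coefficient $C/(t|\log t|)$ is genuine --- integrating it produces a divergent $\log|\log t|$ factor --- but its source is the resonant zero mode $\Lambda Q$ barely failing to lie in $L^2$ (cf.\ \eqref{eq:LaQL2}), not the proximity of $\lam(t)$ to self-similar. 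Second, your Step~1 asks for $|\lam(t)-\lam_c(t)|\lesssim \lam_c(t)^{1+\delta}$ and $\|\bs g-\bs g_c\|_\HH\lesssim t^\delta$, which is already strictly stronger than anything the paper establishes; obtaining such a bound would itself require a new idea beyond the modulation analysis of Sections~\ref{s:mod}--\ref{s:energy}, so this step is not a preliminary but rather the heart of the matter.
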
 

The significance of this conjecture is that its proof would yield the \emph{unique continuation} of  blow-up wave maps past the singularity preserving the energy and the topological class of the solution. To be more concrete, suppose $\bs u_0^*$ is as in $\eqref{eq:u*}$ with $q < 0$. Then by Theorem \ref{t:main1}, there exists a finite energy solution $\bs u^+$ defined for small positive time such that 
\begin{align}
\bs u^+(t) = \bs Q_{\lam^+(t)} - (\pi,0) + \bs u^*_0 + \bs o_{\cl H}(1),
\quad \mbox{as } t \rar 0^+. 
\end{align} 
The shift by $-\pi$ above is included for convenience so that we now have $\bs u^+(t,r) = \bs u^*(t,r)$ for all $r \geq t$. The wave map $\bs u^+$ can be continued past $t = 0$ by attaching the forward-in-time bubbling solution $\bs u^-$ that also radiates $\bs u^*_0$: 
\begin{align}
\bs u^-(t) = \bs Q_{\lam^-(t)} - (\pi,0) + \bs u^*_0 + \bs o_{\cl H}(1), \quad 
\mbox{as } t \rar 0^-. 
\end{align} 
As discussed in Remark \ref{r:q}, $u^-(t,r) := u^+(-t,r)$, $t < 0$, in this case. The resulting weak solution is now defined on an open interval containing $t = 0$ and is continuous except at $(t,r) = (0,0)$.  Viewed in forward time, the solutions consists of a concentrating bubble radiating $\bs u^*(t)$ for $t<0$ and an expanding bubble for $t>0$. 

 In the case of radiation of the form $\bs u^*_0 = (0, u_1^*)$ as in~\eqref{eq:u*dot} with $q < 0$, the continuation past $t = 0$ is obtained by attaching the forward-in-time anti-bubbling solution $\bs u^-$ that also radiates $\bs u^*_0$: 
\begin{align}
\bs u^-(t) = -\bigl (\bs Q_{\lam^-(t)} - (\pi,0) \bigr) + \bs u^*_0 + \bs o_{\cl H}(1), \quad 
\mbox{as } t \rar 0^-. 
\end{align} 
Again, as discussed in Remark \ref{r:q}, $u^-(t,r) := -u^-(-t,r)$, $t < 0$, in this case. Viewed in forward time, the solutions consists of a concentrating anti-bubble radiating $\bs u^*(t)$ for $t<0$ and an expanding bubble for $t>0$.
 A proof of Conjecture~\ref{c:uc} ensures that the continuation process discussed in each case previously is the only meaningful one -- of course one could also ``extend'' the blow up solution past the singular time by evolving only the radiation $\bs u_0^*$, but this extension does not conserve energy ($\E(\bs Q)$ is lost) and does not preserve the topological class of the solution. 
 
These weak solutions provide an intriguing connection 
%\Red{(connection? instead of departure/parallel)} departure (for radiation as in \eqref{eq:u*}) and parallel (for radiation as in \eqref{eq:u*dot}) 
to the work of Topping~\cite{Top-IMRN}, which gave a continuation for the harmonic map heat flow in which an expanding bubble is reattached after a rotation at the blow-up time. Indeed, the continuation proposed here for solutions with radiation of the form~\eqref{eq:u*dot} comes with a $180$-degree rotation of the bubble when viewed in the context of the full map~\eqref{eq:fullmap}. This raises interesting questions about the nature of the flow outside of equivariant symmetry, e.g., do these continuations of bubbling solutions yield information about ``nearby'' continuous-in-time solutions?  It seems that not much is known in this direction in the case of wave maps, but one can draw parallels to formal observations 
%For example, are there continuous in time wave maps that are ``close" to these weak solutions in that they nearly concentrate, but then rotate quickly 
%Moreover, observations 
made by van den Berg, Williams~\cite{vdBW} for the Gilbert-Landau-Lifshitz flow that describe ``near'' blow up solutions in which the inner scale quickly rotates over an angle of $180$-degrees.  In the context of Schr\"odinger maps, Merle, Rapha{\"e}l, Rodnianski~\cite{MeRaRo13} observed an \emph{instability mechanism} given by the rotational freedom in that model, and conjectured that the blow up regime for wave maps in~\cite{RR} is similarly unstable under non-corotational perturbations; see~\cite[Comment 2 after Theorem 1.1]{MeRaRo13}.
 %suggest that generic wave maps outside of 1-equivariance near a blow-up wave map radiating $\bs u_0^*$ as in \eqref{eq:u*dot} quickly rotate $180$-degrees in the azimuth angle as $t \rar 0^-$ and remain continuous across $t = 0$. Therefore, it is likely that the continuation for wave maps discussed previously can be obtained as a weak limit of these rotating solutions. However, in the case of radiation as in \eqref{eq:u*}, it is likely that generic wave maps outside of 1-equivariance near a blow-up wave map radiating $\bs u_0^*$ quickly rotate $360$-degrees in the azimuth angle as $t \rar 0^-$ and remain continuous across $t = 0$. 

The authors are unaware of any uniqueness results as in Conjecture~\ref{c:uc} for concentrating nonlinear waves, but we mention here the pioneering proof of existence and uniqueness of minimal mass blow up for the mass critical NLS  by Merle~\cite{Merle93} and the analogous result (using a different method) by Rapha{\"e}l, Szeftel ~\cite{RS} for the mass critical NLS with an inhomogeneous nonlinearity.  A significant distinction is that in these works the minimal mass blow up emits no radiation.  The question of extending singular solutions past blow up was also addressed in~\cite{BDS}, but in the different context of supercritical self-similar wave maps, and with an extension that comes with a change of topological charge.  

%\Red{bring up Merle, Rapha{\"e}l-Szeftel uniqueness, but for minimal mass blow-up -- these solutions have no radiation.} 

%\begin{itemize} 
%\item Uniqueness conjecture 
%\item Analogy with the scattering problem 
%\item Relationship to earlier works, discussion of Jacek's work for NLW. 
%\end{itemize} 

\subsubsection{Relationship with previous work:} In the context of energy critical singular nonlinear waves, the ``backwards'' approach taken here was initiated in a series of papers by the first author on the blow-up problem for the focusing energy critical nonlinear wave equation (NLW). In~\cite{moi15}, the first author provided an upper bound on the blow up speed $\lam(t)$ for the NLW in dimensions $d=3, 4, 5$ in the case where the radiation lies in $H^{s+1} \times H^{s}$ for $s > \frac{d-2}{2}$ and $s \ge1$, and showed that blow up is impossible in the case of regular $\bs u^*_0$ with  $u^*_0(0) < 0$. In~\cite{moi17-jfa}, the first author gave constructions of blow up solutions for the $5d$ NLW for regular radiation analogous to the result in part $(a)$ of Theorem~\ref{t:main1}, via a related approach. In this context, the significant new element of the present work is the \emph{sharp classification of the rate} in part $(b)$ of Theorem~\ref{t:main1}. To the authors' knowledge, this is the first paper to obtain such a result.  In earlier work ~\cite{JL1, R19}, the authors considered the special case of threshold dynamics. As part of the main theorem in~\cite{JL1}, the first two authors determined the rate of concentration for  global-in-time pure $2$-bubble solutions in equivariance classes $k \ge2$. In part of~\cite{R19}, the third author characterized the rate for any minimal topologically trivial $1$-equivariant blow-up solution. In fact, the latter paper~\cite{R19} can be compared to the special case of $\nu=1$ in~\eqref{eq:u*}, but where the radiation is given precisely by $u^*_0(r) = - Q(r)$. See also work of the first author~\cite{JJ-KdV}, which characterized the dynamical behavior for pure $2$-solitons with the same limit speed for the KdV equation in the unstable regime. 

In a broader sense, the heuristic principle that the size of the nonlinear interaction between a blow up bubble and the rest of the solution influences, or even determines the blow up speed is well documented in the literature. For the mass critical NLS, Bourgain, Wang~\cite{BoWa97} produced examples of blow up solutions with regular $u^*$ where the blow up speed is given by that of the explicit solution $S(t)$, which equals the psuedo-conformal transform of the ground state solitary wave (which has $u^* = 0$). This was later clarified in the classification result of Merle, Rapha{\"e}l~\cite{MeRa05}, which showed how the regularity properties of $u^*$ distinguish the so-called $\log\log$-regime from the $S(t)$-type blow up regime. Later, a remarkable sequence of works by Martel, Merle, Rapha{\"e}l,~\cite{MMR14-1, MMR15-2, MMR15-3} on the mass critical gKdV equation showed that blow up solutions  close to the soliton have a fixed rate if the initial  data have sufficient  decay properties, and that exotic blow up regimes exist if these decay properties are relaxed.  In the case of energy critical equations, Merle, Rapha{\"e}l, Rodnianski conjecture in~\cite[Comment 4 after Theorem 1.1]{MeRaRo13} a deep relationship between blow up dynamics and the regularity of $u^*$ in the context of the Schr\"odinger maps equation.  

As mentioned in Remark~\ref{r:other} the pure polynomial blow up regime $\lam(t) \simeq t^{1+\nu}, \nu> \frac{1}{2}$ discovered by  Krieger, Schlag, Tataru in~\cite{KST} can be recovered via the methods in this paper, as long as $\nu$ is sufficiently large. Since the landmark work~\cite{KST} there have been several subsequent developments, see e.g.,~\cite{GK} which provided the optimal range of pure polynomial blow up, $\nu>0$.  Recently, Krieger, Miao~\cite{KMiao} proved that these blow up solutions are stable under a sufficiently regular perturbation. The methods from~\cite{KST} do not track the solution outside the light cone, and thus it is unclear from the construction what form the asymptotic radiation takes. From Remark~\ref{r:other} we see that the radiation corresponding to pure polynomial blow up rates vanishes like $r^\nu \abs{\log r}$ and thus the evolution of this radiation is singular on the light cone even for integer values of $\nu>0$, which is consistent with~\cite{KST}.  %This suggests that the regularity properties claimed for the solutions in~\cite{KST} are perhaps optimal.  
%One can thus view 
%%We remark that the classification in 
%part $(b)$ in Theorem~\ref{t:main1} as a likely answer. 

%\underline{Regularity properties of the constructed solutions $\bs u_c(t)$:}
\subsection{Outline of the proof}
\label{ssec:outline}

The proof of Theorem~\ref{t:main1} can be summarized as follows. Let $\bs u^*(t)$ be the nonlinear evolution of the asymptotic radiation $\bs u^*_0$ and denote by $\bs u^*_L(t)$ the linear evolution. At time $t>0$ we study the flow near the set $\{ \bs Q_{\lam} + \bs u^*(t)\}$ for $\lam\ll 1$.  Considering a solution $\bs u(t)$ near this set on a time interval  we write  
\EQ{ \label{eq:decomp2} 
\bs u(t) = \bs Q_{\lam(t)} + \bs u^*(t) + \bs g(t)
}
where the pair $(\lam(t), \bs g(t)) := (\lam(t), g(t), \dot g(t))$ are determined uniquely by imposing the orthogonality conditions $0 = \ang{  \calZ_{\lam(t)} \mid g(t)}$, where we take $\calZ(r) = \chi(r) r \p_r Q(r)$ for a smooth cut-off $\chi$. This yields a coupled system for $(\lam(t), \bs g(t))$ under the assumption that $\bs u(t)$ above solves~\eqref{eq:wmk}, i.e., 
\begin{align} % \label{eq:sys} 
 \frac{\ud}{\ud t} \pmat{ g  \\ \dot g}  &= \pmat{  \dot  g  + \frac{\lam'}{\lam} (r \p_r Q)_{\lam}  \\ \De g - \frac{1}{r^2}( f( Q_\lam + u^* +g) - f(Q_\lam) - f(u^*)) } \\
 \ang{  \calZ_{\lam(t)} \mid g(t)} &= 0, \label{eq:orthog} 
\end{align} 
where $f(\rho) = \frac{1}{2} \sin 2 \rho$. 
This is the beginning of the classical modulation theoretic approach and a standard argument yields a preliminary estimate $\abs{\lam'(t)} \lesssim \| \dot g(t) \|_{L^2}$, by differentiating~\eqref{eq:orthog} and using the first row in the equation for $\bs g(t)$. The goal is to (a) find a solution such that 
\EQ{ \label{eq:lag} 
\lam(t) \to 0, \quad \| \bs g(t) \|_{\HH} \to 0 
} 
in finite time, and (b) characterize the dynamics of $\lam(t)$ for any such solution. Since~\eqref{eq:wmk} is second order in time, refined information linking the dynamics of $\bs g(t)$ and $\lam(t)$ enters in the study of $\lam''(t)$.  However, a naive approach of twice differentiating~\eqref{eq:orthog} typically does not yield sufficient information to close. 
%-- the dynamics of $\lam(t)$ and $\bs g(t, x)$ are linked and one studies the equations of each, the former obtained via differentiation of the orthogonality conditions and the latter via~\eqref{eq:wmk} and~\eqref{eq:decomp2}. However, this naive approach rarely yields enough information to find a solution $\bs u(t)$  with $\lam(t), \bs g(t)$ tending to zero. 
If the goal is to construct a single solution, a now standard remedy (developed with outstanding success in, e.g.,~\cite{RR, MR1, MR2, MR3, MR4, MR5, MMR1, MMR2, MMR3, MMR-sem, RaSz11}), is to  refine the ansatz, i.e., extract more profiles from $\bs g(t)$ before imposing the orthogonality conditions, improving at each step the equations of motion for the remainder and the dynamical parameters. However, fixing a refined ansatz at the outset is somewhat at odds with proving a general classification result as in part $(b)$ of Theorem~\ref{t:main1}. 
%does not seem sufficient to address the type of classification results we are interested in here -- fixing a refined ansatz limits the ability to study a general solution with the behavior as in~\eqref{eq:ckls}. 

We thus employ a general approach developed by the authors in~\cite{JL1, R19, JJ-KdV}, which was inspired by earlier work of the first author in~\cite{JJ-AJM, JJ-APDE}. We do not refine the ansatz at all, rather  we  \emph{refine the modulation parameters}. For technical reasons we first define a new function $\zeta(t) \sim 4\lam(t) \log(t/ \lam(t))$. We then define a new modulation parameter, $b(t)$, by 
\EQ{
b(t) = \zeta'(t) + \textrm{small correction} 
}
where the correction has a large derivative designed to cancel critical terms of indeterminate sign in the equation for $\z''$. In the study of $b'(t)$ we find that the leading order is given by the nonlinear interaction between the bubble $\bs Q_{\lam(t)}$ and the linear part of the radiation $\bs u_L^*(t)$, and this gives rise to the universal rate $\lam(t) \simeq t^{\nu+1}/ \abs{\log t}$; see Section~\ref{s:mod}.  
There are several difficulties that arise. For one, to make sense of the above, we need control over the size of the error $ \|\bs g(t) \|_{\HH}$ in terms of $t, \lam(t)$ --  even the correction to $\zeta'$ is seen to be small only once we know the size of $\| \bs g(t) \|_{\HH}$. 
 %We show that  $ \| \bs g(t) \|_{\HH}^2$ can be estimated in terms of the interaction between $\bs Q_{\lam(t)}$ and the radiation $\bs u^*(t)$, roughly as follows.  
 
 We argue as follows. For a solution as in~\eqref{eq:decomp2} such that~\eqref{eq:lag} holds, we have 
\EQ{
\E( \bs u) = \E( \bs Q) + \E( \bs u^*) 
}
On the other hand a Taylor expansion of the energy using~\eqref{eq:decomp2} yields, 
\EQ{
\E( \bs u) = \E( \bs Q_\lam + \bs u^*) + \ang{ D \E( \bs Q_\lam + \bs u^*) \mid \bs g} + \ang{ D^2 \E( \bs Q_\lam + \bs u^*) \bs g \mid \bs g} + O( \| \bs g \|_{\HH}^3) 
}
The orthogonality condition~\eqref{eq:orthog} ensure that the quadratic term in coercive, hence combining the previous two identities yields, 
\EQ{
\| \bs g \|_{\HH}^2 \simeq \Big(\E( \bs Q_\lam + \bs u^*) -  \E( \bs Q) - \E( \bs u^*)\Big)  +  \ang{ D \E( \bs Q_\lam + \bs u^*) \mid \bs g}
}
Since $D\E(\bs Q_\lam) = 0$ the last term above can be replaced with $\ang{ D \E(  \bs u^*) \mid \bs g}$. In the settings considered in earlier work such as~\cite{moi15, JL1, R19}, the latter term is also shown to be negligible, which means that the pure interaction terms in the first grouping above yield the size of $\| \bs g \|_{\HH}^2$. Indeed, using technique analogous to those developed by Struwe~\cite{Struwe99} in a different context,~\cite{moi15} shows that $\ang{D \E(  \bs u^*) \mid \bs g}$ is, to leading order, a conservation law. In~\cite{JL1, R19} the argument is even simpler, since there $\bs u^* = - \bs Q_\mu$ and the linear term is manifestly lower order. Here there is a significant difference. In fact,  $\ang{ D \E(  \bs u^*) \mid \bs g}$ is shown to carry leading order interaction, see Lemma~\ref{l:linear_pairing_estimate}. This novel feature can be attributed to the fact that the underlying system for the dynamical parameters is not autonomous in contrast to~\cite{moi15, JL1, R19}; see e.g., Appendix~\ref{s:rnu}. Section~\ref{s:energy} is devoted to making this precise. In fact,  the sketch above is oversimplified as we have ignored the need to localize due to the slow spatial decay of $r \p_r Q(r) \sim r^{-1}$. This introduces large, but ultimately manageable error terms.  

In Sections~\ref{s:construction} and \ref{s:classification} we combine the modulation theoretic analysis from Section~\ref{s:mod} with the energy estimates for $\bs g(t)$ in Section~\ref{s:energy} to construct and classify bubbling solutions that radiate~$\bs u^*_0$. 

We emphasize that while the arguments in this paper are involved, the technique is elementary.  In particular,  we do not use any sophisticated dispersive analysis aside from the local well-posedness theory of~\eqref{eq:wmk}, which is classical.  

%The ODE type arguments in Section~\ref{s:classification} are quite complicated by the relatives large errors in the outer region (outside the light cone) introduced in Section~\ref{s:energy} to compensate for the slow decay of  

%hence a significant portion of the paper is devoted to showing that $\| \dot g (t) \|_{L^2} \simeq  b(t)$). 

%\subsection{Acknowledgments}
%\label{ssec:ack}
%JJ was partially supported by the ERC grant 291214 BLOWDISOL.

\section{Preliminaries}
In this section we recall elementary facts about solutions to \eqref{eq:wmk} needed in our analysis and establish precise behavior of the radiation field $\bs u^*(t,r)$ inside the light cone $\{r \leq t \}$. 

To simplify the analysis we fix a precise form for the radiation $\bs u^*_0(r)$. We define 
\EQ{
\bs u^*_0(r)  = (u_0^*(r), \dot u_0^*(r)) := \chi(r) (qr^{\nu}, 0) \label{eq:u*1} 
}
where $\chi(r)$ is a smooth cutoff such that $\chi(r)= 1$ for $r \le \frac{1}{2}$, and $\supp(\chi(r)) \subset \{ r \le 1 \}$. We remark that the arguments that follow can be readily adapted to the more general form of the radiation in~\eqref{eq:u*}. See Appendix~\ref{a:u*dot} for a discussion on how to to adapt the proof in the case of radiation as in~\eqref{eq:u*dot}. 

\subsection{Notation}
Given a radial function $f: \bR^d \to \bR$ we will abuse notation and simply write $f = f(r)$, where $r = |x|$. We will also drop the factor $2\pi$ in our notation for the $L^2$ pairing of radial functions on $\bR^2$ 
\begin{equation}
	\la f, g \ra  := \frac{1}{2\pi}\la f, g\ra_{L^2(\bR^2)} = \int_0^\infty \conj{f(r)} g(r) \, r \ud r.
\end{equation}
We define a norm $H$ by 
\begin{equation}
	\| v \|_{H}^2 := \int_0^\infty  \left(( \partial_r v(r))^2 +    \frac{(v(r))^2}{r^2} \right)  r \ud r,
\end{equation}
and for pairs $\bs v = (v, \dot v)$ we write  
\begin{equation}
	\| \bs v \|_{\cH} := \| (v, \dot v)\|_{H \times L^2}.
\end{equation}
The change of variables $x = \log r$ gives us an identification between the radial functions $H(a \leq r \leq b)$ and $H^1(\log a \leq x \leq \log b)$, i.e., 
\begin{align}
	v(r) \in H(a \leq r \leq b) \iff v(e^x) \in H^1(\log a \leq x \leq \log b)
	.
\end{align} By the fundamental theorem of calculus this means
\begin{equation}\label{eq:infty_estimate}
	\| v \|_{L^{\infty}(a \leq r \leq b)} \le C \| v \|_{H(a \leq r \leq b)}.
\end{equation}
The scaling invariance of \eqref{eq:wmk} plays a key role in our analysis.
Given a radial function $v: \bR^2  \to \bR$ we denote the $\dot H^1$ and $L^2$  re-scalings as follows 
\begin{equation} \label{eq:scaledef} 
	v_\lambda(r) = v(r/ \lambda), \quad
	v_{\uln{\lambda}}(r)  = \frac{1}{\lambda} v(r/ \lambda).
\end{equation}
The corresponding infinitesimal generators are given by 
\begin{equation}
	\label{eq:LaLa0}
	\begin{aligned}
		\Lambda v &:= -\frac{\partial}{\partial \lambda}\bigg|_{\lambda = 1} v_\lambda = r \partial_r v  \quad (\dot H^1_{\textrm{rad}}(\bR^2) \,  \textrm{scaling}), \\
		\Lambda_0 v &:= -\frac{\partial}{\partial \lambda}\bigg|_{\lambda = 1} v_{\uln{\lambda}} = (1 + r \partial_r ) v  \quad (L^2_{\textrm{rad}}(\bR^2) \,  \textrm{scaling}).
	\end{aligned}
\end{equation}
Finally, we will often use the notation 
\EQ{ \label{eq:fdef} 
	f( \rho) := \frac{1}{2} \sin 2 \rho 
}
to denote the nonlinearity in~\eqref{eq:wmk}, which means ~\eqref{eq:wmk} becomes 
\EQ{
	\p_t^2 u - \p_r^2 u - \p_r u + \frac{1}{r^2} f( u) = 0. 
}

\subsection{The harmonic map $\bs Q$} 

We recall that the unique (up to scaling and sign change) nontrivial, harmonic map is given by 
\begin{align*}
	\bs Q(r) = (2 \arctan r,0).  
\end{align*}
The harmonic map $\bs Q$ has a variational characterization as follows. For all $\bs u = ( u_0,  u_1) \in \HH_1$ where 
\EQ{
	\HH_1:= \{ (u_0, u_1) \mid \E(\vec \phi)< \infty, \quad u_0(0) = 0, \quad \lim_{r \to \infty} u_0(r) = \pi\}, 
}
we have 
\begin{align}
	\cl E( \bs u) \geq \cl E(\bs Q) = 4\pi
\end{align}
with equality if and only if $\bs u = \bs Q_\lam$.  Indeed, 
for $(u_0, u_1) \in \HH_1$, we have the following 
Bogomol'nyi factorization of the nonlinear energy:
\EQ{ \label{eq:bog}
	\E( \bs u)  &=   \pi \| u_1 \|_{L^2}^2 +  \pi \int_0^{\infty} \left(\p_r u_0  - \frac{\sin(u_0)}{r}\right)^2 \, r \ud r +  2\pi \int_0^{\infty} \sin(u_0) \p_ru_0 \, \ud r\\
	&= \pi \| u_1 \|_{L^2}^2  + \pi \int_0^{\infty} \left(\p_r u_0  - \frac{\sin(u_0)}{r}\right)^2 \, r\ud r + 2 \pi  \int_{u_0(0)}^{u_0(\infty)} \sin(\rho)  \, d\rho \\
	&  =  \pi \| u_1 \|_{L^2}^2  +  \pi \int_0^{\infty} \left(\p_r u_0  - \frac{\sin(u_0)}{r}\right)^2 \, r\ud r  + 4\pi. 
} 
We conclude $\cl E(\bs u) \geq 4\pi$ with equality if and only if $\bs u = \bs Q_\lam$ for some $\lam > 0$. 

The Schr\"odinger operator corresponding to linearizing \eqref{eq:wmk} about the harmonic map $Q_\lam$ is given by 
\EQ{
	\LL_\lam:= - \p_r^2 - \frac{1}{r} \p_r + \frac{1}{r^2} f'(Q_\lam).
} 
For convenience we write $\LL := \LL_1$. Due to the variational characterization of $\bs Q$, one expects the spectrum of $\LL$ to contain no unstable modes. Indeed, differentiating the equation
\begin{align*}
	\p_r^2 Q_\lambda + \frac{1}{r} \p_r Q_\lambda - \frac{1}{r^2} f(Q_\lam)= 0
\end{align*}
with respect to $\lambda$ and setting $\lambda = 1$ implies  
\begin{align}
	\Lam Q(r) = \frac{2r}{1 + r^2}
\end{align}
satisfies 
\EQ{
	\LL \Lam Q = 0, \quad \Lam Q \in L^\infty(\R^2).
}
By Sturm oscillation theory, we conclude $\LL$ has no negative eigenvalues. A useful computation we will use throughout our analysis is:
\EQ{ \label{eq:LaQL2} 
	\int_0^R [ \Lambda Q(r)]^2 \, r\, \ud r  =  -\frac{2 R^2}{1 + R^2} +2 \log(1+ R^2)  = 4 \log R + O(1) \mas R \to \infty.
}
In particular, the previous line implies $\Lam Q$ fails (logarithmically) to be in $L^2(\bbR^2)$, and we say 0 is a resonance for $\LL$.  In general, we have $\LL_\lam \Lam Q_\lam = 0$ by scaling. Finally,  
we note $\Lam_0 \Lam Q$ has an important cancellation which leads to improved decay at $\infty$. Indeed, 
\EQ{ \label{eq:Lam0LamQ} 
	\Lam_0 \Lam Q = \frac{4r}{(1+ r^2)^2} 
}
so $\Lam_0 \Lam Q \in L^1(\bbR^2) \cap L^\infty(\bbR^2)$.

\subsection{Description of the radiation field $\bs u^*$ inside the light cone.} \label{s:radiation}
Let $\chi \in C^\infty(\bR^2)$ be radial such that $\chi(r) = 1$ for $r \leq \frac{1}{2}$ and $\chi(r) = 0$ if $r \geq 1$.
Let 
$$\bs u_0^*(r) = (u_0^*(r), \dot u_0^*(r)) = \chi(r) (qr^{\nu}, 0),$$ as in~\eqref{eq:u*1} and let $\bs u^*(t) \in \cl H_0$ be the unique finite energy solution to \eqref{eq:wmk} with $\bs u^*(0) = \bs u^*_0$, i.e. the radiation.
Let $u^*_L = u^*_L(t,r)$ be the solution to the linear wave equation 
\begin{align}
\begin{split}
		&\partial_t^2 u_L - \partial_r^2 u_L -\frac{1}{r}\partial_r u_L + \frac{1}{r^2} u_L = 0, \quad (t,r) \in \bbR \times (0,\infty), \\
		&\bs u^*_L(0) = \bs u^*_0. 
\end{split}	\label{eq:LW}
\end{align}
Our goal of this section is to obtain a description of $\bs u^*(t,r)$ inside the light cone $\{r \leq t\}$.  To do this, we first describe $\bs u^*_L(t,r)$ for $r \leq t$ using Kirchoff's formula. We then compare $\bs u^*(t,r)$ to $\bs u_L^*(t,r)$ for $r \leq t$ using the standard well-posedness theory for \eqref{eq:wmk}.  It is in carrying out these steps that the technical restriction $\nu > \frac{9}{2}$ is required.

\begin{lemma}\label{l:linear_app}
	Let $\nu > \frac{9}{2}$.  There exists $C = C(\nu,q) > 0$ such that for all $r \leq  |t|$,
	\begin{align}
		|u_L^*(t,r) - q p \abs{t}^{\nu - 1} r | &\leq C r^3 \abs{t}^{\nu - 3}, \label{eq:linearleading} \\
		|\p_t u^*_L(t,r)| &\leq C r \abs{t}^{\nu-2}, \label{eq:lineartest}\\
		|\p_r u^*_L(t,r)| &\leq C \abs{t}^{\nu-1}, \label{eq:linearrest}
	\end{align}
	where 
	\begin{align*}
		p(\nu) =  \frac{\nu (\nu + 2)}{2} \int_0^1 \rho^{\nu + 2 } (1 - \rho^2)^{-1/2} \, \ud \rho = \frac{\nu (\nu + 2)\sqrt{\pi}\Gamma \Bigl ( \frac{3+\nu}{2}\Bigr )}{4 \Gamma \Bigl ( \frac{4+\nu}{2} \Bigr )}.
	\end{align*}
\end{lemma}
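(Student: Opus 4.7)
The plan is to reduce $u_L^*$ to a free wave on $\bR^4$ and then evaluate the Kirchhoff--descent formula from $\bR^5$ to pin down the leading term explicitly. Set $v(t,r) := u_L^*(t,r)/r$. A direct computation gives
\begin{align*}
\partial_r^2 u_L^* + r^{-1}\partial_r u_L^* - r^{-2}u_L^* = r\bigl(\partial_r^2 v + 3 r^{-1}\partial_r v\bigr) = r\,\Delta_{\bR^4} v
\end{align*}
for radial $v$, so $v$ solves the free wave equation on $\bR^4$ with initial data $v(0,r) = q\chi(r)r^{\nu-1}$ and $\partial_t v(0,r) = 0$.

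To compute $v(t,0)$, I extend to a radial $\bR^5$ datum $V_0(y_1,\ldots,y_5) := v_0\bigl(|(y_1,\ldots,y_4)|\bigr)$ that is independent of $y_5$. Uniqueness gives $v(t,0) = V(t,0)$, where $V$ solves the $\bR^5$ free wave with data $(V_0,0)$. The odd-dimensional Kirchhoff formula yields $V(t,0) = \tfrac{1}{3}\partial_t\bigl[t^{-1}\partial_t\bigl(t^3\, M_t V_0(0)\bigr)\bigr]$, where $M_t V_0(0)$ is the average of $V_0$ over the sphere $\{|y|=t\}\subset\bR^5$. Parametrizing $\bS^4$ by the angle $\theta$ with the $y_5$-axis and substituting $\rho = \sin\theta$ produces, for $0 < t \le 1/2$ (so that $\chi \equiv 1$ throughout the relevant ball),
\begin{align*}
M_t V_0(0) = \frac{2|\bS^3|}{|\bS^4|}\,q t^{\nu-1}\int_0^1 \rho^{\nu+2}(1-\rho^2)^{-1/2}\,\ud\rho.
\end{align*}
The two $t$-differentiations in the Kirchhoff formula, together with $|\bS^3|/|\bS^4| = 3/4$, then collapse the result to $v(t,0) = qp(\nu)t^{\nu-1}$ with exactly the constant $p(\nu)$ in the statement, after applying the standard $\Gamma$-function identity relating $\int_0^1 \rho^{\nu+2}(1-\rho^2)^{-1/2}\ud\rho$ to Beta values.

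To upgrade the value at $r=0$ to a pointwise description on the whole interior $\{0\le r\le |t|\}$, I would Taylor expand the radial 4D function $v$ in $r$ and use $\Delta_{\bR^4} v\rst_{r=0} = 4\,\partial_r^2 v\rst_{r=0}$ together with the wave equation to obtain
\begin{align*}
v(t,r) = v(t,0) + \tfrac{r^2}{8}\partial_t^2 v(t,0) + O(r^4).
\end{align*}
Multiplying by $r$ gives $u_L^*(t,r) = qp(\nu)t^{\nu-1}r + O(r^3 t^{\nu-3})$, which is \eqref{eq:linearleading}. The estimates \eqref{eq:lineartest} and \eqref{eq:linearrest} follow by differentiating this expansion in $t$ and in $r$, using $\partial_t^k v(t,0) \simeq t^{\nu-1-k}$ and $r\le |t|$ to absorb the extra factors of $r/t$. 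Time-reversal symmetry of the linear equation handles $t<0$. The main technical obstacle is making the pointwise Taylor remainder rigorous uniformly in $r\le|t|$ despite the limited regularity of $v_0$ at the origin; rather than a raw Taylor argument I would differentiate the descent representation of $V$ directly in $x$ and estimate the resulting spherical-mean integrals by hand. The $|y|^{\nu-4}$-type singularity of $\partial^3 v_0$ (arising from differentiating $r^{\nu-1}$) is integrable against the weight $\rho^3(1-\rho^2)^{-1/2}$ coming from the $\bS^4$ integration precisely when $\nu$ is sufficiently large, which is where the quantitative threshold $\nu > 9/2$ enters.
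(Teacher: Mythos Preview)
Your computation of the constant $p(\nu)$ via the 5D descent is correct and pleasant, but the rest of the argument has a genuine gap that you yourself flag: you only evaluate $v$ at $r=0$, and the Taylor expansion you propose to reach the full region $\{r\le |t|\}$ is circular, since uniform control on $\partial_r^2 v$, $\partial_r^3 v$ is precisely what \eqref{eq:linearleading}--\eqref{eq:linearrest} are asserting. Your fallback of ``differentiating the descent representation directly'' would work in principle, but as stated it is not a proof.

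The paper sidesteps this entirely by exploiting a structural fact you do not use: inside the light cone the data is \emph{exactly} homogeneous, $v_0(r)=q r^{\nu-1}$, so the 4D Kirchhoff formula (ball average with the weight $(1-|y|^2)^{-1/2}$) gives, after the change of variables $y\mapsto x+ty$,
\[
v(t,x)=\partial_t\Big(\tfrac{1}{t}\partial_t\Big)\big(t|t|^{1+\nu}\phi(|x|/t)\big)=|t|^{\nu-1}\psi(|x|/t),
\]
with $\phi(z)=\tfrac{1}{8}\fint_{|y|\le 1} q|y+ze_1|^{\nu-1}(1-|y|^2)^{-1/2}\,\ud y$ and $\psi$ an explicit combination of $\phi,\phi',\phi''$. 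Thus the entire solution on $\{r\le|t|\}$ is written as a self-similar profile. All three estimates then reduce to a single one-variable fact: $\psi\in C^2([-1,1])$ and even, hence $|\psi(z)-\psi(0)|\le C z^2$. This in turn follows from $\phi\in C^4([-1,1])$, which one checks by dominated convergence on the integral defining $\phi$; the integrability of $\partial_z^4|y+ze_1|^{\nu-1}$ against the 4D volume and the weight $(1-|y|^2)^{-1/2}$ is exactly where $\nu>9/2$ enters. The derivative bounds \eqref{eq:lineartest}, \eqref{eq:linearrest} are then read off directly from $v=|t|^{\nu-1}\psi(|x|/t)$ without any further work.

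So the missing idea is: do not compute at the origin and then propagate outward; write the solution globally in self-similar form from the start, and reduce to regularity of a single function on a compact interval.
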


\begin{proof}
	For $(t,x) \in \bbR^{1+4}$, define $v(t,x) = u^*_L(t,|x|)/|x|$.  Then $\Box_{\mathbb R^{1+4}} v = 0$ and $\bs v(0) = \frac{1}{r} \bs u_0^*$. For $|x| \leq |t|$, we can express $v(t,x)$ using Kirchoff's formula and a change of variables by 
	\EQ{ \label{eq:kirchoff}
		v(t,x) &= \frac{1}{8} \p_t \Bigl ( \frac{1}{t} \p_t \Bigr ) 
		\left (
		t^3 \abs{t}^{\nu-1} \fint_{|y| \leq 1} q \left | y + \frac{|x|}{t} e_1 \right |^{\nu-1} (1 - |y|^2)^{-1/2} \, \ud y   
		\right ) \\
		&= \p_t \left ( \frac{1}{t} \p_t \right ) \left ( t \abs{t}^{1+\nu} \phi(|x|/t) \right ), 
	}
	where $e_1 = (1,0,0,0)$ and 
	\begin{align}\label{eq:phidef} 
		\phi(z) = \frac{1}{8} \fint_{|y| \leq 1} q \left | y + z e_1 \right |^{\nu-1} (1 - |y|^2)^{-1/2} \, \ud y, \quad z\in [-1,1]. 
	\end{align}
	By the dominated convergence theorem, $\phi(z) \in C^4([-1,-1])$  for $\nu > \frac{9}{2}$.
	A straightforward computation shows 
	\begin{align}\label{eq:v_eq}
		v(t,x) = \abs{t}^{\nu - 1} \psi (|x|/t)
	\end{align}
	where 
	$\psi(z) = \nu(\nu+2) \phi(z) - (2\nu + 2) z \phi'(z) + z^2 \phi''(z)$. The function $\phi(z)$ is an even function which implies $\psi \in C^2([-1,1])$ is even as well. Thus, there exists a constant $C > 0$ such that 
	\begin{align*}
		|\psi(z) - \psi(0)| \leq C |z|^2, \quad |z| \leq 1,
	\end{align*}
	which implies 
	\begin{align*}
		|v(t,x) - \psi(0)\abs{ t}^{\nu - 1}| \leq C \abs{t}^{\nu-3} |x|^2. 
	\end{align*}
	Since $\psi(0) = q p(\nu)$ and $u_L(t,r) = r v(t,r)$ we conclude 
	\begin{align*}
		|u_L(t,r) - qp \abs{t}^{\nu-1} r | \leq C \abs{t}^{\nu-3} r^3, 
	\end{align*}
	which proves \eqref{eq:linearleading}.
	
	To derive the estimates \eqref{eq:lineartest} and \eqref{eq:linearrest} we observe from \eqref{eq:v_eq} that $v(t,r)$ satisfies for all $r \leq \abs{t}$  
	\begin{align}
		|v(t,r)| &\leq C \abs{t}^{\nu-1}, \quad |\nabla_{t,r} v(t,r)| \leq C \abs{t}^{\nu-2}
	\end{align}
	which imply the desired bounds for $u^*_L(t,r) = r v(t,r)$. 
	\end{proof}

\begin{lemma}\label{l:nonlinear_app}
	Let $\nu > \frac{9}{2}$.  There exists a constant $C = C(\nu,q) > 0$ such that for all $r \leq |t|$, 
	\begin{comment}
	\begin{align}
	|\nabla_{t,r} u^*(t,r) - \nabla_{t,r} u^*_L(t,r)| &\leq C |t|^{3\nu-1}, \label{eq:desired_1} \\ 
	|u^*(t,r) - u^*_L(t,r)| &\leq C r|t|^{3\nu-1}. \label{eq:desired_2}
	\end{align}
	\end{comment}
	\begin{align}
		|\nabla_{t,r} u^*(t,r) - \nabla_{t,r} u^*_L(t,r)| &\leq C r|t|^{3\nu-2}, \label{eq:desired_3} \\ 
		|u^*(t,r) - u^*_L(t,r)| &\leq C r^2|t|^{3\nu-2}. \label{eq:desired4}
	\end{align}
\end{lemma}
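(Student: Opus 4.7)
The plan is to reduce to a 4D radial problem via $v^* := u^*/r$ and $v^*_L := u^*_L/r$. A direct computation shows $\Box_4 v^*_L = 0$ and $\Box_4 v^* = N(v^*, r)$, where
\[
N(v,r) := \frac{2rv - \sin(2rv)}{2r^3} = \tfrac{2}{3} v^3 - \tfrac{2}{15} r^2 v^5 + O(r^4 v^7).
\]
Setting $W := v^* - v^*_L$, the difference obeys $\Box_4 W = N(v^*, r)$ with vanishing initial data, so the bounds on $u^* - u^*_L = r W$ and $\nabla_{t,r}(u^*-u^*_L)$ translate into pointwise estimates on $W$ and its first derivatives. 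Since $\nu > 9/2$, the initial data is in $C^4$ and compactly supported on $\R^4$, and classical local well-posedness gives $\bs u^*$ on some $[0, T_0]$ with $v^*$ smooth.

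I would first run a continuity--bootstrap argument to propagate $|v^*(s, \rho)| \lesssim s^{\nu-1}$ inside the forward cone $\{\rho \leq s\}$, matching the bound that Lemma~\ref{l:linear_app} already gives for $v^*_L$. Outside this cone but within the support of the evolution, applying the 4D Kirchhoff formula to $v^*_0(\rho) = \chi(\rho) q \rho^{\nu-1}$ yields $|v^*_L(s, \rho)| \lesssim (s+\rho)^{\nu-1}$, which transfers to $v^*$ by the bootstrap. Altogether,
\[
|N(v^*(s,\rho), \rho)| \lesssim (s+\rho)^{3\nu - 3}
\]
throughout the past light cone of any $(t,r)$ with $r \leq t$.

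For the pointwise bounds on $W$, I would use the 4D Duhamel formula, obtained by descent from the 5D strong-Huygens formula:
\[
W(t, x) = c \int_0^t \int_{|y-x| < t-s} \frac{N(v^*(s,y), |y|)}{\sqrt{(t-s)^2 - |y-x|^2}} \, dy \, ds,
\]
together with analogous formulas for $\partial_t W$ and $\partial_r W$. Inserting the source bound and computing the Abel-type inner integral directly gives the baseline estimates $|W(t,r)| \lesssim t^{3\nu-1}$ and $|\nabla_{t,r} W(t,r)| \lesssim t^{3\nu-2}$, which yield the weaker conclusions $|u^*-u^*_L| \lesssim r\, t^{3\nu-1}$ and $|\nabla_{t,r}(u^*-u^*_L)| \lesssim t^{3\nu-1}$. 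To reach the sharper bounds stated in the lemma, which carry an additional factor of $r/t$, I would split $W = W_0 + W_1$, where $\Box_4 W_0 = \tfrac{2}{3}(v^*_L)^3$ with zero data and $W_1$ absorbs the remainder source. Using the explicit form $v^*_L(s, \rho) = s^{\nu-1} \psi(\rho/s)$ from Lemma~\ref{l:linear_app} and the evenness of $\psi$, together with integration by parts in the singular Duhamel kernel $((t-s)^2 - |y-x|^2)^{-1/2}$, the extra factor of $r$ can be extracted from $W_0$ by comparing the past light cones of $(t, r)$ and $(t, 0)$ and invoking the parity of $W_0$ in $r$. The remainder $W_1$ has source $\tfrac{2}{3}(v^{*3} - (v^*_L)^3) + O(r^2 v^{*5})$ that is smaller by a factor of either $r^2$ or $W$ and closes under the same Duhamel argument. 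The principal obstacle is precisely this last sharpening: the baseline Duhamel bound is insensitive to the cancellation structure of the leading cubic source near $r = 0$, and extracting the correct $r$-vanishing requires careful use of the parity of $W$ together with the explicit representation of $v^*_L$.
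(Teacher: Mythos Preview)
Your route is valid in principle but vastly more laborious than the paper's. The paper never writes down a Kirchhoff or Duhamel kernel for the difference. Instead it invokes the local well-posedness theory for \eqref{eq:wmk} (citing \cite{ShSt00}) together with finite speed of propagation to obtain in one stroke
\begin{align*}
\bigl|\nabla^2_{t,r}(u^* - u^*_L)(t,r)\bigr|
&\le C\,\bigl\|\nabla^2_{t,r}(u^* - u^*_L)(t)\bigr\|_{H(r \le t)} \\
&\lesssim \|\partial_r^2 u_0^*\|_{H(r \le 2t)}\|u_0^*\|_{H(r \le 2t)}^2 + \|\partial_r u_0^*\|_{H(r \le 2t)}^2\|u_0^*\|_{H(r \le 2t)} \;\lesssim\; |t|^{3\nu-2},
\end{align*}
the first step being the embedding $H \hookrightarrow L^\infty$ of \eqref{eq:infty_estimate}. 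One and two applications of the fundamental theorem of calculus in $r$ then finish. Your bootstrap for $v^*$, the pointwise source bound $|N|\lesssim (s+\rho)^{3\nu-3}$, and the explicit singular Duhamel integrals are all subsumed in a single black-box trilinear estimate; the extra power of $|t|^{-1}$ that you flag as the ``principal obstacle'' is bought simply by bounding one higher derivative at the level of the well-posedness theory and then integrating back in $r$.

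One genuine caution on your sharpening plan: parity of $W$ (or of $W_0$) in $r$ cannot deliver $|W(t,r)| \le Cr\,|t|^{3\nu-2}$. Since $W$ is an \emph{even} radial function on $\bbR^4$, parity only gives $W(t,r) = W(t,0) + O(r^2)$, and $W(t,0) = \partial_r(u^* - u^*_L)(t,0)$ is the Duhamel integral of the cubic source at the spatial origin, which is generically nonzero. Comparing past light cones of $(t,r)$ and $(t,0)$ likewise bounds only the difference $W(t,r)-W(t,0)$. What is fully rigorous in either framework is $|\partial_t(u^*-u^*_L)|\le Cr|t|^{3\nu-2}$ (since $\partial_t(u^*-u^*_L)(t,0)=0$), together with the slightly weaker $|\partial_r(u^*-u^*_L)|\lesssim |t|^{3\nu-1}$ and $|u^*-u^*_L|\lesssim r|t|^{3\nu-1}$; the paper's one-line FTC step is equally silent on the $\partial_r$ component at $r=0$, and these weaker forms are what actually enter the downstream estimates.
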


\begin{proof}
	For $r \leq |t|$, we have by finite speed of propagation and the well-posedness theory for \eqref{eq:wmk} (see \cite{ShSt00})
	\begin{comment}
	\begin{align*}
	|\nabla_{t,r} u^*(t,r) - \nabla_{t,r} u^*_L(t,r) |
	&\leq C \|\nabla_{t,r} u^*(t) - \nabla_{t,r} u^*_L(t) \|_{H(r \leq t)} \\
	&\leq C \| \p_r u_0\|_{H(r \leq 2t)} \| u_0 \|_{H(r \leq 2t)}^2 \\
	&\leq C |q|^3 |t|^{3\nu-1}. 
	\end{align*}
	This proves \eqref{eq:desired_1}, and \eqref{eq:desired_2} follows from \eqref{eq:desired_1} and the fundamental theorem of calculus. 
	
	In the case $\nu \geq 2$, we argue as before and obtain 
	\end{comment}
	\begin{align*}
		|\nabla^2_{t,r}& u^*(t,r) - \nabla^2_{t,r} u^*_L(t,r) | \\
		&\leq C \|\nabla^2_{t,r} u^*(t) - \nabla^2_{t,r} u^*_L(t) \|_{H(r \leq t)} \\
		&\leq C \Bigl ( \| \p_r^2 u_0\|_{H(r \leq 2t)} \| u_0 \|_{H(r \leq 2t)}^2 + 
		\| \p_r u_0 \|_{H(r \leq 2t)}^2 \| u_0 \|_{H(r \leq 2t)} \Bigr ) \\
		&\leq C |t|^{3\nu-2}. 
	\end{align*}
	We then obtain \eqref{eq:desired_3} and \eqref{eq:desired4} by the fundamental theorem of calculus applied to the previous estimate once and twice respectively.  
\end{proof}

By Lemma \ref{l:linear_app} and Lemma \ref{l:nonlinear_app}, we immediately obtain the following description of the radiation $\bs u^*$ inside the light cone. 

\begin{corollary}\label{l:ustar_app}
	Let $\nu > \frac{9}{2}$ and $p = p(\nu)$ be as in Lemma \ref{l:linear_app}.  There exists $C = C(\nu,q) > 0$ such that for all $r \leq  |t|$, 
	\begin{comment}
	\begin{align}
	|u^*(t,r) - q p t^{\nu - 1} r | &\leq C \left (r^3 t^{\nu - 3} + r t^{3\nu-1} \right ), \\
	|\p_t u^*(t,r)| &\leq C \left ( r t^{\nu-2} + t^{3\nu-1}\right ), \label{eq:nonlinearest}\\
	|\p_r u^*(t,r)| &\leq C t^{\nu-1} \label{eq:nonlinearrest}, \\
	|u^*(t,r)| &\leq C r t^{\nu-1}.
	\end{align}
	If $\nu \geq 2$, then there exists a constant $C = C(\nu,q) > 0$ such that for all $r \leq |t|$
	\end{comment}
	\begin{align}
		|u^*(t,r) - q p \abs{t}^{\nu - 1} r | &\leq C r^2 \abs{t}^{\nu - 2}, \\
		|\p_t u^*(t,r)| &\leq C r \abs{t}^{\nu-2}, \label{eq:nonlinearest2}\\
		|\p_r u^*(t,r)| &\leq C \abs{t}^{\nu-1} \label{eq:nonlinearrest2}, \\
		|u^*(t,r)| &\leq C r \abs{t}^{\nu-1}.
	\end{align}
\end{corollary}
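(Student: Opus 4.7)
The plan is to obtain the corollary as a direct consequence of Lemma \ref{l:linear_app} and Lemma \ref{l:nonlinear_app} via the triangle inequality, using the running assumption $r \le \abs{t}$ (and the implicit assumption that $\abs{t}$ is small enough that $\abs{t}^{2\nu}$ can be absorbed by an $O(1)$ constant, which is harmless since the whole analysis takes place near the blow-up time $t = 0$).

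For the leading-order bound on $u^*(t,r)$, I would write
\[
u^*(t,r) - qp\abs{t}^{\nu-1} r = \bigl( u^*(t,r) - u^*_L(t,r) \bigr) + \bigl( u^*_L(t,r) - qp\abs{t}^{\nu-1} r \bigr),
\]
estimate the first term by \eqref{eq:desired4} and the second by \eqref{eq:linearleading}, and then use $r \le \abs{t}$ to dominate $r^3 \abs{t}^{\nu-3} \le r^2 \abs{t}^{\nu-2}$ and $r^2 \abs{t}^{3\nu-2} \le r^2 \abs{t}^{\nu-2}\cdot \abs{t}^{2\nu} \lesssim r^2 \abs{t}^{\nu-2}$ for $\abs{t}$ bounded. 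The derivative estimates for $\p_t u^*$ and $\p_r u^*$ are handled identically: split $\p_t u^* = (\p_t u^* - \p_t u^*_L) + \p_t u^*_L$ and $\p_r u^* = (\p_r u^* - \p_r u^*_L) + \p_r u^*_L$, bound the linear pieces by \eqref{eq:lineartest} and \eqref{eq:linearrest}, bound the nonlinear differences by \eqref{eq:desired_3}, and absorb the $\abs{t}^{3\nu-2}$ factor against the linear bound $r \abs{t}^{\nu-2}$ (resp.\ $\abs{t}^{\nu-1}$) using $r \le \abs{t}$ and smallness of $\abs{t}$.

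Finally, the pointwise bound $|u^*(t,r)| \le C r \abs{t}^{\nu-1}$ follows from the leading-order estimate by writing $|u^*(t,r)| \le |qp| r \abs{t}^{\nu-1} + C r^2 \abs{t}^{\nu-2}$ and using $r \le \abs{t}$ to absorb the remainder into the main term.

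There is no real obstacle here — the technical work was already done in Lemma \ref{l:linear_app} (where the hypothesis $\nu > 9/2$ was used to ensure $\phi \in C^4([-1,1])$ so that $\psi$ is $C^2$ and even) and Lemma \ref{l:nonlinear_app}. The only bookkeeping point to watch is that the error terms from the nonlinear comparison carry an extra factor $\abs{t}^{2\nu}$ relative to the linear leading-order term; this is precisely why the statement is asymptotic near $t = 0$ and the implicit constant absorbs a fixed power of the time horizon.
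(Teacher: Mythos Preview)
Your proposal is correct and is exactly the argument the paper has in mind: the paper states that the corollary follows ``immediately'' from Lemma~\ref{l:linear_app} and Lemma~\ref{l:nonlinear_app}, and your triangle-inequality splitting together with $r\le|t|$ and the absorption of the extra $|t|^{2\nu}$ factors is precisely how that immediacy unpacks.
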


\section{Modified Modulation Method} \label{s:mod} 

In this section, we begin our study of solutions $\bs u(t)$ to~\eqref{eq:wmk} which are close to
a superposition of a bubble and the regular part $\bs u^*(t)$, in the sense that there exists $\ti \lam(t)$ such that 
%$ \bfd(t, \bs u(t))$, defined by
%\begin{equation}
%\label{eq:ddef}
%\bfd(t, \bs u) := \inf_{\lambda > 0} \|\bs u(t) - (\bs u^*(t) + \bs Q_\lambda)\|_\cH^2,
%\end{equation}
\EQ{
\|\bs u(t) - (\bs u^*(t) + \bs Q_{\ti \lambda(t)})\|_\cH^2  \ll 1 
}
for all $t \in J \subset (0,T_0]$.  We will also assume for $r \geq t$ 
\begin{align}\bs u(t,r) = (\pi, 0) + \bs u^*(t,r).
\end{align} 
We remark if $T_- = 0$, then this last assumption is necessary and sufficient for $\bs u^*$ to be the radiation field of $\bs u$.  

By the implicit function, we can find a modulation parameter $\lam(t) \in C^1(J)$ so that if $\bs g(t) \in \cl H$ is defined by
\begin{align}
\bs g(t) = (g(t), \dot g(t)) := \bs u(t) - (\bs Q_\lam(t) + \bs u^*(t)),
\end{align}
then $g(t)$ satisfies a suitable orthogonality condition (see Lemma \ref{l:modeq}). Roughly stated, our goal in this section is to show the modulation parameter $\lam(t)$ satisfies differential inequalities from above and below that are, to leading order in $t$, saturated by
\begin{align}
\lam(t) &= \frac{p|q|}{\nu^2(\nu+1)} \frac{t^{\nu+1}}{|\log t|}, \\
\bs g(t,r) &= (0, -\lam'(t) \Lambda Q_{\lam(t)}(r)), \quad r \leq t.  
\end{align} 
In particular, we find that the leading order term driving the lower differential inequality is the interaction between the bubble and the radiation $\bs u^*(t,r)$ inside the light cone $\{ r \leq t \}$. 
A key idea we use in our approach, 
%with roots in the work of \cite{RaSz11} and 
that was used in the study of threshold wave maps in \cite{JL1, R19}, is to study a small modification of $\lam'(t) \log \frac{t}{\lam(t)}$ having a good monotonicity property.   

\subsection{Preliminary control of the modulation parameter} 

We first state the existence and uniqueness of a $C^1$ modulation parameter such that
\begin{align}
g(t) = u(t) - (Q_{\lam(t)} + u^*(t))
\end{align}
is orthogonal to the tangent of the family of harmonic maps. Let $\chi \in C^\infty(\bR^2)$ be radial such that $\chi(r) = 1$ for $r \leq \frac{1}{2}$ and $\chi(r) = 0$ if $r \geq 1$, and let 
$\cl Z(r) = \chi(r) \Lambda Q(r)$.  Then 
\begin{align}
\int_0^\infty \cl Z(r) \Lambda Q(r) \, r\ud r > 0. \label{eq:approx_tgt}
\end{align}
By standard arguments using the implicit function theorem (see for example~\cite[Proof of Lemma 2.5]{moi15} or \cite[proof of Lemma 3.1]{JL1}), we have the following. 

\label{ssec:mod-param}
\begin{lemma}[Modulation Lemma]
\label{l:modeq}
There exist $\eta_0>0$, $\lam_0>0$ and $C > 0$ with the following property:
Let $J\subset (0, T_0]$ be a time interval,
$\bs u(t)$ a solution to~\eqref{eq:wmk} defined on $J$,
and assume there exists $\bar \lam(t)$ such that 
% \begin{equation}
% \bfd(t, \bs u(t)) \leq \eta_0\qquad \forall t\in J.
% \end{equation}
\EQ{
\|\bs u(t) - (\bs u^*(t) + \bs Q_{\bar \lambda(t)})\|_\cH^2 \le \eta \le  \eta_0 , \quad \bar \lam(t) \le \lam_0 \qquad \forall t\in J.
}
 Then, there exists a unique $C^1(J)$ function $\lambda(t)$ so that, defining $\bs g(t) \in \cH$ by
\begin{equation}
\label{eq:gdef1} 
\bs g(t) = (g(t), \dot g(t)):= \bs u(t) - ( \bs Q_{\lambda(t)} + \bs u^*(t)) 
\end{equation}
we have, for each $t \in J$,
\begin{gather}  
\la{ {\mathcal Z}_{\uln{\lambda(t)}},  g(t) }\ra  = 0,  \label{eq:ola} \\
%\bfd(t, \bs u(t)) \leq 
\| \bs g(t) \|_\cH^2 \leq C\eta % \bfd(t, \bs u(t)) 
. \label{eq:gdotgd} 
\end{gather} 
\end{lemma}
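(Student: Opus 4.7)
The plan is to apply a scale-uniform implicit function theorem to the scalar condition \eqref{eq:ola}. Define the smooth functional
\begin{equation*}
\Phi(\bs w, \lambda) := \langle \mathcal{Z}_{\underline{\lambda}}, w - Q_\lambda\rangle, \qquad (\bs w, \lambda) \in \HH \times (0,\infty),
\end{equation*}
so that \eqref{eq:ola} reads $\Phi(\bs u(t) - \bs u^*(t), \lambda(t)) = 0$. At the reference point $(\bs Q_\mu, \mu)$ one has $\Phi = 0$, and using $\partial_\lambda Q_\lambda = -\lambda^{-1}(\Lambda Q)_\lambda$ followed by the change of variable $r = \mu s$ in the resulting pairing yields
\begin{equation*}
\partial_\lambda \Phi \big|_{(\bs Q_\mu, \mu)} = \frac{1}{\mu}\langle \mathcal{Z}_{\underline{\mu}}, (\Lambda Q)_\mu\rangle = \int_0^\infty \mathcal{Z}(s)\Lambda Q(s)\,s\,\mathrm{d}s =: c_0 > 0,
\end{equation*}
which is independent of $\mu$ and positive by \eqref{eq:approx_tgt}.

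I would then invoke the implicit function theorem uniformly in the scale $\bar\lambda \in (0, \lambda_0]$ by exploiting the exact scaling symmetry of the setup: the change of variables $\bs w \mapsto \bs w(\cdot/\bar\lambda)$, $\lambda \mapsto \lambda/\bar\lambda$ identifies the implicit problem near $(\bs Q_{\bar\lambda}, \bar\lambda)$ with the corresponding problem near $(\bs Q_1, 1)$. A single application of the standard IFT at $\bar\lambda = 1$ therefore furnishes universal constants $\eta_0, C > 0$ and, for each $t$ with $\|\bs u(t) - \bs u^*(t) - \bs Q_{\bar\lambda(t)}\|_\HH \leq \sqrt{\eta} \leq \sqrt{\eta_0}$, a unique $\lambda(t) \in (\bar\lambda(t)/2, 2\bar\lambda(t))$ satisfying \eqref{eq:ola} together with the Lipschitz bound $|\lambda(t) - \bar\lambda(t)| \leq C\bar\lambda(t)\sqrt{\eta}$. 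Continuity of $\lambda$ in $t$ is then immediate from continuity of $\bs u, \bs u^*$ in $\HH$.

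Upgrading to $\lambda \in C^1(J)$ proceeds by differentiating the identity $\Phi(\bs u(t) - \bs u^*(t), \lambda(t)) \equiv 0$ in $t$, which yields $B(t) + A(t)\lambda'(t) = 0$ with $A(t) = c_0 + O(\sqrt{\eta})$ and $B(t) = \langle \mathcal{Z}_{\underline{\lambda(t)}}, \dot u - \dot u^*\rangle$. The crucial regularity input is that the smooth compact support of $\mathcal{Z}$ makes $B(t)$ continuous in $t$ through $\dot u, \dot u^* \in C(J; L^2)$ --- precisely the regularity afforded by the finite-energy wave maps theory --- while $A(t)$ is continuous through $\bs g \in C(J; \HH)$. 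Taking $\eta_0$ small enough to guarantee $A(t) \geq c_0/2$, we conclude $\lambda' = -B/A$ is continuous, hence $\lambda \in C^1(J)$. The energy bound \eqref{eq:gdotgd} finally follows from the triangle inequality
\begin{equation*}
\|\bs g\|_\HH \leq \|\bs u - \bs u^* - \bs Q_{\bar\lambda}\|_\HH + \|\bs Q_{\bar\lambda} - \bs Q_\lambda\|_\HH \leq \sqrt{\eta} + C\,\frac{|\bar\lambda - \lambda|}{\bar\lambda} \lesssim \sqrt{\eta},
\end{equation*}
using the standard scaling estimate $\|Q_{\bar\lambda} - Q_\lambda\|_H \lesssim |\log(\lambda/\bar\lambda)|$ (which is $\lesssim |\lambda - \bar\lambda|/\bar\lambda$ for $\lambda$ close to $\bar\lambda$). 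The only mildly subtle aspect of the argument is arranging the IFT uniformly in the scale $\bar\lambda$; once this is handled through the intrinsic scaling symmetry, everything else is entirely routine.
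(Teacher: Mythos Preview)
Your proof is correct and follows precisely the ``standard arguments using the implicit function theorem'' that the paper invokes by citation rather than writing out; you have supplied the details (scale-uniform IFT via the intrinsic $\dot H^1$-critical scaling, then $C^1$ regularity by differentiating the orthogonality condition) that the referenced works contain. One minor ordering remark: the estimate $A(t) = c_0 + O(\sqrt{\eta})$ in your $C^1$ step implicitly uses $\|g\|_H \lesssim \sqrt{\eta}$, so you should establish the energy bound \eqref{eq:gdotgd} (which follows already from the IFT Lipschitz estimate on $\lambda$) before the $C^1$ upgrade, but this is only a presentational point.
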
 

By differentiating the orthogonality condition satisfied by $g(t)$, we obtain the following preliminary control of the modulation parameter $\lam(t)$. 

\begin{proposition}[Modulation Control Part 1]
\label{p:modp} 
%% Let $\de>0$ be arbitrary and
%Let $\eta_0>0$ be as in Lemma~\ref{l:modeq}, let $J \subset (0,T_0]$ be a time interval, and let
%$\bs u(t)$ be a solution to~\eqref{eq:wmk} on $J$ such that 
Let $\bs u(t)$ be a solution to~\eqref{eq:wmk} on $J$ as in Lemma~\ref{l:modeq} and 
%\begin{equation}
%\bfd(t, \bs u(t))\le \eta_0 \quad \forall t \in J.
%\end{equation}
let $\lambda(t)$ be given by Lemma~\ref{l:modeq}.
Then for $t \in J$
\begin{align}
|\lambda'(t)| &\lesssim \|\dot g\|_{L^2}. \label{eq:la'}
\end{align}
\end{proposition}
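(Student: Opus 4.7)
The plan is to differentiate the orthogonality relation $\langle\mathcal Z_{\underline{\lambda(t)}}, g(t)\rangle = 0$ in $t$ and solve algebraically for $\lambda'$. Two derivatives are needed. First, from $\mathcal Z_{\underline{\lambda}}(r) = \lambda^{-1}\mathcal Z(r/\lambda)$ and the definition \eqref{eq:LaLa0} of $\Lambda_0$, one has
\[
\partial_t\bigl[\mathcal Z_{\underline{\lambda(t)}}\bigr] \;=\; -\frac{\lambda'(t)}{\lambda(t)}\,(\Lambda_0\mathcal Z)_{\underline{\lambda(t)}}.
\]
Second, writing $\bs u = (u,\partial_t u)$ and using $g = u - Q_{\lambda(t)} - u^*$ together with $\partial_t Q_{\lambda(t)} = -(\lambda'/\lambda)(\Lambda Q)_\lambda$ and $\partial_t u - \partial_t u^* = \dot g$ (the second component of $\bs g$), one has
\[
\partial_t g \;=\; \dot g + \frac{\lambda'(t)}{\lambda(t)}\,(\Lambda Q)_{\lambda(t)},
\]
which is precisely the first row of the modulation system written in Section~\ref{ssec:outline}.

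Plugging these two identities into $0 = \frac{d}{dt}\langle\mathcal Z_{\underline{\lambda}}, g\rangle$ and using the $L^2$-scale invariance of the inner product (so that $\langle \mathcal Z_{\underline{\lambda}}, (\Lambda Q)_\lambda\rangle = \langle\mathcal Z,\Lambda Q\rangle =: c_0 > 0$ by \eqref{eq:approx_tgt}) yields
\[
\frac{\lambda'(t)}{\lambda(t)}\bigl[\,c_0 \;-\; \langle(\Lambda_0\mathcal Z)_{\underline{\lambda}}, g\rangle\,\bigr] \;=\; -\langle\mathcal Z_{\underline{\lambda}}, \dot g\rangle.
\]
The next step is to show the bracketed quantity is bounded below by $c_0/2$. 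Since $\chi \in C_c^\infty$ is supported in $\{r \le 1\}$, the function $(\Lambda_0\mathcal Z)_{\underline{\lambda}}$ is supported in $\{r\le\lambda\}$; the Hardy-type bound
\[
\|g\|_{L^2(r\le\lambda)}^2 \;=\; \int_0^\lambda \Bigl(\tfrac{g}{r}\Bigr)^{\!2} r^3\,\ud r \;\le\; \lambda^2\|g\|_H^2
\]
together with the $L^2$-invariance $\|(\Lambda_0\mathcal Z)_{\underline{\lambda}}\|_{L^2} = \|\Lambda_0\mathcal Z\|_{L^2}$ gives
\[
\bigl|\langle(\Lambda_0\mathcal Z)_{\underline{\lambda}}, g\rangle\bigr| \;\lesssim\; \lambda\|g\|_H \;\lesssim\; \lambda_0\,\eta_0^{1/2},
\]
which is $\le c_0/2$ after shrinking $\lambda_0, \eta_0$ if necessary. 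On the right side, Cauchy--Schwarz together with $\|\mathcal Z_{\underline{\lambda}}\|_{L^2} = \|\mathcal Z\|_{L^2}$ gives $|\langle\mathcal Z_{\underline{\lambda}}, \dot g\rangle| \le \|\mathcal Z\|_{L^2}\|\dot g\|_{L^2}$. Combining,
\[
|\lambda'(t)| \;\lesssim\; \lambda(t)\,\|\dot g(t)\|_{L^2} \;\lesssim\; \|\dot g(t)\|_{L^2},
\]
since $\lambda(t)\le\lambda_0$ is bounded. This establishes \eqref{eq:la'}.

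There is no real obstacle: this is the standard differentiated-orthogonality argument, and the only mild subtlety is that $g$ lies in $H$ rather than $L^2$, so the pairing with $(\Lambda_0\mathcal Z)_{\underline{\lambda}}$ must be controlled via Hardy's inequality exploiting the compact support of $\mathcal Z$. Note that the contribution from $\partial_t u^*$ never appears on its own, because it is absorbed into $\dot g = \partial_t u - \partial_t u^*$; this is the advantage of working with the radiation-subtracted remainder.
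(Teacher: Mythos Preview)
Your approach is exactly the paper's: differentiate the orthogonality condition and solve for $\lambda'$. However, there is a scaling slip. You claim that ``$L^2$-scale invariance'' gives $\langle \mathcal Z_{\underline{\lambda}}, (\Lambda Q)_\lambda\rangle = \langle\mathcal Z,\Lambda Q\rangle$, but this is off by a factor of $\lambda$: the pairing $\langle\cdot,\cdot\rangle$ is invariant under $(f,g)\mapsto (f_{\underline\lambda},g_{\underline\lambda})$, not under $(f,g)\mapsto (f_{\underline\lambda},g_{\lambda})$. A direct change of variables gives $\langle \mathcal Z_{\underline{\lambda}}, (\Lambda Q)_\lambda\rangle = \lambda\langle\mathcal Z,\Lambda Q\rangle$. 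Consequently your displayed identity should read
\[
\lambda'(t)\Bigl[\,c_0 \;-\; \tfrac{1}{\lambda}\langle(\Lambda_0\mathcal Z)_{\underline{\lambda}}, g\rangle\,\Bigr] \;=\; -\langle\mathcal Z_{\underline{\lambda}}, \dot g\rangle,
\]
which is precisely the paper's version. Your Hardy estimate $|\langle(\Lambda_0\mathcal Z)_{\underline{\lambda}}, g\rangle|\lesssim\lambda\|g\|_H$ is correct, so after dividing by $\lambda$ the error term is $O(\|g\|_H)\le c_0/2$ for $\eta_0$ small, and one obtains $|\lambda'|\lesssim\|\dot g\|_{L^2}$ directly --- not the stronger (and false) bound $|\lambda'|\lesssim\lambda\|\dot g\|_{L^2}$ that your slip produces. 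With this one-line fix the proof is complete and matches the paper's.
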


\begin{proof}
The function $g(t)$ satisfies
\begin{align}
\partial_t g &= \dot g + \lambda'\Lambda Q_\uln\lambda.
\end{align}
Differentiating the orthogonality condition \eqref{eq:ola} we obtain 
\begin{align}
0 &= \frac{\ud}{\ud t} \la \cl Z_{\uln \lam}, g \ra = 
- \frac{\lam'}{\lam} \la \Lam_0 \cl Z_{\uln \lam}, g \ra 
+ \lam' \la \cl Z_{\uln \lam} , \Lam Q_{\uln \lam} \ra 
+ \la \cl Z_{\uln \lam} , \dot g \ra.
\end{align}
so 
\begin{align}
\lam' \Bigl (
 \la \cl Z_{\uln \lam} , \Lam Q_{\uln \lam} \ra - \frac{1}{\lam} \la \Lam_0 \cl Z_{\uln \lam}, g \ra 
\Bigr ) = - \la \cl Z_{\uln \lam}, \dot g \ra. 
\end{align}
We have the following easily verifiable bounds:
\begin{align}
\Bigl |
\frac{1}{\lam} \la \Lam_0 \cl Z_{\uln \lam}, g \ra
\Bigr | &\lesssim \| \cl Z \|_{L^1} \| g \|_{L^\infty} \lesssim 
\| g \|_H  \lesssim \eta_0, \\
\Big |
\la\cl Z_{\uln \lam} , \dot g \ra 
\Big | &\lesssim \| \dot g \|_{L^2}.
\end{align}
The previous bounds and \eqref{eq:approx_tgt} immediately imply \eqref{eq:la'} as long as $\eta_0$ is sufficiently small. 
\end{proof}

\subsection{Sharp control of the modulation parameter}

We now turn to obtaining an improved control of the modulation parameter $\lam(t)$. We first define an auxilary function $\zeta \in C^1(J)$ by 
\begin{align}
\zeta(t) &:= 4 \lam(t) \log \frac{t}{\lam(t)} - \int_0^{t} \Lambda Q_{\uln{\lam(t)}} g(t) \, r \ud r,  \label{eq:zetadef} 
\end{align}  
which serves as a better proxy for the dynamics than $\lam(t)$ but is still close to $\lam(t)$ (see Proposition \ref{p:modp2}). Since \eqref{eq:wmk} is second order in time, precise control of $\zeta(t)$ (and thus $\lam(t)$) is obtained by studying $\zeta''(t)$. It is not difficult to see that, up to formally acceptable error terms in $t$, 
\begin{align}
\zeta'(t) \approx -\int_0^t \Lambda Q_{\uln{\lam(t)}} \dot g(t) \, r \ud r.
\end{align}
However, it is difficult to prove the formally desired lower bound 
\begin{align}
 -\frac{\ud}{\ud t} \int_0^t \Lambda Q_{\uln{\lam(t)}} \dot g(t) \, r \ud r \geq  
 4p|q| t^{\nu-1}(1 + o(1)),
\end{align}
since several terms arising on the left hand side above have critical size and indefinite sign. To overcome this obstacle, we instead study a function $b(t)$ which is a (small) correction to $\zeta'(t)$ by a truncated virial functional. The introduction of truncated virial functional produces terms which are then large enough to cancel the bad terms which originally arose. The use of such an ad hoc correction was inspired by Rapha{\"e}l, Szeftel~\cite{RaSz11}, and was used crucially in this context in works of the authors~\cite{JJ-AJM, JL1, R19}. To define and study the properties of the function $b(t)$, we require the following lemmas. 

\begin{lemma} \emph{\cite[Lemma 4.6]{JJ-AJM}}
	\label{lem:fun-q}
	For each $c, R > 0$ there exists a function $$q(r) = q_{c, R}(r) \in C^{3,1}((0, \infty))$$ with the following properties:
	\begin{enumerate}[label=(P\arabic*)]
		\item $q(r) = \frac{1}{2} r^2$ for $r \leq R$, \label{enum:approx-q}
		\item there exists an absolute constant $\kappa > 0$  such that $q(r) \equiv \tx{const}$ for $r \geq \tilde R := \kappa e^{\kappa/c} R$, \label{enum:support-q}
		\item $|q'(r)| \lesssim r$ and $|q''(r)| \lesssim 1$ for all $r > 0$, with constants independent of $c, R$, \label{enum:gradlap-q}
		\item $q''(r) \geq -c$ and $\frac 1r q'(r) \geq -c$, for all $r > 0$, \label{enum:convex-ym}
		\item $(\frac{\ud^2}{\ud r^2} + \frac 1r \frac{\ud }{\ud r} r)^2 q(r) \leq c\cdot r^{-2}$, for all $r > 0$, \label{enum:bilapl-ym}
		\item $\big|r\big(\frac{q'(r)}{r}\big)'\big| \leq c$, for all $r > 0$. \label{enum:multip-petit-ym}
	\end{enumerate}
\end{lemma}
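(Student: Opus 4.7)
The plan is to construct $q$ by prescribing its derivative $q'(r)$ as a smoothly cut-off version of the linear function $r$, with the cutoff taking place over a logarithmic scale in $r$ determined by $c$. I would fix once and for all a reference profile $\psi \in C^\infty(\mathbb{R}; [0,1])$ satisfying $\psi(s) = 1$ for $s \le 0$ and $\psi(s) = 0$ for $s \ge 1$, set $M := 1 + \sum_{k=1}^{3}\|\psi^{(k)}\|_{L^\infty}$, and take an absolute constant $\kappa \ge M$ (whose size will be fixed below). I would then let $\tilde R := R\,e^{\kappa/c}$, matching (P2), and define
\begin{equation*}
h(r) := \psi\!\Bigl(\tfrac{c}{\kappa}\log(r/R)\Bigr), \qquad q(r) := \int_0^r s\,h(s)\,ds.
\end{equation*}
Since $h \equiv 1$ on $(0,R]$ we get $q(r) = \tfrac12 r^2$ there, which is (P1); since $h \equiv 0$ on $[\tilde R,\infty)$, $q$ is constant there, which is (P2). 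Smoothness of $\psi$ immediately forces $q \in C^\infty((0,\infty)) \subset C^{3,1}((0,\infty))$.

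The core analytic input is the chain-rule estimate
\begin{equation*}
|r^{k} h^{(k)}(r)| \le C_k(\psi)\,\tfrac{c}{\kappa}, \qquad k = 1,2,3,
\end{equation*}
valid whenever $c/\kappa \le 1$: each differentiation in $r$ of $\log(r/R)$ produces one factor of $1/r$, so $k$ derivatives of $h$ produce one $r^{-k}$ together with factors of $c/\kappa$ and bounded derivatives of $\psi$. From $q'(r) = rh(r)$ and $q''(r) = h(r) + rh'(r)$ I would read off $|q'(r)| \le r$ and $|q''(r)| \lesssim 1$ (giving (P3)), and then $q''(r) \ge -C_1 c/\kappa$ together with $q'(r)/r = h(r) \ge 0$ (giving (P4)) as soon as $\kappa \ge C_1$. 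Property (P6) is immediate since $r(q'(r)/r)' = r h'(r)$.

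The nontrivial verification is (P5). Expanding the fourth-order operator applied to $q$, I would reduce it to a linear combination of $h'(r)/r$, $h''(r)$, and $r h'''(r)$; each of these is bounded by $C(\psi)c/(\kappa r^2)$ by the chain-rule estimate above, so choosing $\kappa$ sufficiently large (depending only on $\psi$) absorbs the combined numerical constants into a prefactor at most $1$, yielding the required bound $c\,r^{-2}$. The main obstacle throughout is precisely this uniform constant tracking: both (P5) and the sign bounds in (P4) and (P6) carry an absolute constant $1$ in front of $c$, so every intermediate estimate must be reduced to a small multiple of $c$ by a single large choice of $\kappa$. Since each derivative of $h$ comes with one factor of $c/\kappa$, this can be arranged provided $\kappa$ is chosen depending only on $\|\psi^{(k)}\|_{L^\infty}$ for $k=1,2,3$, which is manifestly independent of both $c$ and $R$.
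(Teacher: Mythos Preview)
Your construction is correct and is essentially the standard one. The paper itself does not prove this lemma: it is quoted verbatim from \cite[Lemma~4.6]{JJ-AJM} and no argument is given here, so there is no ``paper's own proof'' to compare against. Your logarithmic cutoff $h(r)=\psi\bigl(\tfrac{c}{\kappa}\log(r/R)\bigr)$ with $q'(r)=rh(r)$ is precisely the natural choice, and your verification of (P1)--(P6) is accurate; in particular, the key observation that each $r$-derivative of $h$ gains a factor of $c/\kappa$ (so that a single absolute choice of $\kappa$ depending only on $\|\psi^{(k)}\|_{L^\infty}$, $k\le 3$, handles (P4)--(P6) simultaneously) is the heart of the matter and you identify it correctly.

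Two cosmetic points. First, your $\tilde R = R\,e^{\kappa/c}$ differs from the stated $\tilde R = \kappa e^{\kappa/c}R$ by the harmless factor $\kappa\ge 1$; since your $q$ is already constant on the smaller interval, (P2) holds a fortiori with the larger $\tilde R$. Second, your chain-rule bound $|r^k h^{(k)}|\lesssim c/\kappa$ is stated under the hypothesis $c/\kappa\le 1$; for $c>\kappa$ one simply runs the same construction with $c$ replaced by $\kappa$, which only strengthens (P4)--(P6) while leaving (P1)--(P3) intact. Neither point affects the substance of the argument.
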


For each $\lambda > 0$ we define the operators $\A(\lambda)$ and $\A_0(\lambda)$ as follows:
\begin{align}
[\A(\lambda)g](r) &:= q'\big(\frac{r}{\lambda}\big)\cdot \p_r g(r), \label{eq:opA-wm} \\
[\A_0(\lambda)g](r) &:= \Big(\frac{1}{2\lambda}q''\big(\frac{r}{\lambda}\big) + \frac{1}{2r}q'\big(\frac{r}{\lambda}\big)\Big)g(r) + q'\big(\frac{r}{\lambda}\big)\cdot\p_r g(r). \label{eq:opA0-wm}
\end{align}
Since $q(r) = \frac{1}{2} r^2$ for $r \leq R$, $\A(\lam) g(r) = \frac{1}{\lam} \Lambda g(r)$ and $\A_0(\lam) g(r) = \frac{1}{\lam} \Lambda_0 g(r)$ for $r \leq  \lam R$.  One may intuitively think of $\A(\lam)$ and $\A_0(\lam)$ as truncations of $\frac{1}{\lam} \Lambda$ and $\frac{1}{\lam} \Lambda_0$ to $r \geq \lam R$ which have good boundedness properties.  The following lemma makes this precise.  In what follows, we denote 
\begin{align}
	X:= \left \{ g \in H \mid \frac{g}{r}, \p_r g \in H \right \}.
\end{align}

\begin{lemma} 
	\label{lem:op-A-wm}
	Let $c_0>0$. There exists $c>0$ small enough and $R, \tilde R>0$ large enough in Lemma~\ref{lem:fun-q} such that $\A(\lambda)$ and $\A_0(\lambda)$ defined in~\eqref{eq:opA-wm} and~\eqref{eq:opA0-wm} have the following properties:
	\begin{itemize}[leftmargin=0.5cm]
		\item The families $\{\A(\lambda): \lambda > 0\}$, $\{\A_0(\lam): \lam > 0\}$, $\{\lambda\partial_\lambda \A(\lambda): \lambda > 0\}$
		and $\{\lam\partial_\lam \A_0(\lambda): \lambda > 0\}$ are bounded in $\mathscr{L}(H; L^2)$, where bound depends only on the choice of the function $q(r)$,
		\item %Let $f(\rho):= \frac{k^2}{2}\sin 2 \rho$.  
		For all $\lam > 0$ and $g_1, g_2 \in X$  there holds
		\begin{multline}  \label{eq:A-by-parts-wm}
		\Big| \Bigl \langle  \A(\lam)g_1 ,  \frac{1}{r^2}\big(f(g_1 + g_2) - f(g_1) - f'(g_1)g_2\big) \Big \rangle  \\ +\Big \la \A(\lambda)g_2 , \frac{1}{r^2}\big(f(g_1+g_2) - f(g_1) + g_2\big) \Bigr \ra \Big| 
		\leq \frac{c_0}{\lambda} \|g_2\|_H^2, 
		\end{multline}
		% with a constant $\eps_0$ arbitrarily small,
		\item For all $g \in X$,   %for any $\eps_0 > 0$, if the constants $c$ and $R$ in the definition of $q(r)$ are chosen appropriately, then
		\begin{gather}
			\label{eq:A-pohozaev-wm}
			\Big \la \A_0(\lambda)g , \big(\partial_r^2 + \frac 1r\partial_r - \frac{1}{r^2}\big)g \Bigr \ra \leq \frac{c_0}{\lambda}\|g\|_{H}^2 - \frac{1}{\lambda}\int_0^{R\lambda}\Big((\partial_r g)^2 + \frac{1}{r^2}g^2\Big) r \ud r, 
		\end{gather}
		\item In addition we have the bounds,  
		\begin{gather}
		\label{eq:L-A-wm}
		\|\Lam Q_\uln\lam - \A(\lambda)Q_\lam\|_{L^\infty} \leq \frac{c_0}{\lambda},  \\
		\|\Lam_0 \Lambda Q_\uln\lambda - \A_0(\lam) \Lambda Q_\lambda\|_{L^2} \leq c_0, \label{eq:Al2} 
		\end{gather} 
		and there exist $T = T(c_0, \nu, q) > 0$ and $C = C(\nu,q) > 0$ such that if $t \leq T$ and $\{ r \leq \tilde R \lam \} \subset \{ r \leq t\}$, then for all $g \in H$, 
		\begin{multline}   \label{eq:approx-potential-wm}
		\bigg|\int_0^{\infty}\frac 12 \Big(q''\big(\frac{r}{\lam}\big) + \frac{\lambda}{r}q'\big(\frac{r}{\lambda}\big)\Big)\frac{1}{r^2}\big(f( Q_\lambda + u^* + g) - f(Q_\lam + u^*)- g\big)g \, r \ud r \\
		- \int_0^{\infty} \frac{1}{r^2}\big(f'(Q_\lam)- 1\big)|g|^2 \, r \ud r \bigg| \leq c_0 \|g\|_H^2 + 
		C \| g \|_H^3.  
		\end{multline} 
	\end{itemize}
\end{lemma}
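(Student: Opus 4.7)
The plan is to verify each bullet by combining the structural properties of $q$ from Lemma~\ref{lem:fun-q} with standard virial/Pohozaev manipulations. The boundedness of the four operator families in $\mathscr{L}(H; L^2)$ follows directly from the pointwise bounds $|q'(r/\lambda)|, |q''(r/\lambda)| \lesssim 1$ granted by (P3) together with the Hardy-type inequality $\|g/r\|_{L^2} \lesssim \|g\|_H$ built into the $H$-norm. The $\lambda\partial_\lambda$ derivatives merely replace $q'$ and $q''$ by $-(r/\lambda) q''(r/\lambda)$ and $-(r/\lambda) q'''(r/\lambda)$, which satisfy the same bounds by (P3).

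For the cross identity \eqref{eq:A-by-parts-wm}, the key observation is that the two integrands combine into a total $r$-derivative. Introducing the primitive
\begin{equation*}
E(a,b) := \int_0^{b}\bigl(f(a+s)-f(a)+s\bigr)\,\ud s,
\end{equation*}
a direct check yields $\partial_a E(a,b) = f(a+b) - f(a) - f'(a) b$ and $\partial_b E(a,b) = f(a+b) - f(a) + b$, so that the sum of the two inner products equals $\int_0^\infty (q'(r/\lambda)/r)\, \partial_r E(g_1,g_2)\,\ud r$. Integrating by parts transfers the derivative onto $q'(r/\lambda)/r$, producing the factor $r(q'(r/\lambda)/r)'$ which is bounded by $c/\lambda$ via (P6). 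Combined with the pointwise bound $|E(a,b)| \lesssim b^2$ (from the Lipschitz character of $f$) and Hardy, this gives the required $(c/\lambda)\|g_2\|_H^2$ estimate once $c$ is chosen smaller than $c_0$.

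For the Pohozaev estimate \eqref{eq:A-pohozaev-wm}, I would expand $\A_0(\lambda)g$ into its three components and pair each with $(\partial_r^2 + r^{-1}\partial_r - r^{-2}) g$, integrating by parts systematically. On $\{r \leq R\lambda\}$ one has $\A_0(\lambda) = \Lambda_0/\lambda$, and the exact virial identity $\langle \Lambda_0 g, (\partial_r^2 + r^{-1}\partial_r - r^{-2}) g\rangle = -\|g\|_H^2$ produces the negative principal term $-\lambda^{-1}\int_0^{R\lambda}(|\partial_r g|^2 + g^2/r^2)\,r\,\ud r$. Outside this region, the coefficients appearing in front of $|\partial_r g|^2$, $g^2/r^2$, and the cross-term $(\partial_r g) g/r$ are controlled by $c/\lambda$ using (P4) and (P5), and choosing $c$ small absorbs them into the $c_0/\lambda$ error. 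The approximations \eqref{eq:L-A-wm} and \eqref{eq:Al2} then follow from the fact that $\A(\lambda) Q_\lambda$ and $\A_0(\lambda) \Lambda Q_\lambda$ coincide with $\Lambda Q_{\uln{\lambda}}$ and $\Lambda_0 \Lambda Q_{\uln{\lambda}}$ on $\{r \leq R\lambda\}$, while the tails are governed by the explicit decay $\Lambda Q(s) = 2s/(1+s^2)$ and $\Lambda_0 \Lambda Q(s) = 4s/(1+s^2)^2$ from \eqref{eq:Lam0LamQ}, which become arbitrarily small as $R \to \infty$ (equivalently, as $c \to 0$).

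The main obstacle is the potential-approximation estimate \eqref{eq:approx-potential-wm}. I would Taylor-expand the nonlinearity and use $f'(0) = 1$ to write
\begin{equation*}
f(Q_\lambda + u^* + g) - f(Q_\lambda + u^*) - g = (f'(Q_\lambda) - 1)g + (f'(Q_\lambda + u^*) - f'(Q_\lambda)) g + O(g^2).
\end{equation*}
The coefficient $\tfrac12 (q''(r/\lambda) + (\lambda/r) q'(r/\lambda))$ equals $1$ on $\{r \leq R\lambda\}$ and is uniformly bounded with support in $\{r \leq \tilde R \lambda\}$, so the principal quadratic term $\int r^{-2}(f'(Q_\lambda) - 1) g^2\, r\,\ud r$ is reproduced exactly inside $R\lambda$, with the tail controlled by the decay $|f'(Q_\lambda) - 1| \lesssim \lambda^2/r^2$ for $r \gg \lambda$ and absorbed into $c_0 \|g\|_H^2$ provided $R$ is large. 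The cross term involving $u^*$ is estimated using Corollary~\ref{l:ustar_app}: the hypothesis $\{r \leq \tilde R \lambda\} \subset \{r \leq t\}$ places $u^*$ inside the light cone, where $|u^*(t, r)| \lesssim r\abs{t}^{\nu-1}$, yielding an $O(\abs{t}^{\nu-1} \|g\|_H^2)$ contribution that is absorbed into $c_0 \|g\|_H^2$ for $t \leq T(c_0, \nu, q)$ sufficiently small. Finally, the cubic error $O(g^2) \cdot g$ is bounded by $C\|g\|_H^3$ via the embedding \eqref{eq:infty_estimate} and Hardy.
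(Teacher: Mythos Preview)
Your proposal is correct and follows essentially the same route as the paper. The paper itself defers all statements except \eqref{eq:approx-potential-wm} to \cite[Lemma~5.5]{JJ-AJM}, and your sketches of those items (the primitive $E(a,b)$ and (P6) for \eqref{eq:A-by-parts-wm}, the virial computation with (P4)--(P5) for \eqref{eq:A-pohozaev-wm}, the tail-decay of $\Lambda Q$ for \eqref{eq:L-A-wm}--\eqref{eq:Al2}) are exactly the standard arguments used there. For \eqref{eq:approx-potential-wm} your Taylor decomposition into $(f'(Q_\lambda)-1)g + (f'(Q_\lambda+u^*)-f'(Q_\lambda))g + O(g^2)$ is a mild reorganization of the paper's exact trigonometric expansion, and the three resulting pieces are handled identically: the $u^*$-cross term via Corollary~\ref{l:ustar_app} inside the light cone (the paper uses the cruder bound $|u^*|\lesssim t^\nu$ rather than $r t^{\nu-1}$, which more directly gives $O(t^\nu\|g\|_H^2)$ after Hardy), the cubic remainder via $\|g\|_{L^\infty}\lesssim\|g\|_H$, and the principal term by splitting at $r=R\lambda$ and using the decay $|1-\cos 2Q|\to 0$ as $r\to\infty$.
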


\begin{proof}
For the proofs of all statements except \eqref{eq:approx-potential-wm} we refer the reader to \cite[Lemma 5.5]{JJ-AJM}.  By trigonometric identities we compute 
\begin{align}
f(Q_\lam + u^* + g) - f(Q_\lam + u^*) - g 
&=  -\sin (2 Q_\lam + 2 u^*) \sin^2 g \\&\quad - \bigl ( g - \frac{1}{2} \sin 2 g \bigr ) \cos (2 Q_\lam + 2 u^*) \\
&\quad - g\cos 2 Q_\lam (1-\cos 2u^*) - g \sin 2 Q_\lam \sin 2 u^* \\&\quad - (1 - \cos 2 Q_\lam)g, \\
f'(Q_\lam) - 1 &= \cos 2 Q_\lam -1.     
\end{align}
By Corollary \ref{l:ustar_app} and our assumption $\{ r \leq \tilde R \lam \} \subset \{r \leq t\}$, we have the pointwise estimate 
\begin{align}
|1 - \cos 2u^*| + |\sin 2 u^*| \lesssim t^{\nu},
\end{align}
which along with \ref{enum:gradlap-q} imply 
\begin{align} 
\begin{split}
\bigg|\int_0^{\infty}\frac 12 \Big(q''\big(\frac{r}{\lambda}\big) + \frac{\lambda}{r}q'\big(\frac{r}{\lambda}\big)\Big)\frac{1}{r^2}\big(f( Q_\lambda + u^* + g) - f(Q_\lambda + u^*)-g\big)g \, r \ud r \\
- \int_0^{\infty} \frac 12 \Big(q''\big(\frac{r}{\lambda}\big) + \frac{\lambda}{r}q'\big(\frac{r}{\lambda}\big)\Big)\frac{1}{r^2}( \cos 2 Q_\lam-1)|g|^2 r \ud r \bigg| \leq \frac{c_0}{2} \|g\|_H^2 + C \| g \|^3_H
\end{split}\label{eq:enum_est1}
\end{align}
as long as $T$ is sufficiently small. Now, for $r \leq \lam R$, we have by 
\ref{enum:approx-q}  
\begin{align} \frac 12 \Big(q''\big(\frac{r}{\lambda}\big) + \frac{\lambda}{r}q'\big(\frac{r}{\lambda}\big)\Big)\frac{1}{r^2}( \cos 2 Q_\lam-1) = \frac{1}{r^2} ( \cos 2Q_\lam-1). 
\end{align}
Thus, by \ref{enum:gradlap-q}
\begin{align}
\begin{split}
\bigg|\int_0^{\infty}\frac 12 \Big(q''\big(\frac{r}{\lambda}\big) + \frac{\lambda}{r}q'\big(\frac{r}{\lambda}\big)\Big)&\frac{1}{r^2}( \cos 2Q_\lam-1)|g|^2 r \ud r 
- \int_0^{\infty} \frac{1}{r^2}( \cos 2 Q_\lam-1)|g|^2 r \ud r\bigg| \\
&\lesssim \int_{R\lam}^\infty |1 - \cos 2Q_\lam| |g|^2 \frac{\ud r}{r} \\
&\lesssim \| g \|_H^2 \sup_{r \geq R} |1 - \cos 2Q| \\
&\leq \frac{c_0}{2} \| g \|_H^2
\end{split}\label{eq:enum_est2}
\end{align}
as long as $R$ is sufficiently large. Combining \eqref{eq:enum_est1} and \eqref{eq:enum_est2} yields the claim.  
\end{proof}

We now define a second auxiliary function $b(t)$ by   
\begin{align}
b(t) &:= - \int_0^{t} \Lambda Q_{\uln{\lam(t)}} \dot g(t) \, r \ud r  - \la \dot g(t), \A_0(\lambda(t)) g(t)\ra. \label{eq:bdef}
\end{align}
 As stated previously, we will  show below that we can think of $b(t)$ as a subtle monotonic correction to the derivative $\zeta'(t)$.
 We note the error term $\bs g(t)$ satisfies the differential equation
 \begin{align}
 \begin{split}
 \partial_t g &= \dot g + \lambda'\Lambda Q_\uln\lambda, \\
 \partial_t \dot g &= \Delta g - \frac{1}{r^2}\big(f(u^* + Q_\lambda + g) - f(u^*) - f(Q_\lambda)\big),
 \end{split}\label{eq:g_equations}
 \end{align}
 where $\Delta = \frac{1}{r} \p_r ( r \p_r \cdot )$ and $f(u) = \frac{1}{2} \sin 2u$. Moreover, the assumption $\bs u(t,r) = \bs u^*(t,r)$ for all $r \geq t$ implies 
 \begin{align}
 \bs g(t,r) = (\pi - Q_{\uln{\lam(t)}}(r), 0 ), \quad \forall r \geq t. 
 \end{align}

\begin{proposition}[Modulation Control Part 2]
\label{p:modp2} 
Assume the same hypothesis as in  Proposition~\ref{p:modp}. Let $\delta>0$
be arbitrary and let $\eta_0$ be as in Lemma~\ref{l:modeq}. 
Let $\zeta(t), b(t)$ be as in~\eqref{eq:zetadef}, \eqref{eq:bdef}. In addition, assume there exists a constant $\alpha > 0$ such that for all $t \in J$ 
\begin{align}\label{eq:lam_size_assumption1}
\frac{\lam(t)}{t} < \alpha
\end{align}
Then there exist $\eta_1 = \eta_1(\nu,q, \delta) < \eta_0$ and a constant $C_0 = C_0(\nu,q,\alpha) > 0$ such that
if $$\| \bs g(t) \|_{\HH}^2 + \alpha + \sup J \le \eta_1, \quad \forall t \in J,$$ then
\begin{align}
\left | \frac{\zeta(t)}{4 \lam(t)\log \frac{t}{\lam(t)}} - 1 \right | &\leq C_0 \frac{t}{\lam(t) \log \frac{t}{\lam(t)}} \| \bs g(t) \|_{\mathcal H} \label{eq:bound-on-l}, \\
|b(t)| &\leq 
 ( 4 + \delta )^{\frac{1}{2}} \left (\log \frac{t}{\lam(t)} \right )^{\frac{1}{2}} \| \dot g(t) \|_{L^2} + 
C_0 \| \bs g(t) \|_{\mathcal H}^2
\label{eq:b-bound}, \\
|\zeta'(t) - b(t) |&\le C_0  \left (
\| \dot g(t) \|_{L^2} + 
 \frac{\lam(t)}{t}\right ).  \label{eq:kala'}
\end{align}
In addition, $b(t)$ is locally Lipschitz and the derivative $b'(t)$ satisfies
\begin{align}
|b'(t)| &\leq C_0 \left ( t^{\nu-1} + \frac{\lam(t)}{t^2} + \frac{1}{\lam} \| \bs g(t) \|^2_{\cl H} \right ), \\
b'(t) &\ge (4p|q| - \delta) t^{\nu-1} - C_0 \frac{\lam(t)}{t^2} - \delta \frac{1}{\lam} \| \bs g(t) \|_{\cl H}^2. \label{eq:b'lb}  
\end{align} 
\end{proposition}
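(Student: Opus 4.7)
The plan is to establish the four bounds in the order stated. The first three follow from direct estimation using scaling identities and integration by parts, while the fourth---the analysis of $b'(t)$---is the core of the argument and is responsible for identifying the blow-up rate. For~\eqref{eq:bound-on-l} and~\eqref{eq:b-bound}, we use Cauchy--Schwarz together with the rescaled form of the exact asymptotic~\eqref{eq:LaQL2}, namely $\|\Lambda Q_{\uln \lam}\|_{L^2(\{r\le t\})}^2 = 4\log(t/\lam) + O(1)$, the Hardy-type bound $\|g\|_{L^2(\{r \le t\})}\lesssim t \|g\|_H$, and the operator bound $\|\A_0(\lam)g\|_{L^2}\lesssim \|g\|_H$ from Lemma~\ref{lem:op-A-wm}. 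This handles the two pieces of $\zeta(t) - 4\lam\log(t/\lam)$ and $b(t)$ directly; the constant $4$ in~\eqref{eq:b-bound} is sharp, while the slack $\delta$ absorbs the $O(1)$ error term once $\sup J$ and $\alpha$ are chosen small enough.

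For~\eqref{eq:kala'}, differentiate $\zeta$ using the first row of~\eqref{eq:g_equations} and carefully handle the boundary contribution at the moving endpoint $r=t$. Since $g(t,t) = \pi - Q_\lam(t) = 2\arctan(\lam/t) = 2\lam/t + O((\lam/t)^3)$ and $\Lambda Q_{\uln \lam}(t) = 2t/(\lam^2+t^2)$, the boundary piece contributes exactly $4\lam/t + O(\lam^3/t^3)$, which cancels the $4\lam/t$ produced by differentiating $4\lam\log(t/\lam)$. A $4\lam'\log(t/\lam)$ term generated by $\lam'\int_0^t |\Lambda Q_{\uln \lam}|^2 \, r\ud r$ via~\eqref{eq:LaQL2} cancels against the same term arising from the explicit derivative, leaving
\begin{align}
\zeta'(t) - b(t) = \la \A_0(\lam)g, \dot g\ra + O(|\lam'|),
\end{align}
which is controlled by $\|\dot g\|_{L^2}$ via the preliminary bound~\eqref{eq:la'}; the $O(\lam/t)$ slack in~\eqref{eq:kala'} absorbs the remaining light-cone boundary errors.

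The core of the proof is the analysis of $b'(t)$. Differentiating the first piece of $b$ in~\eqref{eq:bdef}, the boundary at $r=t$ vanishes since $\dot g(t,t) = 0$. Substituting the equation for $\p_t \dot g$ from~\eqref{eq:g_equations}, integrating by parts the Laplacian and using $\LL_\lam \Lambda Q_\lam = 0$, i.e.\ $\Delta \Lambda Q_{\uln \lam} = r^{-2}\cos(2Q_\lam)\Lambda Q_{\uln \lam}$, the IBP surface terms at $r=t$ cancel to order $O(\lam^3/t^3)$ by explicit substitution of the formulas for $Q_\lam$. After Taylor expanding the nonlinearity and exploiting the identity $\sin(Q_\lam) = \Lambda Q_\lam$, the linear-in-$g$ contributions disappear, and the dominant surviving term is the quadratic interaction
\begin{align}
\int_0^t \Lambda Q_{\uln \lam}\, \frac{2(\Lambda Q_\lam)^2}{r^2}\, u^*(t,r)\, r\ud r.
\end{align}
Inserting $u^*(t,r) = qp\,t^{\nu-1} r + O(r^2 t^{\nu-2})$ from Corollary~\ref{l:ustar_app} and computing the elementary integral $\int_0^\infty s^3(1+s^2)^{-3}\,\ud s = 1/4$ identifies this quantity as $4qp\,t^{\nu-1} + o(t^{\nu-1})$.

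The hardest step, and the reason for introducing the virial correction $-\la \dot g, \A_0(\lam) g\ra$ in the definition of $b$, is showing that all the remaining contributions to $b'$ fit into $O(\lam/t^2) + O(\lam^{-1}\|\bs g\|_\HH^2)$. Differentiating the correction produces several quadratic forms in $g$ of critical size $\lam^{-1}\|g\|_H^2$ and indeterminate sign---precisely the terms that would obstruct any lower bound on $b'$. These are compensated by the identities~\eqref{eq:A-by-parts-wm} and~\eqref{eq:A-pohozaev-wm} together with the potential approximation~\eqref{eq:approx-potential-wm} of Lemma~\ref{lem:op-A-wm}, at the cost of an error $c_0\lam^{-1}\|g\|_H^2$ with $c_0$ arbitrarily small. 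Taking $c_0 < \delta/C$, using $|\lam'|\lesssim \|\dot g\|_{L^2}$ and Young's inequality to dispose of mixed terms such as $(\lam'/\lam)\la \,\cdot\,, g\ra$, and absorbing the subleading corrections from $u^*$ into $O(\lam/t^2)$, we arrive at $b'(t) = -4qp\,t^{\nu-1} + O(\lam/t^2) + O(\lam^{-1}\|\bs g\|_\HH^2)$. Since $q<0$ in the relevant regime of Theorem~\ref{t:main1}, $-4qp = 4p|q|$, yielding both~\eqref{eq:b'lb} and the upper bound on $|b'|$. The principal technical obstacle is precisely this fourth step: the truncated virial operator $\A_0(\lam)$ is designed to cancel indefinite-sign critical terms, and keeping track of all such cancellations along with the multiple boundary contributions on $\{r=t\}$ is what makes the calculation delicate.
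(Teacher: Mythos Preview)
Your proposal follows the same overall architecture as the paper's proof: direct estimation for~\eqref{eq:bound-on-l}--\eqref{eq:kala'}, and the virial--correction machinery of Lemma~\ref{lem:op-A-wm} for the analysis of $b'(t)$. Two points deserve attention.

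First, your Cauchy--Schwarz approach to~\eqref{eq:bound-on-l} does not give the stated bound. Using $\|\Lambda Q_{\uln\lam}\|_{L^2(r\le t)}\simeq (\log(t/\lam))^{1/2}$ and $\|g\|_{L^2(r\le t)}\lesssim t\|g\|_H$ yields only
\[
\Bigl|\frac{\zeta}{4\lam\log(t/\lam)}-1\Bigr|\lesssim \frac{t}{\lam(\log(t/\lam))^{1/2}}\|g\|_H,
\]
which is weaker by a factor $(\log(t/\lam))^{1/2}$ than what is claimed, and in fact insufficient for the application in Lemma~\ref{p:modp3}. The paper instead uses $L^1$--$L^\infty$: since $\int_0^{t/\lam}|\Lambda Q|\,r\,\ud r\lesssim t/\lam$ and $\|g\|_{L^\infty}\lesssim\|g\|_H$, one gets $|\int_0^t\Lambda Q_{\uln\lam}g\,r\,\ud r|\lesssim t\|g\|_H$ directly.

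Second, in the lower bound for $b'$ you correctly invoke~\eqref{eq:A-pohozaev-wm} and~\eqref{eq:approx-potential-wm}, but these alone do not give an error of size $c_0\lam^{-1}\|g\|_H^2$ with $c_0$ small. After applying them one is left with
\[
\frac{1}{\lam}\int_0^{R\lam}\Bigl(|\partial_r g|^2+\frac{|g|^2}{r^2}\Bigr)r\,\ud r+\frac{1}{\lam}\int_0^\infty\frac{f'(Q_\lam)-1}{r^2}|g|^2\,r\,\ud r,
\]
and the second integral is negative and of size $\lam^{-1}\|g\|_H^2$. It is the \emph{localized coercivity} (a consequence of the orthogonality $\la\calZ_{\uln\lam},g\ra=0$, see~\cite[Lemma~5.4]{JJ-AJM}) that bounds this sum from below by $-c_1\lam^{-1}\|g\|_H^2$ with $c_1\to 0$ as $R\to\infty$. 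This ingredient is what allows the coefficient on $\lam^{-1}\|\bs g\|_\HH^2$ in~\eqref{eq:b'lb} to be the small $\delta$ rather than some fixed $C_0$, which is essential downstream. You should make this step explicit.
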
 

\begin{proof}
To prove the first estimate, we note
	 \begin{align*}
\left | 
\int_0^t \Lambda Q_{\uln \lam} g \, r \ud r \right |\lesssim \lam \| g \|_{L^\infty(r \leq t)} 
\int_0^{t/\lam} |\Lambda Q| \, r\ud r \lesssim t \| g \|_{L^\infty(r \leq t)}
	 \end{align*}
	and conclude  
	 \begin{align*}
	 \frac{1}{4 \lam \log (t/\lam)}\left | 
	 \int_0^t \Lambda Q_{\uln \lam} g \, r \ud r \right |
	 \lesssim \frac{t}{\lam \log (t/\lam)} \| g \|_{H(r \leq t)}
	 \end{align*}
 which proves \eqref{eq:bound-on-l}.

To prove \eqref{eq:b-bound}, we first note  
\begin{align}
\left | 
\la \dot g , \mathcal A_0(\lam) g \ra 
\right | \leq C \| \dot g \|_{L^2} \| g \|_{H} \leq C \| \bs g \|_{\mathcal H}^2. \label{eq:b-bound1}
\end{align}
By Cauchy-Schwarz and a change of variables we have 
\begin{align}
\left | \int_0^t \Lambda Q_{\uln \lam} \dot g \, r \ud r \right |
\leq \| \Lambda Q \|_{L^2(r \leq t/\lam)} \| \dot g \|_{L^2(r \leq t)}. \label{eq:b-bound3}
\end{align} 
Since 
\begin{align}
\| \Lambda Q \|_{L^2(r \leq t/\lam)}^2 = 4 \log ( t/\lam ) + O(1), \label{eq:lamQL2}
\end{align} 
\eqref{eq:b-bound1} and \eqref{eq:b-bound3} imply \eqref{eq:b-bound} as long as $\eta_1$ is sufficiently small.

We now turn to \eqref{eq:kala'}.  We have 
\begin{align*}
\zeta' &= 4 \lam' \log \frac{t}{\lam} + 4 \frac{\lam}{t} - 4 \lam' - \frac{\ud}{\ud t} \int_0^t \Lambda Q_{\uln \lam}  g \, r \ud r \\
&= 4 \lam' \log \frac{t}{\lam} + 4 \frac{\lam}{t} - 4 \lam' -  \int_0^t  \Lambda Q_{\uln \lam}  \dot g \, r \ud r - \lam' \int_0^t |\Lambda Q_{\uln \lam} |^2 \, r \ud r  \\
&\,-\frac{\lam'}{\lam}
\int_0^t |\Lambda_0 \Lambda Q]_{\uln \lam} g \, r \ud r +
\Lambda Q_{\uln \lam} g \,r \big |_{r = t}. 
\end{align*}
By \eqref{eq:lamQL2} and the definition of $b$ it follows     
\begin{align}
|\zeta'(t) - b(t)| &\lesssim |\lam'|
\Bigl (
1 + |\la \dot g , \mathcal A_0(\lam) g \ra | 
 + \frac{1}{\lam} 
\int_0^t |[\Lambda_0 \Lambda Q]_{\uln \lam}| |g| \, r \ud r \Bigr ) \\
&\qquad+ \frac{\lam}{t} + 
\Bigl | \Lambda Q_{\uln \lam} g \,r \big |_{r = t} \Bigr |.
\end{align}
Since, for all $t \in J$, 
\begin{align}
\| \bs g(t) \|_{\cl H} &\lesssim 1, \\
|\lam'| &\lesssim \| \dot g \|_{L^2},
\end{align}
and $\int_0^{t} |\Lambda_0 \Lambda Q | \, r \ud r \lesssim 1$, we conclude  
\begin{align}
|\la \dot g , \mathcal A_0(\lam) g \ra | + \frac{1}{\lam} 
\int_0^t |[\Lambda_0 \Lambda Q]_{\uln \lam}| |g| \, r \ud r \lesssim 
\| \bs g \|_{\cl H}^2 + \| g \|_{L^\infty} \int_0^{t} |\Lambda_0 \Lambda Q | \, r \ud r \lesssim 1. 
\end{align}
Since $g(t,t) = \pi - Q_{\lam(t)}(t) = O(\lam(t) t^{-1})$, 
\begin{align*}
\left |
\Lambda Q_{\uln \lam} g\, r \Big |_{r = t} 
\right | \lesssim \frac{\lam}{t} 
\end{align*}
Thus,
\begin{align}
|\zeta' - b| \lesssim 
\| \dot g \|_{L^2}
 + \frac{\lam}{t}
\end{align}
as long as $\eta_1$ is sufficiently small. This proves \eqref{eq:kala'}.

We now turn to the delicate proof of \eqref{eq:b'lb}. To lessen notation, we denote 
\begin{align}
\la h_1, h_2 \ra_{loc} = \int_0^t h_1(r) h_2(r) \, r \ud r
\end{align}
We have 
\begin{align}
\begin{split}\label{eq:b'1}
b'(t) &= \frac{\lam'}{\lam} \la [\Lambda_0 \Lambda Q]_{\uln \lam} , \dot g \ra_{loc} - \la \Lambda Q_{\uln \lam} , \partial_t \dot g \ra_{loc} - \la \p_t \dot g , \A_0(\lam) g \ra \\
&\quad-\frac{\lam'}{\lam} \la \dot g , \lam \p_{\lam} \A_0(\lam) g \ra - 
\la \dot g , \A_0(\lam) \p_t g \ra - \Lambda Q_{\uln \lam} \dot g \big |_{r = t}  \\
&= \frac{\lam'}{\lam} \la [\Lambda_0 \Lambda Q]_{\uln \lam} , \dot g \ra_{loc} \\
&\quad- \left \la \Lambda Q_{\uln \lam} ,
\Delta g - \frac{1}{r^2} (f(u^* + Q_\lam + g) - f(u^*) - f(Q_\lam))
\right  \ra_{loc} \\
&\quad- \left \la \Delta g - \frac{1}{r^2} (f(u^* + Q_\lam + g) - f(u^*) - f(Q_\lam)) , \A_0(\lam) g \right \ra_{loc} \\
&\quad-\frac{\lam'}{\lam} \la \dot g , \lam \p_{\lam} \A_0(\lam) g \ra - 
\la \dot g , \A_0(\lam) \dot g \ra - 
\lam' \left \la \dot g , \A_0(\lam)  \Lambda Q_{\uln \lam} \right \ra. 
\end{split}
\end{align}
To pass from the first equality to the second, we used $\bs g(t,r) = \bs (\pi - Q_{\lam(t)}, 0)$ for all $r \geq t$ and \eqref{eq:g_equations}. 

Since $\dot g(t,r) = 0$ for $r \geq t$, we have 
\begin{align}
\left \la \dot g , \A_0(\lam)  \Lambda Q_{\uln \lam} \right \ra
= \left \la \dot g , \A_0(\lam)  \Lambda Q_{\uln \lam} \right \ra_{loc}
\end{align}
Thus by \eqref{eq:Al2} we have the first and sixth terms combined satisfy 
\begin{align}
\frac{|\lam'|}{\lam} \Big | \la [\Lambda_0 \Lambda Q]_{\uln \lam} - \cl A_0(\lam) \Lambda Q_\lam , \dot g \ra_{loc} \Bigr |
&\leq  \frac{|\lam'|}{\lam} \| \dot g \|_{L^2} \| [\Lambda_0 \Lambda Q]_{\uln \lam} - \cl A_0(\lam) Q_\lam \|_{L^2(r \leq t)} \\&\lesssim c_0 \lam^{-1} \| \dot g \|_{L^2}^2
\leq \frac{\delta}{100} \lam^{-1} \| \bs g \|_{\cl H}^2     
\end{align}
as long as $c_0$ is chosen small enough. 
Again by Lemma \ref{lem:op-A-wm} the fourth term appearing in $b'$ satisfies 
\begin{align*}
\frac{|\lam'|}{\lam} \left |
\la \dot g , \lam \p_{\lam} \A_0(\lam) g \ra
\right | \lesssim \frac{|\lam'|}{\lam} \| \bs g \|_{\cl H}^2 \leq 
\lam^{-1} \| \bs g \|_{\cl H}^3 \leq \frac{\delta}{100} \lam^{-1} \| \bs g \|_{\cl H}^2,
\end{align*} 
as long as $\eta_1$ is sufficiently small. 
Since $(\Delta - \frac{1}{r^2} f'(Q_\lam) ) \Lambda Q_{\uln \lam}  = 0$ we have by integration by parts  
\begin{align}
\la \Lambda Q_{\uln \lam} , \Delta g - \frac{1}{r^2} f'(Q_\lam) g \ra_{loc} 
&= r \Lambda Q_{\uln \lam} \p_r g \big |_{r = t} - r \p_r \Lambda Q_{\uln \lam} \, g \big |_{r = t}.
\end{align}
Since  
$g(t,r) = \pi - Q_{\lam(t)}(r)$ for all $r \geq t$, we conclude  
\begin{align}
\left | r
\Lambda Q_{\uln \lam} \p_r g \big |_{r = t} - r
\p_r \Lambda Q_{\uln \lam} \, g \big |_{r = t}
\right | &\lesssim t^{-1} |g(t,t)| + |(\p_r g)(t,t)| \\
&\lesssim  \lam t^{-2}
\end{align}
as long as $\eta_1$ is sufficiently small. 
We denote 
\begin{align*}
I(t) &:=  \Bigl \la \Lambda Q_{\uln \lam} ,
\frac{1}{r^2} (f(u^* + Q_\lam + g) - f(u^*) - f(Q_\lam) - f'(Q_\lam) g)
\Bigr  \ra_{loc} \\
&\quad- \Bigl \la \A_0(\lam) g , \Delta g - 
\frac{1}{r^2} (f(u^* + Q_\lam + g) - f(u^*) - f(Q_\lam)
\Bigr \ra_{loc}.
\end{align*}
Then up to this point, we have proved  
\begin{align}
\bigl | b'(t) - I(t) \bigr | \leq  C_0 \frac{\lam(t)}{t^2} + \frac{\delta}{50} \lam^{-1} \| \bs g(t) \|_{L^2}^2 
\end{align}
We now turn to the study of $I(t)$.  

We write 
\begin{align*}
I(t) &=  \Bigl \la \Lambda Q_{\uln \lam} ,
\frac{1}{r^2} ( f(Q_\lam + u^*) - f(Q_\lam) - f(u^*) ) \Bigr \ra_{loc} \\
&\quad +  \Bigl \la \Lambda Q_{\uln \lam} ,
\frac{1}{r^2} ( f'(Q_\lam + u^*)g - f'(Q_\lam)g ) \Bigr \ra_{loc} \\
&\quad + \Bigl \la \Lambda Q_{\uln \lam} ,
 \frac{1}{r^2} ( f(Q_\lam + u^* + g) - f(Q_\lam + u^*) - f'(Q_\lam + u^*)g ) \Bigr \ra_{loc}  \\
&\quad - \Bigl \la \A_0(\lam) g , \Delta g - 
\frac{1}{r^2} (f(u^* + Q_\lam + g) - f(u^*) - f(Q_\lam)
\Bigr \ra_{loc}.
\end{align*}

We claim the first term appearing in the expression for $I(t)$ contains the leading order: 
\begin{align}\label{eq:bder_leading}
\left | \Bigl \la \Lambda Q_{\uln \lam} ,
\frac{1}{r^2} ( f(Q_\lam + u^*) - f(Q_\lam) - f(u^*) ) \Bigr \ra_{loc}
+ 4pq t^{\nu-1} \right | \leq C_0 \lam t^{-2} + \frac{8 p |q| \delta}{100} t^{\nu-1}. 
\end{align}
By trigonometric identities and the fact that $\sin Q_\lam = \Lambda Q_\lam$ we have 
\begin{align}
(f(Q_\lam + u^*) - f(Q_\lam) - f(u^*)) = -\sin 2 Q_\lam \sin^2 u^* - \sin 2 u^*
| Q_\lam|^2. \label{eq:trig_exp}
\end{align}
The first term appearing in \eqref{eq:trig_exp} contributes to the $L^2$ pairing 
\begin{align}
\left |
\Bigl \la \Lambda Q_{\uln \lam} , \frac{\sin 2Q_\lam \sin^2 u^*}{r^2} \Big \ra_{loc}
\right |
&\lesssim \frac{1}{\lam} \int_0^t |\Lambda Q_{\lam}|^2 |u^*|^2 \frac{\ud r}{r} \\
&\lesssim \frac{1}{\lam} \int_0^t |\Lambda Q_{\lam}|^2 r^2 t^{2\nu-2} \, \frac{\ud r}{r} \\
&\lesssim t^{2\nu-1} \int_0^{t/\lam} |\Lambda Q|^2 \ud r \\
&\leq \frac{4 p|q| \delta}{100} t^{\nu-1} \label{eq:leading2}
\end{align}
as long as $\eta_1$ is sufficiently small. By Lemma \ref{l:ustar_app} we have for all $r \leq t$
\begin{align*}
-\sin 2 u^*(t,r) = -2u^*(t,r) + O(|u^*(t,r)|^3) = 
-2 q p t^{\nu-1} r + O(r^3 t^{\nu-3} + rt^{3\nu-1}).  
\end{align*}
Thus,  
\begin{align}
\Bigl | \Bigl \la 
\Lambda Q_{\uln \lam} , -\frac{\sin 2 u^* |\Lambda Q_\lam|^2}{r^2} 
\Bigr \ra_{loc} &+ 
2pq t^{\nu-1} \int_0^t |\Lambda Q_{\lam}|^3 \, \frac{\ud r}{\lam}
\Bigr | \\&\lesssim \frac{1}{\lam} t^{\nu-3} \int_0^t |\Lambda Q_{\lam}|^3 \, r^2 \ud r
+ t^{3\nu-1} \int_0^t |\Lambda Q_{\lam}|^3 \, \frac{\ud r}{\lam} \\
&\lesssim \lam t^{\nu-2} \int_0^{t/\lam} |\Lambda Q|^3 r \ud r + t^{3\nu-1} \\
&\leq C \lam t^{-2} + \frac{4p|q|}{100} t^{\nu-1} 
\end{align}
Since $\int_0^\infty |\Lambda Q|^3 r \ud r = 2$ and 
\begin{align}
\int_{t}^\infty |\Lambda Q_{\lam}|^3 \frac{\ud r}{\lam} = 
\int_{t/\lam}^\infty |\Lambda Q|^3 \, \ud r 
\lesssim 
\lam^2 t^{-2}
\end{align}
as long as $\eta_1$ is sufficiently small, we conclude  
\begin{align}
\left | \Bigl \la \Lambda Q_{\uln \lam} , -\frac{\sin 2 u^* |\Lambda Q_\lam|^2}{r^2} 
\Bigr \ra_{loc}  + 
4pq t^{\nu-1}
\right | \leq C_0 \lam t^{-2} + \frac{8 p |q| \delta}{100} t^{\nu-1}.  \label{eq:leading} 
\end{align}
By \eqref{eq:trig_exp}, \eqref{eq:leading2} and \eqref{eq:leading} we obtain \eqref{eq:bder_leading}. 

We now introduce some notation.  Until the end of the proof, we write $A \simeq B$ if $A = B + O(\lam t^{-2}) + o(1)(t^{\nu-1} + \lam^{-1} \| \bs g(t) \|_{\cl H}^2)$, where $o(1) \rar 0$ uniformly for $t \leq \eta_1$ as $\eta_1 \rar 0$.  Thus, up to this point in the argument, we have shown  
\begin{align} 
b'(t) + 4 q p t^{\nu-1} \simeq
& \Bigl \la \Lambda Q_{\uln \lam} ,
\frac{1}{r^2} ( f(Q_\lam + u^* + g) - f(Q_\lam + u^*) - f'(Q_\lam + u^*)g ) \Bigr \ra_{loc}  \label{eq:first_term} \\
&- \Bigl \la \A_0(\lam) g , \Delta g - 
\frac{1}{r^2} (f(u^* + Q_\lam + g) - f(u^*) - f(Q_\lam)
\Bigr \ra_{loc}. \label{eq:second_term}
\end{align}
We rewrite \eqref{eq:first_term} as 
\EQ{
\Bigl \la& \Lambda Q_{\uln \lam} \,,\, 
\frac{1}{r^2} ( f(Q_\lam + u^* + g) - f(Q_\lam + u^*) - f'(Q_\lam + u^*)g ) \Bigr \ra \\
&= - \Bigl \la 
\cl A(\lam) g \,,\, \frac{1}{r^2} (f(Q_\lam + u^* + g) - f(Q_\lam + u^*) + g) 
\Bigr \ra \\
&\quad + \Bigl \la 
\cl A(\lam) g \,,\, \frac{1}{r^2} (f(Q_\lam + u^* + g) - f(Q_\lam + u^*) + g) 
\Bigr \ra \\
 &\quad + \Bigl \la 
\cl A(\lam) (Q_\lam + u^*) \,,\, \frac{1}{r^2} (f(Q_\lam + u^* + g) - f(Q_\lam + u^*) - f'(Q_\lam + u^*) g ) 
\Bigr \ra \\
&\quad - \Bigl \la 
\cl A(\lam) u^* \,,\, \frac{1}{r^2} (f(Q_\lam + u^* + g) - f(Q_\lam + u^*)
- f'(Q_\lam + u^*) g )\Bigr \ra \\
&\quad + \Bigl \la 
\Lambda Q_\lam - \cl A(\lam) Q_\lam \,,\, \frac{1}{r^2} (f(Q_\lam + u^* + g) - f(Q_\lam + u^*) - f'(Q_\lam + u^*) g 
\Bigr \ra_{loc}. 
\label{eq:A_expansion}
}
Here we used $\la \cl A(\lam) f, h \ra = \la \cl A(\lam) f, h \ra_{loc}$ for all $f$ and $h$ as long as $\eta_1$ is sufficiently small. 
By \eqref{eq:A-by-parts-wm} with $g_1 = Q_\lam + u^*$ and $g_2 = g$, the second and third terms right of the equal sign in \eqref{eq:A_expansion} satisfy
\begin{gather}
\Bigl |
\Bigl \la 
\cl A(\lam) g \,,\, \frac{1}{r^2} (f(Q_\lam + u^* + g) - f(Q_\lam + u^*) + g) 
\Bigr \ra_{loc} \\
+ \Bigl \la 
\cl A(\lam) (Q_\lam + u^*) \,,\, \frac{1}{r^2} (f(Q_\lam + u^* + g) - f(Q_\lam + u^*) - f'(Q_\lam + u^*) g ) \Bigr | \\ \leq \frac{c_0}{\lam} \| g \|_H^2 
\ll \frac{1}{\lam} \| g \|_{H}^2, 
\end{gather}
as long as $c_0$ is sufficiently small. The pointwise estimate 
\begin{gather}
|f(Q_\lam + u^* + g) - f(Q_\lam + u^*) - f'(Q_\lam + u^*) g | \lesssim |g|^2 
\end{gather}
implies the second to last line of \eqref{eq:A_expansion} satisfies 
\begin{align}
\Bigl | 
\Bigl \la 
\cl A(\lam) u^* \,,\, \frac{1}{r^2} &(f(Q_\lam + u^* + g) - f(Q_\lam + u^*)
- f'(Q_\lam + u^*) g )\Bigr \ra \Bigr | \\&\lesssim \| \cl A(\lam) u^* \|_{L^\infty} 
\| g \|_{H}^2 \ll \frac{1}{\lam} \| g \|_H^2 
\end{align}
as long as $\eta_1$ is sufficiently small. 
In the last estimate, we used $\| \cl A(\lam) u^* \|_{L^\infty} \lesssim 
\| \p_r u^* \|_{L^\infty} \lesssim 1$ which follows easily from the definition of $\cl A(\lam)$ and Lemma \ref{lem:fun-q}.  Using \eqref{eq:L-A-wm}, we can estimate the last line of \eqref{eq:A_expansion} similarly by 
\begin{gather}
\Bigl |
\Bigl \la 
\Lambda Q_\lam - \cl A(\lam) Q_\lam \,,\, \frac{1}{r^2} (f(Q_\lam + u^* + g) - f(Q_\lam + u^*) - f'(Q_\lam + u^*) g 
\Bigr \ra_{loc}
\Bigr | \\ \lesssim \frac{c_0}{\lam} \| g \|_H^2 \ll \frac{1}{\lam} \| g \|_{H}^2
\end{gather} 
as long as $c_0$ is sufficiently small.  In summary, we have shown  
\begin{gather}
\Bigl \la \Lambda Q_{\uln \lam} \,,\,
\frac{1}{r^2} ( f(Q_\lam + u^* + g) - f(Q_\lam + u^*) - f'(Q_\lam + u^*)g ) \Bigr \ra \\
\simeq -  \Bigl \la \cl A(\lam) g \, , \, 
\frac{1}{r^2} (f(Q_\lam + u^* + g) - f(Q_\lam + u^*) + g)
\Bigr \ra. \label{eq:first_term_app} 
\end{gather} 
which by \eqref{eq:first_term} implies 
\begin{align}
\begin{split}\label{b'_third_last}
b' + 4 pq t^{\nu-1} &\simeq  
-  \Bigl \la \cl A(\lam) g \, , \, 
\frac{1}{r^2} (f(Q_\lam + u^* + g) - f(Q_\lam + u^*) + g)
\Bigr \ra  \\
&\quad - \Bigl \la \A_0(\lam) g , \Delta g - 
\frac{1}{r^2} (f(u^* + Q_\lam + g) - f(u^*) - f(Q_\lam)
\Bigr \ra_{loc}.
\end{split}
\end{align}

We now consider the second term appearing in \eqref{b'_third_last}.  Since $\cl A_0(\lam)g$ is supported in $\{ r \leq \tilde R \lam \} \subset \{ r \leq t\}$ if $\eta_1$ is sufficiently small, we may replace the local $L^2$ pairing by the full $L^2$ pairing. Adding and subtracting terms and using \eqref{eq:A-pohozaev-wm} we obtain 
\EQ{
- \Bigl \la \A_0(\lam) g &, \Delta g - 
\frac{1}{r^2} (f(u^* + Q_\lam + g) - f(u^*) - f(Q_\lam)
\Bigr \ra_{loc} \\
&= - \Bigl \la 
\cl A_0(\lam) g \, , \, 
\p_r^2 g + \frac{1}{r} \p_r g - \frac{1}{r^2} g \Bigr \ra \\
&\quad + \Big \la \cl A_0(\lam) g \, , \, 
\frac{1}{r^2} (f(Q_\lam + u^*) - f(Q_\lam) - f(u^*)) \Bigr \ra \\
&\quad + \Big \la \cl A_0(\lam) g \, , \, 
\frac{1}{r^2} (f(Q_\lam + u^*+g) - f(Q_\lam+u^*) - g) \Bigr \ra \\
&\geq -\frac{c_0}{\lam} \| g \|_H^2 + \frac{1}{\lam} \int_0^{R\lam}
\Bigl ( |\p_r g|^2 + \frac{1}{r^2} |g|^2 \Bigr )\, r \ud r \\
&\quad + \Big \la \cl A_0(\lam) g \, , \, 
\frac{1}{r^2} (f(Q_\lam + u^*) - f(Q_\lam) - f(u^*)) \Bigr \ra \\
&\quad + \Big \la \cl A_0(\lam) g \, , \, 
\frac{1}{r^2} (f(Q_\lam + u^*+g) - f(Q_\lam+u^*) - g) \Bigr \ra,
}
where $R$ is defined in Lemma \ref{lem:fun-q}. By \eqref{eq:trig_exp} we see  
\begin{align}
|f(Q_\lam) - f(Q_\lam) - f(u^*)| \lesssim |\Lambda Q_\lam|^2 |u^*| + 
|\Lambda Q_\lam| |u^*|^2. 
\end{align}
Since $\|\cl A_0(\lam) g\|_{L^2} \lesssim \| g \|_{H}$ and $\| r^{-1} u^* \|_{L^\infty(r \leq t)} \lesssim t^{\nu-1}$ (see Corollary \ref{l:ustar_app}), the second to last line in the previous expansion can be estimated via Cauchy-Schwarz 
\begin{align}
\Bigl |
\Bigl \la 
\cl A_0(\lam) g \, , \, 
\frac{1}{r^2} &\bigl ( 
f(Q_\lam + u^*) - f(Q_\lam) - f(u^*)
\bigr )
\Bigr \ra 
\Bigr |  \\&\lesssim \| g \|_{H} \Bigl ( 
\int_0^t \bigl ( r^{-2} |u^*|^2 |\Lambda Q_\lam|^4 + r^{-2} |u^*|^4 |\Lambda Q_\lam|^2 \bigr ) \frac{\ud r}{r} 
\Bigr )^{\frac 12} \\ &\lesssim \| g \|_{H} t^{\nu-1} 
\ll \frac{1}{\lam} \| g \|^2_H + t^{\nu-1} 
\end{align}  
as long as $\eta_1$ is sufficiently small. 
We conclude  
\begin{align}  
\begin{split}\label{eq:b_sec_last}
&-\Bigl \la \cl A(\lam) g \, , \, 
\frac{1}{r^2} (f(Q_\lam + u^* + g) - f(Q_\lam + u^*) - g)
\Bigr \ra  \\
&- \Bigl \la \A_0(\lam) g , \Delta g - 
\frac{1}{r^2} (f(u^* + Q_\lam + g) - f(u^*) - f(Q_\lam)
\Bigr \ra_{loc}  \\
&\geq \frac{1}{\lam} \int_0^{R\lam} \Bigl (|\p_r g|^2 + \frac{1}{r^2} |g|^2 \Bigr )\, r \ud r \\
&\quad + 
\Bigl \la 
[\cl A_0(\lam) - \cl A(\lam)]g \, , \, 
\frac{1}{r^2} (f(Q_\lam + u^* + g) - f(Q_\lam + u^*) - g)
\Big \ra \\&\quad + o(1) \bigl ( t^{\nu-1} + \frac{1}{\lam} \| g \|_H^2 \bigr )
\end{split}
\end{align}
The operator $\cl A_0(\lam) - \cl A(\lam)$ is given by multiplication by the function 
$
\frac{1}{2\lam} \bigl ( q''\bigl ( \frac{r}{\lam} \bigr ) + \frac{\lam}{r} q' \bigl ( \frac{r}{\lam} \bigr )\bigr ). 
$ By \eqref{eq:approx-potential-wm} it follows  
\begin{align}
\begin{split}\label{eq:b'_last} 
\Bigl \la 
[\cl A_0(\lam) - \cl A(\lam)]&g \, , \, 
\frac{1}{r^2} (f(Q_\lam + u^* + g) - f(Q_\lam + u^*) - g)
\Big \ra \\&= \frac{1}{\lam} \int_{0}^{\infty} \frac{1}{r^2} \bigl (
f'(Q_\lam) - 1
\bigr ) |g|^2 \, r \ud r \\& \quad+ (c_0 + \| g \|_H) \,O
\Bigl ( \frac{1}{\lam} \| g \|_H^2 
\Bigr )
\end{split}
\end{align}
where $c_0$ is from Lemma \ref{lem:op-A-wm}. The estimates \eqref{b'_third_last}, \eqref{eq:b_sec_last} and \eqref{eq:b'_last} imply  
\begin{align}
b'(t) + 4pq t^{\nu-1} &\geq \frac{1}{\lam} \int_0^{R\lam} \Bigl (|\p_r g|^2 + \frac{1}{r^2} |g|^2 \Bigr )\, r \ud r
+ \frac{1}{\lam} \int_{0}^{\infty} \frac{1}{r^2} \bigl (
f'(Q_\lam) - 1
\bigr ) |g|^2 \, r \ud r \\&\quad - o(1) \Bigl ( t^{\nu-1} + \frac{1}{\lam} \| g \|_H^2 \Bigr ) - C_0 \frac{\lam}{t^2}. 
\end{align} The orthogonality condition $\bigl \la \cl Z_{\uln \lam} \, , \, g \bigr \ra = 0$ implies 
the localized coercivity estimate, 
\begin{align}
	\frac{1}{\lambda}\int_0^{R\lambda}\Big(|\partial_r g|^2 + \frac{1}{r^2}|g|^2\Big)r \ud r  + \frac{1}{\lambda}\int_0^{\infty} \frac{1}{r^2}\big(f'(Q_\lambda)-1\big)|g|^2 \ud r   \ge - \frac{c_1}{\lam} \| g\|_H^2  
\end{align}
(see~\cite[Lemma 5.4, eq. (5.28)]{JJ-AJM} for the proof). The constant $c_1>0$ appearing above can be made small by choosing $R$ sufficiently large, and thus, 
\begin{align}
b'(t) + 4pq t^{\nu-1} \geq O(\lam t^{-2}) - o(1) \Bigl ( t^{\nu-1} + \frac{1}{\lam} \| g \|_H^2 \Bigr )
\end{align}
for $R$ sufficiently large and $\eta_1$ sufficiently small. This concludes the proof. 
\end{proof}

\section{Energy Estimate for the Error}\label{s:energy} 

In this section, we continue our study of wave maps $\bs u$ such that for all $t \in J \subset (0,T_0]$, 
\begin{align}
\bs u(t) = \bs Q_{\lam(t)} + \bs u^*(t) + \bs g(t), \label{eq:u_de}
\end{align}
where $\lam \in C^1(J)$ is the modulation parameter and 
\begin{align}
\bs g(t) = (g(t), \dot g(t)) := \bs u(t) - (\bs Q_{\lam(t)} + \bs u^*(t)), 
\end{align}
is the error term given by Lemma \ref{l:modeq}. 
 In the previous section we obtained differential inequalities for $\lam(t)$ in terms of $\lam(t), t, $ and the size of $\bs g(t)$ (see Proposition \ref{p:modp2}). The goal for this section is to derive an energy estimate for $\bs g(t)$ to close these differential inequalities. 
 
\subsection{Energy estimate for $\bs g(t)$}
The goal of this section is to prove the following energy estimate for $\bs g(t)$. 

\begin{proposition}\label{p:gestimate}
Assume the same hypothesis as in Lemma \ref{l:modeq}. In addition assume
\begin{gather}
\bs u(t,r) = (\pi,0) + \bs u^*(t,r), \quad \forall r \geq t, \\
\frac{\lam(t)}{t} \leq 1, \quad \forall t \in J. 
\end{gather}
There exist $\eta_2 = \eta_2(\nu,q) >  0$, $C = C(\nu,q) > 0$ and $c = c(\nu,q) > 0$ such that if $\sup J < \eta_2$ then for all $t_0, t \in J$ with $t_0 < t$,
\begin{align}\label{eq:gestimate}
c \| g(t) \|_H^2 + \| \dot g(t) \|_{L^2}^2 &\leq - 8pq \int_{t_0}^t \lam'(\tau) \tau^{\nu-1} \, \ud \tau + C \| \bs g(t_0) \|_{\cl H}^2 \\
&\quad + \iota_1(t_0) + \iota_1(t) + \int_{t_0}^t \iota_2(\tau) d \tau , 
\end{align}
where 
\begin{align}
\iota_1(t) &\leq  C \Bigl (
\lam(t)^2 t^{-2} + t^{2\nu-2} \lam(t)^2 \log \frac{t}{\lam(t)} + \lam(t)t^{3\nu-1}
\Bigr ),\\
\iota_2(t) &\leq 
C\Bigl (\lam(t) t^{2\nu-2} +  t^{\nu-1} \|\bs g(t) \|_{\cl H}^2 \Bigr ). \label{eq:iot2_improved}
\end{align}
\end{proposition}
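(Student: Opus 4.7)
The plan is to combine conservation of $\E(\bs u)$ and $\E(\bs u^*)$ with a second-order Taylor expansion of the energy about $\bs Q_{\lam(t)} + \bs u^*(t)$, then extract $c\|g\|_H^2 + \|\dot g\|_{L^2}^2$ from the coercivity of the linearized energy under the orthogonality $\la \calZ_{\uln\lam}, g\ra = 0$ of Lemma~\ref{l:modeq}. Writing $\bs u = \bs Q_\lam + \bs u^* + \bs g$ and Taylor-expanding,
\[
\E(\bs u) = \E(\bs Q_\lam + \bs u^*) + \la D\E(\bs Q_\lam + \bs u^*), \bs g\ra + \tfrac{1}{2}\la D^2\E(\bs Q_\lam + \bs u^*)\bs g,\bs g\ra + O(\|\bs g\|_\cH^3),
\]
subtracting the identity at $t$ from that at $t_0$ and using $\E(\bs u(t))=\E(\bs u(t_0))$, $\E(\bs u^*(t))=\E(\bs u^*(t_0))$, and $\E(\bs Q_\lam) = \E(\bs Q)$ independent of $\lam$, the quadratic term gives, on the orthogonal complement of $\calZ_{\uln\lam}$, a lower bound $c\|g\|_H^2 + \|\dot g\|_{L^2}^2$ (up to $o(1)$ corrections, using $\|u^*\|_{L^\infty(r\le t)} \lesssim t^\nu$ from Corollary~\ref{l:ustar_app} to treat $D^2\E(\bs Q_\lam + \bs u^*)$ as a small perturbation of the standard Hessian $\|\dot g\|_{L^2}^2 + \la \LL_\lam g, g\ra$). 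Three sources then remain to control on the right: (i) the initial data $\|\bs g(t_0)\|_\cH^2$ and the cubic remainder, both directly $\iota_1$-acceptable; (ii) the increment of the bubble--radiation interaction $J(t) := \E(\bs Q_\lam + \bs u^*) - \E(\bs Q_\lam) - \E(\bs u^*)$; (iii) the increment of the linear pairing $\la D\E(\bs Q_\lam + \bs u^*), \bs g\ra$.

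\textbf{The interaction $J(t)$.} Computing $J'(t)$ by chain rule produces a $\lam'(t)$-term and a $\p_t\bs u^*$-term. The $\lam'$-term equals, up to boundary contributions at $r=t$ of size $O(\lam^2 t^{-2})$, the pairing $\lam'(t)\la \Lam Q_{\uln\lam}, r^{-2}(f'(Q_\lam) u^* - (f(Q_\lam+u^*)-f(Q_\lam)))\ra$, whose leading part — derived just as in the passage to the leading coefficient $4pq\,t^{\nu-1}$ in the proof of~\eqref{eq:b'lb} in Proposition~\ref{p:modp2}, using the expansion $u^*(t,r) = qpt^{\nu-1}r + O(r^2 t^{\nu-2})$ from Corollary~\ref{l:ustar_app} — matches $8pq\,\lam'(t)\,t^{\nu-1}$ up to a universal constant absorbed into the normalization. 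The $\p_t\bs u^*$-term uses the wave map equation for $\bs u^*$, integrates by parts, and contributes to $\iota_1$ and $\iota_2$. Time-integration from $t_0$ to $t$ delivers, after sign tracking, the main term $-8pq\int_{t_0}^t \lam'(\tau)\tau^{\nu-1}\,d\tau$ on the right side of the claimed inequality.

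\textbf{The main obstacle: the linear pairing.} The principal difficulty, highlighted in Section~\ref{ssec:outline}, is $\la D\E(\bs Q_\lam + \bs u^*), \bs g\ra$. Since $D\E(\bs Q_\lam) = 0$ exactly, one splits this as $\la D\E(\bs u^*), \bs g\ra$ plus a cross term pointwise dominated by products of $u^*$, $\Lam Q_\lam$ and $g$ that are estimable by Cauchy--Schwarz using Corollary~\ref{l:ustar_app} and contribute to $\iota_1, \iota_2$. The remaining term $\la D\E(\bs u^*), \bs g\ra$, which in~\cite{moi15, JL1, R19} was either manifestly lower order or formed a Struwe-type conservation law, here carries leading-order interaction because the underlying system is non-autonomous. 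The strategy is to exploit that $\bs u^*$ solves the wave map equation, so that $D\E(\bs u^*) = (-\p_t^2 u^*, \p_t u^*)$ and the pairing reads $-\la \p_t^2 u^*, g\ra + \la \p_t u^*, \dot g\ra$; combining this with the equations of motion $\p_t g = \dot g + \lam'\Lam Q_{\uln\lam}$ and rewriting the increment $[\la D\E(\bs u^*), \bs g\ra]_{t_0}^t$ via time integration by parts expresses it as boundary contributions feeding $\iota_1$ (of sizes $\lam^2 t^{-2}$, $t^{2\nu-2}\lam^2\log(t/\lam)$, $\lam t^{3\nu-1}$, arising respectively from boundary terms at $r=t$ using $\bs g(t,r) = (\pi - Q_{\uln\lam}(r),0)$ for $r\ge t$, from the $u^*\cdot \Lam Q_\lam$ overlap on the light cone, and from cubic interactions) plus an integrand bounded by $\iota_2$ (of sizes $\lam t^{2\nu-2}$ and $t^{\nu-1}\|\bs g\|_\cH^2$, using Corollary~\ref{l:ustar_app} and the preliminary control $|\lam'| \lesssim \|\dot g\|_{L^2}$ from Proposition~\ref{p:modp}). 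Assembling coercivity, the $J$-computation, and the linear-pairing estimate yields the proposition. The linear-pairing step is by far the hardest, as the pointwise control of $\bs u^*$ from Lemma~\ref{l:linear_app}--Corollary~\ref{l:ustar_app} must be used with extreme care to prevent the logarithmic growth $\log(t/\lam)$ and the polynomial weights from overwhelming the coercivity margin.
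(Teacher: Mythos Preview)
There is a genuine gap in your attribution of the leading term. You claim that the $\lam'$-part of $J'(t)$, where $J(t)=\E(\bs Q_\lam+\bs u^*)-\E(\bs Q_\lam)-\E(\bs u^*)$, equals
\[
\lam'(t)\,\big\langle \Lam Q_{\uln\lam},\; r^{-2}\bigl(f'(Q_\lam)u^* - (f(Q_\lam+u^*)-f(Q_\lam))\bigr)\big\rangle
\]
and that this produces $8pq\,\lam'(t)t^{\nu-1}$. But the bracketed expression is exactly $-(f(Q_\lam+u^*)-f(Q_\lam)-f'(Q_\lam)u^*)$, the \emph{second-order Taylor remainder}, hence $O(|u^*|^2)$. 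With $u^*\approx pq\,t^{\nu-1}r$ from Corollary~\ref{l:ustar_app} this pairing is $O(t^{2\nu-2})$, not $O(t^{\nu-1})$. You appear to have confused it with the expression $f(Q_\lam+u^*)-f(Q_\lam)-f(u^*)$ that appears in Proposition~\ref{p:modp2}; that one is \emph{linear} in $u^*$ at leading order (its dominant piece is $-2u^*(\Lam Q_\lam)^2$) and does yield $4pq\,t^{\nu-1}$. The two expressions differ by $f'(Q_\lam)u^*-f(u^*)$, which is itself $O(t^{\nu-1})$ after pairing. Consequently $J(t)$ contributes only to $\iota_1$, and your scheme as written loses the main term entirely.

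In the paper the leading term $-8pq\int_{t_0}^t\lam'\tau^{\nu-1}$ is produced by the \emph{linear pairing}, not by $J$: this is Lemma~\ref{l:linear_pairing_estimate}. Differentiating $\beta(t)=\tfrac{1}{\pi}\la D\cl E_{loc}(\bs u^*),\bs g\ra$ and using $\p_t g=\dot g+\lam'\Lam Q_{\uln\lam}$ together with the wave map equation for $\bs u^*$ generates the term
\[
-2\lam'\int_0^t\Bigl(\Delta u^*-\tfrac{f(u^*)}{r^2}\Bigr)\Lam Q_{\uln\lam}\,r\,\ud r,
\]
and after two integrations by parts using $\LL_\lam\Lam Q_\lam=0$ this equals $8pq\,t^{\nu-1}\lam'$ plus acceptable errors. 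Your proposal has the roles of $J$ and the linear pairing reversed; the linear pairing cannot be pushed entirely into $\iota_1,\iota_2$ as you assert. A secondary inconsistency: you work with the global energy $\E$ but invoke ``boundary contributions at $r=t$''---such terms arise only when using the local energy $\cl E_{loc}$ inside the light cone, which is what the paper does (and which makes the flux identity $\cl F(\bs u)=\cl F(\bs u^*)$ available since $\bs u=(\pi,0)+\bs u^*$ for $r\ge t$).
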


For a finite energy wave map $\bs u = (u, \p_t u)$, consider the local energy contained inside the light cone at time $t$:
\begin{align*}
\cl E_{loc}(\bs u(t)) :=  \pi \int_0^t \Bigl (
|\p_t u(t,r)|^2 + |\p_r u(t,r)|^2 + \frac{\sin^2 u(t,r)}{r^2}
\Bigr ) \, r \ud r. 
\end{align*}
We will obtain Proposition \ref{p:gestimate} from an estimate for the quadratic part of the local energy
\begin{align} \label{eq:Idef} 
\cl Q(\bs g(t),t) &:= \cl E_{loc}(\bs u(t))) - \cl E_{loc}(\bs Q_{\lam(t)} + \bs u^*(t)) \\&\quad- 
\la D\cl E_{loc}(\bs Q_{\lam(t)} + \bs u^*(t)) , \bs g(t) \ra \\
&= 
\la D^2\cl E_{loc}(\bs Q_{\lam(t)} + \bs u^*(t))\bs g(t), \bs g(t) \ra + O(\| \bs g(t)\|_{\cl H(r\leq t)}^3).  
\end{align} 

\begin{proposition}\label{p:quadratic_estimate}
Assume the same hypotheses as in Proposition \ref{p:gestimate}. There exist $\eta_2 = \eta_2(\nu,q) > 0$ and $C = C(\nu,q) > 0$ such that if $\sup J < \eta_2$ then for all $t_0,t \in J$ with $t_0 < t$, 
\begin{align}\label{eq:qestimate}
\cl Q(\bs g(t), t) &= - 8\pi pq  \int_{t_0}^t \lam'(\tau) \tau^{\nu-1} \, \ud \tau + \cl Q(\bs g(t_0), t_0)\\
&\quad + \iota_1(t_0) - \iota_1(t) + \int_{t_0}^t \iota_2(\tau) d \tau , 
\end{align}
where 
\begin{align}
\iota_1(t) &\leq  C \Bigl (
\lam^2(t) t^{-2} + t^{2\nu-2} \lam^2(t) \log \frac{t}{\lam(t)} + \lam(t)t^{3\nu-1} +
\lam(t) t^{\nu-1} \| g(t) \|_{H}
\Bigr ),\\
\iota_2(t) &\leq 
C\Bigl ( t^{2\nu-2} \lam(t) + 
+ t^{\nu-1} \| \bs g(t) \|_{\cl H}^2  \Bigr ). \label{eq:iot2_improved2}
\end{align}
\end{proposition}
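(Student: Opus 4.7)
The plan is to compute $\frac{d}{dt}\mathcal Q(\bs g(t),t)$ explicitly, identify the leading contribution $-8\pi pq\lambda'(t)t^{\nu-1}$, and integrate over $[t_0,t]$. All other pieces will be shown to fit into $\iota_1$ (boundary-in-time contributions and spatial boundary terms at $r=t$) or $\iota_2$ (integrated bulk error). The calculation closely mirrors the second-variation bookkeeping behind Proposition~\ref{p:modp2}: indeed, the same interaction between the bubble tangent $-\frac{\lambda'}{\lambda}\Lambda Q_{\lambda}$ and the radiation $u^*(t,r)\approx qpt^{\nu-1}r$ that produces the lower bound \eqref{eq:b'lb} is what drives the leading term here.

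The first key step is to eliminate $\bs u$ from the analysis. Since $\bs u(t,r) = (\pi,0) + \bs u^*(t,r)$ for $r\ge t$, the energy density at $r=t$ coming from $\bs u$ equals that of $\bs u^*$ (the shift by $\pi$ only flips the sign of $\sin u$, leaving $\sin^2 u$, $\p_r u$, $\p_t u$ unchanged). Combined with the flux identity for radial wave maps and the conservation of total energies $\mathcal E(\bs u)$ and $\mathcal E(\bs u^*)$, this yields
\begin{equation*}
\mathcal E_{loc}(\bs u(t)) - \mathcal E_{loc}(\bs u^*(t)) = \mathcal E(\bs u) - \mathcal E(\bs u^*) \qquad \text{(constant in } t\text{)}.
\end{equation*}
Hence, setting $\bs\psi(t) := \bs Q_{\lambda(t)} + \bs u^*(t)$,
\begin{equation*}
\mathcal Q(\bs g(t),t) = -\bigl[\mathcal E_{loc}(\bs\psi(t)) - \mathcal E_{loc}(\bs u^*(t))\bigr] - \bigl\langle D\mathcal E_{loc}(\bs\psi(t)),\,\bs g(t)\bigr\rangle + \text{const},
\end{equation*}
and the difference $\mathcal Q(\bs g(t),t)-\mathcal Q(\bs g(t_0),t_0)$ only involves $\bs\psi$, $\bs u^*$, and $\bs g$.

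Next I would expand each $t$-dependent piece. For $\mathcal E_{loc}(\bs\psi) - \mathcal E_{loc}(\bs u^*)$, using $\sin^2(A+B)-\sin^2 A - \sin^2 B = \tfrac{1}{2}\sin 2A\sin 2B - 2\sin^2 A\sin^2 B$ and $\sin Q_\lambda = \Lambda Q_\lambda$, one obtains the self-energy $\mathcal E_{loc}(\bs Q_{\lambda(t)})$ (which differs from $4\pi$ by an $O(\lambda^2/t^2)$ tail, placed in $\iota_1$) plus the two cross-terms $2\pi\int_0^t\p_r Q_\lambda\p_r u^*\,r\,\ud r$ and $\pi\int_0^t\bigl[\tfrac{1}{2}\sin(2Q_\lambda)\sin(2u^*)-2(\Lambda Q_\lambda)^2\sin^2 u^*\bigr]r^{-1}\,\ud r$. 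The derivative cross-term is handled by integrating by parts and using $-\p_r^2 Q_\lambda - r^{-1}\p_r Q_\lambda = -\sin(2Q_\lambda)/(2r^2)$ to trade it for a $u^*\cdot\sin(2Q_\lambda)$ integral; the angular cross-terms are estimated via Corollary~\ref{l:ustar_app}. The time derivative of the linear pairing $\frac{d}{dt}\langle D\mathcal E_{loc}(\bs\psi),\bs g\rangle$ splits into a spatial boundary at $r=t$, which is controlled by the matching conditions $\dot g(t,t)=0$, $g(t,t)=\pi-Q_{\lambda(t)}(t)=O(\lambda/t)$, and $\p_r g(t,t)=-\p_r Q_{\lambda(t)}(t)=O(\lambda/t^2)$, and a bulk part that is reorganized using the equations \eqref{eq:g_equations} for $\bs g$ and the wave map equations for $\bs u^*,\bs Q_\lambda$. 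The leading bulk contribution pairs $-\frac{\lambda'}{\lambda}\Lambda Q_\lambda$ against the nonlinear interaction $r^{-2}[f(Q_\lambda+u^*)-f(Q_\lambda)-f(u^*)]=-r^{-2}[(\Lambda Q_\lambda)^2\sin(2u^*)+\sin(2Q_\lambda)\sin^2 u^*]$, which by Corollary~\ref{l:ustar_app} and the identity $\int_0^\infty (\Lambda Q)^3\,\ud r = 2$ equals $-8\pi pq\,\lambda'(t)\,t^{\nu-1}$ up to acceptable errors. Integrating in $\tau\in[t_0,t]$ produces the leading term of \eqref{eq:qestimate}; the residual spatial-boundary contributions of sizes $\lambda(\tau)\tau^{2\nu-2}$ and the cubic $\|\bs g\|_{\mathcal H}^2 \cdot t^{\nu-1}$-type remainders (absorbed via Cauchy--Schwarz against $u^*$ and the smallness bound $\|\bs g\|_{\mathcal H}\ll 1$) collect into $\iota_2$, while the time-boundary pieces at $t_0, t$ collect into $\iota_1$.

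The main obstacle, which requires the most delicate bookkeeping, is a cancellation: the angular cross-term $\pi\int_0^t\tfrac{1}{2}\sin(2Q_\lambda)\sin(2u^*)r^{-1}\,\ud r$ would, on its own, produce a contribution of order $\lambda(t)t^{\nu-1}\log(t/\lambda(t))$ to $\mathcal E_{loc}(\bs\psi)-\mathcal E_{loc}(\bs u^*)$ (since $\int_0^{t/\lambda}\sin 2Q\,\ud r = O(\log(t/\lambda))$), which is strictly larger than any quantity admitted by $\iota_1$. This piece must be shown to cancel exactly against the corresponding contribution from the linear pairing term $\langle D\mathcal E_{loc}(\bs\psi),\bs g\rangle$ when both are expanded to the same order using Corollary~\ref{l:ustar_app}. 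Verifying this cancellation cleanly is the core technical step; a secondary difficulty is ensuring that the spatial boundary contributions at $r=t$, which involve $\p_r u^*(t,t) = O(t^{\nu-1})$ and $\p_r Q_{\lambda(t)}(t) = O(\lambda/t^2)$, combine to produce only $\iota_2$-admissible size $\lambda\, t^{2\nu-2}$ after the full integration.
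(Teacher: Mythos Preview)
Your overall strategy is the same as the paper's: use the exterior matching $\bs u=(\pi,0)+\bs u^*$ to replace $\mathcal E_{loc}(\bs u(t))-\mathcal E_{loc}(\bs u^*(t))$ by a constant, then analyze $\mathcal E_{loc}(\bs\psi)-\mathcal E_{loc}(\bs u^*)$ and $\langle D\mathcal E_{loc}(\bs\psi),\bs g\rangle$ separately. The paper organizes the second piece by first splitting $\langle D\mathcal E_{loc}(\bs\psi),\bs g\rangle=\langle D\mathcal E_{loc}(\bs u^*),\bs g\rangle+O(\lambda^2 t^{-2}+\lambda t^{\nu-1}\|g\|_H)$ (Lemma~\ref{l:linearsplitting}) and then time-differentiating only $\langle D\mathcal E_{loc}(\bs u^*),\bs g\rangle$ (Lemma~\ref{l:linear_pairing_estimate}); this is cleaner than differentiating the full pairing, and it is there that the $8\pi pq\,\lambda'\tau^{\nu-1}$ term emerges.

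You have, however, misdiagnosed the dangerous-term cancellation. The $\lambda t^{\nu-1}\log(t/\lambda)$ contribution does \emph{not} cancel against $\langle D\mathcal E_{loc}(\bs\psi),\bs g\rangle$; by Lemma~\ref{l:linearsplitting} that term carries no piece of that size. The cancellation is \emph{internal} to $\mathcal E_{loc}(\bs\psi)-\mathcal E_{loc}(\bs u^*)$, and you already hold both halves: your integration by parts turns the gradient cross-term into $-\pi\int_0^t u^*\sin(2Q_\lambda)\,r^{-1}\,\ud r$, which combines with the angular cross-term $\tfrac{\pi}{2}\int_0^t\sin(2Q_\lambda)\sin(2u^*)\,r^{-1}\,\ud r$ to give
\[
\tfrac{\pi}{2}\int_0^t\sin(2Q_\lambda)\bigl[\sin(2u^*)-2u^*\bigr]\,r^{-1}\,\ud r=O(\lambda t^{3\nu-1}),
\]
since $\sin(2u^*)-2u^*=O(|u^*|^3)$. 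What survives from the zeroth-order piece is the IBP boundary term $4\pi\lambda(t)u^*(t,t)t^{-1}$ (this is Lemma~\ref{l:locenergyQustar}); it is \emph{not} put into $\iota_1$ but is carried along and cancels exactly against the $-4\pi\lambda u^*(t,t)t^{-1}$ that appears when one integrates $\frac{d}{dt}\langle D\mathcal E_{loc}(\bs u^*),\bs g\rangle$ (Lemma~\ref{l:linear_pairing_estimate}). That, and not the one you describe, is the genuine inter-term cancellation; if you look for a $\lambda t^{\nu-1}\log(t/\lambda)$ piece inside the linear pairing you will not find one.
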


The proof of Proposition \ref{p:quadratic_estimate} will occupy the majority of this section. We now give a quick proof of Proposition \ref{p:gestimate} assuming Proposition \ref{p:quadratic_estimate}. 

\begin{proof}[Proof of Proposition \ref{p:gestimate}]
By definition 
\begin{align}
\frac{1}{\pi} \la D^2\cl E_{loc}(Q_{\lam(t)} + \bs u^*(t))\bs g(t), \bs g(t) \ra
&= \int_0^t |\dot g(t)|^2 \, r \ud r + \int_0^t |\p_r g(t)|^2 \, r \ud r \\
&\quad + \int_0^t \frac{f'(Q_{\lam(t)} + u^*(t))}{r^2} |g(t)|^2 \, r \ud r.  
\end{align}
Then it is easy to see for all $t \in J$, there holds
\begin{align}
\cl Q(\bs g(t), t) \lesssim \| \bs g(t) \|_{\cl H}^2 \label{eq:upperestimate_for_Q}.
\end{align}
Towards deriving a lower bound for $\cl Q (\bs g(t),t)$, we first note
\begin{align}
f'(Q_\lam + u^*) = \cos (2 Q_\lam + 2 u^*) =  \cos 2 Q_\lam 
- 2 \cos 2 Q_\lam \sin^2 u^* - \sin 2 Q_\lam \sin 2 u^*. 
\end{align}
For $r \leq t$, $|u^*(t,r)| \lesssim t^{\nu}$ which implies 
\begin{align}
\Bigl | \int_0^t \bigl ( 2\cos 2 Q_{\lam} \sin^2 u^* + \sin 2 Q_{\lam} \sin 2 u^* \bigr ) |g(t)|^2 \frac{\ud r}{r} \Bigr | \lesssim t^{\nu} \int_0^t \frac{|g(t)|^2}{r^2} \, r \ud r.  
\end{align}
Thus,
\begin{align}
\frac{1}{\pi} \la D^2\cl E_{loc}&(\bs Q_{\lam(t)} + \bs u^*(t))\bs g(t), \bs g(t) \ra
\\&= \int_0^t |\dot g(t)|^2 \, r \ud r + \int_0^t |\p_r g(t)|^2 \, r \ud r \\
&\quad + \int_0^t \frac{f'(Q_{\lam(t)})}{r^2} |g(t)|^2 \, r \ud r + 
O(t^\nu) \int_0^t \frac{|g(t)|^2}{r^2} \, r \ud r. 
\end{align} 
Now, since $\bs g(t,r) = (\pi - Q_{\lam(t)}(r), 0)$ for all $r \geq t$, it is simply to see 
\begin{align}
\| g(t) \|_{\cl H(r \geq t)}^2 \simeq \frac{\lam(t)^2}{t^2}, \label{eq:gestimate_outside_light_cone}.
\end{align}
By \eqref{eq:gestimate_outside_light_cone} and the orthogonality condition imposed on $g(t)$, we conclude there exists $c > 0$ such that  
\begin{align}
\frac{1}{\pi} \la D^2\cl E_{loc}&(\bs Q_{\lam(t)} + \bs u^*(t))\bs g(t), \bs g(t) \ra \\
&= \int_0^\infty |\dot g(t)|^2 \, r \ud r + \int_0^\infty |\p_r g(t)|^2 \, r \ud r
+ \int_0^\infty \frac{f'(Q_{\lam(t)})}{r^2} |g(t)|^2 \, r \ud r \\&\quad + 
O(t^\nu) \int_0^\infty \frac{|g(t)|^2}{r^2} \, r \ud r + O(\lam^2(t) t^{-2}) \\
&= \la D^2 \cl E(\bs Q_{\lam(t)}) \bs g(t), \bs g(t) \ra + 
O(t^\nu) \int_0^\infty \frac{|g(t)|^2}{r^2} \, r \ud r + O(\lam^2(t) t^{-2}) \\
&\geq \bigl ( 2 c - O(t^\nu) \bigr ) \| g(t) \|_{H}^2 + \| \dot g(t) \|_{L^2}^2 + O(\lam^2(t) t^{-2}). 
\end{align} 
Thus, for all $t \in J$, 
\begin{align}
\frac{3c}{2} \| g(t) \|_H^2 + \| \dot g(t) \|_{L^2}^2 \leq 
\frac{1}{\pi} \cl Q(\bs g(t), t) + 
O(\lam(t)^2 t^{-2}) \label{eq:Q_lower_estimate}
\end{align}
as long as $\eta_2$ is small enough. 
Inserting \eqref{eq:upperestimate_for_Q} and \eqref{eq:Q_lower_estimate} into \eqref{eq:Q_est} and using $\lam(t) t^{-1} \lesssim \| g(t) \|_{H}$ (see \eqref{eq:gestimate_outside_light_cone}) yields \eqref{eq:gestimate} as long as $\eta_2$ is small enough. 
\end{proof} 

\subsection{Proof of Proposition \ref{p:quadratic_estimate}}

To prove Proposition \ref{p:quadratic_estimate}, we will use the following local energy identity: if $\bs u$ is a finite energy solution to \eqref{eq:wmk} on $J \times \bbR^2$, then for any $t_0, t \in J$ with $t_0 < t$,
\begin{align}\label{eq:locenergy_ident}
\cl E_{loc}(\bs u(t)) = \cl E_{loc}(\bs u(t_0)) + \cl F(\bs u, t_0, t), 
\end{align}
where $\cl F$ is the flux 
\begin{align*}
\cl F(\bs u, t_0, t) = \pi \int_{t_0}^{t} \Bigl (
|(\p_t + \p_r) u(\rho,\rho)|^2 + \frac{\sin^2 u(\rho,\rho)}{\rho^2}
\Bigr ) \, \rho d\rho. 
\end{align*}
Indeed, \eqref{eq:locenergy_ident} is easily verified for smooth wave maps using \eqref{eq:wmk} and the divergence theorem.  The identity \eqref{eq:locenergy_ident} then holds for all finite energy wave maps by a mollification argument.

Consider now $\bs u$ satisfying \eqref{eq:u_de} and $\bs u(t,r) = (\pi,0) + \bs u^*(t,r)$ for all $r \geq t$. 
We perform a Taylor expansion and write 
\begin{align}
\begin{split}\label{eq:loc_taylor}
\cl E_{loc}&(\bs Q_{\lam(t)} + \bs u^*(t) + \bs g(t)) \\&= \cl E_{loc}(\bs Q_{\lam(t)} + \bs u^*(t)) + 
\la D\cl E_{loc}(\bs Q_{\lam(t)} + \bs u^*(t)) , \bs g(t) \ra + \cl Q(\bs g(t),t). 
\end{split}
\end{align}
From \eqref{eq:locenergy_ident}, we conclude 
\begin{align}
\cl E_{loc}&(\bs Q_{\lam(t)} + \bs u^*(t)) + 
\la D\cl E_{loc}(\bs Q_{\lam(t)} + \bs u^*(t)) , \bs g(t) \ra + \cl Q(\bs g(t),t) \\
&=\cl E_{loc}(\bs Q_{\lam(t_0)} + \bs u^*(t_0)) + 
\la D\cl E_{loc}(\bs Q_{\lam(t_0)} + \bs u^*(t_0)) , \bs g(t) \ra + \cl Q(\bs g(t_0),t_0) \\
&\quad + \cl F(\bs u, t_0,t). 
\end{align}
We now extract the leading order contributions for each of the first two terms appearing on the left and right sides of the previous equality sign.  

\begin{lemma}\label{l:locenergyQustar}
For all $t \in J$,
\begin{align}
\cl E_{loc}(\bs Q_{\lam(t)} + \bs u^*(t)) &= \cl E(\bs Q) + \cl E_{loc}(\bs u^*(t)) + 4 \pi \lam(t) u^*(t,t) t^{-1}
\\&\quad + O\Bigl ( 
\lam^2(t) t^{-2} + t^{2\nu-2} \lam^2(t) |\log(t/\lam)| + \lam t^{3\nu-1}
\Bigr )
\end{align}
\end{lemma}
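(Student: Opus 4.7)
The plan is to expand $\cl E_{loc}(\bs Q_{\lam} + \bs u^*)$ by separating the self-energies from the cross (bubble--radiation) terms, then to extract the boundary contribution $\frac{4\pi \lam u^*(t,t)}{t}$ via a Bogomol'nyi--integration by parts manipulation, and finally to show that the resulting bulk integrals either cancel between the gradient and potential cross terms or are absorbed in the claimed error.

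\textbf{Step 1: Bookkeeping.} Since $\p_t Q_{\lam} = 0$, the kinetic cross term vanishes, and
\begin{align*}
\cl E_{loc}(\bs Q_{\lam} + \bs u^*) - \cl E_{loc}(\bs Q_{\lam}) - \cl E_{loc}(\bs u^*)
&= 2\pi \int_0^t \p_r Q_{\lam}\,\p_r u^*\,r\ud r \\
&\quad + \pi\int_0^t \frac{\sin^2(Q_{\lam}+u^*) - \sin^2 Q_{\lam} - \sin^2 u^*}{r^2}\,r\ud r.
\end{align*}
The self-energy of the bubble is computed explicitly: using $(\p_r Q_{\lam})^2 = \sin^2 Q_{\lam}/r^2 = \frac{4\lam^2}{(\lam^2+r^2)^2}$ one finds $\cl E_{loc}(\bs Q_{\lam}) = 4\pi - \frac{4\pi\lam^2}{\lam^2+t^2} = \cl E(\bs Q) + O(\lam^2/t^2)$.

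\textbf{Step 2: Gradient cross term.} I will use the Bogomol'nyi identity $\p_r Q_{\lam} = \sin Q_{\lam}/r$ and integrate by parts, producing a boundary term and a bulk integral:
\begin{equation*}
2\pi \int_0^t \p_r Q_{\lam}\,\p_r u^*\,r\ud r = 2\pi\,\sin Q_{\lam}(t)\,u^*(t,t) - \pi \int_0^t \frac{\sin 2 Q_{\lam}}{r}\, u^*\,\ud r.
\end{equation*}
Using $\sin Q_{\lam}(t) = \frac{2\lam t}{\lam^2+t^2} = \frac{2\lam}{t} + O(\lam^3/t^3)$ and $|u^*(t,t)|\lesssim t^{\nu}$ from Corollary \ref{l:ustar_app}, the boundary term becomes $\frac{4\pi \lam u^*(t,t)}{t} + O(\lam^3 t^{\nu-3}) = \frac{4\pi\lam u^*(t,t)}{t} + O(\lam^2/t^2)$.

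\textbf{Step 3: Potential cross term.} The trigonometric identity $\sin^2(a+b) - \sin^2 a - \sin^2 b = 2\sin a \sin b \cos(a+b)$, combined with the expansions $\sin u^* = u^* + O(|u^*|^3)$ and $\cos(Q_{\lam}+u^*) = \cos Q_{\lam} - \sin Q_{\lam}\, u^* + O(|u^*|^2)$, yields
\begin{equation*}
\sin^2(Q_{\lam}+u^*) - \sin^2 Q_{\lam} - \sin^2 u^* = \sin 2 Q_{\lam}\, u^* - 2\sin^2 Q_{\lam}\, (u^*)^2 + O(|\sin Q_{\lam}||u^*|^3).
\end{equation*}
Hence the potential cross term produces $+\pi\int_0^t \frac{\sin 2Q_{\lam}}{r}u^*\,\ud r$ (which cancels exactly the bulk integral from Step 2), a quadratic correction $-2\pi\int_0^t \frac{\sin^2 Q_{\lam}}{r^2}(u^*)^2 r\ud r$, and a cubic remainder.

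\textbf{Step 4: Estimating the remaining integrals.} This is the main bookkeeping step. Using the pointwise bound $|u^*(t,r)|\lesssim r\, t^{\nu-1}$ for $r\le t$ (Corollary \ref{l:ustar_app}) together with the explicit computations
\begin{equation*}
\int_0^t \sin^2 Q_{\lam}\, r\, \ud r = \int_0^t \frac{4\lam^2 r^3}{(\lam^2+r^2)^2}\,\ud r = O\bigl(\lam^2 \log(t/\lam)\bigr),
\end{equation*}
\begin{equation*}
\int_0^t |\sin Q_{\lam}|\, r^2\, \ud r = \int_0^t \frac{2\lam r^3}{\lam^2+r^2}\,\ud r = O(\lam t^2),
\end{equation*}
I bound the quadratic term by $O\bigl(t^{2\nu-2} \lam^2 \log(t/\lam)\bigr)$ and the cubic remainder by $O(\lam\, t^{3\nu-1})$, which are precisely the two remaining error terms in the statement. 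Combining Steps 1--4 gives the claim. The main obstacle is simply making sure the $\pi\int_0^t \frac{\sin 2Q_\lam}{r}u^*\ud r$ cancellation is exact and carefully matching each leftover integral to one of the three admissible error rates $\lam^2/t^2$, $t^{2\nu-2}\lam^2\log(t/\lam)$, $\lam t^{3\nu-1}$.
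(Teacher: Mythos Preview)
Your proof is correct and follows essentially the same route as the paper's. The only cosmetic differences are that the paper integrates by parts via $\Delta Q_{\lam} = \tfrac{\sin 2Q_{\lam}}{2r^2}$ (yielding the boundary term $2\pi t\,\p_r Q_{\lam}(t)\,u^*(t,t)$, which equals your $2\pi\sin Q_{\lam}(t)\,u^*(t,t)$ since $r\,\p_r Q_{\lam} = \sin Q_{\lam}$), and that the paper writes the leftover exactly as $-2\sin^2 Q_{\lam}\sin^2 u^* + \tfrac12 \sin 2Q_{\lam}(\sin 2u^* - 2u^*)$ before estimating, whereas you Taylor-expand $2\sin Q_{\lam}\sin u^*\cos(Q_{\lam}+u^*)$ directly; the resulting error bounds are identical.
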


\begin{proof}
We compute 
\begin{align*}
\cl E_{loc}&(\bs Q_{\lam(t)} + \bs u^*(t)) - \cl E(\bs Q) - \cl E_{loc}(\bs u^*(t)) \\
&= \pi \int_t^\infty 
\Bigl ( |\p_r Q_\lam|^2 + \frac{\sin^2 Q_\lam}{r^2} \Bigr ) \, r \ud r
+
2\pi \int_0^t \p_r u^*(t) \p_r Q_\lam \, r\ud r \\
&\quad+ \pi\int_0^t [ \sin^2 (Q_\lam + u^*(t)) - \sin^2 Q_\lam - \sin^2 u^*(t)  ] \frac{\ud r}{r}.  
\end{align*}
The first term is easily seen to be $O(\lam^2/t^2)$. 
Integrating by parts and using $\Delta Q_\lam = \frac{1}{2r^2} \sin 2 Q_\lam$ we obtain
\begin{align*}
2\int_0^t \p_r u^*(t) \p_r Q_\lam \, r\ud r &= -\int_0^t \sin (2 Q_\lam ) u^* \frac{\ud r}{r} + 2\p_r Q_{\lam(t)}(t) u^*(t,t) t.
\end{align*} 
Since $\p_r Q_{\lam(t)}(t) = 2 \lam / t^2 + O(\lam^3/t^4)$ and $|u^*(t,t)| \lesssim t^\nu$, we conclude $\cl E_{loc}(\bs Q_{\lam(t)} + \bs u^*(t)) = \cl E(\bs Q) + \cl E_{loc}(\bs u^*(t)) + 4 \pi \lam(t)u^*(t,t)/t + O(\lam^2/t^2) + \epsilon(t)$ with 
\begin{align*}
\epsilon(t) = \pi \int_0^t 
\left [ \sin^2 (Q_\lam + u^*(t)) - \sin^2 Q_\lam - \sin^2 u^*(t) - \sin(2Q_\lam) u^*(t) \right ] \frac{\ud r}{r}.  
\end{align*} 
Using trigonometric identities we can simplify the previous to
\begin{align*}
\epsilon(t) = -2\pi \int_0^t \sin^2 u^* \sin^2 Q_\lam \frac{\ud r}{r} + 
\frac{\pi}{2} \int_0^t \sin(2Q_\lam) [ \sin 2 u^* - 2u^*] \frac{\ud r}{r}. 
\end{align*}
Since $|u^*(t,r)| \lesssim rt^{\nu-1}$ and $\sin^2 Q_\lam = |\Lambda Q_\lam|^2$, the first integral appearing above satisfies 
\begin{align}
 \int_0^t \sin^2 u^* \sin^2 Q_\lam \frac{\ud r}{r}
 &\lesssim 
 t^{2\nu-2} \int_0^t |\Lambda Q_\lam|^2 r \ud r \\
 & \lesssim t^{2\nu-2} \lam^2 \int_0^{t/\lam} |\Lambda Q|^2 r \ud r \\
 &\lesssim t^{2\nu-2} \lam^2 \log (t/\lam),
\end{align}
and the second integral satisfies 
\begin{align*}
\Bigl | \int_0^t \sin (2 Q_\lam) [\sin 2u^* - 2u^*] \frac{\ud r}{r} \Bigr |
&\lesssim t^{3\nu-3} \int_0^{t} |\sin 2 Q_\lam| r^2 \ud r \\
&\lesssim t^{3\nu-3} \lam^3 \int_0^{t/\lam} |\sin 2Q| r^2 \ud r \\ 
&\lesssim t^{3\nu-1} \lam.  
\end{align*}
We conclude $|\epsilon(t)| \lesssim \lam t^{3\nu-1} + t^{2\nu - 2} \lam^2 |\log(t/\lam)|$ so 
\begin{align*}
\cl E_{loc}(\bs Q_{\lam} + \bs u^*(t)) &= \cl E(\bs Q) + \cl E_{loc}(\bs u^*(t)) 
+ 4\pi\lam(t) u^*(t,t)/t \\ 
&\quad+ O\Bigl ( 
\lam^2/t^2 + t^{2\nu-2} \lam^2 |\log(t/\lam)| + \lam t^{3\nu-1}
\Bigr )
\end{align*}
as desired. 
\end{proof}

\begin{lemma}\label{l:linearsplitting}
For all $t \in J$, 
\begin{align}
\la D\cl E_{loc}(\bs Q_{\lam(t)} + \bs u^*(t)) , \bs g(t) \ra &= 
\la D\cl E_{loc}(\bs u^*(t)), \bs g(t) \ra + O(\lam(t)^2 t^{-2}) \\&\quad + O \Bigl ( \|g(t) \|_{H}
\lam(t) t^{\nu-1} \Bigr )
\end{align}
\end{lemma}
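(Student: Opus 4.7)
The differential of the local energy acts as
\begin{equation*}
\la D\cl E_{loc}(\bs v), \bs g\ra = 2\pi \int_0^t \Bigl( \dot v \dot g + \p_r v \p_r g + \frac{f(v)}{r^2}\, g\Bigr) r \ud r,
\end{equation*}
with $f(\rho) = \frac{1}{2}\sin 2\rho$. Since $\dot Q_\lam = 0$, the quantity to estimate reduces to
\begin{equation*}
2\pi \int_0^t \Bigl( \p_r Q_{\lam(t)} \p_r g + \frac{f(Q_{\lam(t)} + u^*(t)) - f(u^*(t))}{r^2}\, g\Bigr) r \ud r.
\end{equation*}
The plan is to split the nonlinear bracket as $f(Q_\lam + u^*) - f(u^*) = f(Q_\lam) + R$, where $R := f(Q_\lam + u^*) - f(u^*) - f(Q_\lam)$, and treat the two contributions separately.

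For the piece containing $\p_r Q_\lam \p_r g + f(Q_\lam)\, g/r^2$, I will integrate the first term by parts. The resulting interior integral cancels against the potential term by the harmonic map equation $\Delta Q_\lam = f(Q_\lam)/r^2$; the boundary contribution at $r = 0$ vanishes since $r\p_r Q_\lam \to 0$ as $r \to 0$ and $g \in H$, and the only surviving piece is the boundary term at $r = t$, namely $2\pi\, t \p_r Q_{\lam(t)}(t)\, g(t,t)$. The outside-the-light-cone matching $\bs g(t,r) = (\pi - Q_{\lam(t)}(r), 0)$ for $r \geq t$ gives $g(t,t) = \pi - Q_{\lam(t)}(t) = 2\arctan(\lam(t)/t) = O(\lam(t)/t)$, and $\p_r Q_{\lam(t)}(t) = 2\lam(t)/(\lam(t)^2 + t^2) = O(\lam(t)/t^2)$. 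Combining these produces the claimed $O(\lam(t)^2 t^{-2})$ error.

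For the piece containing $R$, I will apply the trigonometric identity already recorded in \eqref{eq:trig_exp},
\begin{equation*}
R = -\sin 2 Q_\lam \sin^2 u^* - \sin 2 u^* \sin^2 Q_\lam,
\end{equation*}
and estimate each summand after pairing against $g/r^2$. Using $|\sin 2 Q_\lam| \le 2|\Lambda Q_\lam|$, $\sin^2 Q_\lam = (\Lambda Q_\lam)^2$, the inside-cone pointwise bound $|u^*(t,r)| \le C r t^{\nu-1}$ from Corollary~\ref{l:ustar_app}, and the Sobolev embedding $\|g\|_{L^\infty} \lesssim \|g\|_H$, the first summand is bounded by $C t^{2\nu-2}\,\|g\|_H \int_0^t |\Lambda Q_\lam|\, r\ud r \lesssim t^{2\nu-2} \|g\|_H\, \lam t \lesssim t^\nu \cdot \|g\|_H \lam t^{\nu-1}$, which is absorbed into the stated $O(\|g\|_H \lam t^{\nu-1})$ for small $t$. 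The second summand is bounded by $C t^{\nu-1} \|g\|_H \int_0^t (\Lambda Q_\lam)^2\, \ud r \lesssim t^{\nu-1}\|g\|_H\, \lam$, matching the remaining error term.

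The only genuinely delicate point is the bookkeeping of the boundary term at $r = t$ in the first step, which crucially uses the prescribed form of $\bs g$ outside the light cone together with the decay of $\p_r Q_\lam$ at $r = t$ when $\lam \ll t$. The interaction estimates of the second step are routine once the sharp pointwise bound on $u^*$ from Corollary~\ref{l:ustar_app} is in hand.
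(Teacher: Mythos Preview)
Your proof is correct and follows essentially the same approach as the paper: split off $\la D\cl E_{loc}(\bs u^*(t)), \bs g(t)\ra$, integrate the $Q_\lam$-piece by parts using $\Delta Q_\lam = f(Q_\lam)/r^2$ to leave only the boundary term at $r=t$ (controlled by $\lam^2/t^2$ via the outside-cone identity), and handle the cross term $f(Q_\lam+u^*)-f(Q_\lam)-f(u^*)$ via the trigonometric identity~\eqref{eq:trig_exp} together with the pointwise bound on $u^*$ from Corollary~\ref{l:ustar_app}. The estimates and their orders of magnitude match the paper's exactly.
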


\begin{proof}
We write 
\begin{align*}
\la D\cl E_{loc}&(\bs Q_{\lam(t)} + \bs u^*(t)) , \bs g(t) \ra \\&= 
2 \pi \int_0^t \Bigl ( \p_t u^*(t) \dot g + \p_r u^*(t) \p_r g(t) + \frac{\sin(2 u^*(t))}{2r^2} g(t) \Bigr ) r \ud r \\
&\quad + 2 \pi\int_0^t \Bigl ( \p_r Q_{\lam(t)} \p_r g(t) + \frac{\sin(2Q_{\lam(t)})}{2r^2} g(t) \Bigr ) r \ud r \\
&\quad + \pi\int_0^t \Bigl [
\sin(2 Q_{\lam(t)} + 2 u^*(t)) - \sin(2 Q_{\lam(t)}) - \sin(2u^*(t))
\Bigr ] g(t) \frac{\ud r}{r}. 
\end{align*}
The first term on the right-hand side above is exactly $\la D\cl E_{loc}(\bs u^*(t)), \bs g(t) \ra$. After integrating by parts and using $\Delta Q_\lam = \frac{\sin 2Q_\lam}{2r^2}$ we see the second term satisfies
\begin{align} 
2\int_0^t \Bigl ( \p_r Q_{\lam(t)} \p_r g(t) + \frac{\sin(2Q_{\lam(t)})}{2r^2} g(t) \Bigr ) r \ud r &= 2\p_r Q_{\lam(t)} g(t,t) t \\
&=2\p_r Q_{\lam(t)} (\pi - Q_{\lam(t)}(t)) t \\
&\lesssim (\lam(t)/t^2) (\lam(t)/t) t = \lam(t)^2/t^2.
\end{align}
Using trigonometric identities, we write 
\begin{align}
&\int_0^t \Bigl [
\sin(2 Q_{\lam(t)} + 2 u^*(t)) - \sin(2 Q_{\lam(t)}) - \sin(2u^*(t))
\Bigr ] g(t) \frac{\ud r}{r} \\&\quad= -2 \int_0^t [
\sin(2Q_{\lam(t)}) \sin^2 u^*(t) + \sin (2 u^*(t)) \sin^2 Q_{\lam(t)}
] g(t) \frac{\ud r}{r}. 
\end{align}
Since $\|g \|_{L^\infty(r \leq t)} \lesssim \|g \|_{H(r \leq t)}$, the first integral satisfies 
\begin{align*}
\Bigl |\int_0^t 
\sin(2Q_{\lam(t)}) \sin^2 u^*(t) g(t) \frac{\ud r}{r} \Bigr | &\lesssim
\| g \|_{H(r \leq t)} t^{2\nu-2} \int_0^t |\sin 2 Q_\lam| r \ud r \\
&\lesssim \| g \|_{H(r \leq t)} t^{2\nu-2} \lam^2 \int_0^{t/\lam} |\sin 2 Q| r \ud r  \\
&\lesssim \| g \|_{H(r \leq t)} t^{2\nu-1} \lam.
\end{align*}
For the second integral, we obtain 
\begin{align*}
\Bigl | \int_0^t
\sin (2 u^*(t)) \sin^2 Q_{\lam(t)}
 g(t) \frac{\ud r}{r}
\Bigr | &\lesssim \| g \|_{H(r \leq t)} t^{\nu-1}
\int_0^t |\Lambda Q_{\lam(t)}|^2 \ud r 
 \\&\lesssim \| g \|_{H(r \leq t)}
\lam(t) t^{\nu-1}.
\end{align*}
The lemma follows. 
\end{proof}

\begin{lemma}\label{l:linear_pairing_estimate}
For all $t_0,t \in J$, 
\begin{align}
\la D\cl E_{loc}(\bs u^*(t)), \bs g(t) \ra &= 8 \pi p q \int_{t_0}^t \lam'(\tau) \tau^{\nu-1} \, \ud \tau + 
\la D\cl E_{loc}(\bs u^*(t_0)) , \bs g(t_0) \ra \\&\quad- 4 \pi \lam(t) u^*(t,t) t^{-1} + 4\pi \lam(t_0) u^*(t_0,t_0) t_0^{-1} \\&\quad + 
\int_{t_0}^t 
\gamma(\tau)\, \ud \tau, 
\end{align}
where $\gamma(t)$ satisfies 
\begin{align}
\gamma(t) = 
O \Bigl (
&t^{2\nu-2} \lam(t) 
+ t^{\nu-1} \| \bs g(t) \|_{\cl H}^2 \Bigr ).
\end{align} 
\end{lemma}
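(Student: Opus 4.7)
The plan is to establish that the auxiliary function $G(t) := \la D\cl E_{loc}(\bs u^*(t)), \bs g(t)\ra + 4\pi \lam(t) u^*(t,t)/t$ satisfies $G'(t) = 8\pi pq\, \lam'(t)\, t^{\nu-1} + \gamma(t)$ with $\gamma$ as claimed, and then to integrate from $t_0$ to $t$; the correction $4\pi\lam u^*(t,t)/t$ is introduced precisely to absorb the nontrivial boundary contribution that appears when differentiating $F(t) := \la D\cl E_{loc}(\bs u^*(t)), \bs g(t)\ra$ in $t$.

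First I would differentiate $F(t) = 2\pi\int_0^t [\p_t u^*\,\dot g + \p_r u^*\, \p_r g + \sin(2u^*) g/(2r^2)]\, r\,dr$, picking up a boundary at $r = t$ plus derivatives of the integrand. Using the wave equation $\p_t^2 u^* = \Delta u^* - \sin(2u^*)/(2r^2)$ for the radiation, the system \eqref{eq:g_equations} for $\bs g$, the identity $\p_t g = \dot g + \lam'\Lam Q_{\uln\lam}$, and an integration by parts in $r$ to convert $\p_r(\p_t g)$ into $\Delta(\p_t g)$, most derivatives cancel and one obtains
\begin{align*}
F'(t) &= -2\pi\lam'(t)\!\int_0^t \p_t^2 u^*\,\Lam Q_{\uln\lam}\, r\,dr + 2\pi\!\int_0^t \frac{\cos(2u^*)\,\p_t u^*}{r^2}\, g\, r\,dr \\
&\quad - 2\pi\!\int_0^t \p_t u^*\, \frac{f(u^*+Q_\lam+g) - f(u^*) - f(Q_\lam)}{r^2}\, r\,dr + B(t),
\end{align*}
where $B(t)$ collects all boundary contributions at $r = t$.

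The main term arises from the first integral. Writing $\p_t^2 u^* = \Delta u^* - \sin(2u^*)/(2r^2)$ and integrating by parts twice in $r$, using the identity $\LL_\lam \Lam Q_{\uln\lam} = 0$, i.e.\ $\Delta \Lam Q_{\uln\lam} = \cos(2Q_\lam)\, \Lam Q_{\uln\lam}/r^2$, yields
\begin{equation*}
\int_0^t \p_t^2 u^*\, \Lam Q_{\uln\lam}\, r\, dr = t\bigl[\p_r u^*\, \Lam Q_{\uln\lam} - \p_r(\Lam Q_{\uln\lam})\, u^*\bigr]_{r=t} + \int_0^t \bigl[u^*\cos 2Q_\lam - \tfrac12 \sin 2u^*\bigr] \frac{\Lam Q_{\uln\lam}}{r}\, dr.
\end{equation*}
Expanding $u^*\cos 2Q_\lam - \tfrac12 \sin 2u^* = -2u^*\,|\Lam Q_\lam|^2 + O(|u^*|^3)$ (via $\sin^2 Q = (\Lam Q)^2$), substituting $u^*(t,r) = pq\, t^{\nu-1} r + O(r^2 t^{\nu-2})$ from Corollary \ref{l:ustar_app}, and using $\int_0^\infty |\Lam Q(s)|^3\, ds = 2$ (with tail correction $O(\lam^2/t^2)$), the interior integral evaluates to $-4pq\, t^{\nu-1} + O(\lam t^{\nu-2} + t^{3\nu-1})$. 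Multiplied by $-2\pi\lam'(t)$, this produces the leading term $8\pi pq\, \lam'(t) t^{\nu-1}$; the correction times $\lam'$ is absorbed into $\gamma(t)$ using $|\lam'|\lesssim \|\dot g\|_{L^2}$, the boundary estimate $\lam/t \lesssim \|g\|_H$, and Young's inequality.

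Finally, I would check that $B(t)$, together with the boundary produced by the integration by parts above and the correction $4\pi\frac{d}{dt}[\lam u^*(t,t)/t]$, combine to $O(\gamma(t))$. Using $\dot g(t,t) = 0$, $g(t,t) = \pi - Q_\lam(t) = 2\lam/t + O(\lam^3/t^3)$, $\p_r g(t,t) = -Q_\lam'(t) = -2\lam/t^2 + O(\lam^3/t^4)$, $\p_t g(t,t) = \lam'\Lam Q_{\uln\lam}(t)$, $\Lam Q_\lam(t) = (2\lam/t)(1+O(\lam^2/t^2))$, $\Lam Q_{\uln\lam}(t) = (2/t)(1+O(\lam^2/t^2))$, $\p_r[\Lam Q_{\uln\lam}](t) = -(2/t^2)(1+O(\lam^2/t^2))$, $\frac{d}{dt}u^*(t,t) = (\p_t u^* + \p_r u^*)(t,t)$, and the pointwise bounds $u^*(t,t) = O(t^\nu)$, $\p_t u^*(t,t), \p_r u^*(t,t) = O(t^{\nu-1})$ from Corollary \ref{l:ustar_app}, the leading pieces of size $\lam' t^{\nu-1}$ and $\lam t^{\nu-2}$ cancel pairwise between the four sources of boundary terms; the residue is bounded by $\gamma(t)$. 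The two remaining interior error integrals are handled by expanding $f(u^* + Q_\lam + g) - f(u^*) - f(Q_\lam)$ via trigonometric identities and applying Cauchy--Schwarz with the pointwise bounds on $u^*$ and $\p_t u^*$, analogously to the proof of Proposition \ref{p:modp2}. The main obstacle is the delicate grouping of the numerous boundary contributions of critical size $\lam' t^{\nu-1}$, which cancel only after one correctly identifies the role of $4\pi\lam u^*(t,t)/t$. Integrating $G'(t) = 8\pi pq\, \lam'(t) t^{\nu-1} + \gamma(t)$ from $t_0$ to $t$ then completes the proof.
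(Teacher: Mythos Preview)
Your proposal is correct and follows essentially the same approach as the paper's proof: differentiate $\beta(t) = \tfrac{1}{\pi}\la D\cl E_{loc}(\bs u^*(t)), \bs g(t)\ra$, use the equations for $\bs u^*$ and $\bs g$ together with integration by parts to reduce to the main integral $-2\lam'\int_0^t(\Delta u^* - f(u^*)/r^2)\Lambda Q_{\uln\lam}\,r\,dr$, extract the leading term $8pq\,\lam' t^{\nu-1}$ via $\LL_\lam\Lambda Q_{\uln\lam}=0$ and $\int_0^\infty(\Lambda Q)^3\,ds=2$, and verify that the boundary contributions combine to $-4\tfrac{d}{dt}(\lam u^*(t,t)/t)$ up to admissible errors. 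The only cosmetic difference is that you introduce the corrected function $G = F + 4\pi\lam u^*(t,t)/t$ at the outset, whereas the paper recognizes this total derivative at the end of the boundary computation.
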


\begin{proof}
Define
	\begin{align*}
	\beta(t) &:= \frac{1}{\pi} \la D\cl E_{loc}(\bs u^*(t)), \bs g(t) \ra \\
	&= 2 \int_0^t \Bigl ( \p_t u^*(t) \dot g(t) + \p_r u^*(t) \p_r g(t) + \frac{\sin(2 u^*(t))}{2r^2} g(t) \Bigr ) r \ud r.
	\end{align*}
We compute $\beta'(t)$. 
Since $\dot g(t,r) = 0$ and $g(t,r) = \pi - Q_{\lam(t)}(r)$ for $r \geq t$, we have 
\begin{align*}
\beta'(t) &= -2 \p_r u^*(t,t) \p_r Q_{\lam(t)}(t) t + \frac{\sin 2u^*(t,t)}{t} (\pi - Q_{\lam(t)}(t)) \\
&\quad + 2 \int_0^t \Bigl [ \p_t^2 u^* \dot g + \p_{rt}^2 u^* \p_r g + 
\frac{1}{r^2} f'(u^*) \p_t u^* g \Bigr ] r \ud r \\
&\quad + 2 \int_0^t\Bigl[ \p_t u^* \p_t \dot g  + \p_r u^* \p^2_{tr} g + 
\frac{1}{r^2}f(u^*) \p_t g\Bigr ] r \ud r  
\end{align*}
where again we have $f(\rho) := \frac{1}{2}\sin 2\rho$. 
Using the equations satisfied by $\bs u^*(t)$ and $\bs g(t)$ we can write the last two lines as 
\begin{align}
&2 \int_0^t \Bigl \{  \Delta u^* \dot g + \p_{rt}^2 u^* \p_r g +\frac{1}{r^2}f'(u^*) \p_t u^* g + \p_t u^* \Delta  g + \p_{r} u^* \p^2_{tr} g \\ &\quad \quad - \frac{1}{r^2}[f(u^* + Q_\lam + g) - f(u^*) - f(Q_\lam)]\p_t u^* + \frac{1}{r^2} f(u^*) \lam' \Lambda Q_{\uln{\lam}} \Bigr \} r \ud r. 
\end{align}
Integrating by parts and using $g(t,r) = \pi - Q_\lam(t)(r)$ for $r \geq t$, we have  
\begin{align*}
2 \int_0^t \p^2_{rt} u^* \p_r g \, r \ud r &= 
- 2 \p_t u^*(t,t) \p_r Q_{\lam(t)}(t) t - 2 \int_0^t \p_t u^* \Delta g r \ud r,\\
2 \int_0^t \p_{r} u^* \p^2_{tr} g \, r \ud r &= 
 2 \lam'(t) \p_r u^*(t,t) \Lambda Q_{\uln \lam(t)}(t) t - 2 \int_0^t \Delta u^* \dot g \, r \ud r\\
 &\quad -2 \lam' \int_0^t \Delta u^* \Lambda Q_{\uln \lam} r \ud r. 
\end{align*}
We conclude  
\begin{align*}
\beta'(t) &= -2 \p_r u^*(t,t) \p_r Q_{\lam(t)}(t) t + \frac{\sin 2u^*(t,t)}{t} (\pi - Q_{\lam(t)}(t)) \\
&\quad - 2 \p_t u^*(t,t) \p_r Q_{\lam(t)}(t) t +  2 \lam'(t) \p_r u^*(t,t) \Lambda Q_{\uln \lam(t)}(t) t \\
&\quad - 2 \lam'\int_0^t \Bigl (\Delta u^* - \frac{f(u^*)}{r^2}
 \Bigr ) \Lambda Q_{\uln{\lam}} r \ud r + \gamma(t),
\end{align*}
where 
\begin{align*}
\tilde \gamma(t) :=  2 \int_0^t \p_t u^* \Bigl [ 
f(u^* + Q_\lam + g) - f(u^*) - f(Q_\lam) - f'(u^*) g
\Bigr ] \frac{\ud r}{r}. 
\end{align*}

We now study the size of $\tilde \gamma(t)$.
Adding and subtracting $f(u^* + g)$ in the integrand, using $f(u^* + g) - f(u^*) - f'(u^*)g = O(|g|^2)$ and Corollary \ref{l:ustar_app} we see  
\begin{align*}
\tilde \gamma(t) = - 2\int_0^t \p_t u^* \Bigl [ f(u^* + Q_\lam + g) - f(u^* + g) - f(Q_\lam)\Bigr ] \frac{\ud r}{r} + O\Bigl (
t^{\nu-1} \| g \|_{H(r \leq t)}^2
\Bigr ).
\end{align*} 
Now 
\begin{gather}
 f(u^* + Q_\lam + g) - f(u^* + g) - f(Q_\lam) \\= -\frac{1}{2} \sin (2u^* + 2g) (1- \cos 2 Q_\lam) - \frac{1}{2}(1-\cos (2u^*+2g)) \sin 2 Q_\lam.
\end{gather}
Thus, 
\begin{align*}
\abs{\tilde \gamma(t)} &\lesssim \int_0^t |\p_t u^*| \Bigl [
(|u^*| +|g|)|\cos 2 Q_\lam - 1| + (|u^*|^2 + |g|^2) |\sin 2Q_\lam|
\Bigr ] \frac{\ud r}{r}. 
\end{align*}
By Corollary \ref{l:ustar_app}
\begin{align*}
\int_0^t |\p_t u^*| 
|u^*| |\cos 2 Q_\lam - 1| \frac{\ud r}{r} 
&\lesssim t^{2\nu-2} \int_0^t |\cos 2 Q_\lam - 1| \ud r \\
&\lesssim t^{2\nu-2} \lam,
\end{align*}
as well as 
\begin{align*}
\int_0^t |\p_t u^*| |g| |\cos 2Q_\lam - 1| \frac{\ud r}{r} 
&\lesssim t^{\nu-2} \lam \|g \|_{L^\infty(r \leq t)} \int_0^{t/\lam} |\cos 2Q - 1| \, \ud r \\
&\lesssim t^{\nu-2} \lam \|g \|_{H(r \leq t)}. 
\end{align*}
Via Corollary \ref{l:ustar_app} we also conclude   
\begin{align*}
\int_0^t |\p_t u^*| |u^*|^2 |\sin 2 Q_\lam| \frac{\ud r}{r} + 
\int_0^t |\p_t u^*| |g|^2 |\sin 2Q_\lam| \frac{\ud r}{r}
&\lesssim t^{3\nu}\lam + t^{\nu-1} \| g \|_{H(r \leq t)}^2. 
\end{align*}
In summary, we have proved the linear term 
\begin{align*}
\beta(t) &= \frac{1}{\pi} \la D\cl E_{loc}(\bs u^*(t)), \bs g(t) \ra \\
&= 2 \int_0^t \Bigl ( \p_t u^*(t) \dot g(t) + \p_r u^*(t) \p_r g(t) + \frac{\sin(2 u^*(t))}{2r^2} g(t) \Bigr ) r \ud r,
\end{align*}
satisfies 
\begin{align}
\beta'(t) &= -2 \p_r u^*(t,t) \p_r Q_{\lam(t)}(t) t + \frac{\sin 2u^*(t,t)}{t} (\pi - Q_{\lam(t)}(t)) \\
&\quad - 2 \p_t u^*(t,t) \p_r Q_{\lam(t)}(t) t +  2 \lam'(t) \p_r u^*(t,t) \Lambda Q_{\uln{\lam(t)}}(t) t \\
&\quad- 2 \lam'\int_0^t \Bigl ( \Delta u^* - \frac{f(u^*)}{r^2} \Bigr ) \Lambda Q_{\uln{\lam}} r \ud r + \tilde \gamma(t),
\end{align}
where  
\begin{align}
\tilde \gamma(t) &= O\Bigl ( 
t^{2\nu-2} \lam(t) +  t^{\nu-2} \lam(t)\| g \|_{H(r \leq t)} 
+ t^{\nu-1} \| g \|_{H(r \leq t)}^2
\Bigr ).
\end{align}

We note for all $r \geq 1$,
\begin{gather}\label{eq:Q_est}
\pi  - Q(r) = 2 r^{-1} + O(r^{-3}), \quad \p_r Q(r) = 2 r^{-2} + O(r^{-4}), \\
 \Lambda Q(r) = 2 r^{-1} + O(r^{-3}).
 \end{gather}
By Lemma \ref{l:nonlinear_app} and Lemma \ref{l:linear_app}
\begin{align}\label{eq:ustarest}
t^{-1} |u^*(t,t)| + |\p_t u^*(t,t)| + |\p_r u^*(t,t)| \lesssim t^{\nu-1}
\end{align} 
Thus, we conclude 
\begin{align*}
-2 \p_r u^*(t,t) \p_r Q_{\lam(t)}(t) t &= 
-4 \p_r u^*(t,t) \lam(t) t^{-1} + O(t^{\nu-4} \lam^3(t) ),\\
\frac{\sin 2 u^*(t,t)}{t} (\pi - Q_{\lam(t)}) &= 
4 u^*(t,t) \lam(t) t^{-2} + O( t^{3\nu - 2} \lam(t) + t^{\nu - 4} \lam(t)^3), \\
-2 \p_t u^*(t,t) \p_r Q_{\lam(t)}(t) t &= -4 \p_t u^*(t,t) \lam(t) t^{-1} + O(t^{\nu-4} \lam^3(t) ), \\
2 \lam'(t) \p_r u^*(t,t) \Lambda Q_{\uln \lam(t)}(t) t
&= 4 \lam'(t) \p_r u^*(t,t) + O (
\lam'(t) t^{\nu-3} \lam^2(t) ). 
\end{align*}
Now, integrating by parts and using $(\Delta - \frac{f'(Q_{\lam})}{r^2})\Lambda Q_\lam = 0$ we obtain  
\begin{align*}
-2 \lam' \int_0^t \Bigl (& 
\Delta u^* - \frac{f(u^*)}{r^2} 
\Bigr ) \Lambda Q_{\uln \lam} r \ud r \\
&= - 2\lam'(t) \p_r u^*(t,t) \Lambda Q_{ \uln \lam}(t) t + 2 \lam'(t) u^*(t,t) \p_r \Lambda Q_{\uln \lam}(t) t \\ &\quad - 2 \lam' \int_0^t u^* \Delta Q_{\uln \lam} r \ud r + 2 \lam' \int_0^t f(u^*) \Lambda Q_{\uln \lam} \frac{\ud r}{r} \\
&= - 2\lam'(t) \p_r u^*(t,t) \Lambda Q_{\uln \lam}(t) t + 2 \lam'(t) u^*(t,t) \p_r \Lambda Q_{\uln \lam}(t) t \\ &\quad 
-  \frac{2\lam'}{\lam} \int_0^t \Bigl ( u^* f'(Q_\lam) - f(u^*) \Bigr ) \Lambda Q_{\lam} \, \frac{\ud r}{r}.
\end{align*}
Since $1 - \cos 2 Q_\lam = 2 \sin^2 Q_\lam = 2 (\Lambda Q_\lam)^2$
\begin{align*}
-u^* f'(Q_{\lam}) + f(u^*) 
= 2  u^* \sin^2 Q_\lam + O(|u^*|^3)
= 2 u^* (\Lambda Q_\lam)^2 + O(|u^*|^3).  
\end{align*}
so 
\begin{align}
-\frac{2\lam'}{\lam} \int_0^t \Bigl ( u^* f'(Q_\lam) - f(u^*) \Bigr ) \Lambda Q_{\lam} \, \frac{\ud r}{r} &= 
\frac{4\lam'}{\lam} \int_0^t u^* (\Lambda Q_{\lam})^3 \frac{\ud r}{r} + O
\Bigl (
\lam' t^{3\nu-1}
\Bigr ). 
\end{align}
By Lemma \ref{l:nonlinear_app} and Lemma \ref{l:linear_app} we have for $r \leq t$
\begin{align}
u^*(t,r) = pq t^{\nu-1} r + O(r^3 t^{\nu-3} + r t^{3\nu-1})
\end{align}
Since $\int_0^t (\Lambda Q_\lam)^3 r^2 \ud r \lesssim \lam^3 |\log (\lam/t)|$
we conclude
\begin{align}
-\frac{2\lam'}{\lam} \int_0^t \Bigl ( u^* f'(Q_\lam) - f(u^*) \Bigr ) \Lambda Q_{\lam} \, \frac{\ud r}{r} 
&= 4\lam' \int_0^{t} pq t^{\nu-1} (\Lambda Q_\lam)^3 \frac{\ud r}{\lam} \\
&\qquad+  O
\Bigl ( t^{\nu-3} \lam' \lam^2 |\log (t/\lam)| + \lam' t^{3\nu-1}
\Bigr ) \\
&= 8 pq t^{\nu-1} \lam' \\
&\qquad+  O
\Bigl ( t^{\nu-3} \lam' \lam^2 |\log (t/\lam)| + \lam' t^{3\nu-1}
\Bigr )
\end{align}
Thus, 
\begin{align}
-2 \lam' \int_0^t \Bigl (&\Delta u^* - \frac{f(u^*)}{r^2} \Bigr ) \Lambda Q_{\uln \lam} \, r \ud r \\&= 
-2 \lam'(t) \p_r u^*(t,t) \Lambda Q_{\uln \lam}(t) t + 
2\lam'(t) u^*(t,t) \p_r \Lambda Q_{\uln \lam}(t) t\\
&\quad +8 pq t^{\nu-1} \lam'(t)  +  O
\Bigl (  t^{\nu-3} \lam' \lam^2 |\log (t/\lam)| + \lam'  t^{3\nu-1}
\Bigr ).
\end{align}
 Using \eqref{eq:Q_est}, \eqref{eq:ustarest} and Proposition \ref{p:modp} we conclude 
\begin{align}
-2 \lam' \int_0^t \Bigl (&\Delta u^* - \frac{f(u^*)}{r^2} \Bigr ) \Lambda Q_{\uln \lam} \, r \ud r \\&=
-4 \lam'(t) \p_r u^*(t,t) - 4 \lam'(t) u^*(t,t) t^{-1}
+ 8 pq t^{\nu-1} \lam'(t) \\
&\quad +  O
\Bigl ( t^{\nu-3} \lam^2 \log (t/\lam) \| \dot g \|_{L^2} +  t^{3\nu-1} \| \dot g \|_{L^2}
\Bigr ).   
\end{align}
In summary, we have proven  
\begin{align}
\beta'(t) &= -4 \lam(t) \p_r u^*(t,t) t^{-1} + 4 \lam(t) u^*(t,t) t^{-2} 
+ 4 \lam(t) \p_t u^*(t,t) t^{-1} \\&\quad - 4 \lam'(t) u^*(t,t) t^{-1} + 8 pq \lam'(t) t^{\nu-1} + \gamma(t) \\
&= -4 \frac{\ud }{\ud t} \Bigl ( \lam(t) u^*(t,t) t^{-1} \Bigr ) 
+ 8 pq \lam'(t) t^{\nu-1} + \gamma(t)
\end{align}
where  
\begin{align*}
\gamma(t) = 
O \Bigl (
&t^{2\nu-2} \lam(t) + t^{\nu-2}\lam(t)\|\bs g(t) \|_{\cl H}  
+ t^{\nu-1} \| \bs g(t) \|_{\cl H}^2 + t^{\nu-4}\lam^3(t) \Bigr ),
\end{align*}
Since $\lam(t) t^{-1} \lesssim \| \bs g(t) \|_{\cl H}^2$, the previous implies 
\begin{align*}
\gamma(t) = 
O \Bigl (
&t^{2\nu-2} \lam(t) + t^{\nu-1} \| \bs g(t) \|_{\cl H}^2 \Bigr ),
\end{align*}
The lemma follows upon integrating $\beta'$ from $t_0$ to $t$.  
\end{proof} 

The proof of Proposition \ref{p:quadratic_estimate} now follows from the previous three lemmas.

\begin{proof}[Proof of Proposition \ref{p:quadratic_estimate}] For $t \geq t_0$, we define 
\begin{align}
\iota_1(t) &:= \cl E_{loc}(\bs Q_{\lam(t)} + \bs u^*(t)) - \cl E(\bs Q) - \cl E_{loc}(\bs u^*(t))
- 4 \pi \lam(t) u^*(t,t) t^{-1} \\ &\quad + \la D\cl E_{loc}(\bs Q_{\lam(t)} + \bs u^*(t)) , \bs g(t) \ra - 
\la D\cl E_{loc}(\bs u^*(t)), \bs g(t) \ra, \\
\iota_2(t) &:= -\gamma(t),
\end{align}
where $\gamma(t)$ is as in the statement of Lemma \ref{l:linear_pairing_estimate}. 
By Lemma \ref{l:locenergyQustar}, Lemma \ref{l:linearsplitting} and Lemma \ref{l:linear_pairing_estimate} $\iota_1$ and $\iota_2$ satisfy the desired estimates. The Taylor expansion of the local energy inside of the light cone may then be expressed as 
\begin{align}
\begin{split}\label{eq:loc_taylor2}
\cl E_{loc}&(\bs Q_{\lam(t)} + \bs u^*(t) + \bs g(t)) \\&= \cl E_{loc}(\bs Q_{\lam(t)} + \bs u^*(t)) + 
\la D\cl E_{loc}(\bs Q_{\lam(t)} + \bs u^*(t)) , \bs g(t) \ra + \cl Q(\bs g(t),t) \\
&= \cl E(\bs Q) + \cl E_{loc}(\bs u^*(t))
+ 4 \lam(t) u^*(t,t) t^{-1} + \la D\cl E_{loc}(\bs u^*(t)) , \bs g(t) \ra \\ &\quad+ \iota_1(t) + \cl Q(\bs g(t), t).
\end{split}
\end{align}
Recall the local energy identities satisfied by $\bs u$ and $\bs u^*$:  
\begin{align}
\begin{split}
\cl E_{loc}(\bs u(t)) &=  \cl E_{loc}(\bs u(t_0)) + \cl F(\bs u, t_0,t), \\
\cl E_{loc}(\bs u^*(t)) &=  \cl E_{loc}(\bs u^*(t_0)) + \cl F(\bs u^*, t_0,t).
\end{split}
\label{eq:u_energy_ident2}
\end{align} 
Conservation of energy and \eqref{eq:u_energy_ident2} imply
\begin{align}
\begin{split}
\cl E_{ext}(\bs u(t_0)) &= \cl E_{\ext}(\bs u(t)) + \cl F(\bs u,t_0,t), \\
\cl E_{ext}(\bs u^*(t_0)) &= \cl E_{\ext}(\bs u^*(t)) + \cl F(\bs u^*,t_0,t), \end{split} \label{eq:exteriorenergy_identity}
\end{align}
where 
\begin{align*}
\cl E_{ext}(\bs u(t)) :=  \pi \int_t^\infty \Bigl (
|\p_t u(t,r)|^2 + |\p_r u(t,r)|^2 + \frac{\sin^2 u(t,r)}{r^2}
\Bigr ) \, r \ud r. 
\end{align*}
Since $\bs u(t,r) = (\pi,0) + \bs u^*(t,r)$ for all $r \geq t$, 
 $\cl E_{ext}(\bs u(t)) = \cl E_{ext}(\bs u^*(t))$ for all $t$. By \eqref{eq:exteriorenergy_identity}, we conclude 
\begin{align}
\cl F(\bs u, t_0, t) = \cl F(\bs u^*, t_0, t). 
\end{align} 
Then \eqref{eq:loc_taylor2} and \eqref{eq:u_energy_ident2} imply 
\begin{align}
\cl E(\bs Q) + \cl E_{loc}&(\bs u^*(t))
+ 4 \lam(t) u^*(t,t) t^{-1}  + 
\la D\cl E_{loc}(\bs u^*(t)) , \bs g(t) \ra + \iota_1(t) + \cl Q(\bs g(t), t) \\&= 
\cl E(\bs Q) + \cl E_{loc}(\bs u^*(t_0))
+ 4 \lam(t_0) u^*(t_0,t_0) t_0^{-1}  + 
\la D\cl E_{loc}(\bs u^*(t_0)) , \bs g(t_0) \ra \\&\quad + \iota_1(t_0) + \cl Q(\bs g(t_0), t_0) + \cl F(\bs u^*, t_0, t). 
\end{align}
Since 
\begin{align}
\cl E_{loc}(\bs u^*(t)) &= \cl E_{loc}(\bs u^*(t_0)) + \cl F(\bs u^*,t_0,t), \\
\la D\cl E_{loc}(\bs u^*(t)) , \bs g(t) \ra &= 
8p q \int_{t_0}^t \lam'(\tau) \tau^{\nu-1} \, \ud \tau + \la D\cl E_{loc}(\bs u^*(t_0)) , \bs g(t_0) \ra \\&\quad- 4\lam(t) u^*(t,t) t^{-1} + 4\lam(t_0) u^*(t_0,t_0) t_0^{-1}  - \int_{t_0}^t 
\iota_2(\tau) \ud \tau,
\end{align}
we conclude 
\begin{align}
\cl Q(\bs g(t), t) &= \cl Q(\bs g(t_0), t_0) - 8pq \int_{t_0}^t \lam'(\tau) \tau^{\nu-1} \ud \tau \\
&\quad+ \iota_1(t_0) - \iota_1(t) + \int_{t_0}^t \iota_2(\tau) d \tau , 
\end{align}
as desired. 
\end{proof}  

\section{Construction of the blow-up solution} \label{s:construction} 

In this section we prove part $(a)$ of Theorem \ref{t:main1}.  We
define 
\begin{align}
\lam_c(t) = \frac{p|q|}{\nu^2(\nu+1)} \frac{t^{\nu+1}}{|\log t|}.
\end{align}
Let $\chi \in C^\infty(\bR^2)$ be radial such that $\chi(r) = 1$ for $r \leq \frac{1}{2}$ and $\chi(r) = 0$ if $r \geq 1$.  As before, let $\bs u^*$ denote the unique solution to \eqref{eq:wmk} such that $\bs u^*(0,r) = \bs u^*_0(r) = 
\chi(r)(qr^\nu,0)$.
For $t > 0$, we define 
\begin{align}
\bs v(t) = \bs Q_{\lam_c(t)} + \bs u^*(t) + (1 - \chi_{t})(\pi - Q_{\lam_c(t)},0). 
\end{align}  
We note
\begin{align}
\bs v(t,r) = \bs Q_{\lam_c(t)}(r) + \bs u^*(t,r), \quad \forall r \leq \frac{t}{2},
\end{align}
and
\begin{align}
\bs v(t,r) = (\pi,0) + \bs u^*(t,r), \quad \forall r \geq t. 
\end{align}
The main tool we use to prove part $(a)$ of Theorem \ref{t:main1} is the following proposition.

\begin{proposition}\label{p:close_existence}
	For all $\epsilon > 0$ sufficiently small, there exists $T_1 = T_1(\nu, q, \epsilon) > 0$ with the following property.  For all $T \leq T_1$, and for all $t_0 < T$, the unique finite energy solution $\bs u$ to \eqref{eq:wmk} with initial data $\bs u(t_0) = \bs v(t_0)$ is defined on $[t_0,T]$ and satisfies 
	\begin{align}
	\sup_{t \in [t_0,T]} \left \| 
	\bs u(t) - (\bs Q_{\lam_c(t)} + \bs u^*(t) ) 
	\right \|_{\cl H} < \epsilon. 
	\end{align} 
\end{proposition}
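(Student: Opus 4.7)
The plan is to prove Proposition \ref{p:close_existence} by a bootstrap continuity argument coupling the energy estimate of Proposition \ref{p:gestimate} to the sharp modulation control of Proposition \ref{p:modp2}. Fix $\epsilon > 0$ small, let $T \leq T_1$ and $t_0 < T$. First I would verify that the initial data $\bs u(t_0) = \bs v(t_0)$ lies $O(\lam_c(t_0)/t_0)$-close to the manifold $\{\bs Q_\lam + \bs u^*(t_0) : \lam > 0\}$ in $\HH$; indeed,
\begin{align}
\bs v(t_0) - \bigl(\bs Q_{\lam_c(t_0)} + \bs u^*(t_0)\bigr) = (1-\chi_{t_0})\bigl(\pi - Q_{\lam_c(t_0)}, 0\bigr),
\end{align}
whose $\HH$-norm is controlled by $\lam_c(t_0)/t_0 \simeq t_0^{\nu}/|\log t_0|$. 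Local well-posedness in $\HH_1$ then yields a maximal interval $J = [t_0, T^*) \subset [t_0, T]$ on which $\|\bs u(t) - (\bs Q_{\lam_c(t)} + \bs u^*(t))\|_{\HH} < \epsilon$, and I would suppose for contradiction that $T^* < T$, or that $T^* = T$ but the bound is saturated as $t \uparrow T^*$.

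On $J$, Lemma \ref{l:modeq} furnishes a $C^1$ modulation parameter $\lam(t)$ with $\lam(t)/t \ll 1$, and an error $\bs g(t) = \bs u(t) - \bs Q_{\lam(t)} - \bs u^*(t)$ satisfying the orthogonality \eqref{eq:ola} and $\|\bs g(t)\|_{\HH} \lesssim \epsilon$. At the initial time, since $\p_t \bs v(t_0) = \p_t \bs u^*(t_0)$, the second component satisfies $\dot g(t_0) \equiv 0$, so $b(t_0) = 0$ exactly from \eqref{eq:bdef}, and direct computation from \eqref{eq:zetadef} gives $\zeta(t_0) = \tfrac{4p|q|}{\nu(\nu+1)}t_0^{\nu+1}(1 + o(1))$ and $\|\bs g(t_0)\|_{\HH} \lesssim \lam_c(t_0)/t_0$.

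Next I would invoke Proposition \ref{p:gestimate}. Because $-8pq > 0$ and $\lam' > 0$ when $\lam$ tracks the increasing profile $\lam_c$, the leading integral $-8pq \int_{t_0}^t \lam'(\tau) \tau^{\nu-1}\ud\tau$ is positive but bounded by $C\lam(t) t^{\nu-1} \lesssim t^{2\nu}/|\log t|$; the $\iota_1, \iota_2$ terms contribute comparable or smaller positive powers of $t$, and the $t^{\nu-1}\|\bs g\|_{\HH}^2$ term in $\iota_2$ is absorbed by a Gronwall argument since $\int_0^{T}\tau^{\nu-1}\ud\tau < \infty$. This yields $\|\bs g(t)\|_{\HH}^2 \leq C t^{2\nu}/|\log t|$ on $J$, and in particular $\|\bs g(t)\|_{\HH}^2/\lam(t) \lesssim t^{\nu-1}$. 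Feeding this into the two-sided bound on $b'$ in Proposition \ref{p:modp2} gives, for any $\delta > 0$, $b'(t) = 4p|q|\,t^{\nu-1}(1 + O(\delta)) + O(\lam/t^2)$, so integrating from $t_0$ with $b(t_0) = 0$ produces $b(t) = \tfrac{4p|q|}{\nu} t^\nu(1 + O(\delta))$. Combining this with $\zeta'(t) = b(t) + O(\|\dot g\|_{L^2} + \lam/t)$ and $\zeta(t) = 4\lam(t)\log(t/\lam(t))(1+o(1))$ from Proposition \ref{p:modp2} and integrating from $t_0$, I obtain
\begin{align}
\lam(t)\log\bigl(t/\lam(t)\bigr) = \frac{p|q|}{\nu(\nu+1)} t^{\nu+1}(1 + o_{T\to 0}(1)),
\end{align}
and inverting via $\log(t/\lam(t)) \sim \nu|\log t|$ gives $\lam(t) = \lam_c(t)(1 + o_{T\to 0}(1))$.

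Combining these two pieces, $\|\bs u(t) - \bs Q_{\lam_c(t)} - \bs u^*(t)\|_{\HH} \leq \|\bs g(t)\|_{\HH} + \|Q_{\lam(t)} - Q_{\lam_c(t)}\|_{H} = o_{T\to 0}(1) < \epsilon/2$ uniformly on $J$, contradicting maximality once $T_1$ is fixed small enough; in particular the solution extends up to $T$ with the stated bound. The main obstacle in the scheme will be the sharp tracking of $\lam(t)$ by $\lam_c(t)$: the naive modulation estimate $|\lam'| \lesssim \|\dot g\|_{L^2}$ of Proposition \ref{p:modp} is far too crude to resolve the logarithmic correction in $\lam_c$, and the resolution requires both the monotonic virial-type corrected quantity $b(t)$ and the crucial algebraic fact that its integration constant $b(t_0)$ vanishes identically for the present ansatz, so that the leading-order profile of $b$ (and hence of $\zeta$ and $\lam$) is pinned down without free parameters.
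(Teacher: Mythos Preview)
Your proof has the right architecture and essentially follows the paper's approach: bootstrap, energy estimate from Proposition~\ref{p:gestimate}, modulation control from Proposition~\ref{p:modp2}, and the key observation that $\dot g(t_0)=0$ forces $b(t_0)=0$. However, there is a genuine gap in the step where you write
\[
b'(t) = 4p|q|\,t^{\nu-1}\bigl(1+O(\delta)\bigr) + O(\lam/t^2).
\]
Proposition~\ref{p:modp2} does \emph{not} give this. The lower bound \eqref{eq:b'lb} is sharp, but the upper bound there is only $|b'(t)|\le C_0\bigl(t^{\nu-1}+\lam/t^2+\lam^{-1}\|\bs g\|_{\HH}^2\bigr)$ with a non-explicit constant $C_0$ that has nothing to do with $4p|q|$. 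Integrating it yields only $b(t)\le C t^\nu$ and hence $\lam(t)\le C'\lam_c(t)$ with $C'$ not close to $1$; this is not an improvement over your bootstrap hypothesis and the continuity argument does not close.

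The paper obtains the sharp upper bound on $b$ by a different route: it comes directly from \eqref{eq:b-bound},
\[
|b(t)|\le (4+\delta)^{1/2}\Bigl(\log\tfrac{t}{\lam(t)}\Bigr)^{1/2}\|\dot g(t)\|_{L^2}+C_0\|\bs g(t)\|_{\HH}^2,
\]
combined with a \emph{sharp} energy bound $\|\dot g(t)\|_{L^2}^2\le (1+O(\epsilon))\tfrac{4p^2q^2}{\nu^3}\tfrac{t^{2\nu}}{|\log t|}$. That sharp constant in the energy bound is obtained by integrating $-8pq\int_{t_0}^t\lam'(\tau)\tau^{\nu-1}\ud\tau$ by parts and plugging in the bootstrap bound on $\lam$; see \eqref{eq:lam_integral}. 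The circular estimate closes because one full pass (sharp $\lam\Rightarrow$ sharp $\|\dot g\|_{L^2}\Rightarrow$ sharp $|b|\Rightarrow$ sharp $\lam$) contracts the $O(\epsilon)$-correction by the fixed factor $\tfrac{\nu}{\nu+1}<1$. For this reason the paper bootstraps not on $\|\bs u-\bs Q_{\lam_c}-\bs u^*\|_{\HH}<\epsilon$ as you do, but on the pair of quantitative inequalities \eqref{eq:bootstrap_1}--\eqref{eq:bootstrap_2} with explicit constants, so that the $\tfrac{\nu}{\nu+1}$-improvement is visible.
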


We prove Proposition \ref{p:close_existence} via a bootstrap argument. We will first require a few simple facts.

\subsection{A few lemmas} 

The following two lemmas are simple consequences of Proposition \ref{p:modp2} and Proposition \ref{p:gestimate}.

\begin{lemma}
	\label{p:modp3} 
	Assume the same hypothesis as in  Proposition~\ref{p:modp}. Let $\delta>0$
	be arbitrary and let $\eta_0$ be as in Lemma~\ref{l:modeq}. 
	Let $\zeta(t), b(t)$ be as in~\eqref{eq:zetadef}, \eqref{eq:bdef}. In addition, assume there exist constants $\kappa_1, \kappa_2 > 0$ such that for all $t \in [t_0,T]$ 
	\begin{gather}\label{eq:lam_size_assumption}
	\begin{split}
	\frac{\kappa_1}{2019} \frac{t^{\nu+1}}{|\log t|} \leq \lam(t) \leq
	\kappa_1 \frac{t^{\nu+1}}{|\log t|}, \\
	\| \bs g(t) \|_{\cl H}^2 \leq \kappa_2 t^{2\nu} |\log t|^{-1}.
	\end{split}
	\end{gather}
	Then there exist $T_0 = T_0(\nu,q,\delta,\kappa_1,\kappa_2) > 0$ and $C_0 = C_0(\nu,q,\kappa_1,\kappa_2) > 0$ such that
	if $T \leq T_0$, then for all $t \in [t_0,T]$
	\begin{align}
	\left | \frac{\zeta(t)}{4 \lam(t)\log \frac{t}{\lam(t)}} - 1 \right | &\leq C_0 |\log t|^{-\frac 12} \label{eq:bound-on-l2}, \\
	|b(t)| &\leq 
	( 4\nu + \delta )^{\frac{1}{2}} \left |\log t \right |^{\frac{1}{2}} \| \dot g(t) \|_{L^2} + 
	\delta t^{\nu}, 
	\label{eq:b-bound2}, \\
	|\zeta'(t) - b(t) |&\le \delta t^{\nu}. \label{eq:kala'2}
	\end{align}
	In addition, $b(t)$ is locally Lipschitz and the derivative $b'(t)$ satisfies
	\begin{align}
	&|b'(t)| \leq C_0 t^{\nu-1}, \\
	&b'(t) \ge (-4pq - \delta) t^{\nu-1}. \label{eq:b'lb2}  
	\end{align} 
\end{lemma}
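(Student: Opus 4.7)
The plan is to derive each of the five inequalities as a direct consequence of the corresponding estimate in Proposition~\ref{p:modp2}, by substituting the extra size information \eqref{eq:lam_size_assumption} on $\lam(t)$ and $\|\bs g(t)\|_{\cl H}$. First I would verify that Proposition~\ref{p:modp2} applies. Setting $\alpha := \kappa_1 T^{\nu}/|\log T|$, the hypothesis gives $\lam(t)/t \le \alpha$ for $t \in [t_0,T]$, and clearly $\alpha \to 0$ and $\|\bs g(t)\|_{\cl H}^2 \le \kappa_2 T^{2\nu}/|\log T| \to 0$ as $T \to 0$, so the smallness condition $\|\bs g(t)\|_{\cl H}^2 + \alpha + \sup J \leq \eta_1$ is met once $T \le T_0(\nu,q,\delta,\kappa_1,\kappa_2)$ is small. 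A key observation, used throughout, is that under \eqref{eq:lam_size_assumption} one has
\begin{equation}
\log \frac{t}{\lam(t)} = \nu |\log t| + O(\log |\log t|),
\end{equation}
so in particular, given any auxiliary $\eps>0$, one can arrange $\log(t/\lam(t)) \le (\nu + \eps)|\log t|$ by taking $T$ small.

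For \eqref{eq:bound-on-l2} I apply Proposition~\ref{p:modp2}'s bound on $\zeta/(4\lam \log(t/\lam))-1$: using the lower bound $\lam(t)\ge (\kappa_1/2019) t^{\nu+1}/|\log t|$, the asymptotic above, and $\|\bs g(t)\|_{\cl H} \le \sqrt{\kappa_2}\, t^{\nu}|\log t|^{-1/2}$, one gets
\begin{equation}
\frac{t}{\lam(t)\log(t/\lam(t))}\|\bs g(t)\|_{\cl H} \lesssim \frac{|\log t|}{t^{\nu}}\cdot\frac{1}{|\log t|}\cdot \frac{t^{\nu}}{|\log t|^{1/2}} \lesssim |\log t|^{-1/2},
\end{equation}
which gives \eqref{eq:bound-on-l2}. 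For \eqref{eq:b-bound2} I invoke Proposition~\ref{p:modp2}'s bound on $|b(t)|$ with an auxiliary small parameter $\delta'$ in place of $\delta$; then $(4+\delta')\log(t/\lam(t)) \le (4+\delta')\nu|\log t|(1+\eps) \le (4\nu+\delta)|\log t|$, provided $\delta'$ and $\eps$ are chosen small in terms of $\delta,\nu$ and $T$ is small. The remainder $C_0\|\bs g(t)\|_{\cl H}^2 \le C_0\kappa_2 t^{2\nu}/|\log t| \le \delta t^{\nu}$ for $T$ small. For \eqref{eq:kala'2} the right-hand side of Proposition~\ref{p:modp2}'s corresponding estimate is $C_0(\|\dot g\|_{L^2} + \lam/t)$, and both summands are $\lesssim t^{\nu}/|\log t|^{1/2}$ and $\lesssim t^{\nu}/|\log t|$ respectively, hence $\le \delta t^\nu$ once $T$ is small.

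For the two bounds on $b'(t)$ I start from Proposition~\ref{p:modp2}. In the upper bound, $\lam(t)/t^2 \lesssim t^{\nu-1}/|\log t|$ and $\|\bs g(t)\|_{\cl H}^2/\lam(t) \lesssim (\kappa_2/\kappa_1)\, t^{\nu-1}$, so the three terms combine into $|b'(t)| \le C_0 t^{\nu-1}$ with $C_0 = C_0(\nu,q,\kappa_1,\kappa_2)$. For the lower bound, I apply Proposition~\ref{p:modp2}'s inequality with a new small parameter $\delta'$ to be chosen; the error terms satisfy $C_0\lam(t)/t^2 \le C_0\kappa_1 t^{\nu-1}/|\log t| \le (\delta/3) t^{\nu-1}$ for $T$ small, and $\delta'\|\bs g(t)\|_{\cl H}^2/\lam(t) \le \delta'(2019\kappa_2/\kappa_1) t^{\nu-1} \le (\delta/3) t^{\nu-1}$ if $\delta' = \delta'(\delta,\kappa_1,\kappa_2)$ is chosen small. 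Taking also $\delta' \le \delta/3$ yields $b'(t) \ge (4p|q|-\delta)t^{\nu-1} = (-4pq - \delta)t^{\nu-1}$, since $q<0$.

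The proof is essentially bookkeeping; the only subtlety is the \emph{order of quantifiers}: one must choose the auxiliary $\delta'$ in Proposition~\ref{p:modp2} (depending on $\delta,\nu,\kappa_1,\kappa_2$) \emph{before} choosing $T_0$, so that the main coefficients $(4\nu+\delta)^{1/2}$ in \eqref{eq:b-bound2} and $(-4pq-\delta)$ in \eqref{eq:b'lb2} come out with the promised constants, rather than being inflated by factors of $\nu$ or absorbed into the error.
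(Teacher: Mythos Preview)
Your proposal is correct and follows essentially the same approach as the paper: derive the asymptotic $\log(t/\lam(t))=\nu|\log t|+O(\log|\log t|)$ from \eqref{eq:lam_size_assumption}, check the smallness hypotheses of Proposition~\ref{p:modp2}, and then substitute the resulting size bounds on $\frac{t}{\lam\log(t/\lam)}\|\bs g\|_{\cl H}$, $\|\dot g\|_{L^2}+\lam/t$, $\lam/t^2$, and $\|\bs g\|_{\cl H}^2/\lam$ into each estimate of Proposition~\ref{p:modp2}. Your explicit handling of the quantifier order (choosing the auxiliary $\delta'$ in Proposition~\ref{p:modp2} in terms of $\delta,\nu,\kappa_1,\kappa_2$ before fixing $T_0$) is a point the paper leaves implicit.
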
 

\begin{proof}
	The assumption \eqref{eq:lam_size_assumption} 
	implies  
	\begin{align}
	\frac{1}{\kappa} \frac{|\log t|}{t^{\nu}}\leq  \frac{t}{\lam(t)} \leq 
	\frac{2019}{\kappa} \frac{|\log t|}{t^{\nu}}.
	\end{align}
	Thus, for all $t \leq T$, 
	\begin{align}
	\log \frac{t}{\lam(t)}  &= \nu |\log t| + O(\log |\log t|). \label{eq:logtoverlam}
	\end{align} 
	The assumption \eqref{eq:lam_size_assumption} and \eqref{eq:logtoverlam} then easily imply the following estimates: 
	\begin{align}
	\begin{split}\label{eq:modp3_est}
	\frac{t}{\lam(t) \log \frac{t}{\lam(t)}} \| \bs g(t) \|_{\cl H} 
	&\lesssim |\log t|^{-\frac 12}, \\
	\| \dot g(t) \|_{L^2} + \frac{\lam(t)}{t}
	&\lesssim t^{\nu} |\log t|^{-\frac 12}, \\
	\frac{\lam(t)}{t^2} &\lesssim t^{\nu-1} |\log t|^{- 1}, \\ 
	\frac{1}{\lam(t)} \| \bs g(t) \|_{\cl H}^2 &\lesssim t^{\nu-1}, 
	\end{split}
	\end{align}
	where the implied constants depend only on $\kappa_1, \kappa_2$. By choosing $T_0$ sufficiently small, \eqref{eq:lam_size_assumption} implies the hypotheses in Proposition \ref{p:modp2} are verified.  The conclusions of Proposition \ref{p:modp3} then follow from those in Proposition \ref{p:modp2}, the estimates \eqref{eq:logtoverlam} and \eqref{eq:modp3_est}. 
\end{proof}

\begin{lemma}\label{l:g_size_with_assumptions}
	Assume there exist constants $\kappa_1, \kappa_2 > 0$ such that for all $t \in [t_0,T]$ 
	\begin{gather}\label{eq:lam_size_assumption2}
	\begin{split}
	\frac{\kappa_1}{2019} \frac{t^{\nu+1}}{|\log t|} \leq \lam(t) \leq
	\kappa_1 \frac{t^{\nu+1}}{|\log t|}, \\
	\| \bs g(t) \|_{\cl H}^2 \leq \kappa_2 t^{2\nu} |\log t|^{-1}.
	\end{split}
	\end{gather}
	There exists $T_0 = T_0(\nu, q, \kappa_1, \kappa_2) > 0$ and $C_1 = C_1(\nu, q,\kappa_1, \kappa_2)$ such that if $T \leq T_0$ then for all $t \in [t_0,T]$, 
	\begin{align}\label{eq:g_estimate}
	c \| g(t) \|_{H}^2 + \| \dot g (t) \|_{L^2}^2
	&\leq -8pq \int_{t_0}^t \lam'(\tau) \tau^{\nu-1} \ud \tau \\ &\quad + C_1 
	\left ( \| \bs g(t_0) \|_{\cl H}^2 + t^{2\nu} |\log t|^{-2} \right ).   
	\end{align}
\end{lemma}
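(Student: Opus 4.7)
The plan is to derive Lemma~\ref{l:g_size_with_assumptions} as a direct consequence of the energy estimate in Proposition~\ref{p:gestimate}, using the a priori bounds \eqref{eq:lam_size_assumption2} to absorb the remainder terms $\iota_1(t_0)$, $\iota_1(t)$, and $\int_{t_0}^{t}\iota_2(\tau)\,\ud\tau$ into the right-hand side $C_1(\|\bs g(t_0)\|_{\cl H}^2 + t^{2\nu}|\log t|^{-2})$. First one verifies the standing hypotheses of Proposition~\ref{p:gestimate}: the upper bound $\lam(t)\le \kappa_1 t^{\nu+1}|\log t|^{-1}$ ensures $\lam(t)/t\le 1$ provided $T_0$ is chosen small enough depending on $\kappa_1$, the smallness requirement $\sup J \le \eta_2$ is imposed by a further reduction of $T_0$, and the condition $\bs u(t,r)=(\pi,0)+\bs u^*(t,r)$ for $r\ge t$ is assumed to hold in the context in which this lemma will be applied.

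Proposition~\ref{p:gestimate} then yields
\begin{align*}
c \|g(t)\|_H^2 + \|\dot g(t)\|_{L^2}^2 &\leq -8pq \int_{t_0}^{t} \lam'(\tau) \tau^{\nu-1} \, \ud \tau + C \|\bs g(t_0)\|_{\cl H}^2 \\
&\quad + \iota_1(t_0) + \iota_1(t) + \int_{t_0}^{t} \iota_2(\tau) \, \ud \tau,
\end{align*}
so the task reduces to controlling the three error terms. Plugging $\lam(s)\lesssim s^{\nu+1}|\log s|^{-1}$ into the definition of $\iota_1(s)$, the three contributions are of sizes $\lam(s)^2 s^{-2}\lesssim s^{2\nu}|\log s|^{-2}$, $s^{2\nu-2}\lam(s)^2\log(s/\lam(s))\lesssim s^{4\nu}|\log s|^{-1}$, and $\lam(s)s^{3\nu-1}\lesssim s^{4\nu}|\log s|^{-1}$. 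Since $\nu>9/2$, the last two are far smaller than the first for $s\le T_0$ small, giving $\iota_1(s)\lesssim s^{2\nu}|\log s|^{-2}$ uniformly on $[t_0,T]$. Because $t_0\le t<1$ forces $|\log t_0|\ge|\log t|$ and $t_0^{2\nu}\le t^{2\nu}$, both $\iota_1(t)$ and $\iota_1(t_0)$ are bounded by $Ct^{2\nu}|\log t|^{-2}$.

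For the integrated remainder, one uses $\lam(\tau)\tau^{2\nu-2}\lesssim \tau^{3\nu-1}|\log\tau|^{-1}$ together with the second bound in \eqref{eq:lam_size_assumption2} to estimate $\tau^{\nu-1}\|\bs g(\tau)\|_{\cl H}^2\lesssim \tau^{3\nu-1}|\log\tau|^{-1}$, so that $\iota_2(\tau)\lesssim \tau^{3\nu-1}|\log\tau|^{-1}$ on $[t_0,T]$ and integration produces $\int_{t_0}^{t}\iota_2(\tau)\,\ud\tau \lesssim t^{3\nu}|\log t|^{-1}$, again much smaller than $t^{2\nu}|\log t|^{-2}$ for $T_0$ small. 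Combining these estimates and choosing $C_1=C_1(\nu,q,\kappa_1,\kappa_2)$ appropriately yields \eqref{eq:g_estimate}. There is no genuine obstacle beyond those resolved in Proposition~\ref{p:gestimate}; the argument is a routine verification that the a priori control \eqref{eq:lam_size_assumption2} is strong enough to dominate every error term produced by the quadratic energy identity, with the standing assumption $\nu>9/2$ furnishing the required margin.
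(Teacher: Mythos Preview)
Your argument is correct and follows essentially the same route as the paper: apply Proposition~\ref{p:gestimate}, then use the a priori bounds \eqref{eq:lam_size_assumption2} to show $\iota_1(s)\lesssim s^{2\nu}|\log s|^{-2}$ (hence $\iota_1(t_0)\le\iota_1(t)$ by monotonicity) and $\int_{t_0}^t\iota_2\lesssim t^{3\nu}|\log t|^{-1}\ll t^{2\nu}|\log t|^{-2}$. The paper phrases the monotonicity step as ``$t\mapsto t^{2\nu}|\log t|^{-2}$ is increasing on $(0,T_0]$,'' but this is the same observation you make by bounding the two factors separately.
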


\begin{proof}
	Let $\iota_1$ and $\iota_2$ be as in the statement of Proposition \ref{p:gestimate}. Then using \eqref{eq:lam_size_assumption2} we have for all $t$ sufficiently small
	\begin{align}
	\lam(t)^2 t^{-2} &\lesssim t^{2\nu} |\log t|^{-2}, \\
	t^{2\nu-2} \lam^2(t) |\log t/(\lam(t))| &\lesssim t^{4\nu} |\log t|^{-1}, \\
	\lam(t) t^{3\nu-1} &\lesssim t^{4\nu}|\log t|^{-1},
	\end{align}
	which imply for all $t$ sufficiently small 
	\begin{align}\label{eq:iota_1_bound}
	|\iota_1(t)| \lesssim t^{2\nu} |\log t|^{-2}. 
	\end{align}
	Note the function $f(t) = t^{2\nu} |\log t|^{-2}$ is strictly increasing on $(0,T_0]$ as long as $T_0$ is sufficiently small. This implies for all $t_0 \leq t \leq T_0$ 
	\begin{align}\label{eq:increasing}
	\iota_1(t_0) \lesssim t_0^{2\nu} |\log t_0|^{-2} \lesssim t^{2\nu} |\log t|^{-2}. 
	\end{align}
	Using \eqref{eq:lam_size_assumption2} we have for all $t$ sufficiently small
	\begin{align}
	t^{2\nu-2} \lam(t) + t^{\nu-1} \| \bs g(t) \|_{\cl H}^2 \lesssim t^{3\nu-1} |\log t|^{-1}, 
	\end{align}
	which imply, for all $t$ sufficiently small, 
	\begin{align}\label{eq:iota2_bound}
	\int_{t_0}^t \iota_2(\tau) \ud \tau \lesssim 
	\int_0^t \tau^{3\nu-1} |\log \tau|^{-1} \ud \tau \lesssim 
	t^{3\nu} |\log t|^{-1} \lesssim t^{2\nu} |\log t|^{-2}. 
	\end{align}
	Inserting \eqref{eq:iota_1_bound}, \eqref{eq:increasing} and \eqref{eq:iota2_bound} into Proposition \ref{p:gestimate} finishes the proof.
\end{proof}

The following two lemmas are needed to estimate the size of $\bs v(t_0) - (\bs Q_{\lam_c(t_0)} + \bs u^*(t_0))$ and the size of $Q_{\lam_1} - Q_{\lam_2}$. 

\begin{lemma}\label{l:initialg_size}
For all $t > 0$ sufficiently small, we have 
\begin{align}
\left \|
(1 - \chi_{t})\left (\pi - Q_{\lam_c(t)},0 \right )
\right \|_{\cl H}^2 \lesssim t^{2\nu} |\log t|^{-2}. 
\end{align}
\end{lemma}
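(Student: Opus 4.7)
The claim concerns only the first component, since the second is zero, so I need to bound
$$\| (1-\chi_t)(\pi - Q_{\lam_c(t)})\|_H^2 = \int_0^\infty \Bigl(|\partial_r\{(1-\chi_t)(\pi - Q_{\lam_c(t)})\}|^2 + \frac{(1-\chi_t(r))^2 (\pi - Q_{\lam_c(t)}(r))^2}{r^2}\Bigr)r\,\ud r.$$
The plan is a direct pointwise-and-integrate computation exploiting the support properties of $1-\chi_t$ and $\chi_t'$ together with the explicit form of $Q_\lam(r)=2\arctan(r/\lam)$ in the regime $r\gg\lam$.

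First I would write $\pi - Q_\lam(r) = 2\arctan(\lam/r)$ and $\partial_r Q_\lam(r) = 2\lam/(\lam^2+r^2)$. Since $1-\chi_t$ is supported in $\{r\ge t/2\}$ and $\lam = \lam_c(t) \simeq t^{\nu+1}/|\log t|$, for $t$ sufficiently small we have $\lam/r \le 2\lam/t \ll 1$ on the support of $1-\chi_t$, which gives the pointwise bounds $\pi - Q_\lam(r) \lesssim \lam/r$ and $|\partial_r Q_\lam(r)| \lesssim \lam/r^2$.

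Next, I would split the derivative via the product rule,
$$\partial_r\bigl((1-\chi_t)(\pi - Q_\lam)\bigr) = -\tfrac{1}{t}\chi'(r/t)(\pi - Q_\lam) - (1-\chi_t)\partial_r Q_\lam,$$
and estimate each piece on its support. The $\chi'_t$ term lives in the annulus $\{t/2\le r\le t\}$ where $|\chi'_t|\lesssim 1/t$, contributing
$$\int_{t/2}^t \frac{1}{t^2}\Bigl(\frac{\lam}{r}\Bigr)^2 r\,\ud r \lesssim \frac{\lam^2}{t^2}.$$
The remaining $\partial_r Q_\lam$ piece and the potential-type term both reduce to integrals of the form
$$\int_{t/2}^\infty \Bigl(\frac{\lam}{r^2}\Bigr)^2 r\,\ud r \lesssim \frac{\lam^2}{t^2}, \qquad \int_{t/2}^\infty \frac{(\lam/r)^2}{r^2} r\,\ud r \lesssim \frac{\lam^2}{t^2}.$$
Summing these contributions yields $\|(1-\chi_t)(\pi - Q_{\lam_c(t)})\|_H^2 \lesssim \lam_c(t)^2/t^2$.

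Finally, substituting $\lam_c(t) = \frac{p|q|}{\nu^2(\nu+1)}\,t^{\nu+1}/|\log t|$ gives $\lam_c(t)^2/t^2 \lesssim t^{2\nu}|\log t|^{-2}$, which is the desired bound. There is no real obstacle here; the only point requiring care is to note that $\lam_c(t)/t \to 0$, so that the asymptotic approximations $\pi - Q_\lam(r) \lesssim \lam/r$ and $|\partial_r Q_\lam(r)| \lesssim \lam/r^2$ are valid throughout the support of $1-\chi_t$ for $t$ small enough.
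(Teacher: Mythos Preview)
Your proof is correct and follows essentially the same approach as the paper: both reduce to the bound $\|(1-\chi_t)(\pi - Q_{\lam_c(t)})\|_H^2 \lesssim \lam_c(t)^2/t^2$ using the tail estimates $\pi - Q_\lam(r) \lesssim \lam/r$ and $|\partial_r Q_\lam(r)| \lesssim \lam/r^2$ on $\{r \ge t/2\}$, and then substitute the explicit form of $\lam_c(t)$. The paper's version is marginally more compact---it absorbs the $\chi'_t$ contribution into the potential term via $|\chi'_t| \lesssim 1/r$ on its support and then performs the change of variable $r \mapsto r/\lam_c(t)$ to work with the unscaled $Q$---but the computation is identical in substance.
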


\begin{proof}
For $t$ sufficiently small, we have after a change of variable
\begin{align}
\left \|
(1 - \chi_{t})\left (\pi - Q_{\lam_c(t)},0 \right )
\right \|_{\cl H}^2 
&\lesssim \int_{t/2\lam_c(t)}^\infty \Bigl (|\p_r Q|^2 + \frac{|Q - \pi|^2}{r^2} \Big ) \, r \ud r \\ 
&\lesssim \int_{t/2\lam_c(t)}^\infty r^{-4} \, r \ud r \\
&\lesssim \lam_c(t)^2 t^{-2} \lesssim t^{2\nu} |\log t|^{-2}. 
\end{align}
\end{proof}

\begin{lemma}\label{l:difference_est}
Let $\lam_1, \lam_2 > 0$.  Then 
\begin{align}\label{eq:difference_est}
\left \| Q_{\lam_1} - Q_{\lam_2} \right \|_{H} \lesssim  \left |
\log \frac{\lam_1}{\lam_2}
\right |. 
\end{align}
\end{lemma}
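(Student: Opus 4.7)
The estimate is essentially a consequence of the scale invariance of the $H$-norm together with the fact that $\Lambda Q \in H$. My plan is to differentiate the family $\mu \mapsto Q_\mu$ in $\mu$ and integrate.

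First I would use scale invariance to reduce to the case $\lam_2 = 1$. Indeed, setting $\tilde r = r/\lam_2$, the difference $Q_{\lam_1}(r) - Q_{\lam_2}(r) = Q(r/\lam_1) - Q(r/\lam_2)$ becomes $Q_{\lam_1/\lam_2}(\tilde r) - Q(\tilde r)$, and a direct change of variables in the defining integral \eqref{eq:H-norm} shows $\|Q_{\lam_1} - Q_{\lam_2}\|_H = \|Q_{\lam_1/\lam_2} - Q\|_H$. So setting $\lam := \lam_1/\lam_2$, it suffices to prove $\|Q_\lam - Q\|_H \lesssim |\log \lam|$.

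Next, by the chain rule $\frac{\ud}{\ud \mu} Q_\mu(r) = -\frac{r}{\mu^2} Q'(r/\mu) = -\frac{1}{\mu} (\Lambda Q)_\mu(r)$, where I used the definition of $\Lambda$ in \eqref{eq:LaLa0}. The fundamental theorem of calculus then yields
\begin{equation*}
Q_\lam - Q = -\int_1^\lam \frac{1}{\mu}(\Lambda Q)_\mu \, \ud \mu.
\end{equation*}
Applying Minkowski's integral inequality in the Banach space $(H, \|\cdot\|_H)$ gives
\begin{equation*}
\|Q_\lam - Q\|_H \leq \left|\int_1^\lam \frac{1}{\mu}\|(\Lambda Q)_\mu\|_H \, \ud \mu \right|.
\end{equation*}

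Finally, a change of variables shows that the $H$-norm is invariant under the $\dot H^1$-rescaling of \eqref{eq:scaledef}, so $\|(\Lambda Q)_\mu\|_H = \|\Lambda Q\|_H$. Since $\Lambda Q(r) = \frac{2r}{1+r^2}$ is a bounded, smooth function decaying like $r^{-1}$ at infinity and vanishing linearly at the origin, one checks directly that $\|\Lambda Q\|_H < \infty$ (both $\p_r \Lambda Q = \frac{2(1-r^2)}{(1+r^2)^2}$ and $r^{-1} \Lambda Q = \frac{2}{1+r^2}$ lie in $L^2(r\,\ud r)$). Combining the previous displays yields
\begin{equation*}
\|Q_\lam - Q\|_H \leq \|\Lambda Q\|_H \cdot |\log \lam|,
\end{equation*}
which is \eqref{eq:difference_est}. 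There is no real obstacle here: the argument is a one-parameter integration in the scaling direction, and the logarithm arises purely from $\int_1^\lam \ud\mu/\mu$.
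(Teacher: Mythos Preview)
Your proof is correct and is essentially the same approach as the paper's: both reduce to $\lam_2=1$ by scale invariance, apply the fundamental theorem of calculus along the scaling direction, and use that $\Lambda Q\in H$. The only cosmetic difference is that the paper first passes through the identification $H\cong H^1(\bR)$ via $x=\log r$ (so that scaling becomes translation) before applying FTC, whereas you work directly with the scaling parameter $\mu$; your version is arguably slightly cleaner since it avoids invoking $\Lambda^2 Q$.
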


\begin{proof}
By the scaling invariance of the estimate we can set $\lam_2 = 1$.  Then, making the change of variables 
\begin{align}
x = \log r, \quad x_0 = \log \lam_1,  \quad f(x) = Q(e^x), 
\end{align}  
we see \eqref{eq:difference_est} is equivalent to showing 
\begin{align}\label{eq:equiv_difference_est}
\left \| f(x - x_0) - f(x) \right \|_{H^1(\bR)} \lesssim |x_0|.  
\end{align}
By the fundamental theorem of calculus
\begin{align}
f(x - x_0) - f(x) = -x_0 \int_0^1 f'(x - t x_0)\, dt.
\end{align}
Since $f'(x) = \Lambda Q(e^x), f''(x) = \Lambda^2 Q(e^x)$, we obtain 
\begin{align}
\| f(x - x_0) - f(x) \|_{H^1(\bR)} 
\lesssim |x_0| \| f' \|_{H^1(\bR)} 
\lesssim |x_0| ( \| \Lambda Q \|_H + \| \Lambda^2 Q \|_H ) \lesssim |x_0|
\end{align}
as desired. 
\end{proof}

\subsection{Proof of Proposition \ref{p:close_existence}}
The proof proceeds via a bootstrap argument.  Let $\epsilon > 0$ be sufficiently small, $T_1$ to be chosen later, and $t_0 < T \leq T_1$. Let $\bs u$ be the unique finite energy solution to \eqref{eq:wmk} with initial data 
$\bs u(t_0) = \bs v^*(t_0)$.  By Lemma \ref{l:initialg_size}
\begin{align}
\left \| \bs u(t_0) - (\bs u^*(t_0) + \bs Q_{\lam_c(t_0)}) \right \|_{\cl H}
\lesssim t_0^{2\nu} |\log t_0|^{-2}, 
\end{align}
and as long as $T_1$ is sufficiently small, 
\begin{align}
\left \la 
u(t_0) - (u^*(t_0) + Q_{\lam_c(t_0)}), \cl Z_{\lam_c(t_0)}  
\right \ra
= \left \la 
(1 - \chi_{t_0})(\pi - Q_{\lam_c(t_0)}), \cl Z_{\lam_c(t_0)}  
\right \ra = 0. 
\end{align}
Thus, there exist a time interval containing $t_0$ and a $C^1$ modulation parameter $\lam(t)$ such that  
\begin{align}
\bs u(t) = \bs Q_{\lam(t)} + \bs u^*(t) + \bs g(t)
\end{align}
with
\begin{align}\label{eq:bootstrap_init}
\lam(t_0) = \lam_c(t_0), \quad \bs g(t_0) = (1 - \chi_{t_0}) 
(\pi - Q_{\lam_c(t_0)}, 0). 
\end{align}

Let $T' \in (t_0,T]$ be the largest time such that for $t \in [t_0,T']$, $\bs u(t)$ is defined and
\begin{gather}
(1 - \epsilon^2) \frac{p|q|}{\nu^2(\nu+1)} \frac{t^{\nu+1}}{|\log t|} 
\leq  \lam(t) \leq (1 + \epsilon) \frac{p|q|}{\nu^2(\nu+1)} \frac{t^{\nu+1}}{|\log t|}, \label{eq:bootstrap_1} \\
c \| g(t) \|_H^2 + \| \dot g(t) \|_{L^2}^2 
\leq \Bigl ( 1 + 2 \epsilon \Bigr ) \frac{4p^2q^2}{\nu^3} \frac{t^{2\nu}}{|\log t|}, \label{eq:bootstrap_2} 
\end{gather}
where $c$ is the constant from Lemma \ref{l:g_size_with_assumptions}. By the preceding discussion, \eqref{eq:bootstrap_init} and Lemma \ref{l:initialg_size}, such a $T'$ exists as long as $T_1$ is sufficiently small. Moreover, by Corollary A.4 of \cite{JJ-AJM} and as long as $T_1$ is sufficiently small, $\bs u(t)$ is defined beyond $T'$.

We first show the bootstrap assumptions imply an improvement to \eqref{eq:bootstrap_2} as long as $\epsilon$ and $T_1$ are sufficiently small.  Let $T_1$ be small enough so Lemma \ref{l:g_size_with_assumptions} applies.  
Then since $t \mapsto t^{2\nu} |\log t|^{-2}$ is increasing and $\| \bs g(t_0) \|_{\cl H}^2 \lesssim t_0^{2\nu} |\log t_0|^{-2}$, it follows there exists $\alpha_0 = \alpha_0(\nu,q) > 0$ such that 
\begin{align}\label{eq:gbound_from_boot}
c \| g(t) \|_{H}^2 + \| \dot g(t) \|_{L^2}^2 \leq 8p|q| \int_{t_0}^t \lam'(\tau) \tau^{\nu-1} \ud \tau + \alpha_0 t^{2\nu} |\log t|^{-2}.
\end{align}
We note since $\frac{\ud }{\ud t} t^{2\nu} |\log t|^{-1} = 2\nu t^{2\nu-1} |\log t|^{-1} + t^{2\nu-1} |\log t|^{-2}$, it follows 
\begin{align}
\int_{t_0}^t \tau^{2\nu-1} |\log \tau|^{-1} \, \ud \tau = 
\frac{1}{2\nu} t^{2\nu} |\log t|^{-1} - \frac{1}{2\nu} t_0^{2\nu} |\log t_0|^{-1}
+ O(t^{2\nu} |\log t|^{-2})
\end{align}
Thus, by integration by parts and \eqref{eq:bootstrap_1} we conclude that as long as $\epsilon$ and $T_1$ are sufficiently small
\begin{align}
 \int_{t_0}^t \lam'(\tau) \tau^{\nu-1} \ud \tau &= 
\lam(t) t^{\nu-1} - \lam(t_0) t_0^{\nu-1} - (\nu-1)\int_{t_0}^t \lam(\tau) \tau^{\nu-2} \ud \tau \\
&\leq \frac{p|q|}{\nu^2(\nu+1)} \Bigl (
(1 + \epsilon) t^{2\nu} |\log t|^{-1} - t_0^{2\nu} |\log t_0|^{-1}
\\ &\quad-(\nu-1)(1-\epsilon^2) \int_{t_0}^t \tau^{2\nu-1} |\log \tau|^{-1} d \tau
\Bigr )  + Ct^{2\nu} |\log t|^{-2} \\
&\leq \frac{p|q|}{2\nu^3}\Bigl (1 + \frac{2\nu}{\nu+1} \epsilon + 
\frac{\nu-1}{\nu+1} \epsilon^2 \Bigr ) t^{2\nu} |\log t|^{-1} \\&\quad + \frac{p|q|}{\nu^2(\nu+1)}\Bigl (-1 + \frac{\nu-1}{2\nu}(1-\epsilon^2) \Bigr ) t_0^{2\nu}|\log t_0|^{-1}  + Ct^{2\nu} |\log t|^{-2}\\
&\leq \frac{p|q|}{2 \nu^3} \Bigl ( 1 + (2 + \epsilon) \frac{\nu}{\nu+1} \epsilon \Bigr ) t^{2\nu} |\log t|^{-1}.   \label{eq:lam_integral}
\end{align}
Combining \eqref{eq:gbound_from_boot} and \eqref{eq:lam_integral} we conclude that as long as $T_1$ is sufficiently small, we have for all $t \in [t_0,T']$   
\begin{align} \label{eq:bootstrap2_improvement}
c \| g(t) \|_H^2 + \| \dot g \|_{L^2}^2 \leq 
\Bigl ( 1 + (1 + \epsilon) \frac{2\nu}{\nu+1} \epsilon \Bigr ) \frac{4p^2q^2}{\nu^3} 
t^{2\nu} |\log t|^{-1}. 
\end{align}
As long as $\epsilon$ is small enough so $$(1+\epsilon)\frac{2\nu}{\nu+1} < 2,$$ \eqref{eq:bootstrap2_improvement} is an improvement of \eqref{eq:bootstrap_2}. 
We now turn to obtaining an improvement on \eqref{eq:bootstrap_1}.

Let $\zeta(t)$ and $b(t)$ be defined as in \eqref{eq:zetadef} and \eqref{eq:bdef}.  Let $T_1$ be small enough so Lemma \ref{p:modp3} applies with $\delta = \epsilon^3$. We conclude from \eqref{eq:b-bound2}, \eqref{eq:b'lb2} and \eqref{eq:bootstrap2_improvement} that as long as $T_1$ is small, there exists $\alpha_1 = \alpha_1(\nu,q) > 0$ such that for all $t \in [t_0,T']$
\begin{align}
\frac{4p|q|}{\nu}(1 - \alpha_1 \epsilon^3) t^{\nu} \leq b(t) \leq (1 + \alpha_1 \epsilon^3)^{\frac{1}{2}} \Bigl (
1 + (1+\epsilon) \frac{2\nu}{\nu+1} \epsilon 
\Bigr )^{\frac 12}\frac{4p|q|}{\nu} t^{\nu} 
\end{align} 
By Taylor's theorem, $(1 + x)^{\frac 12} = 1 + \frac{1}{2} x + O(x^2)$.  Thus, the previous implies that as long as $\epsilon$ is sufficiently small, there exists a constant $\alpha_2 = \alpha_2(\nu,q)$ such that 
\begin{align}
\frac{4p|q|}{\nu}(1 - \alpha_2 \epsilon^3) t^{\nu} \leq b(t) \leq \Bigl (
1 + (1+\alpha_2 \epsilon) \frac{\nu}{\nu+1} \epsilon 
\Bigr )\frac{4p|q|}{\nu} t^{\nu} 
\end{align} 
 By \eqref{eq:kala'2}, \eqref{eq:bootstrap_2} and integration, the previous implies that there exists $\alpha_3 = \alpha_3(q,\nu)$ such that 
\begin{gather}
\frac{4p|q|}{\nu(\nu+1)}(1 - \alpha_3 \epsilon^3) (t^{\nu+1} - t_0^{\nu+1} ) \leq \zeta(t) - \zeta(t_0) \\ \leq \Bigl (
1 + (1+\alpha_3 \epsilon) \frac{\nu}{\nu+1} \epsilon 
\Bigr )\frac{4p|q|}{\nu(\nu+1)} (t^{\nu+1} - t_0^{\nu+1}) \label{eq:alpha2_bound}
\end{gather}
By \eqref{eq:bound-on-l} and \eqref{eq:bootstrap_1} (see \eqref{eq:logtoverlam}) we have 
\begin{align}
\zeta(t) = 4 \nu \lam(t) |\log t| \bigl (1 + O(|\log t|^{-1} \log |\log t|) \bigr ) \label{eq:zeta_relation_to_t}
\end{align}
In particular, 
\begin{align}\label{eq:zeta_t_0}
\zeta(t_0) = \frac{4p|q|}{\nu(\nu+1)} t_0^{\nu+1} \bigl ( 1 + O(|\log t_0|^{-1} \log |\log t_0| ) \bigr )
\end{align}
Inserting \eqref{eq:zeta_relation_to_t} and \eqref{eq:zeta_t_0} into \eqref{eq:alpha2_bound}, we conclude there exists $\alpha_4 = \alpha_4(\nu,q)$ such that as long as $T_1$ is sufficiently small we have 
\begin{align}
\frac{4p|q|}{\nu(\nu+1)}(1 - \alpha_4 \epsilon^3) \frac{t^{\nu+1}}{|\log t|} \leq \lam(t) \leq \Bigl (
1 + (1+\alpha_4 \epsilon) \frac{\nu}{\nu+1} \epsilon 
\Bigr )\frac{4p|q|}{\nu(\nu+1)} \frac{t^{\nu+1}}{|\log t|} 
\label{eq:bootstrap1_improvement}
\end{align}
As long as $\epsilon$ is sufficiently small so 
\begin{align}
 \alpha_4 \epsilon < 1, \quad (1 + \alpha_4 \epsilon) \frac{\nu}{\nu+1} < 1, 
\end{align} 
\eqref{eq:bootstrap1_improvement} is an improvement on the bootstrap assumption \eqref{eq:bootstrap_1}.  Thus, we have proven that as long as $\epsilon$ is sufficiently small and $T_1$ is sufficiently small depending on $\epsilon$, the bootstrap assumptions \eqref{eq:bootstrap_1} and \eqref{eq:bootstrap_2} can be improved on $[t_0,T']$.  We conclude $T' = T$.  In particular, 
\begin{align}
\sup_{t \in [t_0,T]} \left \| \bs u(t) - (\bs u^*(t) + \bs Q_{\lam(t)}) \right \|_{\cl H} \lesssim t^\nu |\log t|^{-1/2}, \label{eq:boundondiff}
\end{align} 
with 
\begin{align}
(1 - \epsilon^2) \lam_c(t)
\leq  \lam(t) \leq (1 + \epsilon) \lam_c(t), \quad \forall t \in [t_0,T]. \label{eq:lam_lamc}
\end{align}
Combining \eqref{eq:boundondiff}, \eqref{eq:lam_lamc} and Lemma \ref{l:difference_est} finishes the proof. 
\qed

\subsection{Proof of part $(a)$ of Theorem \ref{t:main1}}
We prove part $(a)$ of Theorem \ref{t:main1} using Proposition \ref{p:close_existence} and a general scheme for constructing multi-soliton and singular solutions to dispersive equations introduced by Martel \cite{Martel05} and Merle \cite{Merle90}. 
Let $\epsilon > 0$ be small enough so Proposition \ref{p:close_existence} is valid.  Then there exist $T > 0$ and a sequence of times $T > t_0 > t_1 > \ldots$ with $t_n \rar 0$ such that the unique finite energy solution $\bs u_n$ to \eqref{eq:wmk} with initial data $\bs u_n(t_n) = \bs v(t_n)$ is defined on $[t_n,T]$ and satisfies, for all $n \geq 0$ and for all $0 \leq m \leq n$, 
\begin{align}
\sup_{t \in [t_m,t_{m-1}]} \left \| 
\bs u_n(t) - (\bs Q_{\lam_c(t)} + \bs u^*(t) ) 
\right \|_{\cl H} < 2^{-m} \epsilon. \label{eq:exist_1}
\end{align}
Here we set $t_{-1} := T$.  In particular, we have 
\begin{align}
\sup_{t \in [t_n,T]} \left \| 
\bs u_n(t) - (\bs Q_{\lam_c(t)} + \bs u^*(t) ) 
\right \|_{\cl H} < \epsilon. \label{eq:exist_2}
\end{align} 
By Corollary A.6 of \cite{JJ-AJM} we can conclude from \eqref{eq:exist_2} (after shrinking $\epsilon$ and extracting subsequences if necessary) there exists a finite energy solution $\bs u_c$ to \eqref{eq:wmk} defined on the time interval $(0,T]$ such that $\bs u_n(t) \rightharpoonup_n \bs u_c(t)$ for all $t \in (0,T]$. By \eqref{eq:exist_1} and weak convergence, we conclude for all $m \geq 0$, 
\begin{align}
\sup_{t \in [t_m,t_{m-1}]} \left \| 
\bs u_c(t) - (\bs Q_{\lam_c(t)} + \bs u^*(t) ) 
\right \|_{\cl H} \leq 2^{-m}\epsilon. \label{eq:exist_3}
\end{align}
Thus, $\bs u_c$ is the desired blow-up solution. 
\qed %fuck yeah

%\bibliographystyle{plain}
%\bibliography{researchbib}
%
%\end{document}
%
\section{Classification of bubbling dynamics determined by the radiation} \label{s:classification} 

In this section we prove  part $(b)$ of Theorem~\ref{t:main2}. We assume there exists a continuous time parameter $\bar \lambda(t)$ such that 
\EQ{  \label{eq:ckls} 
 \bs u(t)  &= \bs Q_{\bar \lam(t)} + \bs u^*(t) + \bs o_{\HH}(1) \mas t  \to 0,  \\
 \frac{\bar \lam(t)}{t}  &\rar 0 \mas t \to 0, 
}
where $\bs u^*(t) \in \HH$ is the unique finite energy wave map with initial data 
$\bs u^*(0,r) = \bs u^*_0(r) = \chi(r)(qr^\nu,0)$. Here, as always, 
$\chi \in C^\infty(\bR^2)$ is radial such that $\chi(r) = 1$ for $r \leq \frac{1}{2}$ and $\chi(r) = 0$ if $r \geq 1$.
By the local Cauchy theory for \eqref{eq:wmk} and finite speed of propagation we have 
\EQ{ \label{eq:u=u*+pi} 
\bs u(t, r) = ( \pi, 0) + \bs u^*(t, r), \quad \forall r \ge t. 
}
It follows from~\eqref{eq:ckls} that %$\bfd(\bs u, t)$ as defined in~\eqref{eq:ddef} we have 
\EQ{ \label{eq:dto0} 
\|  \bs u(t) -( \bs Q_{\bar \lam(t)} + \bs u^*(t)) \|_{\HH} \to 0 \mas t \to 0. 
}
Therefore, we can find $T_0>0$ small enough such that the hypothesis of Lemma~\ref{l:modeq} are satisfied on the time interval 
\EQ{\label{eq:Jdef} 
J  = (0, T_0].
} 
Let $\lam(t)$ and $\bs g(t)$ be given by Lemma~\ref{l:modeq} so that for all $t \in J$, $\bar \lam(t) \simeq \lam(t)$ and  
\EQ{ \label{eq:lamg} 
\bs u(t) &=  \bs Q_{\lam(t)} + \bs u^*(t) + \bs g(t),  \\
 0 & = \ang{ \calZ_{\U \lam(t)} \mid g(t)}.
}
 From~\eqref{eq:ckls},~\eqref{eq:dto0} and~\eqref{eq:gdotgd} we see
\EQ{ \label{eq:gto0} 
\| \bs g(t) \|_{\HH} \to 0 \mas t \to 0, \quad \frac{\lam(t)}{t} \to 0 \mas t \to 0. 
}
We remark that \eqref{eq:ckls}, \eqref{eq:lamg} and \eqref{eq:gto0} imply 
\begin{align}
\frac{\bar \lam(t)}{\lam(t)} \rar 1, \quad \mbox{as } t \rar 0.
\end{align}
Thus, we may assume $\bar \lam(t) = \lam(t)$ to begin with.  
By Proposition~\ref{p:modp}, 
\EQ{ \label{eq:lam'dotg} 
\abs{\lam'(t)} \lesssim \| \dot g(t) \|_{L^2}.
}
Finally, from~\eqref{eq:u=u*+pi} we see 
\EQ{ \label{eq:g-ext} 
\bs g(t, r) =  (\pi - Q_{\lam(t)}(r),0), \quad \forall r \ge t,
}
and thus we have the bound 
\EQ{ \label{eq:gext-bound} 
 \| \bs g(t) \|_{\HH(r \ge t)} \simeq \frac{\lam^2(t)}{t^2}.
}

\subsection{Focusing nature of the radiation}
We first prove the interaction of the bubble and the radiation must be attractive for blow-up to occur. More precisely, we prove the following. 

\begin{proposition}\label{p:sign}
	Let $\nu > \frac{9}{2}$ and $q \in \bbR$, and let $\bs u$ satisfy \eqref{eq:ckls}.  
	Then
	\begin{align}
	q < 0. 
	\end{align}
\end{proposition}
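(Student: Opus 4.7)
The plan is to prove $q < 0$ by contradiction, assuming $q \geq 0$ and treating the two cases $q = 0$ and $q > 0$ separately.

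For $q = 0$: the radiation initial data vanishes, so by uniqueness $\bs u^*(t) \equiv 0$ for all $t$, and \eqref{eq:ckls} reduces to $\bs u(t) = \bs Q_{\bar\lam(t)} + \bs o_\HH(1)$ with $\bar\lam(t) \to 0$. A Taylor expansion of $\E$ at $\bs Q_{\bar\lam(t)}$, combined with the stationarity $D\E(\bs Q_{\bar\lam}) = 0$ and the scale invariance of $\E(\bs Q_\mu) = \E(\bs Q)$, gives $\E(\bs u) = \E(\bs Q) + O(\|\bs g(t)\|_\HH^2) \to \E(\bs Q)$. The Bogomol'nyi factorization \eqref{eq:bog} then forces $\bs u$ to coincide with a stationary harmonic map $\bs Q_{\mu_0}$ for some $\mu_0 > 0$ at every time, contradicting $\bar\lam(t) \to 0$.

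For $q > 0$: this is the substantive case, and I would derive the sharp leading-order asymptotic of the modulation quantity $\zeta(t)$ of \eqref{eq:zetadef} to exhibit a contradiction with $\zeta(t) > 0$. First, a bootstrap argument combining Proposition~\ref{p:gestimate} and Proposition~\ref{p:modp2}, and exploiting the fact that the error terms $\iota_1, \iota_2$ and the remainders in \eqref{eq:b'lb} and \eqref{eq:kala'} depend on $q$ only through $|q|$, should yield the quantitative rates
\[
\|\bs g(t)\|_\HH^2 \lesssim t^{2\nu}|\log t|^{-1}, \qquad \lam(t) \lesssim t^{\nu+1}|\log t|^{-1}.
\]
The key point for closing the bootstrap when $q > 0$ is that the non-negativity of $E(t)$ combined with the integration-by-parts identity $\int_0^t \lam'(\tau)\tau^{\nu-1}\,d\tau = \lam(t)t^{\nu-1} - (\nu-1)\int_0^t \lam(\tau)\tau^{\nu-2}\,d\tau$ produces a Gronwall-type inequality for $\lam(t)t^{\nu-1}$, which can be iterated to progressively tighten the bound on $\lam$ and then on $\|\bs g\|_\HH$ through the energy inequality.

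With these bounds in hand, \eqref{eq:b-bound} and \eqref{eq:bound-on-l} give $b(t_0) \to 0$ and $\zeta(t_0) \to 0$ as $t_0 \to 0^+$. Tracing the derivation of \eqref{eq:b'lb} under the bootstrap rates upgrades it to a two-sided bound $|b'(t) + 4pq t^{\nu-1}| = o(t^{\nu-1})$, since the coercive quadratic form responsible for the one-sided lower estimate also admits a matching upper bound by direct estimation. Integrating from $t_0 \to 0$ to $t$ yields $b(t) = -\tfrac{4pq}{\nu}t^\nu(1 + o(1))$; substituting into $\zeta'(t) = b(t) + O(\|\dot g(t)\|_{L^2} + \lam(t)/t)$ and integrating once more produces $\zeta(t) = -\tfrac{4pq}{\nu(\nu+1)}t^{\nu+1}(1 + o(1))$. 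For $q > 0$ this is strictly negative for small $t$, which contradicts the identity $\zeta(t) \simeq 4\lam(t)\log(t/\lam(t)) > 0$ from \eqref{eq:bound-on-l} and the positivity of $\lam(t)$.

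The principal obstacle will be the bootstrap step in the case $q > 0$: the energy estimate is naturally signed in favor of $q < 0$, and the desired rates do not follow directly by the method used in Section~\ref{s:construction} (which relied on a specific ansatz for $\lam(t)$). Carefully tracking the sign of the leading term $-8pq \int_0^t \lam'(\tau)\tau^{\nu-1}\,d\tau$ and exploiting $E(t) \geq 0$ to trap $\lam(t) t^{\nu-1}$ by its integral history, followed by several rounds of feedback into the remainder terms, is the technical heart of the argument.
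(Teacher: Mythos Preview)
Your treatment of $q = 0$ is essentially the paper's (the paper dispatches it in one parenthetical remark via the variational characterization of $\bs Q$).

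For $q > 0$, your strategy has a genuine gap at the step where you claim a two-sided estimate $|b'(t) + 4pq\,t^{\nu-1}| = o(t^{\nu-1})$. Tracing the derivation of \eqref{eq:b'lb} in Proposition~\ref{p:modp2}, the quantity $b'(t) + 4pq\,t^{\nu-1}$ equals, up to genuinely lower-order errors, a localized quadratic form in $g$ whose \emph{lower} bound $-c_1\lam^{-1}\|g\|_H^2$ comes from localized coercivity, but whose upper bound is only $C\lam^{-1}\|g\|_H^2$. Under your bootstrap rates $\lam \lesssim t^{\nu+1}/|\log t|$ and $\|\bs g\|_\HH^2 \lesssim t^{2\nu}/|\log t|$, this term is $O(t^{\nu-1})$, not $o(t^{\nu-1})$, and the bootstrap does not self-improve: feeding the rates back into Proposition~\ref{p:gestimate} reproduces $\|\bs g\|_\HH^2 \lesssim t^{2\nu}/|\log t|$ with no gain. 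Hence you cannot pin down the sign of $b(t)$ or $\zeta(t)$, and the contradiction via $\zeta(t) < 0$ does not close. (In the $q < 0$ analysis of Section~\ref{s:classification}, the sharp control on $\lam$ comes not from a two-sided bound on $b'$ but from pairing the \emph{lower} bound $b(t) \geq (4p|q|/\nu - o(1))t^\nu$ with the \emph{upper} bound \eqref{eq:b-bound}; for $q > 0$ the lower bound on $b$ has the wrong sign, so this mechanism is unavailable.)

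The paper's route is different and more direct, bypassing $\zeta$ and $b$ entirely. For $q > 0$, Proposition~\ref{p:gestimate} reads, after integrating by parts in the leading term,
\[
c\|\bs g(t)\|_\HH^2 + \lam(t) t^{\nu-1} \le [(\nu-1)+o(1)]\int_0^t \lam(\tau)\tau^{\nu-2}\,\ud\tau + C\frac{\lam(t)^2}{t^2}.
\]
A separate argument (pairing the equation for $\p_t g$ against $\Lam Q_{\uln\lam}$ truncated at scale $R\lam$ and sending $R\to\infty$) gives $\lam(t) = o(t^{\nu+1})$, which absorbs the last term into the left. Setting $f(t) := c\|\bs g(t)\|_\HH^2 t^{-\nu} + \lam(t)t^{-1}$ then yields the strict self-referential inequality $f(t) < \nu\, t^{-\nu}\int_0^t f(\tau)\tau^{\nu-1}\,\ud\tau$; evaluating at a maximal time $T$ for $f$ produces $f(T) < f(T)$, a contradiction.
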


Towards a contradiction, we assume $q >  0$ (if $q = 0$, then \eqref{eq:ckls} and the variational characterization of $\bs Q$ imply $\exists \lam_0 > 0$ such that $\bs u = \bs Q_{\lam_0}$). We require the following lemmas. 

\begin{lemma}
	Let $\nu > \frac{9}{2}$ and $q > 0$, and let $\bs u$ satisfy \eqref{eq:ckls}.
 There exist constants $C = C(\bs u), c = c(\bs u) > 0$ such that as $t \rar 0$, 
	\begin{align}
	c \| \bs g(t) \|_{\cl H}^2 +  \lam(t) t^{\nu-1}
	&\leq [(\nu-1)+o(1)] \int_0^t \lam(\tau) \tau^{\nu-2} \, \ud \tau + 
	C \frac{\lam(t)^2}{t^2}. \label{eq:glam_apriori}
	\end{align} 
\end{lemma}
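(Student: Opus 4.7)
The plan is to start from the energy estimate of Proposition~\ref{p:gestimate} on $[t_0, t] \subset J$ (whose hypotheses hold by \eqref{eq:dto0} and \eqref{eq:ckls}) and integrate the leading term by parts,
\begin{align*}
-8pq\int_{t_0}^t \lam'(\tau)\tau^{\nu-1}\,\ud\tau = -8pq\lam(t)t^{\nu-1} + 8pq\lam(t_0)t_0^{\nu-1} + 8pq(\nu-1)\int_{t_0}^t \lam(\tau)\tau^{\nu-2}\,\ud\tau.
\end{align*}
Upon sending $t_0 \to 0^+$, every boundary contribution vanishes: $\|\bs g(t_0)\|_{\HH} \to 0$ by \eqref{eq:gto0}; $\lam(t_0)t_0^{\nu-1} = (\lam(t_0)/t_0)t_0^\nu \to 0$ since $\nu > 1$; $\iota_1(t_0) \to 0$ because each summand in $\iota_1$ carries a vanishing factor of $\lam(t_0)$; and $\int_0^t \lam(\tau)\tau^{\nu-2}\,\ud\tau$ converges since $\lam(\tau)\tau^{\nu-2} = o(\tau^{\nu-1})$.

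The hypothesis $q > 0$ (together with $p > 0$) is the key input: the boundary term $-8pq\lam(t)t^{\nu-1}$ carries a favorable sign, so I move it to the left-hand side, divide by $8pq$, and use \eqref{eq:gext-bound} to trade $c\|g(t)\|_H^2 + \|\dot g(t)\|_{L^2}^2$ for $c\|\bs g(t)\|_{\HH}^2$ (absorbing the $O(\lam(t)^2/t^2)$ correction into the target $C\lam(t)^2/t^2$ on the right). The residual pieces of $\iota_1(t)$, namely $t^{2\nu-2}\lam(t)^2\log(t/\lam(t))$ and $\lam(t)t^{3\nu-1}$, are both $o(\lam(t)t^{\nu-1})$ (using $x\log(1/x) \to 0$ for $x \to 0$ and $t^{2\nu} \to 0$, respectively) and so are absorbed into $\lam(t)t^{\nu-1}$ on the left. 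Of the two contributions to $\int_0^t \iota_2(\tau)\,\ud\tau$, the first gives $\int_0^t \lam(\tau)\tau^{2\nu-2}\,\ud\tau \le t^\nu \int_0^t \lam(\tau)\tau^{\nu-2}\,\ud\tau$, which folds into the $o(1)$ correction of the coefficient $(\nu-1)$.

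The main obstacle is the second piece of $\iota_2$, namely $C\int_0^t \tau^{\nu-1}\|\bs g(\tau)\|_{\HH}^2\,\ud\tau$, which at first sight is too large to absorb. I resolve this by Gronwall: setting $F(t) := c\|\bs g(t)\|_{\HH}^2 + \lam(t)t^{\nu-1}$, the gathered inequality reads
\begin{align*}
F(t) \le a(t) + B\int_0^t \tau^{\nu-1}F(\tau)\,\ud\tau, \quad a(t) := [(\nu-1)+o(1)]\int_0^t \lam(\tau)\tau^{\nu-2}\,\ud\tau + C\lam(t)^2/t^2.
\end{align*}
Since $\int_0^t B\tau^{\nu-1}\,\ud\tau = Bt^\nu/\nu \to 0$, the integral form of Gronwall's inequality yields $F(t) \le a(t) + (1+o(1))\int_0^t a(\tau)B\tau^{\nu-1}\,\ud\tau$. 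Splitting $a(\tau)$: the first piece contributes at most $B(\nu-1)(1+o(1))(t^\nu/\nu)\int_0^t \lam(s)s^{\nu-2}\,\ud s = o(1)\int_0^t \lam(s)s^{\nu-2}\,\ud s$ using the monotonicity of $\tau \mapsto \int_0^\tau \lam(s)s^{\nu-2}\,\ud s$, while the second is $BC\int_0^t \lam(\tau)^2\tau^{\nu-3}\,\ud\tau \le BC\sup_{\tau\leq t}(\lam(\tau)/\tau)\int_0^t \lam(\tau)\tau^{\nu-2}\,\ud\tau = o(1)\int_0^t \lam(\tau)\tau^{\nu-2}\,\ud\tau$ since $\lam(\tau)/\tau \to 0$. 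Both corrections are absorbed into the $o(1)$ coefficient of the main term, completing the proof.
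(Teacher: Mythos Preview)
Your proof is correct and follows essentially the same approach as the paper: apply Proposition~\ref{p:gestimate}, integrate the leading term by parts, send $t_0 \to 0$, exploit $q>0$ to move $-8pq\lam(t)t^{\nu-1}$ to the left, and close with Gronwall. Your treatment of the Gronwall correction term is in fact more detailed than the paper's, which simply asserts ``By Gronwall's inequality the previous estimate immediately implies \eqref{eq:glam_apriori}''; your explicit bookkeeping of $\int_0^t a(\tau)B\tau^{\nu-1}\,\ud\tau$ via $\sup_{\tau\le t}(\lam(\tau)/\tau)$ is a clean way to verify the absorbed terms are indeed $o(1)\int_0^t \lam(s)s^{\nu-2}\,\ud s$. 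One minor remark: the step invoking \eqref{eq:gext-bound} to pass from $c\|g\|_H^2 + \|\dot g\|_{L^2}^2$ to $c\|\bs g\|_{\HH}^2$ is unnecessary, since $c\|g\|_H^2 + \|\dot g\|_{L^2}^2 \ge \min(c,1)\|\bs g\|_{\HH}^2$ directly.
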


\begin{proof}
		Let $\iota_1$ be as in the statement of Proposition \ref{p:gestimate}.
	By \eqref{eq:gto0}, \eqref{eq:gext-bound}
	it follows that 
	\begin{align}
	|\iota_1(t)| \lesssim \frac{\lam(t)^2}{t^2} + \lam(t) t^{2\nu-1}. \label{eq:iota1_estimatess2}
	\end{align}
	Thus, $\lim_{t_0 \rar 0} \iota(t_0) = 0$.
	By Proposition \ref{p:gestimate} and integration by parts 
	\begin{align}
	c \| \bs g(t) \|_{\cl H}^2 + 8pq \lam(t) t^{\nu-1} 
	&\leq (\nu-1) 8pq \int_0^t \lam(\tau) \tau^{\nu-2} \, \ud \tau 
	\\&\quad+ 
	C \frac{\lam(t)^2}{t^2} + C \int_0^t \| \bs g(\tau) \|_{\cl H}^2 \tau^{\nu-1} \, \ud \tau \\
	&\quad  + C \lam(t) t^{2\nu-1} + 
	C \int_0^t \lam(\tau) \tau^{2\nu-2} \, \ud \tau. 
	\end{align}
	Thus, 
	\begin{align}
	c \| \bs g(t) \|_{\cl H}^2 + \lam(t) t^{\nu-1} 
	&\leq [(\nu-1) + o(1)] \int_0^t \lam(\tau) \tau^{\nu-2} \, \ud \tau 
	\\&\quad+ 
	C \frac{\lam(t)^2}{t^2} + C \int_0^t \| \bs g(\tau) \|_{\cl H}^2 \tau^{\nu-1} \, \ud \tau.  
	\end{align}
	By Gronwall's inequality the previous estimate immediately implies \eqref{eq:glam_apriori}. 
\end{proof}

\begin{lemma}\label{p:new_lam_upper}
Let $\nu > \frac{9}{2}$ and $q > 0$, and let $\bs u$ satisfy \eqref{eq:ckls}.
Then
	\begin{align}
	\lam(t) = o(t^{\nu+1}), \quad \mbox{as } t \rar 0.   
	\end{align}
\end{lemma}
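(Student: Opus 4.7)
The plan is to proceed by contradiction. Assume $L := \limsup_{t \to 0^+} \lam(t)/t^{\nu+1} > 0$ (possibly $L = +\infty$). The strategy is to combine the a priori estimate \eqref{eq:glam_apriori} with the modulation identities of Section~\ref{s:mod} in order to derive a sharp pointwise bound $|\zeta(t)| \lesssim t^{\nu+1}$ that contradicts the logarithmic lower bound coming from \eqref{eq:bound-on-l}.

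First I would iteratively improve the decay rate of $\lam$. Starting from $\lam(t) = o(t)$ (from \eqref{eq:gto0}), I insert $\|\bs g\|_{\cH}^2 \gtrsim \lam^2/t^2$ (from \eqref{eq:gext-bound}) into \eqref{eq:glam_apriori} to derive $\lam(t)^2/t^2 \lesssim A(t) := \int_0^t \lam(\tau) \tau^{\nu-2} \, \ud\tau$. If $\lam(\tau) = o(\tau^\alpha)$ for some $\alpha \geq 1$, then $A(t) = o(t^{\alpha+\nu-1})$ and consequently $\lam(t) \lesssim t \sqrt{A(t)} = o(t^{(\alpha+\nu+1)/2})$. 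The map $\alpha \mapsto (\alpha+\nu+1)/2$ has unique attracting fixed point $\nu+1$, so starting from $\alpha_0 = 1$ the iterates $\alpha_k$ converge monotonically to $\nu+1$; after finitely many iterations one concludes $\lam(t) = o(t^\alpha)$ for every prescribed $\alpha < \nu+1$.

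Next I turn to the modulation control from Proposition~\ref{p:modp2}. Using the improved decay from the previous step together with \eqref{eq:glam_apriori}, each of the three terms on the right-hand side of the upper bound $|b'(t)| \lesssim t^{\nu-1} + \lam(t)/t^2 + \|\bs g(t)\|_{\cH}^2 / \lam(t)$ is shown to be $\lesssim t^{\nu-1}$: the first is immediate, the second follows from $\lam(t) \lesssim t^{\nu+1-\eps}$, and the third uses that, once $\lam \lesssim t^{\nu+1}$ is known uniformly, the $C\lam^2/t^2$ term in \eqref{eq:glam_apriori} can be absorbed into the coercive left-hand side to give $\|\bs g\|_\cH^2 \lesssim \lam(t)\, t^{\nu-1}$. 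Integrating and using $b(t) \to 0$ as $t \to 0^+$ (a consequence of \eqref{eq:b-bound} together with $\|\bs g\|_\cH \to 0$) yields $|b(t)| \lesssim t^\nu$; then \eqref{eq:kala'} gives $|\zeta'(t)| \lesssim t^\nu$, and a final integration using $\zeta(0^+) = 0$ produces the sharp pointwise bound
\[
|\zeta(t)| \lesssim t^{\nu+1},
\]
crucially without any logarithmic factor.

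The contradiction is then immediate. Along a sequence $t_k \to 0^+$ with $\lam(t_k) \geq (L/2) t_k^{\nu+1}$, Step~1 (in particular $\lam(t_k) \lesssim t_k^{\nu+1-\eps}$) forces $\log(t_k/\lam(t_k)) \geq (\nu - \eps)|\log t_k| - O(1)$, and \eqref{eq:bound-on-l} then yields $|\zeta(t_k)| \gtrsim L (\nu - \eps) \, t_k^{\nu+1}|\log t_k|$, which contradicts the bound $|\zeta(t_k)| \lesssim t_k^{\nu+1}$ for $k$ large, so $L=0$. The main obstacle is the control of $\|\bs g\|_\cH^2 / \lam$ in Step~2: this requires the \emph{uniform} upper bound $\lam \lesssim t^{\nu+1}$, which a naive iteration does not directly give (only $\lam \lesssim t^{\nu+1-\eps}$ for each fixed $\eps > 0$). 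Obtaining this uniform bound requires a careful bootstrap to ensure the critical-size $C\lam^2/t^2$ term in \eqref{eq:glam_apriori} is absorbable by the coercivity of $\|\bs g\|_\cH^2$, which is the delicate point of the proof.
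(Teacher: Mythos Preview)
Your overall strategy---bound $|\zeta(t)|\lesssim t^{\nu+1}$ via the modulation estimates and contradict the logarithmic lower bound from \eqref{eq:bound-on-l}---is in the right spirit, but the argument does not close, and the obstruction you flag at the end is fatal rather than merely ``delicate.'' In Step~1 your iteration hinges on $\lam^2/t^2\lesssim A(t)$, which you obtain by absorbing the $C\lam^2/t^2$ on the right of \eqref{eq:glam_apriori} into $c\|\bs g\|_{\cH}^2$ on the left via $\|\bs g\|_{\cH}^2\geq c'\lam^2/t^2$. This requires $cc'>C$; those constants come from unrelated places (coercivity of $\LL$, the exterior tail $g=\pi-Q_\lam$, and the error $\iota_1$ in Proposition~\ref{p:gestimate}), and you do not verify the inequality. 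Even granting Step~1, it yields only $\lam=o(t^{\nu+1-\eps})$ for each fixed $\eps>0$, and then Step~2 still fails: the term $\|\bs g\|_{\cH}^2/\lam$ in the bound for $|b'|$ needs a \emph{pointwise lower bound} on $\lam$, which you never have, and your claimed estimate $\|\bs g\|_{\cH}^2\lesssim \lam t^{\nu-1}$ does not follow from \eqref{eq:glam_apriori} even after absorption (one gets $\|\bs g\|_{\cH}^2\lesssim A(t)$, and $A(t)\lesssim\lam(t)t^{\nu-1}$ is false whenever $\lam$ is near a local minimum).

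The paper's argument is quite different and sidesteps all constant-matching. Rather than work with $\zeta$ and $b$, it differentiates the pairing $\int_0^{R\lam}\Lambda Q_{\uln\lam}\,g\,r\,\ud r$ with a \emph{free} parameter $R$, producing $\lam'\,\|\Lambda Q\|_{L^2(r\leq R)}^2$ with $\|\Lambda Q\|_{L^2(r\leq R)}^2\sim 4\log R$. Integrating in time, bounding $\|\bs g\|$ by \eqref{eq:glam_apriori}, and setting $\alpha(t):=\sup_{\tau\leq t}\lam(\tau)$, one arrives at
\[
\alpha(t)^{1/2}\;\leq\; C(\log R)^{-1/2}\,t^{(\nu+1)/2}\;+\;C(\log R)^{-1/2}\int_0^t \frac{\alpha(\tau)^{1/2}}{\tau}\,\ud\tau.
\]
The coefficient $C(\log R)^{-1/2}$ in front of the Gronwall term can be made arbitrarily small by taking $R$ large, so the integral inequality closes and yields $\lam(t)\leq C(\log R)^{-1}t^{\nu+1}$ for $t$ small (depending on $R$). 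Since $R$ is arbitrary, $\lam(t)=o(t^{\nu+1})$. This extra degree of freedom is precisely what replaces the unverified constant inequalities your approach would require.
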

\begin{proof}
We claim there exists $C = C(\bs u) > 0$ such that the following holds: for all $R > 0$ sufficiently large, there exists $T_0 = T_0(R) > 0$ such that for all $t \leq T_0$
\begin{align}
\lam(t) \leq C (\log R)^{-1} t^{\nu+1}. \label{eq:lam_logR}
\end{align}
The above claim then immediately implies $\lam(t) = o(t^{\nu+1})$ as desired. 

To prove \eqref{eq:lam_logR}, we first note   
$\p_t g = \dot g + \lambda' \Lambda Q_{\uln \lam}$ implies  
\begin{align}
\frac{\ud }{\ud t} \int_0^{R\lam}\Lambda Q_{\uln{\lam}} g \, r \ud r 
&=
\int_0^{R\lam} \Lambda Q_{\uln{\lam}} \dot g  \, r \ud r
+ \lam' \int_0^{R\lam} |\Lambda Q_{\uln \lam} |^2 \, r \ud r \\  
&\quad-\frac{\lam'}{\lam}
\int_0^{R\lam} |\Lambda_0 \Lambda Q]_{\uln \lam} g \, r \ud r +
R \lam' \Lambda Q_{\uln \lam} g \,r \big |_{r = R\lam}. 
\end{align}
Since $|\lam'| \lesssim \| \dot g \|_{L^2}$, $\Lambda_0 \Lambda Q \in L^1(\bbR^2)$ and $\Lambda Q(R) = O(R^{-1})$, we have 
\begin{align}
\frac{|\lam'|}{\lam}
\Bigl | \int_0^{R\lam} |\Lambda_0 \Lambda Q]_{\uln \lam} g \, r \ud r \Bigr |
\lesssim \| \dot g \|_{L^2} \| g \|_{L^\infty} \lesssim \| \bs g \|_{\cl H}^2,  
\end{align}
as well as 
\begin{align}
\Bigl | \lam' R\Lambda Q_{\uln \lam} g r \big |_{r = R\lam} \Bigr |
\lesssim  \| \dot g \|_{L^2} R \lam^{-1} R^{-1} \| g \|_{L^\infty} R \lam 
\lesssim R \| \bs g \|_{\cl H}^2. 
\end{align}
The previous two estimates imply 
\begin{align} \Bigl |
-\frac{\lam'}{\lam}
\int_0^{R\lam} |\Lambda_0 \Lambda Q]_{\uln \lam} g \, r \ud r +
R \lam' \Lambda Q_{\uln \lam} g \,r \big |_{r = R\lam} \Bigr |
\lesssim R \| \bs g \|^2_{\cl H}
\end{align} 
whence 
\begin{align}
\lam' \| \Lambda Q \|_{L^2(r \leq R)}^2
= -\int_0^{R\lam} \Lambda Q_{\uln \lam} \dot g \, r \ud r 
+ \frac{\ud }{\ud t} \int_0^{R\lam} \Lambda Q_{\uln \lam} g \, r \ud r
+ O(R \| \bs g \|_{\cl H}^2) \label{eq:lam_logR2}
\end{align}
Integrating in time and using Minkowski's inequality we conclude 
\begin{align}
\lam(t) \log R &\lesssim (\log R)^{\frac 12} \int_0^t  \| \bs g(\tau) \|_{\cl H} \, \ud \tau + \lam(t) R \| \bs g(t) \|_{\cl H} + R \int_0^t \| \bs g(\tau) \|_{\cl H}^2 \, \ud \tau. 
\end{align}
Since $\lim_{t \rar 0} \| \bs g(t) \|_{\cl H} = 0$, the second term on the right side above absorbs into the left side and the third term absorbs into the first term (for all $t$ sufficiently small depending on $R$). Thus, for all $t$ sufficiently small,  
\begin{align}
\lam(t) &\lesssim (\log R)^{-\frac 12} \int_0^t  \| \bs g(\tau) \|_{\cl H} \, \ud \tau. \label{eq:lam_logR3}
\end{align}
By \eqref{eq:glam_apriori} we conclude
\begin{align}
\lam(t) &\lesssim (\log R)^{-\frac 12} \int_0^t \Bigl 
[
\int_0^\tau \lam(s) s^{\nu-2} \ud s 
\Bigr ]^{\frac 12} \, \ud \tau + (\log R)^{-\frac 12} \int_0^t \frac{\lam(\tau)}{\tau} \, \ud \tau. 
\end{align} 
Define
$$\alpha(t) := \sup_{\tau \in (0, t]} \lam(\tau) = o(t).$$ Then the previous estimate implies there exists a constant $C > 0$ (uniform in $R$) such that  
\begin{align}
\alpha(t)^{\frac 12} \leq C (\log R)^{-\frac 12} t^{\frac{\nu+1}{2}} 
+ C (\log R)^{-\frac 12} \int_0^t \frac{\alpha(\tau)^{\frac 12}}{\tau} \, \ud \tau. \label{eq:lam_logR4}
\end{align}
Let $R$ be large enough so 
\begin{align}
C(\log R)^{-\frac 12} < \frac{1}{4} \min(\nu+1, 1). \label{eq:R_condition}
\end{align}
Then \eqref{eq:lam_logR4} implies 
\begin{align}
\frac{\ud}{\ud t} \left( t^{-C (\log R)^{-\frac 12}} \int_0^t \frac{\alpha(\tau)^{\frac 12}}{\tau} \, \ud \tau  \right)
\leq C (\log R)^{-\frac 12} t^{\frac{\nu+1}{2} - C (\log R)^{-\frac 12} - 1}. \label{eq:lam_logR5}
\end{align}
We note \eqref{eq:R_condition} and the fact $\alpha(t) = o(t)$ imply  
\begin{align}
\lim_{t \rar 0}  t^{-C (\log R)^{-\frac 12}} \int_0^t \frac{\alpha(\tau)^{\frac 12}}{\tau} \, \ud \tau = 0. 
\end{align}
Integrating \eqref{eq:lam_logR5} then yields 
\begin{align}
\int_0^t \frac{\alpha(\tau)^{\frac 12}}{\tau} \, \ud \tau
\leq C (\log R)^{-\frac 12} t^{\frac{\nu+1}{2}}. \label{eq:lam_logRest}
\end{align}
Inserting \eqref{eq:lam_logRest} into \eqref{eq:lam_logR4} implies that for all $t$ sufficiently small, 
\begin{align}
\alpha(t)^{\frac 12} \leq C (\log R)^{- \frac 12} t^{\frac{\nu+1}{2}}
\end{align}
which yields \eqref{eq:lam_logR}. 
\end{proof}

We now give the short proof of Proposition \ref{p:sign}.

\begin{proof}[Proof of Proposition \ref{p:sign}]
	Assume $q > 0$. By \eqref{eq:glam_apriori} and Proposition \ref{p:new_lam_upper} we conclude 
	\begin{align}
	c \| \bs g(t) \|_{\cl H}^2 + \lam(t) t^{\nu-1}
	&\leq [(\nu-1)+o(1)] \int_0^t \lam(\tau) \tau^{\nu-2} \, \ud \tau \label{eq:glam_apriori2}, 
	\end{align}
	as well as 
	\begin{align}
	\| \bs g(t) \|_{\cl H}^2 \lesssim t^{2\nu} |\log t|^{-1}.
	\end{align}
	Define 
	\begin{align}
	f(t) := c \| \bs g(t) \|_{\cl H}^2 t^{-\nu} + \lam(t) t^{-1}. 
	\end{align}
	Then $f(t) > 0$, $\lim_{t \rar 0} f(t) = 0$ and by \eqref{eq:glam_apriori2} 
	\begin{align}
	f(t) < \nu t^{-\nu} \int_0^t f(\tau) \tau^{\nu-1} \, \ud \tau. \label{eq:stopping_time}
	\end{align}
	Let $T > 0$ be a maximal time for $f$, i.e. 
	\begin{align}
	\forall t \in (0,T], \quad f(t) \leq f(T). 
	\end{align}
	Then \eqref{eq:stopping_time} implies 
	\begin{align}
	f(T) < \nu T^{-\nu} \int_0^T f(\tau) \tau^{\nu-1} \, \ud \tau 
	\leq f(T) T^{-\nu} \int_0^T \nu \tau^{\nu-1} \, \ud \tau = f(T),
	\end{align}
	which is a contradiction.  Thus, $q < 0$. 
\end{proof} 

\subsection{An upper bound for the modulation parameter} 

By Proposition \ref{p:sign} we can assume for the remainder of this work that $q < 0$.
In this section we prove the following preliminary bound on the modulation parameter $\lam(t)$. 

\begin{proposition} \label{t:lamupper} 
Let $\nu > \frac{9}{2}$ and $q < 0$, and let $\bs u$ satisfy \eqref{eq:ckls}.
Let $J = (0, T_0]$ be as in~\eqref{eq:Jdef}, and 
let $\lam(t)$ and $\bs g(t)$ be as in~\eqref{eq:lamg}.
Then, there exists a constant $A = A(\bs u)> 0$ such that  
\EQ{ \label{eq:lamupper} 
\lam(t) \le A \frac{t^{\nu+1}}{\abs{\log{t}}}, \quad \forall t \in J. 
}
\end{proposition}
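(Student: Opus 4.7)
My plan is to derive the bound in two stages: first the crude bound $\lam(t)\lesssim t^{\nu+1}$ (without the logarithm) from the energy estimate, then the refinement to the sharp $|\log t|^{-1}$ improvement using the modulation parameters $\zeta(t)$ and $b(t)$ from Proposition~\ref{p:modp2}. For the crude bound, apply Proposition~\ref{p:gestimate} and send $t_0\to 0^+$: $\|\bs g(t_0)\|_{\cl H}^2\to 0$ by~\eqref{eq:gto0}, and $\iota_1(t_0)\to 0$ since $\lam(t_0)/t_0\to 0$. Integration by parts gives
\[
-8pq\int_0^t \lam'(\tau)\tau^{\nu-1}\,d\tau = 8p|q|\Bigl[\lam(t)t^{\nu-1} - (\nu-1)\int_0^t \lam(\tau)\tau^{\nu-2}\,d\tau\Bigr],
\]
and I discard the non-positive second contribution since $q<0$ (Proposition~\ref{p:sign}). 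A Gronwall argument absorbs $\int_0^t \tau^{\nu-1}\|\bs g(\tau)\|_{\cl H}^2\,d\tau$ from $\int_0^t\iota_2\,d\tau$. Under a bootstrap assumption $\lam(t)\leq K t^{\nu+1}$, the residual errors $\lam^2/t^2$, $t^{2\nu-2}\lam^2\log(t/\lam)$, $\lam t^{3\nu-1}$, $\int_0^t\lam\tau^{2\nu-2}\,d\tau$ are all dominated by $\lam(t) t^{\nu-1}$, yielding $\|\bs g(t)\|_{\cl H}^2\lesssim \lam(t)t^{\nu-1}$. Combined with $|\lam'|\lesssim \|\dot g\|_{L^2}$ from Proposition~\ref{p:modp}, this gives $|(\sqrt\lam)'|\lesssim t^{(\nu-1)/2}$; integrating with $\lam(0^+)=0$ closes the bootstrap and establishes $\lam(t)\lesssim t^{\nu+1}$.

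For the refinement, set $F(t):=\lam(t)\log(t/\lam(t))$ and invoke Proposition~\ref{p:modp2} on a sufficiently small $(0,T_0]$, which is legitimate given $\lam/t\to 0$ and $\|\bs g\|_{\cl H}\to 0$. Combining~\eqref{eq:b-bound}, \eqref{eq:kala'} with the crude energy bound $\|\dot g\|_{L^2}\lesssim \lam^{1/2}t^{(\nu-1)/2}$, every contribution to $|\zeta'(t)|$ besides $[\log(t/\lam)]^{1/2}\|\dot g\|_{L^2}$ is strictly subordinate under $\lam\lesssim t^{\nu+1}$ (using $\log(t/\lam)\gtrsim|\log t|$), so
\[
|\zeta'(t)|\leq C\,F(t)^{1/2}t^{(\nu-1)/2}.
\]
Next, $\zeta(t_0)\to 0$ as $t_0\to 0^+$: the main piece satisfies $\lam(t_0)\log(t_0/\lam(t_0)) = t_0\cdot(\lam(t_0)/t_0)\log(t_0/\lam(t_0))\to 0$ since $x\mapsto x\log(1/x)$ is bounded on $(0,1)$, and the integral correction is $O(t_0\|g(t_0)\|_H)\to 0$. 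Integration then yields
\[
|\zeta(t)|\leq C\int_0^t F(\tau)^{1/2}\tau^{(\nu-1)/2}\,d\tau.
\]

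To close the argument, the definition of $\zeta$ combined with $t\|g\|_H\lesssim F^{1/2}t^{(\nu+1)/2}$ (from $\|g\|_H\lesssim\lam^{1/2}t^{(\nu-1)/2}$ and $\lam\leq F$) plus AM-GM yields $F(t)\lesssim|\zeta(t)|+t^{\nu+1}$. Setting $G(t):=\sup_{s\in(0,t]}F(s)$ and using that $G$ is non-decreasing, the preceding displays combine to the quadratic inequality
\[
G(t)\leq C_1 G(t)^{1/2}t^{(\nu+1)/2}+C_1 t^{\nu+1},
\]
whose resolution forces $G(t)\leq A_1 t^{\nu+1}$. Since the crude bound $\lam\lesssim t^{\nu+1}$ gives $\log(t/\lam(t))\geq(\nu/2)|\log t|$ for $t$ small, $F(t)\leq A_1 t^{\nu+1}$ rearranges to $\lam(t)\leq A\,t^{\nu+1}/|\log t|$, the desired conclusion. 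The main obstacle is the bookkeeping of error terms in Propositions~\ref{p:gestimate} and~\ref{p:modp2}: several a priori comparable quantities ($\lam^2/t^2$, $\|\bs g\|_{\cl H}^2$, $\lam/t$) only become subordinate to the leading order $F^{1/2}t^{(\nu-1)/2}$ after Stage~1 provides the bound $\lam\lesssim t^{\nu+1}$, which is exactly why the two-stage structure is needed.
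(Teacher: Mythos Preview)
Your Stage~2 is correct and, granted the hypothesis $\lam\lesssim t^{\nu+1}$, gives a cleaner route to the final bound than the paper's. The gap is in Stage~1: the bootstrap does not close. The energy estimate (Proposition~\ref{p:gestimate} plus Gronwall, exactly as in Lemma~\ref{l:g_apriori_bd}) yields, \emph{without} any bootstrap,
\[
\|\bs g(t)\|_{\cl H}^2\le C_0\bigl(\lam(t) t^{\nu-1}+\lam(t)^2/t^2\bigr).
\]
Under your assumption $\lam\le Kt^{\nu+1}$ the second term is only bounded by $K\lam t^{\nu-1}$, so $\|\bs g\|_{\cl H}^2\le C_0(1+K)\lam t^{\nu-1}$. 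Feeding this into $|\lam'|\le C_1\|\dot g\|_{L^2}$ and integrating $(\sqrt\lam)'$ returns $\lam(t)\le C'(1+K)t^{\nu+1}$ with $C'=C_0C_1^2/(\nu+1)^2$ a fixed structural constant; closure would require $C'(1+K)<K$, i.e.\ $C'<1$, and nothing in the argument forces this. There is also an initiation problem: a priori only $\lam/t\to 0$ is known, not $\limsup_{t\to0}\lam(t)/t^{\nu+1}<\infty$, so you cannot guarantee the bootstrap holds on any initial interval $(0,T']$.

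The paper sidesteps Stage~1 entirely. It works directly with $\alpha(t)=\sup_{\tau\le t}\lam(\tau)\log(\tau/\lam(\tau))$ but \emph{keeps} the $\lam^2/t^2$ contribution in the $|\zeta'|$ estimate. That term then enters the integral inequality for $\alpha^{1/2}$ as
\[
\int_0^t \frac{\alpha(\tau)^{1/2}}{\tau\,\bigl(\log(\tau/\lam(\tau))\bigr)^{1/2}}\,d\tau,
\]
and the point is that the kernel carries the factor $(\log(\tau/\lam))^{-1/2}$, which can be made $\le 1/8$ for small $\tau$ purely from $\lam/\tau\to 0$. A Gronwall with integrating factor $\exp(-\gamma)$, $\gamma'(t)=C\,t^{-1}(\log(t/\lam))^{-1/2}$, then closes: one checks $\exp(-\gamma(t))\lesssim t^{-1/8}$, the integral is $\lesssim t^{(\nu+1)/2}$, and one obtains $\lam\log(t/\lam)\lesssim t^{\nu+1}$ in one shot. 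The final division by $\log(t/\lam)\gtrsim|\log t|$ is the same as yours. In short, the smallness that makes the argument close is $\log(t/\lam)\to\infty$, which is available from the outset, rather than $\lam\lesssim t^{\nu+1}$, which is not.
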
 

Throughout this section, we assume the hypotheses from Proposition \ref{t:lamupper}. We first use Proposition \ref{p:gestimate} to obtain the following bound for $\bs g(t)$.

\begin{lemma}\label{l:g_apriori_bd}
	There exists $C = C(\bs u) > 0$ such that
	\begin{align}
	{\| \bs g(t) \|_{\cl H}^2} \leq C \Bigl (\lam(t) t^{\nu-1}
	+ \frac{\lam(t)^2}{t^2} \Bigr ), \quad \forall t \in J
	. \label{eq:g_apriori_bd}
	\end{align}
\end{lemma}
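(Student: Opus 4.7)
The plan is to derive the estimate from Proposition~\ref{p:gestimate} by exploiting the sign condition $q < 0$ (established in Proposition~\ref{p:sign}) to extract positivity from the leading driving term, and then to close the resulting inequality on $\|\bs g(t)\|_{\cH}^2$ via a Gronwall-type bootstrap.

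First I would apply Proposition~\ref{p:gestimate} on $(t_0, t]$ and integrate by parts in the leading term:
\begin{align*}
-8pq\int_{t_0}^t \lam'(\tau)\tau^{\nu-1}\,\ud\tau &= -8pq\bigl[\lam(t)t^{\nu-1} - \lam(t_0)t_0^{\nu-1}\bigr] \\ &\quad + 8pq(\nu-1)\int_{t_0}^t \lam(\tau)\tau^{\nu-2}\,\ud\tau.
\end{align*}
Since $q<0$ and $\nu>1$, the tail integral is non-positive and may be discarded. Sending $t_0 \to 0^+$ the boundary term $\lam(t_0) t_0^{\nu-1}$ tends to zero by $\lam(t_0)=o(t_0)$, and by \eqref{eq:gto0}, \eqref{eq:gext-bound} both $\|\bs g(t_0)\|_{\cH}^2$ and $\iota_1(t_0)$ vanish, leaving
$$\|\bs g(t)\|_{\cH}^2 \leq C\lam(t) t^{\nu-1} + C\iota_1(t) + C\int_0^t \iota_2(\tau)\,\ud\tau.$$
Setting $\Phi(t):=\lam(t)t^{\nu-1}+\lam(t)^2/t^2$, I would verify $\iota_1(t)\leq C\Phi(t)$: the $\lam^2/t^2$ contribution is already in $\Phi$; for the logarithmic term, $\log(t/\lam)\leq t/\lam$ gives $t^{2\nu-2}\lam^2\log(t/\lam)\leq t^{\nu}\cdot \lam t^{\nu-1}$; and $\lam t^{3\nu-1}\leq t^{2\nu}\cdot \lam t^{\nu-1}$, both absorbed for $t$ small. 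For the $\iota_2$-integral, the elementary identity $\lam(\tau)\tau^{2\nu-2}=\tau^{\nu-1}\cdot \lam(\tau)\tau^{\nu-1}\leq \tau^{\nu-1}\Phi(\tau)$ lets me rewrite
$$\int_0^t \iota_2(\tau)\,\ud\tau \leq C\int_0^t \tau^{\nu-1}\Phi(\tau)\,\ud\tau + C\int_0^t \tau^{\nu-1}\|\bs g(\tau)\|_{\cH}^2\,\ud\tau.$$

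To conclude, I would run a maximal-time bootstrap: let $t^{*}$ be the largest time in $(0, T_0]$ such that $\|\bs g(s)\|_{\cH}^2 \leq 2C\Phi(s)$ on $(0, t^*)$, for a constant $C=C(\bs u)$ chosen later. Inserting the bootstrap bound into the integrated estimate, Gronwall's inequality together with the smallness of the weight $\int_0^t \tau^{\nu-1}\,\ud\tau = t^{\nu}/\nu \to 0$ forces a strict improvement at $t^*$, contradicting maximality unless $t^*=T_0$. The main technical obstacle is controlling $\int_0^t \tau^{\nu-1}\Phi(\tau)\,\ud\tau$ by a multiple of $\Phi(t)$ without any a priori monotonicity of $\lam$; my strategy is to absorb this iteratively using the smallness of $t$ together with the vanishing of $\lam(t)$ and $\|\bs g(t)\|_{\cH}$ at $t=0$, at the cost of a constant $C$ that depends on the specific solution $\bs u$.
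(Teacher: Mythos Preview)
Your plan has a genuine gap at precisely the point you flag as ``the main technical obstacle''. After discarding the tail integral $8pq(\nu-1)\int_{0}^{t}\lam(\tau)\tau^{\nu-2}\,\ud\tau$ and running Gronwall, you are left needing
\[
\int_0^t \tau^{\nu-1}\Phi(\tau)\,\ud\tau \;\le\; \varepsilon\,\Phi(t),
\qquad \Phi(t)=\lam(t)t^{\nu-1}+\frac{\lam(t)^2}{t^2},
\]
to close the bootstrap. This estimate is \emph{not} available at this stage: you only know $\lam(t)=o(t)$, so for instance $\int_0^t \lam(\tau)^2\tau^{\nu-3}\,\ud\tau=o(t^{\nu})$, but $\Phi(t)$ can be far smaller than $t^{\nu}$ (indeed the eventual rate gives $\Phi(t)\sim t^{2\nu}/|\log t|$). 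Since $\lam$ has no monotonicity and no a~priori lower bound, $\Phi$ can dip at time $t$ while the integral has accumulated contributions from larger past values; no choice of a $\bs u$-dependent constant fixes this, because the ratio can blow up along a sequence $t_n\to 0$. Your proposed ``iterative absorption'' does not address this, and the bootstrap is also not obviously seeded since nothing yet guarantees $\|\bs g(s)\|_{\cH}^2\le 2C\Phi(s)$ for small $s$.

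The fix is exactly to \emph{not} discard that tail integral. Keep $8p|q|(\nu-1)\int_0^t\lam(\tau)\tau^{\nu-2}\,\ud\tau$ on the left-hand side alongside $\|\bs g(t)\|_{\cH}^2$. After Gronwall (with the harmless weight $\tau^{\nu-1}$), the new integrated errors are
\[
\int_0^t \lam(\tau)\tau^{2\nu-2}\,\ud\tau
\quad\text{and}\quad
\int_0^t \lam(\tau)^2\tau^{\nu-3}\,\ud\tau,
\]
and these are $o(1)\int_0^t \lam(\tau)\tau^{\nu-2}\,\ud\tau$ using only $\tau^{\nu}=o(1)$ and $\lam(\tau)/\tau=o(1)$, respectively. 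They are then absorbed into the retained left-hand integral, and the desired bound drops out. In short, the term you threw away is precisely the mechanism that controls the time-integrated errors.
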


\begin{proof}
	Let $\iota_1$ be as in the statement of Proposition \ref{p:gestimate}.
	By \eqref{eq:gto0}, \eqref{eq:gext-bound}
	it follows  
	\begin{align}
	|\iota_1(t)| \lesssim \frac{\lam(t)^2}{t^2} + \lam(t) t^{2\nu-1}. \label{eq:iota1_estimatess}
	\end{align}
	Thus, $\lim_{t_0 \rar 0} \iota(t_0) = 0$.
	By Proposition \ref{p:gestimate} and \eqref{eq:iota1_estimatess} and there exist constants $C,c > 0$ such that for all $t$ sufficiently small 
	\begin{align}
	c \| \bs g(t) \|^2_{\cl H} &\leq 8p|q| \int_0^t \lam'(\tau) \tau^{\nu-1} \, \ud \tau + 
	C \left ( \lam(t) t^{2\nu-1} + \frac{\lam(t)^2}{t^2} \right ) \\ &\qquad+ C \int_0^t \bigl (\lam(\tau) \tau^{2\nu-2} + \tau^{\nu-1} \| \bs g(\tau) \|_{\cl H}^2 \bigr ) \, \ud \tau.
	\end{align}
	After integrating by parts, the previous expression implies for all $t$ sufficiently small 
	\begin{align}
	c \| \bs g(t) \|_{\cl H}^2 + 8p|q|&(\nu-1) \int_0^t \lam(\tau) \tau^{\nu-2} \ud \tau \\&\leq 
	8p|q| \lam(t) t^{\nu-1} +  C \left ( \lam(t) t^{2\nu-1} + \frac{\lam^2}{t^2} \right ) \\ &\qquad+ C \int_0^t \bigl (\lam(\tau) \tau^{2\nu-2} + \tau^{\nu-1} \| \bs g(\tau) \|_{\cl H}^2 \bigr ) \ud \tau \\
	&\leq C \lam(t) t^{\nu-1} + C \frac{\lam(t)^2}{t^2} + 
	\frac{8p|q|(\nu-1)}{2} \int_0^t \lam(\tau) \tau^{\nu-2} \ud \tau \\&\qquad+ 
	C \int_0^t \tau^{\nu-1} \| \bs g(t) \|_{\cl H}^2 \ud \tau.  
	\end{align}
	We conclude there exists a constant $C > 0$ such that 
	\begin{align}
	\| \bs g(t) \|_{\cl H}^2 + \int_0^t \lam(\tau) \tau^{\nu-2} \ud \tau 
	\leq C \Bigl ( \lam(t) t^{\nu-1} + \frac{\lam(t)^2}{t^2} + 
	\int_0^t \tau^{\nu-1} \| \bs g(t) \|_{\cl H}^2 \ud \tau
	\Bigr ).
	\end{align}
	By Gronwall's inequality and \eqref{eq:gto0}
	\begin{align}
	\| \bs g(t) \|_{\cl H}^2 + \int_0^t \lam(\tau) \tau^{\nu-2} \ud \tau
	&\leq C \lam(t) t^{\nu-1} + C \frac{\lam(t)^2}{t^2} \\&\quad+ 
	C \int_0^t \bigl (\lam(\tau) \tau^{2\nu-2} + \lam(\tau)^2 \tau^{\nu-3} \bigr ) \ud \tau \\
	&\leq C \Bigl (\lam(t) t^{\nu-1} + \frac{\lam(t)^2}{t^2} \Bigr ) + 
	o(1) \int_0^t \lam(\tau) \tau^{\nu-2} \ud \tau.
	\end{align}
	Absorbing the third term on the right side into the left side above yields
	\begin{align}
	\| \bs g(t) \|_{\cl H}^2 \leq C \Bigl (\lam(t) t^{\nu-1} + \frac{\lam(t)^2}{t^2} \Bigr ),
	\end{align}
	which finishes the proof.
\end{proof}  

Before turning to the proof of Proposition \ref{t:lamupper}, we recall the two auxiliary functions $\zeta(t)$ and $b(t)$ defined in~\eqref{eq:zetadef} and~\eqref{eq:bdef}:  
\begin{align}
\zeta(t) &= 4 \lam(t) \log \frac{t}{\lam(t)} - \int_0^{t} \Lambda Q_{\uln{\lam(t)}} g(t) \, r \ud r, \\% \label{eq:zetadef} \\ 
b(t) &= - \int_0^{t} \Lambda Q_{\uln{\lam(t)}} \dot g(t) \, r \ud r  - \la \dot g(t), \A_0(\lambda(t)) g(t)\ra. %\label{eq:bdef}
\end{align}
For the convenience of the reader we also recall several of the estimates proved in Proposition~\ref{p:modp2} for these functions: 
\begin{align}
\left | \frac{\zeta(t)}{4 \lam(t)\log \frac{t}{\lam(t)}} - 1 \right | &\leq C_0 \frac{t}{\lam(t) \log \frac{t}{\lam(t)}} \| \bs g(t) \|_{\mathcal H} \label{eq:zlam}, \\
|b(t)| &\leq 
( 4 + o(1) )^{\frac{1}{2}} \left (\log \frac{t}{\lam(t)} \right )^{\frac{1}{2}} \| \dot g(t) \|_{L^2} + 
C_0 \| \bs g(t) \|_{\mathcal H}^2
\label{eq:bupper}, \\
|\zeta'(t) - b(t) |&\le C_0  \left (
\| \dot g(t) \|_{L^2} + 
\frac{\lam(t)}{t}\right ),  \label{eq:z'-b} \\
b'(t) &\ge (4p|q| - o(1)) t^{\nu-1} - C_0 \frac{\lam(t)}{t^2} - \delta \frac{1}{\lam} \| \bs g(t) \|_{\cl H}^2. \label{eq:b'lblast}  
\end{align}

\begin{proof}[Proof of Proposition \ref{t:lamupper}]
Combining~\eqref{eq:z'-b} and~\eqref{eq:bupper} and using the fact $ \| \bs g(t) \|_{\HH} \lesssim 1$ we conclude
\EQ{
\abs{\zeta'(t)}  \lesssim \Big( \log \frac{t}{\lam(t)}\Big)^{\frac{1}{2}}   \| \bs g(t) \|_{\HH}+ \frac{\lam(t)}{t}. 
}
Integrating the above yields, 
\EQ{
 \zeta(t)  \lesssim  \int_{0}^t   \Big( \log \frac{\tau}{\lam(\tau)}\Big)^{\frac{1}{2}}  \| \bs g(\tau) \|_{\HH}\, \ud \tau  +  \int_{0}^t  \frac{\lam(\tau)}{\tau}  \, \ud \tau. 
}
By~\eqref{eq:zlam} 
\EQ{
 \lambda(t) \log \frac{t}{\lam(t)} \lesssim \int_{0}^t \Big( \log \frac{\tau}{\lam(\tau)}\Big)^{\frac{1}{2}}  \| \bs g(\tau) \|_{\HH}\, \ud \tau  +   \int_{0}^t  \frac{\lam(\tau)}{\tau}  \, \ud \tau+ t\| \bs g(t) \|_{\HH}.
}
Plugging in \eqref{eq:g_apriori_bd} to the previous estimate we obtain 
\EQ{
 \lambda(t) \log \frac{t}{\lam(t)} &\lesssim  \int_{0}^t \lam^{\frac{1}{2}}(\tau)  \Big( \log \frac{\tau}{\lam(\tau)}\Big)^{\frac{1}{2}} \tau^{\frac{\nu-1}{2}} \, \ud \tau  \\
 &\quad +  \int_{0}^t \lam(\tau)  \Big( \log \frac{\tau}{\lam(\tau)}\Big) \frac{1}{\tau \Big(\log \frac{\tau}{\lam(\tau)} \Big)^{\frac{1}{2}}}\, \ud \tau  \\ 
  &\quad +  \int_{0}^t \lam(\tau)  \Big( \log \frac{\tau}{\lam(\tau)}\Big) \frac{1}{\tau \Big(\log \frac{\tau}{\lam(\tau)} \Big)}\, \ud \tau  \\ 
 &\quad +  \lam(t)^{\frac{1}{2}} t^{\frac{\nu+1}{2}}   +   \lam(t).
}
%Dividing through by $t$ and absorbing the fourth term on the right into the third, and the last term into the right-hand side gives, 
%\EQ{
% \frac{\lambda(t)}{t}  \log \frac{t}{\lam(t)} &\lesssim \int_{0}^t \frac{\lam(\tau)^{\frac{1}{2}}}{\tau^{\frac{1}{2}}}  \Big( \log \frac{\tau}{\lam(\tau)}\Big)^{\frac{1}{2}}\tau^{\frac{\nu-2}{2}} \, \ud \tau  \\
% &\quad + C \int_{t_n}^t \frac{\lam(\tau) }{\tau}  \Big( \log \frac{\tau}{\lam(\tau)}\Big) \frac{1}{\tau \Big(\log \frac{\tau}{\lam(\tau)} \Big)^{\frac{1}{2}}}\, \ud \tau  \\ 
%  %&\quad + C \int_{t_n}^t \lam(\tau)  \Big( \log \frac{\tau}{\lam(\tau)}\Big) \frac{1}{\tau \Big(\log \frac{\tau}{\lam(\tau)} \Big)}\, \ud \tau  \\ 
% %&\quad +  C \int_{t_n}^t \lam(\tau)  \Big( \log \frac{\tau}{\lam(\tau)}\Big)^{\frac{1}{2}} \frac{\lam^{\frac{1}{2}}(\tau)}{\tau}\, \ud \tau  
% %&\quad + C\int_{t_n}^t \lam^{\frac{1}{2}}(\tau) \abs{ \log \lam(\tau)}^{\frac{1}{2}}\left( \int_0^\tau \frac{\lam(s)^2}{s^2}  \abs{\log s}^2 \, \ud s \right)^{\frac{1}{2}}\, \ud \tau   \\
% &\quad +  C\lam(t)^{\frac{1}{2}} t^{\frac{\nu-1}{2}}   %+   C\lam(t) \\
%% &\quad +  C\lam^{\frac{1}{2}}(t)\lam^{\frac{1}{1+ \nu}}(t)\left( \int_0^t \frac{\lam(s)^2}{s^2}  \abs{\log s}^2 \, \ud s \right)^{\frac{1}{2}}
%}
Since $\log \frac{t}{\lam(t)}  \to \infty$ as $t \to 0$ the last term on the right-hand side can be absorbed into the left-hand side and the third term on the right absorbs into the second term. The previous estimate reduces to 
\EQ{ \label{eq:lamlogt/lam} 
 \lambda(t) \log \frac{t}{\lam(t)} &\lesssim  \int_{0}^t \lam^{\frac{1}{2}}(\tau)  \Big( \log \frac{\tau}{\lam(\tau)}\Big)^{\frac{1}{2}} \tau^{\frac{\nu-1}{2}} \, \ud \tau  \\
 &\quad +  \int_{0}^t \lam(\tau)  \Big( \log \frac{\tau}{\lam(\tau)}\Big) \frac{1}{\tau (\log \frac{\tau}{\lam(\tau)} )^{\frac{1}{2}}}\, \ud \tau  \\ 
 % &\quad + C \int_{0}^t \lam(\tau)  \Big( \log \frac{\tau}{\lam(\tau)}\Big) \frac{1}{\tau \Big(\log \frac{\tau}{\lam(\tau)} \Big)}\, \ud \tau  \\ 
 %&\quad +  C \int_{t_n}^t \lam(\tau)  \Big( \log \frac{\tau}{\lam(\tau)}\Big)^{\frac{1}{2}} \frac{\lam^{\frac{1}{2}}(\tau)}{\tau}\, \ud \tau  
 %&\quad + C\int_{t_n}^t \lam^{\frac{1}{2}}(\tau) \abs{ \log \lam(\tau)}^{\frac{1}{2}}\left( \int_0^\tau \frac{\lam(s)^2}{s^2}  \abs{\log s}^2 \, \ud s \right)^{\frac{1}{2}}\, \ud \tau   \\
 &\quad +  \lam(t)^{\frac{1}{2}} t^{\frac{\nu+1}{2}}.   %+   C\lam(t)
% &\quad +  C\lam^{\frac{1}{2}}(t)\lam^{\frac{1}{1+ \nu}}(t)\left( \int_0^t \frac{\lam(s)^2}{s^2}  \abs{\log s}^2 \, \ud s \right)^{\frac{1}{2}}
}
Now, define 
\EQ{
\al(t) := \sup_{\tau \in (0, t]} \lambda(\tau)  \log \frac{\tau}{\lam(\tau)}. 
}
Since $\lam(t) = o(t)$, $\alpha(t) > 0$ for all $t$ sufficiently small and  
\EQ{
\al(t) \to 0 \mas t \to 0.
}
From~\eqref{eq:lamlogt/lam} we obtain 
\begin{align}
\alpha(t) \lesssim \alpha(t)^{\frac 12} t^{\frac{\nu+1}{2}} + 
\alpha(t)^{\frac 12} \int_0^t \alpha(\tau)^{\frac 12} \frac{1}{\tau \bigl (\log \frac{\tau}{\lam(\tau)} \bigr )^{\frac 12}} \, \ud \tau. 
\end{align}
Dividing through by $\al(t)^{\frac{1}{2}}$ gives  
\EQ{ \label{eq:al12}
\al(t)^{\frac{1}{2}} & \le   C t^{\frac{\nu+1}{2}}   + C\int_{0}^t \al(\tau)^{\frac{1}{2}} \frac{1}{\tau \bigl (\log\frac{\tau}{\lam(\tau)}\bigr )^{\frac{1}{2}}}\, \ud \tau 
}
for some universal constant $C>0$. Next, we estimate the last term on the right above. Let $\de>0$ be a small constant such that 
\EQ{ \label{eq:1/8} 
\frac{C}{(\log \frac{\tau}{\lam(\tau)} )^{\frac{1}{2}}}  \le  \frac{1}{8}  \mif t < \de,
}
and define additional auxiliary functions 
\EQ{
\gamma(t)&:= -\int^\de_t \frac{C}{ \tau \bigl (\log \frac{\tau}{\lam(\tau)} \bigr )^{\frac{1}{2}}} \, \ud \tau , \quad \gamma'(t) = \frac{C}{ t \bigl (\log \frac{t}{\lam(t)} \bigr )^{\frac{1}{2}}} \\
\te(t)&:= \exp(-\gamma(t))\int_{0}^t \al(\tau)^{\frac{1}{2}}\gamma'(\tau)\, \ud \tau.  
}
Then~\eqref{eq:al12} reads 
\EQ{ \label{eq:al121} 
\al(t)^{\frac{1}{2}} - \int_{0}^t \al(\tau)^{\frac{1}{2}}\gamma'(\tau)\, \ud \tau \le C t^{\frac{\nu+1}{2}}.
}
We first claim $\te(t) \to 0$ as $t \to 0$. To see this, note  
\EQ{
\lam(t) \log \frac{t}{\lam(t)} = t  \frac{\lam(t)}{t} \log \frac{t}{\lam(t)} = o(1) t
}
which implies  
\EQ{
\al(\tau)^{\frac{1}{2}}\gamma'(\tau) \lesssim t^{-\frac{1}{2}}. %\mas t \to 0
}
Note also by~\eqref{eq:1/8} we have 
\EQ{
-\gamma(t)  \le \frac{1}{8} \int^\de_t \frac{1}{ \tau} \, \ud \tau \le \frac{1}{8} \log \frac{1}{t} = \log t^{-\frac{1}{8}}
}
and therefore 
\EQ{ \label{eq:expgamma} 
\exp( -\gamma(t)) \lesssim t^{-\frac{1}{8}}.
}
Combining these estimates gives 
\EQ{
\te(t) \lesssim t^{-\frac{1}{8}}\int_0^t \tau^{-\frac{1}{2}} \, \ud \tau  \lesssim t^{\frac{3}{8}} \to 0 \mas t \to 0 
}
as claimed. 
Now, using~\eqref{eq:al12} we obtain 
\EQ{
\te'(t)  &= -\gamma'(t) \exp(-\gamma(t))\int_{0}^t \al(\tau)^{\frac{1}{2}}\gamma'(\tau)\, \ud \tau  + \al(t)^{\frac{1}{2}}\exp(-\gamma(t))\gamma'(t) \\
& =\exp(-\gamma(t))\gamma'(t) \Big( \al(t)^{\frac{1}{2}} - \int_{0}^t \al(\tau)^{\frac{1}{2}}\gamma'(\tau)\, \ud \tau \Big) \\
& \le C\exp(-\gamma(t))\gamma'(t) t^{\frac{\nu+1}{2}}, 
}
so integrating from $0$ to $t$, using integration by parts and \eqref{eq:expgamma} we obtain 
\begin{align}
\theta(t) &\leq C \int_0^t \exp(-\gamma(\tau)) \gamma'(\tau) \tau^{\frac{\nu+1}{2}} \, \ud \tau \\
&= C \Bigl (
-\exp(-\gamma(t)) t^{\frac{\nu+1}{2}} + \frac{\nu+1}{2} \int_0^t 
\exp(-\gamma(\tau)) \tau^{\frac{\nu-1}{2}} \, \ud \tau. 
\Bigr ) \\
&\leq C \int_0^t 
\exp(-\gamma(\tau)) \tau^{\frac{\nu-1}{2}} \, \ud \tau.
\end{align}
Since $\log \frac{t}{\lam(t)} \rar 0$, we observe 
\begin{align}
\frac{\ud}{\ud t} \left ( 
\exp(-\gamma(t)) t^{\frac{\nu+1}{2}}
\right ) &= \frac{\nu+1}{2} \exp(-\gamma(t)) t^{\frac{\nu-1}{2}} \\
&\quad - \gamma'(t) \exp(-\gamma(t)) \max(t^{\frac{\nu+1}{2}}, t^{\frac{\mu+1}{2}}) \\
&\geq \frac{\nu+1}{4} \exp(-\gamma(t)) t^{\frac{\nu-1}{2}}
\end{align}
for all $t$ sufficiently small. Thus, $\int_0^t 
\exp(-\gamma(\tau)) \tau^{\frac{\nu-1}{2}} \, \ud \tau \lesssim \exp(-\gamma(t)) t^{\frac{\nu+1}{2}}$ so 
\begin{align}
\theta(t) \lesssim \exp(-\gamma(t)) t^{\frac{\nu+1}{2}}, \label{eq:gron_fin_est}
\end{align}
for all $t$ sufficiently small. From the definition of $\theta(t)$, we conclude for all $t$ sufficiently small, 
\begin{align}
\int_0^t \alpha(\tau)^{\frac 12} \frac{1}{\tau \bigl (
	\log \frac{\tau}{\lam(\tau)}
	\bigr )^{\frac 12}} \, \ud \tau \lesssim t^{\frac{\nu+1}{2}}. \label{eq:estimate_for_second_term_gron}
\end{align}
By the definition of $\alpha(t)$, \eqref{eq:al12} and \eqref{eq:estimate_for_second_term_gron}, we conclude for all $t$, 
\begin{align}
\lam(t) \log \frac{t}{\lam(t)} \lesssim t^{\nu+1}. 
\end{align}
Dividing the previous by $t$ and taking a logarithm yields 
\begin{align}
|\log t| \lesssim \log \frac{t}{\lam(t)}, 
\end{align}
whence it follows
\begin{align}
\lam(t) \lesssim t^{\nu+1} |\log t|^{-1}
\end{align}
which finishes the proof. 
\end{proof} 

\subsection{Determination of the blow-up rate} 

In this section we prove the remaining statements  in Theorem \ref{t:main2} (b). More precisely, we prove the following. 

\begin{proposition}\label{p:scaling_dynamics}
Let $\nu > \frac{9}{2}$ and $q < 0$, and let $\bs u$ satisfy \eqref{eq:ckls}.
Let $\lam(t)$ and $\bs g(t)$ be as in~\eqref{eq:lamg}.
Then 
\begin{align}
\lam(t) = \left (
\frac{p |q|}{\nu^2(\nu+1)} + o(1)
\right ) \frac{t^{\nu+1}}{|\log t|}. 
\end{align}
\end{proposition}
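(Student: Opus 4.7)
The strategy is to squeeze the rescaled rate $c(t) := \lambda(t)|\log t|/t^{\nu+1}$ between matching asymptotic upper and lower bounds equal to $c_* := p|q|/(\nu^2(\nu+1))$. Proposition~\ref{t:lamupper} gives $L := \limsup_{t\to 0^+} c(t) \leq A < \infty$, and Lemma~\ref{l:g_apriori_bd} then yields $\|\bs g(t)\|_{\mathcal H}^2 \lesssim t^{2\nu}/|\log t|$; in particular $\|\bs g\|_{\mathcal H}^2/\lambda = O(t^{\nu-1})$ and $\lambda/t^2 = O(t^{\nu-1}/|\log t|)$. These bounds make the hypotheses of Proposition~\ref{p:modp2} hold for $t$ small, so $\zeta(t) = 4\nu\lambda(t)|\log t|(1+o(1))$ and $|\zeta'(t)-b(t)| = o(t^\nu)$.

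For the lower bound $\ell := \liminf_{t\to 0^+} c(t) \geq c_*$, I would apply~\eqref{eq:b'lb}: fixing $\delta > 0$ small enough that $\delta\|\bs g\|^2/\lambda < \epsilon t^{\nu-1}/2$ and using $C_0\lambda/t^2 = o(t^{\nu-1})$, one obtains $b'(t) \geq (4p|q| - \epsilon)t^{\nu-1}$ for $t$ small. Since $|b(t)| \lesssim t^\nu$ tends to zero as $t \to 0$ (by~\eqref{eq:b-bound}), integrating from $0$ to $t$ gives $b(t) \geq (4p|q|/\nu - o(1))t^\nu$. Substituting into~\eqref{eq:kala'} and integrating once more with $\zeta(0) = 0$ yields $\zeta(t) \geq (4p|q|/(\nu(\nu+1)) - o(1))t^{\nu+1}$, and the asymptotic $\zeta \sim 4\nu\lambda|\log t|$ from~\eqref{eq:bound-on-l} translates this to $\ell \geq c_*$.

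The main obstacle is the matching upper bound $L \leq c_*$, which requires combining the exact energy identity of Proposition~\ref{p:quadratic_estimate} with the sharp modulation bound~\eqref{eq:b-bound}. Sending $t_0 \to 0^+$ in Proposition~\ref{p:quadratic_estimate} (using $\mathcal{Q}(\bs g(t_0),t_0) \lesssim \|\bs g(t_0)\|_{\mathcal H}^2 \to 0$) and invoking the coercivity inequality $\mathcal{Q}(\bs g,t)/\pi \geq \|\dot g\|_{L^2}^2 - O(\lambda^2/t^2 + t^\nu\|g\|_H^2)$ from the proof of Proposition~\ref{p:gestimate} yields $\|\dot g(t)\|_{L^2}^2 \leq 8p|q|\psi(t) + o(t^{2\nu}/|\log t|)$, where $\psi(t) := \int_0^t \lambda'(\tau)\tau^{\nu-1}\,\ud\tau$. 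Plugging this into~\eqref{eq:b-bound}, after replacing $\log(t/\lambda) = \nu|\log t|(1+o(1))$ and absorbing the $\|\bs g\|^4$ remainder, yields $b(t)^2 \leq (32\nu p|q| + o(1))|\log t|\,\psi(t)$. Combined with $\zeta'(t) = b(t) + o(t^\nu)$ and $b(t) > 0$, this rearranges to the differential inequality
\begin{equation*}
\zeta'(t) \leq (1+o(1))\sqrt{32\nu p|q|}\,\sqrt{|\log t|\,\psi(t)}.
\end{equation*}

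The argument closes via integration by parts: $\psi(\tau) = \lambda(\tau)\tau^{\nu-1} - (\nu-1)\int_0^\tau \lambda(s)s^{\nu-2}\,\ud s$. Plugging in $\lambda(s) \geq (c_* - \epsilon)s^{\nu+1}/|\log s|$ (from the liminf bound) inside the integral, $\lambda(\tau) \leq (L+\epsilon)\tau^{\nu+1}/|\log\tau|$ in the first term, and using $\int_0^\tau s^{2\nu-1}/|\log s|\,\ud s \sim \tau^{2\nu}/(2\nu|\log\tau|)$, one obtains $|\log\tau|\,\psi(\tau) \leq [L - (\nu-1)c_*/(2\nu) + O(\epsilon)]\tau^{2\nu}$ for $\tau \leq T_\epsilon$. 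Integrating the $\zeta'$-inequality from $0$ to $t_n$ along a sequence $t_n \to 0^+$ with $c(t_n) \to L$ (so that $\zeta(t_n)/t_n^{\nu+1} \to 4\nu L$) and sending $\epsilon \to 0$ produces the quadratic inequality
\begin{equation*}
\nu(\nu+1)^2 L^2 - 2p|q|L + \frac{(\nu-1)p^2q^2}{\nu^3(\nu+1)} \leq 0,
\end{equation*}
whose discriminant equals $4p^2q^2/\nu^2$ and whose roots are $p|q|(\nu \pm 1)/(\nu^2(\nu+1)^2)$. The larger root is exactly $c_*$, forcing $L \leq c_*$; combined with $c_* \leq \ell \leq L$ we conclude $\lim_{t \to 0^+} c(t) = c_*$, which is the claim. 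The principal technical obstacle will be organizing the hierarchy of $\epsilon$- and $o(1)$-quantifiers when integrating the differential inequality for $\zeta'$, and maintaining enough precision in the asymptotic $\zeta \sim 4\nu\lambda|\log t|$ to invert a sharp bound on $\zeta$ back to one on $\lambda$.
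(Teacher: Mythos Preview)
Your lower-bound argument ($\ell \ge c_*$) matches the paper's. For the upper bound, your approach is \emph{correct but genuinely different} from the paper's. The paper first proves the auxiliary relation
\[
\Big|\,4\nu|\log t|\,\lambda'(t)-\tfrac{\ud}{\ud t}\!\int_0^t \Lambda Q_{\uln\lambda}\,g\,r\,\ud r-b(t)\,\Big|\lesssim \frac{t^{\nu}}{|\log t|^{1/4}},
\]
uses it (together with~\eqref{eq:final_lemma2}) to convert $\int_0^t \lambda'(\tau)\tau^{\nu-1}\,\ud\tau$ into $\frac{1}{4\nu}\int_0^t b(\tau)\tau^{\nu-1}|\log\tau|^{-1}\,\ud\tau$ plus lower-order terms, and thereby closes a scalar differential inequality for $f(t):=\int_0^t b(\tau)\tau^{\nu-1}|\log\tau|^{-1}\,\ud\tau$: from $b^2\le(4\nu+o(1))|\log t|\|\dot g\|_{L^2}^2$ and $\|\dot g\|_{L^2}^2\le(1+o(1))\frac{2p|q|}{\nu}f(t)$ one gets $(\sqrt f\,)' \le (1+o(1))(2p|q|)^{1/2}t^{\nu-1}|\log t|^{-1/2}$, which integrates directly to the sharp bound $b(t)\le(1+o(1))\frac{4p|q|}{\nu}t^{\nu}$.

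You instead keep $\psi(t)=\int_0^t\lambda'(\tau)\tau^{\nu-1}\,\ud\tau$ as is, integrate by parts, and feed in \emph{both} the limsup bound $\lambda\le(L+\epsilon)t^{\nu+1}/|\log t|$ and the already-proved liminf bound $\lambda\ge(c_*-\epsilon)t^{\nu+1}/|\log t|$ to get $|\log\tau|\psi(\tau)\le(K+o(1))\tau^{2\nu}$ with $K=L-\frac{(\nu-1)c_*}{2\nu}$; this produces a quadratic inequality in $L$ whose larger root is exactly $c_*$. The algebra checks: $K>0$ because $L\ge\ell\ge c_*>\frac{\nu-1}{2\nu}c_*$, so the $o(\tau^{2\nu})$ errors absorb cleanly inside the square root. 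The paper's route is self-contained (it does not use the lower bound as input) and yields the sharp pointwise bound on $b(t)$ as a byproduct; your route avoids the somewhat technical conversion $\lambda'\!\to b$ of~\eqref{eq:final_lemma1}--\eqref{eq:b_lemma} at the cost of coupling the two directions and relying on a factoring identity. Both are valid; the organization of $\epsilon/o(1)$ quantifiers you flag is real but routine once you restrict all estimates to $\tau\le T_\epsilon$ before integrating.
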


\begin{proof}
We first prove 
\begin{align}
\lam(t) \geq \left (\frac{p|q|}{\nu^2(\nu+1)} - o(1)\right ) \frac{t^{\nu+1}}{|\log t|}, \quad \mbox{as } t \rar 0. \label{eq:lam_lower}
\end{align}
Proposition \ref{t:lamupper} and Lemma \ref{l:g_apriori_bd} imply there exists $C = C(\bs u)$ such that 
	\begin{align}
	\frac{\| \bs g(t) \|_{\cl H}^2}{\lam(t)} \leq C t^{\nu-1}. \label{eq:g_apriori_bd2}
	\end{align}
By \eqref{eq:b'lblast}, \eqref{eq:lamupper} and \eqref{eq:g_apriori_bd2} 
\begin{align}
b'(t) &\geq (4p|q|- o(1)) t^{\nu-1} - C \frac{\lam(t)}{t^2} - o(1) \frac{\| \bs g(t) \|_{\cl H}^2}{\lam(t)} \\
&\geq (4p|q| - o(1)) t^{\nu-1}, \quad \mbox{as } t \rar 0. 
\end{align}
By \eqref{eq:bupper}, \eqref{eq:g_apriori_bd2} and \eqref{eq:gto0} 
\begin{align}
|b(t)| \lesssim t^{\frac 12} \frac{\| \bs g(t) \|}{\lam(t)^{\frac 12}}
+ \| \bs g(t) \|_{\cl H}^2 \rar 0,
\end{align}
as $t \rar 0$.
Thus, $b(t) \geq \left (\frac{4p|q|}{\nu} - o(1)\right ) t^{\nu}$ which implies 
\begin{align}
\int_0^t b(\tau) \, \ud \tau \geq \left (\frac{4p|q|}{\nu(\nu+1)} - o(1)\right ) t^{\nu+1}. \label{eq:intb_lower}
\end{align}
By \eqref{eq:z'-b} and \eqref{eq:zlam} we conclude  
\begin{align}
4\lam(t) \log \frac{t}{\lam(t)} &\geq \int_0^t b(\tau) \, \ud \tau -
C_0 \int_0^t \Bigl ( \| \dot g(\tau) \|_{L^2} + \frac{\lam(\tau)}{\tau} \Bigr ) \, \ud \tau \\
&\qquad- C_0 t \| \bs g(t) \|_{\cl H}. \label{eq:lower_almost}
\end{align}
By \eqref{eq:g_apriori_bd2} and \eqref{eq:lamupper},
\begin{align}
 \int_0^t \Bigl ( \| \dot g(\tau) \|_{L^2} + \frac{\lam(\tau)}{\tau} \Bigr ) \, \ud \tau
+ t \| \bs g(t) \|_{\cl H} \lesssim \frac{t^{\nu+1}}{|\log t|^{\frac 12}}. \label{eq:error_estimates}
\end{align}
Inserting \eqref{eq:intb_lower} and \eqref{eq:error_estimates} into \eqref{eq:lower_almost} implies  
\begin{align}
\lam(t) \log \frac{t}{\lam(t)} \geq \left (\frac{p|q|}{\nu(\nu+1)} - o(1)\right ) t^{\nu+1}. \label{eq:lower_almostalmost}
\end{align}
Dividing both sides of \eqref{eq:lower_almostalmost} by $t$ and taking logarithms implies  
\begin{align}
\log \frac{t}{\lam(t)} \leq  (\nu + o(1)) |\log t|. \label{eq:logestimate}
\end{align}
Inserting \eqref{eq:logestimate} into \eqref{eq:lower_almostalmost} yields 
\begin{align}
\lam(t) \geq \left (\frac{p|q|}{\nu^2(\nu+1)} - o(1)\right ) \frac{t^{\nu+1}}{|\log t|}
\end{align}
which finishes the proof of \eqref{eq:lam_lower}.

Now we prove the sharp upper bound,  
\begin{align}
\lam(t) \leq (1 + o(1))\frac{p|q|}{\nu^2(\nu+1)} \frac{t^{\nu+1}}{|\log t|}, \quad \mbox{as } t \rar 0. \label{eq:sharp_upper_lam}
\end{align}
The estimates \eqref{eq:lamupper} and \eqref{eq:g_apriori_bd2} imply  
\begin{align}
\| \bs g(t) \|_{\cl H}^2 \lesssim t^{2\nu} |\log t|^{-1}
\end{align}
where the implied constant is uniform in $t$. This estimate will be used frequently in what follows. 

Our first step towards proving \eqref{eq:sharp_upper_lam} is to establish the following bounds: 
\begin{gather}
\left |
\int_0^t \Lambda Q_{\uln{\lam(t)}} g(t) \, r \ud r
\right | \lesssim \frac{t^{\nu+1}}{|\log t|^{\frac 12}}, \label{eq:final_lemma2} \\
\left |4\nu |\log t| \lambda'(t) - \frac{\ud}{\ud t} \int_0^t \Lambda Q_{\uln{\lam(t)}} g(t) \, r \ud r 
- b(t) \right | \lesssim \frac{t^\nu}{|\log t|^{\frac 14}}. \label{eq:final_lemma1}
\end{gather}
Indeed, the estimate \eqref{eq:final_lemma2} is quite simple:   
\begin{align}
\left |
\int_0^t \Lambda Q_{\uln{\lam}} g(t) \, r \ud r
\right |
&\lesssim \| g(t) \|_{L^\infty} \int_0^t |\Lambda Q_{\uln \lam}| \, r \ud r \\ 
 &\lesssim \| g(t) \|_{L^\infty} \lam \int_0^{t/\lam} |\Lambda Q| \, r \ud r \\
&\lesssim t \| g(t) \|_{H} \lesssim \frac{t^{\nu+1}}{|\log t|^{\frac 12}}.
\end{align}
We now turn to \eqref{eq:final_lemma1}.  By \eqref{eq:lamupper} and \eqref{eq:lam_lower}, we have 
\begin{align}
\log \frac{t}{\lam(t)} = \nu |\log t|(1 + O(|\log t|^{-1} \log |\log t| )), \quad \mbox{as } t \rar 0. 
\end{align}
Since $\p_t g = \dot g + \lam' \Lambda Q_{\uln \lam}$,
\begin{align}
\frac{\ud}{\ud t} \int_0^t \Lambda Q_{\uln{\lam(t)}} g(t) \, r \ud r 
&=
\int_0^t \Lambda Q_{\uln{\lam}} \dot g  \, r \ud r
  + \lam' \int_0^t |\Lambda Q_{\uln \lam} |^2 \, r \ud r \\  
&\quad-\frac{\lam'}{\lam}
\int_0^t |\Lambda_0 \Lambda Q]_{\uln \lam} g \, r \ud r +
\Lambda Q_{\uln \lam} g \,r \big |_{r = t} \\
&= -b(t) + 4\nu |\log t| \lam' + O(\lam' \log |\log t|) \\
&\quad- \la \dot g\, , \, \cl A(\lam) g \ra + 
\frac{\lam'}{\lam}\int_0^t |\Lambda_0 \Lambda Q]_{\uln \lam} g \, r \ud r +
\Lambda Q_{\uln \lam} g \,r \big |_{r = t}.
\end{align}
The two estimates 
\begin{gather}
\| \bs g(t) \|_{\cl H} \lesssim t^{\nu} |\log t|^{-\frac 12}, \quad 
\int_0^{\infty} |\Lambda_0 \Lambda Q | \, r \ud r \lesssim 1,
\end{gather}
imply 
\begin{align}
|\la \dot g , \mathcal A_0(\lam) g \ra | + \frac{1}{\lam} 
\int_0^t |[\Lambda_0 \Lambda Q]_{\uln \lam}| |g| \, r \ud r \lesssim 
\| \bs g \|_{\cl H}^2 + \| g \|_{L^\infty} \int_0^{t} |\Lambda_0 \Lambda Q | \, r \ud r \lesssim 1. 
\end{align}
Since $g(t,t) = \pi - Q_{\lam(t)}(t) = O(\lam(t) t^{-1})$, 
\begin{align*}
\left |
\Lambda Q_{\uln \lam} g\, r \Big |_{r = t} 
\right | \lesssim \frac{\lam}{t} \lesssim t^{\nu}|\log t|^{-1}. 
\end{align*}
Since $|\lam'| \lesssim \| \dot g\|_{L^2} \lesssim t^{\nu} |\log t|^{-\frac 12}$, we conclude 
\begin{align}
\left |4\nu |\log t| \lambda'(t) - \frac{\ud}{\ud t} \int_0^t \Lambda Q_{\uln{\lam(t)}} g(t) \, r \ud r 
- b(t) \right | \lesssim 
|\lam'| \log |\log t| \lesssim t^{\nu} |\log t|^{-\frac 14}.
\end{align}
which finishes the proof of \eqref{eq:final_lemma1}. 

In the second step of proving \eqref{eq:sharp_upper_lam}, we establish the bound 
	\begin{align}
	c &\| g(t) \|_H^2 + \| \dot g(t) \|_{L^2}^2 \\&\leq (1 + o(1))\frac{2p|q|}{\nu}  \int_0^t b(\tau) \tau^{\nu-1} |\log \tau|^{-1} \, \ud \tau, \quad \mbox{as } t \rar 0, 
	\label{eq:b_lemma}
	\end{align}
	where $c$ is the constant appearing in Proposition \ref{p:gestimate}. 
Indeed, by Proposition \ref{p:gestimate}, \eqref{eq:lamupper} and \eqref{eq:final_lemma1},
\begin{align}
c \| g \|_H^2 + 
\| \dot g\|^2_{L^2} &\leq 8 p|q| \int_0^t \lam'(\tau) \tau^{\nu-1} \, \ud \tau 
+ C t^{2\nu} |\log t|^{-2} \\
&\leq \frac{2p|q|}{\nu} \int_0^t b(\tau) \tau^{\nu-1} |\log \tau|^{-1} \ud \tau \\
&\quad 
+ 8p|q| \int_0^t \frac{\ud}{\ud \tau} \Bigl (\int_0^\tau \Lambda Q_{\uln{\lam(\tau)}} g(\tau) \, r \ud r \Bigr ) \tau^{\nu-1} |\log \tau|^{-1} \, \ud \tau \\
&\quad + C \int_0^t \tau^{2\nu-1} |\log \tau|^{-\frac 54} \, \ud \tau + C t^{2\nu} |\log t|^{-2} \\
&\leq \frac{2p|q|}{\nu} \int_0^t b(\tau) \tau^{\nu-1} |\log \tau|^{-1} \ud \tau \\
&\quad 
+ 8p|q| \int_0^t \frac{\ud}{\ud \tau} \Bigl (\int_0^\tau \Lambda Q_{\uln{\lam(\tau)}} g(\tau) \, r \ud r \Bigr ) \tau^{\nu-1} |\log \tau|^{-1} \, \ud \tau \\
&\quad + C t^{2\nu} |\log t|^{-\frac 54}
\end{align}
By integration by parts and \eqref{eq:final_lemma2},
\begin{align}
\int_0^t & \frac{\ud}{\ud \tau} \Bigl (\int_0^\tau \Lambda Q_{\uln{\lam(\tau)}} g(\tau) \, r \ud r \Bigr ) \tau^{\nu-1} |\log \tau|^{-1} \, \ud \tau \\
&= t^{\nu-1} |\log t|^{-1} \int_0^t \Lambda Q_{\uln{\lam(t)}} g(t) \, r \ud r  \\&\quad-
\int_0^t \int_0^\tau \Lambda Q_{\uln{\lam(\tau)}} g(\tau) \frac{\ud}{\ud \tau}\bigl( \tau^{\nu-1} |\log \tau|^{-1} \bigr )\, r \ud r  \, \ud \tau \\
&\lesssim t^{2\nu} |\log t|^{-\frac 32} + 
\int_0^t \tau^{2\nu-1} |\log \tau|^{-\frac 32} \, \ud \tau \\
&\lesssim t^{2\nu} |\log t|^{-\frac 32}. 
\end{align}
We recall $b(t)$ satisfies the lower bound $b(t) \gtrsim t^{\nu}$ so 
\begin{align}
t^{2\nu} |\log t|^{-\frac 54} \leq o(1) t^{2\nu} |\log t|^{-1} 
\leq o(1) \frac{2p|q|}{\nu} \int_0^t b(\tau) \tau^{\nu-1} \ud \tau.  
\end{align}
We conclude 
\begin{align}
c \| g \|_H^2 + \| \dot g \|^2_{L^2} &\leq \frac{2p|q|}{\nu} \int_0^t b(\tau) \tau^{\nu-1} |\log \tau|^{-1} \ud \tau + C t^{2\nu} |\log t|^{-\frac 54} \\
&\leq (1 + o(1)) \frac{2p|q|}{\nu} \int_0^t b(\tau) \tau^{\nu-1} |\log \tau|^{-1} \, \ud \tau
\end{align}
as desired. 

From the first two steps, the proof of the sharp upper bound \eqref{eq:sharp_upper_lam} follows easily. 
By \eqref{eq:bupper} and \eqref{eq:b_lemma},
\begin{align}
b(t) &\leq (1 + o(1)) (4\nu)^{\frac 12} |\log t|^{\frac 12} \| \dot g \|_{L^2} \\
&\leq (1 + o(1)) (8 p|q|)^{\frac 12} |\log t|^{\frac 12}
\Bigl (\int_0^t b(\tau) \tau^{\nu-1} |\log \tau|^{-1} \, \ud \tau \Bigr )^{\frac 12}
\label{eq:b_intb_estimate}.
\end{align}
Let 
\begin{align}
f(t) = \int_0^t b(\tau) \tau^{\nu-1} |\log \tau|^{-1} \, \ud \tau. 
\end{align}
Then $f$ is positive for all $t$ sufficiently small and $\lim_{t \rar 0} f(t) = 0$. The previous estimate implies 
\begin{align}
f'(t) \leq (1 + o(1)) (8 p|q| )^{\frac 12} t^{\nu-1} |\log t|^{-\frac 12} f(t)^{\frac 12}. 
\end{align}
Thus, 
\begin{gather}
\frac{\ud}{\ud t} [f(t)]^{\frac 12} \leq (1 + o(1)) (2p|q|)^{\frac 12} t^{\nu-1} |\log t|^{-\frac 12},
\end{gather}
and we conclude $f(t)^{\frac 12} \leq (1 + o(1)) (2p|q|)^{\frac 12} \nu^{-1} t^\nu |\log t|^{-\frac 12}$, i.e. 
\begin{align}
\Bigl (\int_0^t b(\tau) \tau^{\nu-1} |\log \tau|^{-1} \, \ud \tau \Bigr )^{\frac 12}
\leq (1 + o(1)) (2p|q|)^{\frac 12} \nu^{-1} t^\nu |\log t|^{-\frac 12} \label{eq:int_b_estimate}. 
\end{align}
Plugging \eqref{eq:int_b_estimate} into \eqref{eq:b_intb_estimate} we obtain 
\begin{align}
b(t) \leq (1 + o(1)) \frac{4p|q|}{\nu} t^{\nu}. \label{eq:optimal_b_estimate}
\end{align}
Then  
\begin{align}
(1 + o(1)) 4\nu |\log t| \lam(t) &= 4 \lam(t) \log \frac{t}{\lam(t)} \\
&\leq \zeta(t) + C t \| \bs g(t) \|_{\cl H} \\
&\leq \int_0^t b(\tau) \, \ud \tau + C
\int_0^t \Bigl (\| \dot g(\tau) \|_{L^2} + \frac{\lam(\tau)}{\tau} \Bigr ) \ud \tau
 + C t \| \bs g(t) \|_{\cl H} \\
&\leq (1 + o(1)) \frac{4p|q|}{\nu(\nu+1)} t^{\nu+1} 
+ C \int_0^t \tau^{\nu} |\log \tau|^{-\frac 12} \, \ud \tau \\&\quad+ C t^{\nu+1} |\log t|^{-\frac 12} \\
&\leq (1 + o(1)) \frac{4p|q|}{\nu(\nu+1)} t^{\nu+1} + C t^{\nu+1} |\log t|^{-\frac 12} \\
&\leq (1 + o(1)) \frac{4p|q|}{\nu(\nu+1)} t^{\nu+1}. 
\end{align}
We conclude
\begin{align}
\lam(t) \leq (1 + o(1)) \frac{p|q|}{\nu^2(\nu+1)} t^{\nu+1} |\log t|^{-1},
\end{align}
which finishes the proof of Proposition~\ref{p:scaling_dynamics} and thus also of Theorem~\ref{t:main1}. 
\end{proof}

\subsection{Faster decay of $g(t)$}

We conclude this work with a simple corollary of Theorem \ref{t:main2}, which shows that the $L^2$ norm of $\dot g(t)$ carries the leading order in $\| \bs g(t) \|_{\HH}$. 

\begin{corollary}
Assume the same hypotheses as Theorem \ref{t:main2}, and let 
\begin{align}
\bs g(t) = (g(t), \dot g(t)) := \bs u(t) - (\bs Q_{\lam(t)} + \bs u^*(t)), 
\end{align}
where $\lam(t) \in C^1(J)$ is the modulation parameter defined on a time interval $J = (0,T_0)$ (so \eqref{eq:lamg} and \eqref{eq:gto0} hold). Define
$\bar b \in C(J)$ and $\dot w \in C(J; L^2(\bbR^2))$ via 
\begin{align}
\bar b(t) &:= -\| \Lambda Q \|_{L^2(r \leq t/\lam(t))}^{-2} \int_0^t \Lambda Q_{\uln{\lam(t)}} \dot g(t) \, r \ud r, \label{eq:def_btilde} \\
\dot w(t) &:= \dot g(t) + \bar b(t) \chi_{[r \leq t]} \Lambda Q_{\uln{\lam(t)}}, \label{eq:def_wdot} 
\end{align}
where $\chi_{[r \leq t]}$ is the sharp cutoff 
\begin{align}
\chi_{[r \leq t]}(r) = 
\begin{cases}
1 &\mbox{ if } r \leq t, \\
0 &\mbox{ if } r > t. 
\end{cases}
\end{align}
Then, as $t \rar 0$, 
\begin{gather}
\bar b(t) = \Bigl ( 
\frac{p |q|}{\nu^2} + o(1)
\Bigr ) \frac{t^\nu}{|\log t|}, \label{eq:barb_leading}\\
\| g(t) \|_H^2 + \| \dot w(t) \|_{L^2}^2 = o\Bigl ( \frac{t^{2\nu}}{|\log t|}\Bigr ), \label{eq:gdotw_norms} \\
\| \dot g(t) \|_{L^2}^2 = \Bigl (
\frac{4p^2 q^2}{\nu^3} + o(1)
\Bigr ) \frac{t^{2\nu}}{|\log t|}. \label{eq:gdot_leading_order}
\end{gather}
\end{corollary}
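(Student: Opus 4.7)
The plan is to bootstrap from Theorem~\ref{t:main1}(b) through four short steps: establish a sharp leading-order asymptotic for $b(t)$, transfer it to $\bar b(t)$ via the algebraic identity relating the two, match upper and lower bounds for $\|\dot g(t)\|_{L^{2}}^{2}$, and then read off $\|\dot w(t)\|_{L^{2}}^{2}$ from an exact algebraic cancellation.

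First I would upgrade \eqref{eq:optimal_b_estimate} to the two-sided asymptotic $b(t) = (4p|q|/\nu + o(1))t^{\nu}$. The upper bound is \eqref{eq:optimal_b_estimate}. For the matching lower bound I apply \eqref{eq:b'lb} with $\delta>0$ arbitrary; Theorem~\ref{t:main1}(b) and Lemma~\ref{l:g_apriori_bd} give $\lambda(t)/t^{2} \lesssim t^{\nu-1}/|\log t|$ and $\|\bs g(t)\|_{\mathcal H}^{2}/\lambda(t) \lesssim t^{\nu-1}$, so the error terms are absorbed and $b'(t) \ge (4p|q|-o(1))t^{\nu-1}$. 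Since $|b(t)|\to 0$ as $t\to 0^{+}$ (follows from \eqref{eq:bupper} and $\|\bs g\|_{\mathcal H}\to 0$), integration from $0$ together with $\delta\downarrow 0$ yields the sharp asymptotic. Inserting this into the identity
\begin{equation*}
\bar b(t)\,\|\Lambda Q\|_{L^{2}(r\le t/\lambda(t))}^{2} = b(t) + \langle \dot g(t),\mathcal A_0(\lambda(t))g(t)\rangle,
\end{equation*}
obtained by comparing \eqref{eq:bdef} with \eqref{eq:def_btilde}, and using $\|\Lambda Q\|_{L^{2}(r\le t/\lambda)}^{2}=4\nu|\log t|(1+o(1))$ from \eqref{eq:LaQL2} and Theorem~\ref{t:main1}(b) together with $|\langle \dot g,\mathcal A_0(\lambda) g\rangle| \lesssim \|\bs g\|_{\mathcal H}^{2} \lesssim t^{2\nu}/|\log t|=o(t^{\nu})$, produces \eqref{eq:barb_leading}.

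Next, for \eqref{eq:gdot_leading_order} I match upper and lower bounds on $\|\dot g\|_{L^{2}}$. Feeding the sharp $b(t)$ into \eqref{eq:bupper} and absorbing the $O(\|\bs g\|_{\mathcal H}^{2})=o(t^{\nu})$ term gives $\|\dot g(t)\|_{L^{2}}^{2} \ge (4p^{2}q^{2}/\nu^{3}-o(1))t^{2\nu}/|\log t|$. For the matching upper bound I reuse the energy-type estimate \eqref{eq:b_lemma} from the proof of Proposition~\ref{p:scaling_dynamics}; plugging in the sharp $b(\tau)$ and the asymptotic $\int_{0}^{t}\tau^{2\nu-1}|\log\tau|^{-1}\ud\tau = (1+o(1))t^{2\nu}/(2\nu|\log t|)$ (verified by differentiating the right-hand side) yields
\begin{equation*}
c\|g(t)\|_{H}^{2} + \|\dot g(t)\|_{L^{2}}^{2} \le \Bigl(\frac{4p^{2}q^{2}}{\nu^{3}}+o(1)\Bigr)\frac{t^{2\nu}}{|\log t|}.
\end{equation*}
Comparing upper and lower bounds simultaneously forces \eqref{eq:gdot_leading_order} and the $\|g\|_{H}^{2}$ half of \eqref{eq:gdotw_norms}.

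Finally, since $\dot g(t,r)=0$ for $r\ge t$, expanding $\|\dot w(t)\|_{L^{2}}^{2}$ and using the defining relation $\int_{0}^{t}\Lambda Q_{\uln{\lambda(t)}}\dot g(t)\,r\ud r = -\bar b(t)\,\|\Lambda Q\|_{L^{2}(r\le t/\lambda(t))}^{2}$ collapses the cross term, leaving
\begin{equation*}
\|\dot w(t)\|_{L^{2}}^{2} = \|\dot g(t)\|_{L^{2}}^{2} - \bar b(t)^{2}\,\|\Lambda Q\|_{L^{2}(r\le t/\lambda(t))}^{2}.
\end{equation*}
Substituting the asymptotics established above, both terms on the right equal $(4p^{2}q^{2}/\nu^{3})t^{2\nu}/|\log t|$ to leading order, so their difference is $o(t^{2\nu}/|\log t|)$, completing \eqref{eq:gdotw_norms}. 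The only mildly delicate point is the passage $\delta\downarrow 0$ in Step~1, handled by fixing $\delta$, applying Proposition~\ref{p:modp2} on a small time interval depending on $\delta$, and then letting $\delta\downarrow 0$; everything else is direct substitution, so there is no substantial obstacle.
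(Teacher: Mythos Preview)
Your proposal is correct and uses essentially the same ingredients as the paper's proof: the sharp two-sided asymptotic for $b(t)$, the algebraic relation between $b$ and $\bar b$, the energy-type estimate \eqref{eq:b_lemma}, and the Pythagorean identity $\|\dot g\|_{L^2}^2 = \|\dot w\|_{L^2}^2 + \bar b^{2}\|\Lambda Q\|_{L^2(r\le t/\lambda)}^2$. The only cosmetic difference is the ordering of the last two steps: the paper substitutes the Pythagorean identity into \eqref{eq:b_lemma} to bound $c\|g\|_H^2 + \|\dot w\|_{L^2}^2$ directly and then reads off $\|\dot g\|_{L^2}^2$, whereas you match upper and lower bounds on $\|\dot g\|_{L^2}^2$ first and then recover $\|\dot w\|_{L^2}^2$ from the identity.
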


\begin{proof}
By the definition of $\bar b$ and $\dot w$, $\la \dot w, \Lambda Q_{\uln \lam} \ra = 0$. Thus,   
\begin{align}
\| \dot w \|_{L^2}^2 + (\bar b)^2 \| \Lambda Q \|_{L^2(r \leq t/\lam)}^2 = 
\| \dot g \|_{L^2}^2. \label{eq:norm_of_wdot_norm_g}
\end{align}
By the definition of the auxiliary function $b(t)$, Lemma \ref{l:g_apriori_bd} and Theorem \ref{t:main2},
\begin{align}
b(t) &= - \int_0^t \Lambda Q_{\uln \lam} \dot g \, r \ud r + \la \dot g , \cl A_0(\lam) g \ra \\
&= \| \Lambda Q \|_{r \leq t/\lam}^2 \bar b + O(\| \bs g(t) \|_{\cl H}^2) \\
&= (4\nu - o(1)) |\log t| \bar b + O(t^{2\nu} |\log t|^{-1}). 
\end{align}
By \eqref{eq:bupper} and \eqref{eq:b'lblast},
\begin{align}
b(t) = \Bigl ( \frac{4p|q|}{\nu} + o(1) \Bigr ) t^\nu \label{eq:sharp_b}
\end{align}
and thus, 
\begin{align}
\bar b(t) = \Bigl ( \frac{p|q|}{\nu^2} + o(1) \Bigr ) t^\nu |\log t|^{-1},  \label{eq:sharp_barb}
\end{align}
which establishes \eqref{eq:barb_leading}. 
In particular, we conclude 
\begin{align}
(\bar b)^2 \| \Lambda Q \|_{L^2(r \leq t/\lam)}^2 = \Bigl (
\frac{4 p^2 q^2}{\nu^3} + o(1) 
\Bigr ) t^{2\nu} |\log t|^{-1}. \label{eq:sharp_barb2}
\end{align}
Inserting \eqref{eq:norm_of_wdot_norm_g} into \eqref{eq:b_lemma} and using \eqref{eq:sharp_b} and \eqref{eq:sharp_barb2} yield 
\begin{align}
c \| g \|_H^2 + \| \dot w \|^2_{L^2} 
&\leq (1 + o(1)) \frac{2p|q|}{\nu} \int_0^t b(\tau) \tau^{\nu-1} |\log \tau|^{-1} \, \ud \tau 
\\&\quad - (1 + o(1)) \frac{4 p^2 q^2}{\nu^3} t^{2\nu} |\log t|^{-1} \\
&= (1 + o(1)) \frac{8p^2 q^2}{\nu^2}\int_0^t \tau^{2\nu-1} |\log \tau|^{-1} \, \ud \tau 
\\&\quad - (1 + o(1)) \frac{4 p^2 q^2}{\nu^3} t^{2\nu} |\log t|^{-1} \\
&= 
o \Bigl (t^{2\nu} |\log t|^{-1} \Bigr )
\end{align}
which establishes \eqref{eq:gdotw_norms}. Finally, we obtain \eqref{eq:gdot_leading_order} from inserting \eqref{eq:sharp_barb2} and the previous bound into \eqref{eq:norm_of_wdot_norm_g}.  
\end{proof}

\appendix

\section{Formal derivation of the blow-up rate} \label{a:formal} 
In this section we demonstrate how to formally derive the blow-up rate for a given radiation field $\bs u_0^*$. 
%We now formally derive the blow-up rate for the final profile $\bs u_0^*$.  
The idea is to separate variables and search for a solution with an expansion in a small parameter $b(t)$, 
\EQ{
\bs u(t) = \bs u^*(t) + \bs Q_{\lam(t)} + b(t) \bs U^{(1)}_{\lam(t)} + b^2(t) \bs U^{(2)}_{\lam(t)} + \dots 
}
where for a pair $\bs U = (U, \dot U)$ the rescaling $\bs U_\lam$ is defined by $\bs U_{\lam} = (U_\lam, \dot U_{\U \lam})$, i.e., the invariant scaling in $\dot H^1 \times L^2$.  And above as usual $\bs u^*(t)$ denotes the wave map evolution of $\bs u^*_0$. We now use the equation~\eqref{eq:wmk} to make informed guesses for the dynamical parameters $(\lam(t), b(t))$ and the profiles $\bs U^{(j)}$. 

First, differentiating the first few terms in the expansion for $u(t)$ we see that  
\EQ{
\p_t u(t)  = \p_t u^*(t)  - \lam'(t) \Lam Q_{\U {\lam(t)}} + \dots 
}
which leads to the guess $b(t) = - \lam'(t)$ and $\bs U^{(1)} = (0, \Lam Q)$. 

Next, we write $\p_t^2 u$ two ways. On the one hand, differentiating in time the terms in the second slot of the expansion, and using $b(t) = \lam'(t)$ yields, 
\EQ{
\p_t^2 u(t) =  \p_t^2 u^*(t) + b'(t) \Lam Q_{\U \lam(t)} +  \frac{b(t)^2 }{\lam(t)} \Lam_0 \Lam Q_{\U{\lam(t)}}  + \dots 
}
On the other hand, if $u(t)$ is to solve~\eqref{eq:wmk} then, using that $\bs u^*(t)$ also solves~\eqref{eq:wmk} we have 
\EQ{
\p_t^2 u &= \Delta u - \frac{1}{r^2} f(u) \\
& = \De( u^* + Q_{\lam} + b^2 U^{(1)}_{\lam} + \dots ) - \frac{1}{r^2} f( u^* + Q_{\lam} + b^2 U^{(1)}_{\lam} + \dots) \\
& = \p_t^2 u^* + b^2 \De U^{(1)}_{\lam}  - \frac{1}{r^2} \Big( f  (u^* + Q_{\lam} + b^2 U^{(1)}_{\lam}) - f(u^*) - f(Q_\lam)  \Big) + \dots  \\
& = \p_t^2 u^* - b^2 \LL_\lam U^{(1)}_{\lam}  \\
&\quad   - \frac{1}{r^2} \Big( f  (u^* + Q_{\lam} + b^2 U^{(1)}_{\lam}) - f(u^*) - f(Q_\lam)  - f'(Q_\lam)b^2U^{(1)}_{\lam}  \Big) + \dots   \\
& = \p_t^2 u^* - b^2 \LL_\lam U^{(1)}_{\lam}   + 2 r^{-2} u^* (\Lambda Q_\lam)^2 + \dots % - \frac{1}{r^2} \Big( f  (u^* + Q_{\lam} + b^2 U^{(1)}_{\lam}) - f(u^*) - f(Q_\lam)  - f'(Q_\lam)b^2U^{(1)}_{\lam}  \Big) + \dots   \\
}
where in the last line we have used the approximation, 
\EQ{
f(Q_\lam + u^* + b^2 U^{(1)}_\lam) - f(Q_\lam) - f(u^*) - f'(Q_\lam) b^2 U^{(1)}_\lam = -2 u^* (\Lambda Q_\lam)^2 + \dots 
}
which is formally valid in the small-$r$ regime and if we continue ignoring terms that are smaller than $b^2$. 

Setting the right-hand sides of the previous two expression equal to each other, and ignoring lower order terms we arrive at the equation, 
\EQ{ \label{eq:U1}
b^2(t) \LL_\lam U^{(1)}_{\lam} = - b'(t) \Lam Q_{\U \lam(t)} -  \frac{b(t)^2 }{\lam(t)} \Lam_0 \Lam Q_{\U{\lam(t)}}  + 2 r^{-2} u^*(t, \cdot) (\Lambda Q_{\lam(t)})^2 
}
which we will use to choose $b(t)$ and $U^{(1)}$ for given $u^*(t, r)$. 

\subsection{The case $\bs u^*(r) = (q r^{\nu} + o(r^{\nu}), 0)$} \label{s:rnu} 
In this section we find the formal rate for radiation $\bs u^*$ as in Theorem~\ref{t:main1}. In this case we know from Corollary~\ref{l:ustar_app}  that the wave map evolution $\bs u^*(t)$ can be approximated  inside the light cone $r \le t$ by the value that the $4d$ linear evolution $\bs u_L^*(t, r)/r$ takes at the origin at time $t$, i.e., 
\EQ{
u^*(t, r) = p q rt^{\nu-1} + \dots 
}
where $p = p(q, \nu) >0$ is an explicit constant. Replacing $u^*(t, r)$ by the above we reduce~\eqref{eq:U1} to 
\EQ{
b^2(t) \LL_\lam U^{(1)}_{\lam} = - b'(t) \Lam Q_{\U \lam(t)} -  \frac{b(t)^2 }{\lam(t)} \Lam_0 \Lam Q_{\U{\lam(t)}}  + 2pq t^{\nu-1} r^{-1} (\Lambda Q_{\lam(t)})^2 
}
Noting that $\LL_\lam U_{\lam}^{(1)}  =  \frac{1}{\lam} (\LL U^{(1)})_{\U \lam} $ and that $ r^{-1} (\Lambda Q_{\lam(t)})^2  = [r^{-1} (\Lambda Q)^2]_{\U \lam}$ we rescale above and arrive at 
\EQ{
  \LL U^{(1)} = - \Lam_0 \Lam Q -  \frac{\lam}{b^2}  \Big( b'  \Lam Q- 2pq t^{\nu-1}r^{-1} (\Lambda Q)^2 \Big) 
}
A (formal) invocation of the Fredholm alternative, tells us that solving for $U^{(1)}$ above on say the interval $r  \le t/ \lam$, requires that the right-hand side be perpendicular to the kernel of $\LL$ on this interval. This leads to the condition 
\EQ{
 \left(\int_0^{t/ \lam} \abs{\Lam Q}^2 \, r \, \ud r \right) b' &= 2p q t^{\nu-1}\left ( \int_0^{t/\lam}  (\Lambda Q)^3) dr \right ) + \frac{b^2}{\lam} \left (  \int_0^{t/\lam} (\Lambda_0 \Lambda Q ) \Lambda Q r \ud r \right ), 
}
from which we deduce that to leading order, 
\EQ{
4 \log(t/ \lam) b'(t) = 4p qt^{\nu-1} + \dots 
}
We arrive at the formal system, 
\EQ{
 \lam'(t) &= - b(t) \\
 b'(t) & = pq \frac{t^{\nu-1}}{\log( t/\lam)} 
}
In the case $q <0$ we deduce the rate, 
\EQ{
\lam(t) = -\frac{pq}{\nu^2(\nu+1)} \frac{t^{\nu+1}}{|\log t|}(1 + o(1)), \quad \mbox{as } t \rar 0. 
}
which appears in Theorem~\ref{t:main1}.  
\subsection{Other rates} 
Using the same formal argument as above we can find the formal rates associated to different behaviors of the radiation as $r  \to 0$, for which the rigorous methods developed in this paper apply.   In particular, we can recover the pure polynomial blow-up rates $\lam(t) \simeq t^{\nu+1}$ discovered by Krieger, Schlag, and Tataru~\cite{KST}. Indeed, consider radiation $\bs u^*(r)$ such that 
\EQ{
 u_0^*(r) =  - r^\nu \abs{\log r} + o( r^\nu \abs{\log r} ) \mas r \to 0, \quad \dot u_0^*(r) = 0
}
As long is $\nu>\frac{9}{2}$ the same argument given in Section~\ref{s:radiation} shows that the wave map evolution $u^*(t)$ of this data is well approximated by 
\EQ{
u^*(t, r) =-  \ti p  r t^{\nu-1} \abs{ \log t} + \dots 
}
for some explicit constant $\ti p>0$. Arguing exactly as in Section~\ref{s:rnu} we obtain the formal system 
\EQ{
 \lam'(t) &= - b(t) \\
 b'(t) & = -\ti p t^{\nu-1} \frac{\abs{\log t} }{\log( \lam/ t)}
}
from which one deduces the rate 
\EQ{
\lam(t) \simeq t^{\nu+1}
}
Other rates can also be considered. For example, one immediate extension is to consider radiation of the form 
\EQ{
 u_0^*(r) =  - r^\nu \abs{\log r}^\mu , \quad \dot u_0^*(r) = 0
}
from which we find the formal rate
\EQ{
\lam(t) \simeq t^{\nu+1} \abs{\log t}^{\mu -1} 
}
%And of course in all of the preceding discussion we could just as easily have considered nontrivial $\dot u_1^*(r)$ as well, which simply adds a second term to the formula~\eqref{eq:kirchoff}. 

\section{Radiation of the form $\bs u_0^* = (0, q r^{\nu-1} + o(r^{\nu-1})$)}  \label{a:u*dot} 
In this appendix we briefly discuss the case where the radiation takes the form \eqref{eq:u*dot}, and in particular the derivation of the constant~\eqref{eq:tipdef}. In fact, the only different aspect of the proof lies in the analysis of the linear flow in Lemma~\ref{l:linear_app}. 

Consider radiation of the form, 
\EQ{
\bs u_0^*(r) := (u_0^*(r), \dot u_0^*(r)) = \chi(r) ( 0, qr^{\nu-1}).
}
%as in~\eqref{eq:u*1} and let $\bs u^*(t) \in \cl H_0$ be the unique finite energy solution to \eqref{eq:wmk} with $\bs u^*(0) = \bs u^*_0$, i.e. the radiation.
Let $u^*_L = u^*_L(t,r)$ be the solution to the linear equation 
\begin{align}
\begin{split}
		&\partial_t^2 u_L - \partial_r^2 u_L -\frac{1}{r}\partial_r u_L + \frac{1}{r^2} u_L = 0, \quad (t,r) \in \bbR \times (0,\infty), \\
		&\bs u^*_L(0) = \bs u^*_0. 
\end{split}	\label{eq:LW2}
\end{align}

Since the initial data takes the form $ \bs u_0^* = (0, g)$ we take special note here in the distinction between the forward and backwards evolution (oddness in time), as a crucial difference in sign emerges; see Theorem~\ref{t:main1} and Remark~\ref{r:q}. 
%Our goal of this section is to obtain a description of $\bs u^*(t,r)$ inside the light cone $\{r \leq t\}$.  To do this, we first describe $\bs u^*_L(t,r)$ for $r \leq t$ using Kirchoff's formula. We then compare $\bs u^*(t,r)$ to $\bs u_L^*(t,r)$ for $r \leq t$ using the standard well-posedness theory for \eqref{eq:wmk}.  It is in carrying out these steps that the technical restriction $\nu > \frac{9}{2}$ is required. 

\begin{lemma}\label{l:lin-dot} 
	Let $\nu > \frac{9}{2}$.  There exists $C = C(\nu,q) > 0$ such that for all $r \leq  |t|$,
	\begin{align}
		|u_L^*(t,r) - q p  \, t \abs{t}^{\nu - 2}  r | &\leq C r^3  \abs{t}^{\nu - 3},  \label{eq:linearleading1} \\
		|\p_t u^*_L(t,r)| &\leq C r \abs{t}^{\nu-2}, \label{eq:lineartest1}\\
		|\p_r u^*_L(t,r)| &\leq C \abs{t}^{\nu-1}, \label{eq:linearrest1}
	\end{align}
	where 
	\begin{align*}
p(\nu) = 
\frac{\nu+1}{2} \int_0^1 \rho^{\nu+1} (1 - \rho^2)^{-1/2} \, \ud \rho = 
\frac{(\nu + 1)\sqrt{\pi}\Gamma \Bigl ( \frac{2+\nu}{2}\Bigr )}{4 \Gamma \Bigl ( \frac{3+\nu}{2} \Bigr )}.
	\end{align*}
\end{lemma}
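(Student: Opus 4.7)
The plan is to follow the same three-step scheme used in the proof of Lemma~\ref{l:linear_app}: lift the equivariant 2d problem to a free wave on $\R^{1+4}$ via $v(t,x) := u_L^*(t,|x|)/|x|$, represent $v$ by a Poisson--Kirchhoff formula adapted to velocity-type initial data, and then Taylor-expand the resulting self-similar profile near the origin. Since $\partial_t^2 u_L^* - \partial_r^2 u_L^* - r^{-1}\partial_r u_L^* + r^{-2}u_L^* = 0$ transforms into $\Box_{\R^{1+4}} v = 0$, the new $v$ has initial data
\begin{equation*}
v(0,x) = 0, \qquad \partial_t v(0,x) = q|x|^{\nu-2} \quad \text{for } |x| \leq 1/2,
\end{equation*}
and the cutoff in $\bs u_0^*$ is invisible within the light cone for $|t|$ small.

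The difference with Lemma~\ref{l:linear_app} is that the data now have the form $(0,g)$ instead of $(f,0)$. The relevant 4d representation formula reads
\begin{equation*}
v(t,x) = \frac{1}{8t}\,\partial_t\!\left(t^3 \fint_{|y|\leq 1} g(|x+ty\,e_1|)\,(1-|y|^2)^{-1/2}\,dy\right),
\end{equation*}
which can be verified directly (the normalization $\fint_{|y|\leq 1}(1-|y|^2)^{-1/2}dy = 8/3$ in $\R^4$ produces the correct initial velocity) or by noting that $v(t,\cdot) = \int_0^t w(s,\cdot)\,ds$ where $w$ solves the free wave with data $(g,0)$ and is handled by the formula already used in Lemma~\ref{l:linear_app}. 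Substituting $g(\rho) = q\rho^{\nu-2}$, rescaling $y$, and invoking the oddness $v(-t,x) = -v(t,x)$ (a consequence of time-reversal symmetry acting on data of the form $(0,g)$) yield the self-similar expression
\begin{equation*}
v(t,x) = t\,|t|^{\nu-2}\,\tilde\psi(|x|/|t|),
\end{equation*}
with
\begin{equation*}
\tilde\psi(z) := q(\nu+1)\tilde\phi(z) - qz\tilde\phi'(z), \qquad \tilde\phi(z) := \frac{1}{8}\fint_{|y|\leq 1}|y+ze_1|^{\nu-2}(1-|y|^2)^{-1/2}\,dy.
\end{equation*}

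For $\nu > 9/2$, dominated convergence gives $\tilde\phi \in C^3([-1,1])$, hence $\tilde\psi \in C^2([-1,1])$, and the change of variables $y \mapsto -y$ forces $\tilde\phi$, and therefore $\tilde\psi$, to be even. Consequently $|\tilde\psi(z) - \tilde\psi(0)| \leq Cz^2$ on $[-1,1]$. A computation in polar coordinates on $\R^4$ combined with the beta-function identity
\begin{equation*}
\int_0^1 \rho^{\nu+1}(1-\rho^2)^{-1/2}\,d\rho = \frac{\sqrt{\pi}\,\Gamma((\nu+2)/2)}{2\,\Gamma((\nu+3)/2)}
\end{equation*}
identifies $\tilde\psi(0) = qp(\nu)$ with $p(\nu)$ precisely the constant appearing in the statement. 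Writing $u_L^*(t,r) = r\,v(t,r)$ and using $|\tilde\psi(r/|t|) - \tilde\psi(0)| \leq C(r/|t|)^2$ then immediately gives \eqref{eq:linearleading1}, while \eqref{eq:lineartest1}--\eqref{eq:linearrest1} follow routinely from the self-similar form of $v$ together with $u_L^* = rv$ and the assumption $r \leq |t|$. No serious obstacle is expected: the only real subtlety is the bookkeeping of signs in $t$, which is cleanly handled by invoking the oddness of $v(\cdot,x)$ rather than grinding through the representation formula for $t<0$.
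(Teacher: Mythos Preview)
Your proposal is correct and follows essentially the same route as the paper: lift to the free wave on $\R^{1+4}$ via $v=u_L^*/r$, apply the Kirchhoff formula for velocity data $(0,g)$ to obtain the self-similar representation $v(t,x)=t|t|^{\nu-2}\psi(|x|/t)$ with $\psi(z)=(\nu+1)\phi(z)-z\phi'(z)$, then use evenness of $\phi$ and the $C^3$ regularity (valid for $\nu>9/2$) to Taylor-expand and read off the constant $p(\nu)$. The only cosmetic difference is that you factor $q$ out of $\tilde\phi$ and handle negative times by invoking oddness explicitly, whereas the paper absorbs both into the formula; the content is identical.
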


\begin{proof}
	For $(t,x) \in \bbR^{1+4}$, define $v(t,x) = u^*_L(t,|x|)/|x|$.  Then $\Box_{\mathbb R^{1+4}} v = 0$ and $\bs v(0) = \frac{1}{r} \bs u_0^*$. For $|x| \leq |t|$, we can express $v(t,x)$ using Kirchoff's formula and a change of variables by 
	\EQ{ \label{eq:kirchoff1}
		v(t,x) &= \frac{1}{8}   \frac{1}{t} \p_t  
		\left (
		t^{3} \abs{t}^{\nu-2} \fint_{|y| \leq 1} q \left | y + \frac{|x|}{t} e_1 \right |^{\nu-2} (1 - |y|^2)^{-1/2} \, \ud y   
		\right ) \\
		&=\left ( \frac{1}{t} \p_t \right ) \left ( t^{3} \abs{t}^{\nu-2} \phi(|x|/t) \right ), 
	}
	where $e_1 = (1,0,0,0)$ and 
	\begin{align}\label{eq:phidef1} 
		 \phi(z) = \frac{1}{8} \fint_{|y| \leq 1} q \left | y + z e_1 \right |^{\nu-2} (1 - |y|^2)^{-1/2} \, \ud y, \quad z\in [-1,1]. 
	\end{align}
	By the dominated convergence theorem, $ \phi(z) \in C^3([-1,-1])$  for $\nu > \frac{9}{2}$.
	A straightforward computation shows 
	\begin{align}\label{eq:v_eq2}
		v(t,x) = t \abs{t}^{\nu - 2} \psi (|x|/t)
	\end{align}
	where 
	$\psi(z) = (\nu+1) \phi(z) -  z  \phi'(z)$. The function $\phi(z)$ is an even function which implies $\psi \in C^2([-1,1])$ is even as well. Thus, there exists a constant $C > 0$ such that 
	\begin{align*}
		| \psi(z) -  \psi(0)| \leq C |z|^2, \quad |z| \leq 1,
	\end{align*}
	which implies 
	\begin{align*}
		|v(t,x) - \psi(0)t  \abs{t}^{\nu - 2}| \leq C \abs{t}^{\nu-3} |x|^2. 
	\end{align*}
	Since $\psi(0) = q p(\nu)$ and $u_L(t,r) = r v(t,r)$ we conclude 
	\begin{align*}
		|u_L(t,r) - q p(\nu) t \abs{t}^{\nu-2} r | \leq C \abs{t}^{\nu-3} r^3, 
	\end{align*}
	which proves \eqref{eq:linearleading1}.
	
	To derive the estimates \eqref{eq:lineartest1} and \eqref{eq:linearrest1} we observe from \eqref{eq:v_eq2} that $v(t,r)$ satisfies for all $r \leq \abs{t}$,  
	\begin{align}
		|v(t,r)| &\leq C\abs{ t}^{\nu-1}, \quad |\nabla_{t,r} v(t,r)| \leq C \abs{t}^{\nu-2}
	\end{align}
	which imply the desired bounds for $u^*_L(t,r) = r v(t,r)$. 
	\end{proof}

With Lemma~\ref{l:lin-dot} in hand, the analogs of Lemma~\ref{l:nonlinear_app} and Corollary~\ref{l:ustar_app} follow as in Section~\ref{s:radiation}. And thus the rest of the proof of Theorem~\ref{t:main1} for radiation as in~\eqref{eq:u*dot} follows in an identical fashion as the detailed arguments carried out in the rest of the paper for radiation as in~\eqref{eq:u*}.

\bibliographystyle{plain}
\bibliography{researchbib}

\begin{thebibliography}{10}

\bibitem{BDS}
Pawe{\l} Biernat, Roland Donninger, and Birgit Sch{\"o}rkhuber.
\newblock Hyperboloidal similarity coordinates and a globally stable blowup
  profile for supercritical wave maps.
\newblock {\em ArXiv e-prints}, 2017.

\bibitem{BoWa97}
J.~Bourgain and W.~Wang.
\newblock Construction of blowup solutions for the nonlinear {S}chr{\"o}dinger
  equation with critical nonlinearity.
\newblock {\em Ann. Scuola Norm. Sup. Pisa Cl. Sci. (4)}, 25:197--215, 1997.

\bibitem{Cote}
R.~C\^ote.
\newblock Instability of nonconstant harmonic maps for the
  {$(1+2)$}-dimensional equivariant wave map system.
\newblock {\em Int. Math. Res. Not.}, (57):3525--3549, 2005.

\bibitem{CKLS1}
Raphael C{\^o}te, Carlos Kenig, Andrew Lawrie, and Wilhelm Schlag.
\newblock Characterization of large energy solutions of the equivariant wave
  map problem: {I}.
\newblock {\em Amer. J. Math.}, 137(1):139--207, 2015.

\bibitem{DJKM2}
Thomas Duyckaerts, Hao Jia, Carlos Kenig, and Frank Merle.
\newblock Universality of blow up profile for small blow up solutions to the
  energy critical wave map equation.
\newblock {\em Int. Math. Res. Not. IMRN}, (22):6961--7025, 2018.

\bibitem{GK}
Can Gao and Joachim Krieger.
\newblock Optimal polynomial blow up range for critical wave maps.
\newblock {\em Commun. Pure Appl. Anal.}, 14(5):1705--1741, 2015.

\bibitem{GeGr17}
Dan-Andrei Geba and Manoussos~G. Grillakis.
\newblock {\em An introduction to the theory of wave maps and related geometric
  problems}.
\newblock World Scientific Publishing Co. Pte. Ltd., Hackensack, NJ, 2017.

\bibitem{moi15}
J.~Jendrej.
\newblock Bounds on the speed of type {II} blow-up for the energy critical wave
  equation in the radial case.
\newblock {\em Int. Math. Res. Not.}, 2015.

\bibitem{JJ-APDE}
J.~Jendrej.
\newblock Construction of two-bubble solutions for the energy-critical nls.
\newblock {\em Anal. PDE}, 10(8):1923--1959, 2017.

\bibitem{moi17-jfa}
J.~Jendrej.
\newblock Construction of type {II} blow-up solutions for the energy-critical
  wave equation in dimension 5.
\newblock {\em J. Funct. Anal.}, 272(3):866--917, 2017.

\bibitem{JJ-KdV}
Jacek Jendrej.
\newblock Dynamics of strongly interacting unstable two-solitons for
  generalized korteweg-de vries equations.
\newblock {\em arXiv E-Prints}, 2018.

\bibitem{JJ-AJM}
Jacek Jendrej.
\newblock Construction of two-bubble solutions for energy-critical wave
  equations.
\newblock {\em Amer. J. Math.}, 141(1):55--118, 2019.

\bibitem{JL1}
Jacek Jendrej and Andrew Lawrie.
\newblock Two-bubble dynamics for threshold solutions to the wave maps
  equation.
\newblock {\em Invent. Math.}, 213(3):1249--1325, 2018.

\bibitem{JK}
Hao Jia and Carlos Kenig.
\newblock Asymptotic decomposition for semilinear wave and equivariant wave map
  equations.
\newblock {\em Amer. J. Math.}, 139(6):1521--1603, 2017.

\bibitem{KST}
J.~Krieger, W.~Schlag, and D.~Tataru.
\newblock Renormalization and blow up for charge one equivariant wave critical
  wave maps.
\newblock {\em Invent. Math.}, 171(3):543--615, 2008.

\bibitem{KMiao}
Joachim Krieger and Shuang Miao.
\newblock On stability of blow up solutions for the critical co-rotational wave
  maps problem.
\newblock {\em arXiv E-Prints}, 2018.

\bibitem{MS}
N.~Manton and P.~Sutcliffe.
\newblock {\em Topological solitons}.
\newblock Cambridge Monographs on Mathematical Physics. Cambridge University
  Press, Cambridge, 2004.

\bibitem{Martel05}
Y.~Martel.
\newblock Asymptotic ${N}$-soliton-like solutions of the subcritical and
  critical generalized {K}orteweg-de {V}ries equations.
\newblock {\em Amer. J. Math.}, 127(5):1103--1140, 2005.

\bibitem{MMR14-1}
Y.~Martel, F.~Merle, and P.~Rapha{\"e}l.
\newblock Blow up for the critical {gKdV} equation {I}: dynamics near the
  soliton.
\newblock {\em Acta Math.}, 212(1):59--140, 2014.

\bibitem{MMR15-2}
Y.~Martel, F.~Merle, and P.~Rapha{\"e}l.
\newblock Blow up for the critical {gKdV} equation {II}: minimal mass dynamics.
\newblock {\em J. Eur. Math. Soc.}, 17:1855--1925, 2015.

\bibitem{MMR15-3}
Y.~Martel, F.~Merle, and P.~Rapha{\"e}l.
\newblock Blow up for the critical {gKdV} equation {III}: exotic regimes.
\newblock {\em Ann. Sc. Norm. Super. Pisa Cl. Sci.}, XIV:575--631, 2015.

\bibitem{MMR-sem}
Yvan Martel, Frank Merle, and Pierre Rapha\"{e}l.
\newblock Blow up and near soliton dynamics for the {$L^2$} critical g{K}d{V}
  equation.
\newblock In {\em S\'{e}minaire {L}aurent {S}chwartz---\'{E}quations aux
  d\'{e}riv\'{e}es partielles et applications. {A}nn\'{e}e 2011--2012},
  S\'{e}min. \'{E}qu. D\'{e}riv. Partielles, pages Exp. No. XXXVII, 14.
  \'{E}cole Polytech., Palaiseau, 2013.

\bibitem{MMR1}
Yvan Martel, Frank Merle, and Pierre Rapha\"{e}l.
\newblock Blow up for the critical generalized {K}orteweg--de {V}ries equation.
  {I}: {D}ynamics near the soliton.
\newblock {\em Acta Math.}, 212(1):59--140, 2014.

\bibitem{MMR2}
Yvan Martel, Frank Merle, and Pierre Rapha\"{e}l.
\newblock Blow up for the critical g{K}d{V} equation. {II}: {M}inimal mass
  dynamics.
\newblock {\em J. Eur. Math. Soc. (JEMS)}, 17(8):1855--1925, 2015.

\bibitem{MMR3}
Yvan Martel, Frank Merle, and Pierre Rapha\"{e}l.
\newblock Blow up for the critical g{K}d{V} equation {III}: exotic regimes.
\newblock {\em Ann. Sc. Norm. Super. Pisa Cl. Sci. (5)}, 14(2):575--631, 2015.

\bibitem{Merle90}
F.~Merle.
\newblock Construction of solutions with exactly $k$ blow-up points for the
  {S}chr{\"o}dinger equation with critical nonlinearity.
\newblock {\em Commun. Math. Phys.}, 129(2):223--240, 1990.

\bibitem{Merle93}
F.~Merle.
\newblock Determination of blow-up solutions with minimal mass for nonlinear
  {S}chr\"odinger equations with critical power.
\newblock {\em Duke Math. J.}, 69(2):427--454, 1993.

\bibitem{MR1}
F.~Merle and P.~Raphael.
\newblock Sharp upper bound on the blow-up rate for the critical nonlinear
  {S}chr\"{o}dinger equation.
\newblock {\em Geom. Funct. Anal.}, 13(3):591--642, 2003.

\bibitem{MeRa05}
F.~Merle and P.~Rapha{\"e}l.
\newblock Profiles and quantization of the blow up mass for critical nonlinear
  {S}chr{\"o}dinger equation.
\newblock {\em Commun. Math. Phys.}, 253(3):675--704, 2005.

\bibitem{MeRaRo13}
F.~Merle, P.~Rapha\"el, and I.~Rodnianski.
\newblock Blowup dynamics for smooth data equivariant solutions to the critical
  {S}chr{\"o}dinger map problem.
\newblock {\em Invent. Math.}, 193(2):249--365, 2013.

\bibitem{MR2}
Frank Merle and Pierre Raphael.
\newblock On universality of blow-up profile for {$L^2$} critical nonlinear
  {S}chr\"{o}dinger equation.
\newblock {\em Invent. Math.}, 156(3):565--672, 2004.

\bibitem{MR4}
Frank Merle and Pierre Raphael.
\newblock The blow-up dynamic and upper bound on the blow-up rate for critical
  nonlinear {S}chr\"{o}dinger equation.
\newblock {\em Ann. of Math. (2)}, 161(1):157--222, 2005.

\bibitem{MR3}
Frank Merle and Pierre Raphael.
\newblock Profiles and quantization of the blow up mass for critical nonlinear
  {S}chr\"{o}dinger equation.
\newblock {\em Comm. Math. Phys.}, 253(3):675--704, 2005.

\bibitem{MR5}
Frank Merle and Pierre Raphael.
\newblock On a sharp lower bound on the blow-up rate for the {$L^2$} critical
  nonlinear {S}chr\"{o}dinger equation.
\newblock {\em J. Amer. Math. Soc.}, 19(1):37--90, 2006.

\bibitem{RR}
P.~Rapha{\"e}l and I.~Rodnianski.
\newblock Stable blow up dynamics for the critical co-rotational wave maps and
  equivariant {Y}ang-{M}ills problems.
\newblock {\em Publ. Math. Inst. Hautes \'Etudes Sci.}, pages 1--122, 2012.

\bibitem{RaSz11}
P.~Rapha{\"e}l and J.~Szeftel.
\newblock Existence and uniqueness of minimal mass blow up solutions to an
  inhomogeneous ${L}^2$-critical {NLS}.
\newblock {\em J. Amer. Math. Soc.}, 24(2):471--546, 2011.

\bibitem{RS}
I.~Rodnianski and J.~Sterbenz.
\newblock On the formation of singularities in the critical ${O}(3)$
  $\sigma$-model.
\newblock {\em Ann. of Math.}, 172:187--242, 2010.

\bibitem{R19}
Casey Rodriguez.
\newblock Threshold dynamics for corotational wave maps.
\newblock {\em arXiv E-Prints}, 2018.

\bibitem{ShSt00}
J.~Shatah and M.~Struwe.
\newblock {\em {Geometric Wave Equations}}, volume~2 of {\em Courant Lecture
  Notes in Mathematics}.
\newblock AMS, 2000.

\bibitem{ST94}
J.~Shatah and A.~Tahvildar-Zadeh.
\newblock On the {C}auchy problem for equivariant wave maps.
\newblock {\em Comm. Pure Appl. Math.}, 47(5):719--754, 1994.

\bibitem{Struwe}
M.~Struwe.
\newblock Equivariant wave maps in two space dimensions.
\newblock {\em Comm. Pure Appl. Math.}, 56(7):815--823, 2003.

\bibitem{Struwe99}
Michael Struwe.
\newblock Uniqueness for critical nonlinear wave equations and wave maps via
  the energy inequality.
\newblock {\em Comm. Pure Appl. Math.}, 52(9):1179--1188, 1999.

\bibitem{Top-IMRN}
Peter Topping.
\newblock Reverse bubbling and nonuniqueness in the harmonic map flow.
\newblock {\em Int. Math. Res. Not.}, (10):505--520, 2002.

\bibitem{vdBW}
Jan~Bouwe van~den Berg and J.~F. Williams.
\newblock ({I}n-)stability of singular equivariant solutions to the
  {L}andau-{L}ifshitz-{G}ilbert equation.
\newblock {\em European J. Appl. Math.}, 24(6):921--948, 2013.

\end{thebibliography}

\bigskip
\centerline{\scshape Jacek Jendrej}
\smallskip
{\footnotesize
 \centerline{CNRS and LAGA, Universit\'e Paris 13}
\centerline{99 av Jean-Baptiste Cl\'ement, 93430 Villetaneuse, France}
\centerline{\email{jendrej@math.univ-paris13.fr}}
} 
\medskip 
\centerline{\scshape Andrew Lawrie}
\smallskip
{\footnotesize
 \centerline{Department of Mathematics, Massachusetts Institute of Technology}
\centerline{77 Massachusetts Ave, 2-267, Cambridge, MA 02139, U.S.A.}
\centerline{\email{alawrie@mit.edu}}
} 
\medskip 
\centerline{\scshape Casey Rodriguez}
\smallskip
{\footnotesize
 \centerline{Department of Mathematics, Massachusetts Institute of Technology}
\centerline{77 Massachusetts Ave, 2-246B, Cambridge, MA 02139, U.S.A.}
\centerline{\email{caseyrod@mit.edu}}
} 

\end{document}